\renewenvironment{proof}[1][\proofname]{%
\par\pushQED{\qed}\normalfont%
\topsep6\p@\@plus6\p@\relax
\trivlist\item[\hskip\labelsep\bfseries#1\@addpunct{.}]%
\ignorespaces
}{%
\popQED\endtrivlist\@endpefalse
}
\newcommand{\arxivversion}[1]{}
\newtheorem{theorem}{Theorem}
\newtheorem{proposition}[theorem]{Proposition}
\numberwithin{theorem}{section}
\newtheorem{corollary}[theorem]{Corollary}
\newtheorem{lemma}[theorem]{Lemma}
\newtheorem{remark}[theorem]{Remark}
\newtheoremstyle{def}
{\topsep}
{\topsep}
{}
{0pt}
{\bfseries}
{.}
{ }
{\thmname{#1}\thmnumber{ #2}\textnormal{\thmnote{ (#3)}}}
\theoremstyle{def}
\newtheorem*{definition}{Definition}
\newtheorem*{notation}{Notation}
\numberwithin{equation}{section}
\renewcommand{\P}{\mathbb{P}}
\newcommand{\E}{\mathbb{E}}
\newcommand{\R}{\mathbb{R}}
\newcommand{\pmR}{\partial\mathcal{R}}
\newcommand{\Z}{\mathbb{Z}}
\newcommand{\Q}{\mathbb{Q}}
\newcommand{\N}{\mathbb{N}}
\newcommand{\cB}{\mathcal B}
\newcommand{\cC}{\mathcal C}
\newcommand{\cF}{\mathcal F}
\newcommand{\cE}{\mathcal E}
\newcommand{\cG}{\mathcal G}
\newcommand{\cH}{\mathcal H}
\newcommand{\cL}{\mathcal{L}}
\newcommand{\cS}{\mathcal{S}}
\newcommand{\veps}{\varepsilon}
\newcommand{\supp}{\mathrm{supp}}
\newcommand{\diam}{\mathrm{diam}}
\newcommand{\pf}[1]{{\color{black}#1}}
\renewcommand{\pf}[1]{}
\newcommand{\SUBMIT}[1]{}
\newcommand{\ARXIV}[1]{#1}
\begin{document}

\author{
	Edwin Perkins\footnote{Department of Mathematics, The University of British Columbia.  {\tt perkins@math.ubc.ca}}
	\and 
	Delphin S\'enizergues\footnote{MODAL'X, Universit\'e Paris Nanterre. {\tt dsenizer@parisnanterre.fr}}
}

\title{Total disconnectedness and percolation for the supports of super-tree random measures}

\maketitle


\begin{abstract}
Super-tree random measures (STRMs) were introduced by Allouba, Durrett, Hawkes and Perkins as a simple stochastic 
model which emulates a superprocess at a fixed time. A STRM $\nu$ arises as the a.s.\ limit of a sequence of empirical measures for a discrete time particle system  which undergoes independent supercritical branching and independent random displacement (spatial motion) of children from their parents. 
We study the connectedness properties of the closed support of a STRM ($\supp(\nu$)) for a particular choice of random displacement. 
Our main results are distinct sufficient conditions for the a.s.\ total disconnectedness (TD) of $\supp(\nu)$, and for percolation on $\supp(\nu)$ which will imply a.s.\ existence of a non-trivial connected component in $\supp(\nu)$.  
We illustrate a close connection between a subclass of these STRMs and super-Brownian motion (SBM). 
For these particular STRMs the above results give a.s.\ TD of the support in three and higher dimensions and the existence of a non-trivial connected component in two dimensions, with the three-dimensional case being critical. 
The latter two-dimensional result assumes that $p_c(\Z^2)$, the critical probability for site percolation on $\Z^2$, is less than $1-e^{-1}$. 
(There is strong numerical evidence supporting this condition although the known rigorous bounds fall just short.)   
This gives evidence that the same connectedness properties should hold for SBM. 
The latter remains an interesting open problem in dimensions $2$ and $3$ ever since it was first posed by Don Dawson over $30$ years ago.
\end{abstract}

\setcounter{tocdepth}{1}
\tableofcontents
\section{Introduction and Description of the Main Results}\label{sec:intro}

Let $M_F(E)$ denote the space of finite measures on a metric space $E$ equipped with the topology of weak convergence. 
Super-Brownian motion (SBM), $(X_t,\, t\ge 0)$, is a continuous $M_F(\R^d)$-valued random process arising as the scaling limit of critical branching random walk. 
It also is the scaling limit of a number of other models in population genetics and statistical physics, the latter for $d$ above the critical dimension (see, e.g. \cite{S99,HS03}).  
Even for the case $d=2\text{ or }3$ which will motivate this work, it arises as the limit of the rescaled voter model \cite{CDP02}, rescaled Lotka-Volterra models \cite{CMP10}, or spatial Lambda-Fleming-Viot processes (\cite{CE18,CP20}). 
Recall that $A\subset\R^d$ is totally disconnected (TD) iff it contains no connected subset with more than one element. 
We let $\supp(M)$ denote the closed support of a measure, $M$,  defined on a metric space with its Borel $\sigma$-field. 
For $d=1$, the measure $X_1$ has a continuous density (e.g., Theorem~III.4.2 of \cite{Per02}) and so $\supp(X_1)$ will contain intervals of positive length if $X_1\neq 0$. 
An elementary cluster decomposition will show $\supp(X_1)$ is a.s.\ totally disconnected if $d\ge 4$ (see Section~III.6 of \cite{Per02}).
 Whether or not $\supp(X_1)$ is a.s.\ TD in $2$ or $3$ dimensions has been an interesting open problem for over $30$ years (see, e.g., Sec. 4 of \cite{P95}).
  (The answer will be the same for $\supp(X_t)$ for any $t>0$ and any initial condition $X_0$ by general absolute continuity results (e.g. Theorem~III.2.2 in \cite{Per02}).)
  One suspects that there is a critical dimension and so the problem cannot be ``difficult'' in both unresolved dimensions.  
  We partially resolve this disconnectedness problem in dimensions $2$ and $3$ for a closely related class of models from \cite{ADHP} called super-tree random measures.
   We believe this clarifies the state of affairs for SBM itself, as conjectured below.  

To describe the models from \cite{ADHP} we start with an offspring law on $\Z_+$ given by $p=(p_k,k\in\Z_+)$, where
\begin{equation}\label{pk}
\mu=\sum_{k=0}^\infty kp_k>1\quad \text{ and } \quad \sum_{k=0}^\infty k^2p_k<\infty.
\end{equation}
Introduce an $\R^d$-valued random vector $X$ with finite mean, a parameter $\beta>0$, and set $\rho=\mu^{-\beta/2}\in(0,1)$.  
We set $\Z_+=\{0,1,\dots\}$ (unlike \cite{ADHP}). 
Define an $M_F(\R^d)$-valued discrete time process $(\nu_n, \, n\in\Z_+)$ as follows:
\begin{enumerate}[label=\emph{(\alph*)}, ref=(\alph*)]
\item At $n=0$, we set $\nu_0=\delta_0$ (i.e., a single particle starts at the origin).
\item In generation $n\in\N=\{1,2,\dots\}$ each particle splits into $k$ particles with probability $p_k$ and each ``child'' is displaced from its parent by an amount equal in law to $\rho^n X$.  
The displacements and the numbers of children are independent for each particle and each generation, and are also independent from each other.
\item \label{it:nuprops:iii} The random measure $\nu_n$ assigns mass $\mu^{-n}$ to the location of each individual in generation~$n$.
\end{enumerate}
Clearly $\mu^n\nu_n(\R^d)$ is a supercritical Galton-Watson (GW) process with offspring law $p$ and so 
$\nu_n(\R^d)\to W$ a.s.\ as $n\rightarrow \infty$, where $\P(W=0)$ is the probability that the GW process becomes extinct in finite time.
In fact Theorem~11 of \cite{ADHP} (or see Theorem~\ref{nuprops}(d) below) implies that $\nu_n$ converges a.s.\ in $M_F(\R^d)$ as $n\to\infty$ to a limit $\nu$ which we call a {\it classical super-tree random measure} (CSTRM).
In this work we will also assume a mild polynomial decay on the tail probability of $X$ (see \eqref{TCH}) which will ensure that $\supp(\nu)$ is a.s.\ compact (Theorem~\ref{nuprops}(b)).
 Unlike \cite{ADHP} we will allow $p_0>0$, which affects little aside from the fact that now $\nu=0$ may occur with positive probability. 
 We reserve the term {\it super-tree random measure}  (STRM), used in \cite{ADHP}, for a slightly larger class discussed below and constructed in Section~\ref{sec:STRM}. 

If $B\in\N^{\ge 2}$, a $B$-ary CSTRM
associated with $p$ is the CSTRM with $\cL(X)$, the law of $X$, uniform on $\{0,1,\dots,B-1\}^d$ and $\beta=\frac{2\log B}{\log \mu}$, or equivalently, $\rho=B^{-1}$. 
Intuitively a typical point in $\supp(\nu)$ is given by $S=\sum_{m=1}^\infty B^{-m}X_m\in[0,1]^d$ where $\{X_m\}$ are i.i.d. copies of $X$. 
The individual offspring displacements correspond to the $B$-ary expansion of each coordinate of $S$. In particular, $\supp(\nu)\subset [0,1]^d$.  
See Section~\ref{sec:Bary} below for a more careful discussion. 

The class of CSTRMs was introduced in \cite{ADHP} as a toy model for the study of SBM. 
We now give an intuitive comparison of $B$-ary CSTRM $\nu$ and $X_1$. First note that 
\begin{equation}\label{nudisp}
\parbox[t]{0.9\textwidth}{given $\nu_n$, the displacements being added to the particles in generation $n$ to construct $\nu$ are now scaled by $B^{-n}$.}
\end{equation}
So conditional on $\sigma(\nu_k,\ k \le n)$, $\nu$ is a sum of a $\text{Binomial }(\mu^n\nu_n(\R^d),\P(W>0))$ number of independent clusters which are scaled copies of $\nu$ conditioned to be non-zero and relocated at the parent and where the spatial scaling factor is $B^{-n}=\mu^{-\beta n/2}$ as in \eqref{nudisp}. 

For the time being we allow our SBM $(X_t)$ to have $(1+\beta)$-stable branching, for some $\beta\in (0,1]$, and a branching scale factor of $\gamma>0$ so that the non-linear equation associated with $(X_t)$ has non-linearity $\phi(\lambda)=\frac{-\gamma}{2}\lambda^{1+\beta}$ (see p. 49 of \cite{DP91}). 
Here our $\gamma$ differs from that in \cite{DP91} by a factor of $1/2$ so that if we consider the usual SBM (when $\beta=1$), 
$\gamma$ corresponds to the variance of the
critical offspring law in the branching random walk approximation (see, e.g. Theorem~II.5.1(c) of \cite{Per02}). 
Consider the cluster decomposition of $X_1$ corresponding to the ancestors at time $1-B^{-2n}$. 
By Brownian scaling  this time lag will result in the Brownian displacements agreeing with those in \eqref{nudisp}. 
By Proposition~3.5 of  \cite{DP91}, conditional on $\sigma(X_t,t\le 1-B^{-2n})$, the measure $X_1$ is a sum of a Poisson number of independent clusters, where the Poisson  has mean $X_{1-B^{-2n}}(\R^d)((2/\gamma)B^{2n})^{1/\beta}=c\mu^n X_{1-B^{-2n}}(\R^d)$.
Proposition~3.5(b) of \cite{DP91} also identifies the laws of the individual clusters making up $X_1$, which again are scaled relocated copies of the ``cluster law'' associated with SBM described in Section~\ref{sec:SBM} below (it is the canonical measure of $X_1$ conditioned to be non-zero).    

The above cluster descriptions of $X_1$ and $\nu$ are very similar and in fact underlie a number of common properties of the two processes.  
For example, recall that $\dim(\supp(X_1))=(2/\beta)\wedge d$ a.s.\ on $\{X_1\neq 0\}$ where $\dim(A)$ denotes the Hausdorff measure of a set $A\subset\R^d$, and that for $d<\frac{2}{\beta}$, $X_1$ a.s.\ has a density. 
See Theorems~9.3.3.1 and 9.3.3.5 of \cite{D93} for the former, and \cite{F88} and Theorem~8.3.1 of \cite{D93} for the latter.  
For $B$-ary CSTRM the a.s.\ existence of a density for $d<\frac{2}{\beta}$ follows from Theorem~3 of \cite{ADHP} (the fact that we now allow $p_0=0$ does not affect the proof).
This implies that $\supp(\nu)$ has positive Lebesgue measure a.s.\ on $\{\nu\neq 0\}$.  The fact that 
\begin{equation}\label{dimforCSTRM}\dim(\supp(\nu))=(2/\beta)\wedge d\ \text{ a.s.\ on $\{\nu\neq 0\}$}
\end{equation}
follows from Theorem~\ref{Hdim} below which gives this result for our class of STRMs, extending a result for CSTRMs in \cite{ADHP}. 
We refer to the setting where $\frac{2}{\beta}=d$ (i.e. $\mu=B^d$) as the \emph{critical case}.

If $h:[0,\delta)\to[0, \infty)$ is a continuous strictly increasing function for some $\delta>0$ with $h(0)=0$, we let $h\text{-}\mathrm{m}(A)$ denote the $h$-Hausdorff measure of a set $A\subset \R^d$. 
In Section~\ref{sec:Bary} we refine the above dimension results and prove the following for a $B$-ary CSTRM:
\begin{proposition}\label{baryhmeas}
\begin{enumerate}[label=\emph{(\alph*)}, ref=(\alph*)]
\item\label{it:baryhmeas:a} In the critical case $\frac{2}{\beta}=d$ we have $x^d \log(1/x)\text{-}\mathrm{m}(\supp(\nu))<\infty$ a.s., and in fact is an integrable r.v.
In particular (recall also \eqref{dimforCSTRM}), $\supp(\nu)$ is a compact Lebesgue null set of full dimension a.s.\ on $\{\nu>0\}$.
\item\label{it:baryhmeas:b} If $\frac{2}{\beta}<d$, then $x^{\frac{2}{\beta}}$-$\mathrm{m}(\supp(\nu))<\infty$ a.s., and in fact is an integrable r.v.
\end{enumerate}
\end{proposition}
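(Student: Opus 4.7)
The approach exploits the rigid $B$-adic structure of $\nu$. Each displacement being uniform on $\{0,\dots,B-1\}^d$ forces every generation-$n$ particle to sit at a vertex $x\in B^{-n}\Z^d\cap[0,1-B^{-n}]^d$, and the trivial bound $\sum_{k\ge1}(B-1)B^{-n-k}=B^{-n}$ confines all of its descendants to the closed cube $Q(x):=[x,x+B^{-n}]^d$. Letting $\mathcal O_n$ be the set of occupied vertices at generation $n$ and $V_n:=|\mathcal O_n|$, we obtain the deterministic cover $\supp(\nu)\subset\bigcup_{x\in\mathcal O_n}Q(x)$ by $V_n$ cubes of diameter $\sqrt d\,B^{-n}$, so that the pre-Hausdorff measures satisfy $h^{\sqrt d B^{-n}}\text{-}\mathrm m(\supp\nu)\le V_n\,h(\sqrt d\,B^{-n})$. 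Since $h^{\delta_n}\text{-}\mathrm m(\supp\nu)\uparrow h\text{-}\mathrm m(\supp\nu)$ as $\delta_n\to 0$, monotone convergence yields
$$\E\!\left[h\text{-}\mathrm m(\supp\nu)\right]\le\liminf_{n\to\infty}\E[V_n]\,h(\sqrt d\,B^{-n}),$$
and the task reduces to bounding the right-hand side uniformly in $n$ for the appropriate $h$.

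\textbf{Recursion and case analysis.} By symmetry the occupation probability $q_n:=\P(\text{a fixed level-$n$ cell is occupied})$ does not depend on the chosen cell, so $\E[V_n]=B^{dn}q_n$. A first-generation decomposition (in which a given child of the root contributes with probability $q_{n-1}/B^d$, conditionally independently) gives
$$q_n=1-f\!\left(1-\tfrac{q_{n-1}}{B^d}\right),\quad q_0=1,$$
where $f$ is the offspring pgf. Writing $g(x)=1-f(1-x)=\mu x-\tfrac12 f''(1)x^2+o(x^2)$ and $\alpha:=\mu/B^d$, the recursion reads $q_n=g(q_{n-1}/B^d)$. In case (b), $\alpha<1$ and the convexity of $f$ gives $g(x)\le\mu x$ directly, whence $q_n\le\alpha^n$. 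With $\mu=B^{2/\beta}$ this produces
$$\E[V_n]\,h(\sqrt d B^{-n})\le B^{dn}\alpha^n\cdot d^{1/\beta}B^{-2n/\beta}=d^{1/\beta},$$
uniformly in $n$. In the critical case (a), $\alpha=1$ forces an analysis of the quadratic term: writing $c=f''(1)/(2B^{2d})>0$ (strictly positive because $\mu>1$ forces $f''(1)>0$), the recursion becomes $q_n=q_{n-1}-c\,q_{n-1}^2+O(q_{n-1}^3)$, so $1/q_n-1/q_{n-1}\to c$ and therefore $q_n\le C/n$ for all $n\ge1$. Combined with $h(x)=x^d\log(1/x)$ and $\mu=B^d$ this gives
$$\E[V_n]\,h(\sqrt d B^{-n})\le\tfrac{C B^{dn}}{n}\cdot d^{d/2}B^{-dn}\bigl(n\log B-\tfrac12\log d\bigr)\le C d^{d/2}\log B,$$
again uniformly in $n$.

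\textbf{Conclusion and main obstacle.} Both bounds feed back into the monotone-convergence inequality above to give $\E[h\text{-}\mathrm m(\supp\nu)]<\infty$, which is precisely the integrability statement and also yields a.s.\ finiteness. For part (a), finiteness of $x^d\log(1/x)$-$\mathrm m$ forces the $d$-dimensional Hausdorff measure (and hence Lebesgue measure) to vanish, since for any $\delta$-cover one has $\sum_i\diam(U_i)^d\le(\log(1/\delta))^{-1}\sum_i\diam(U_i)^d\log(1/\diam(U_i))\to 0$ as $\delta\to 0$. Compactness of $\supp(\nu)$ is Theorem~\ref{nuprops}(b), and the ``full dimension'' claim is Theorem~\ref{Hdim}. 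The whole argument is thus reduced to the scalar recursion $q_n=g(q_{n-1}/B^d)$, and the only slightly delicate point is upgrading the asymptotic $q_n\sim(cn)^{-1}$ in the critical case to an actual upper bound $q_n\le C/n$, which follows by a routine induction using concavity of $g$ near zero. Notably no second-moment or density control of $\nu$ is needed: the exact $B$-ary branching structure yields a natural covering at every dyadic scale, so a single random cover combined with monotone convergence is enough.
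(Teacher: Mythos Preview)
Your proof is correct and follows essentially the same route as the paper's. The paper packages the key estimate as a separate result (Proposition~\ref{hitprob1}): it identifies $|J_m^x|$ as a Galton--Watson process with offspring law $R=\sum_{\ell=1}^Z e_\ell$ (Bernoulli-$B^{-d}$ thinning of $Z$) and then invokes Kolmogorov's theorem (critical case) or the subcritical survival asymptotic from Harris to get $\P(\nu(\overline{C_m(x)})>0)\asymp m^{-1}$ or $\asymp B^{-m(d-2/\beta)}$, respectively. You instead analyse the recursion $q_n=1-f(1-q_{n-1}/B^d)$ for the slightly larger quantity $q_n=\P(|J_n^x|>0)$ directly; this is exactly the survival probability of the same GW chain, so your bounds $q_n\le C/n$ and $q_n\le\alpha^n$ amount to rederiving those classical estimates. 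The covering argument, the passage via Fatou/monotone convergence, and the deduction of Lebesgue-nullity from finiteness of the $x^d\log(1/x)$-measure are the same in both.
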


In Section~\ref{sec:Bary} we derive the following hitting estimates for $B(y,r)$, the open Euclidean ball in $\R^d$, centered at $y$ and of radius $r$. 
Again we are in the setting of a $B$-ary CSTRM, $\nu$.
\begin{proposition}\label{hitprob2}
There are constants $0<c_{\ref{hitprob2}}\le C_{\ref{hitprob2}}$, depending on $(d,B,\cL(Z))$ such that:
\begin{enumerate}[label=\emph{(\alph*)}, ref=(\alph*)]
	\item\label{it:hitprob2:a} If $\frac{2}{\beta}=d$, then for all $y\in[0,1]^d$ and $r\in(0,\frac{1}{2}]$, $\frac{c_{\ref{hitprob2}}}{\log(1/r)}\le \P(\nu(B(y,r))>0)\le \frac{C_{\ref{hitprob2}}}{\log(1/r)}$.
	\item\label{it:hitprob2:b} If $\frac{2}{\beta}<d$, then for all $y\in[0,1]^d$ and $r\in(0,1]$, $$c_{\ref{hitprob2}}r^{d-\frac{2}{\beta}}\le \P(\nu(B(y,r))>0)\le C_{\ref{hitprob2}}r^{d-\frac{2}{\beta}}.$$
\end{enumerate}
\end{proposition}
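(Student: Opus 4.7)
The plan is to reduce the continuous hitting probability to a discrete recursion on $B$-adic cubes and then analyse the recursion asymptotically. For $n \ge 0$ and $\vec a \in \{0, \ldots, B^n - 1\}^d$, let $Q_{n,\vec a} := \prod_{i=1}^d [a_i B^{-n}, (a_i+1) B^{-n}]$ and set $p_n := \P\bigl(\nu(Q_{n,\vec a}) > 0\bigr)$; this is independent of $\vec a$ by the permutation symmetry in the construction of $\nu$. Conditioning on generation $1$ — each of the $K$ children lands in the generation-$1$ cube containing $Q_{n,\vec a}$ with probability $B^{-d}$, and each sub-tree is a $\mu^{-1}$-mass-rescaled and $B^{-1}$-spatially-rescaled-and-translated independent copy of $\nu$ — the event $\{\nu(Q_{n,\vec a})>0\}$ becomes the event that at least one such sub-copy has positive mass in the corresponding generation-$(n-1)$ sub-cube. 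This yields
$$p_n \;=\; 1 - f\bigl(1 - B^{-d} p_{n-1}\bigr), \qquad n \ge 1, \qquad p_0 = \P(\nu \neq 0) > 0,$$
where $f(s) = \E[s^K]$ is the offspring pgf.

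Set $\phi(p) := 1 - f(1 - B^{-d} p)$ and $\lambda := \mu B^{-d} = B^{2/\beta - d}$. The moment assumption in \eqref{pk} gives the expansion
$$\phi(p) \;=\; \lambda p \;-\; \tfrac12 B^{-2d} f''(1)\, p^2 \;+\; o(p^2);$$
convexity of $f$ yields globally $\phi(p) \le \lambda p$, and concavity of $\phi$ on $[0,1]$ gives $\phi(p) < p$ for all $p > 0$ whenever $\lambda \le 1$ (no positive fixed point), so $p_n \downarrow 0$. In case \ref{it:hitprob2:b} ($\lambda < 1$), iterating the upper bound gives $p_n \le \lambda^n$, while $\phi(p) \ge \lambda p - C p^2$ combined with the now-summable $\sum_k p_k$ yields $p_n \ge c_0\, \lambda^n$ via a convergent infinite product. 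In case \ref{it:hitprob2:a} ($\lambda = 1$), the expansion reads $p_{n+1} = p_n - c\, p_n^2 + o(p_n^2)$ with $c = \tfrac12 B^{-2d} f''(1) > 0$; the classical manipulation $1/p_{n+1} - 1/p_n = c + o(1)$, together with control of the error (bootstrapped from $p_n = O(1/n)$), gives $p_n \sim 1/(cn)$.

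It remains to transfer these estimates to balls uniformly in $y \in [0,1]^d$. For $r \in (0, 1/2]$, set $n := \lfloor \log_B(1/r) \rfloor$, so $B^{-n} \asymp r$. The ball $B(y,r)$ meets at most $C_d$ generation-$n$ $B$-adic cubes, giving $\P(\nu(B(y,r)) > 0) \le C_d\, p_n$ by the union bound. For the matching lower bound, fix a constant integer $c_1 = c_1(d, B)$ with $B^{-(n+c_1)} \le r/(2\sqrt d)$; since $B(y, r) \cap [0, 1]^d$ always contains an axis-aligned cube of side $\ge r/\sqrt d$, that cube contains some generation-$(n+c_1)$ $B$-adic sub-cube $Q^\star \subset B(y, r)$, so $\P(\nu(B(y,r)) > 0) \ge p_{n+c_1}$. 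Since $p_{n+c_1} \ge c'_0\, p_n$ uniformly in $n$ (by the two-sided asymptotics above), combining with $\log(1/r) \asymp n$ in case \ref{it:hitprob2:a} and $r^{d-2/\beta} \asymp \lambda^n$ in case \ref{it:hitprob2:b} delivers the stated bounds.

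The main obstacle is the critical case: ruling out a positive fixed point of $\phi$ (via $\phi'(0) = 1$ and strict concavity, which holds since $\mu > 1$ forces $f''(1) > 0$) and then turning the quadratic correction into matching $\Theta(1/n)$ bounds through the $1/p_n$ argument. The supercritical case is softer — convexity alone yields half the estimate in one line — and the discrete-to-continuous transfer is routine given the uniform choice of $c_1$ and the fact that $\nu$ is supported in $[0,1]^d$.
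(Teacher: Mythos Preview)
Your proof is correct. The discrete-to-continuous transfer (balls to $B$-adic cubes via union bound and sub-cube inclusion) is essentially the same as the paper's. The difference lies in how you obtain the cube estimate $p_n = \P(\nu(Q_n)>0)$. The paper (Proposition~\ref{hitprob1}) writes $\nu(C_m(x)) = \sum_{g\in J_m^x} W^g$, identifies $|J_m^x|$ as a Galton--Watson process with offspring law $R=\sum_{\ell=1}^Z 1(X_\ell=x_m)$ (mean $\mu B^{-d}$), and then invokes Kolmogorov's theorem and Yaglom's theorem in the critical case, and the classical subcritical survival asymptotics in the other case, finally combining these with the conditional law of the $W^g$'s. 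You instead derive the one-step recursion $p_n = 1 - f(1 - B^{-d} p_{n-1})$ directly from self-similarity and analyse it by hand. These are the same recursion (your $\phi$ is exactly $1-g(1-\cdot)$ for $g$ the pgf of $R$), but with initial condition $p_0=\P(\nu\neq0)$ rather than $1$; the asymptotics are unaffected. Your route is more self-contained---you re-derive the relevant pieces of Kolmogorov's theorem and the subcritical decay rate from the Taylor expansion of $f$ at $1$ rather than citing them---while the paper's route separates the combinatorial GW structure from the $W^g$ survival factor and appeals to named theorems. Both are valid; yours is arguably more elementary, the paper's more modular.
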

\begin{remark} 
If $\beta=1$ these are the same asymptotics that hold for SBM, $X_1$, at time $1$ say, under its canonical measure. 
For $d=2$ this is Theorem~2 of \cite{LG94} and for $d>2$ see Theorem~3.1 of \cite{DIP89}. 
(The latter reference works with the law of SBM, not the canonical measure, but the same PDE arguments apply in the latter setting.)  For (b) when $\beta<1$, the analogous bounds for SBM (with $\frac{2}{\beta}<d$) can be found in Theorem~4.3 of \cite{He13}. The situation for SBM with $\frac{2}{\beta}=d$ seems unresolved but polar set results in Theorem 3.5(iii) of \cite{Dyn92}  suggests the same bounds as in (a) hold but with $(\log (1/r))^{-1/\beta}$ in place of $(\log (1/r))^{-1}$. 

Both of the results in Proposition~\ref{baryhmeas} are well-known for SBM $X_1$ with $\beta=1$ where exact Hausdorff measure functions are even known (see Theorem~III.3.8 of \cite{Per02} and \cite{LGP}). For $\beta<1$ and $\frac{2}{\beta}<d$, the finiteness of $x^{\frac{2}{\beta}}-m(\supp(X_1)$ follows easily from the SBM analogue of Proposition~\ref{hitprob2}(b), noted above (as in, e.g., the proof of Proposition~\ref{baryhmeas} (a) in Section~\ref{sec:Bary}).
\end{remark}

Although we believe the connection between SBM and $B$-ary CSTRMs is quite close for $\beta=1$, this  connection for $\beta\neq1$ is a bit more tenuous. For example, $B$-ary CSTRMs will have finite second moments while SBM for $\beta<1$ will not.  Henceforth SBM will refer to the standard $\beta=1$ case.

In Section~\ref{sec:SBM} below we will show in fact that SBM (recall now $\beta=1$)  satisfies a construction very similar to that of $\nu$ above, except that the law of the sibling locations born in each generation are given by a symmetric (in the $k$ vectors) law $Q_k$ on $(\R^d)^k$ depending on the number of offspring $k$.  
The sibling locations may be dependent for a given $k$ but, as before, the displacements and offspring laws remain independent for distinct generations and distinct parents. 
In addition, all of the $Q_k$ marginals have the same law $\cL(X)$. 
We call such a random measure a \emph{super-tree random measure} (STRM). 
In Section~\ref{sec:STRM} we show the main properties derived for CSTRMs in \cite{ADHP} remain valid for this more general class. 
For example, if $S=\sum_{m=1}^\infty \rho^mX_m$ where $\{X_m\}$ are i.i.d.\ copies of $X$, then $\nu$ has mean measure $\E(\nu(A))=\P(S\in A)$ for all Borel $A\subset\R^d$ (Theorem~\ref{nuprops}(c)).  
Then in Section~\ref{sec:SBM} (see Theorem~\ref{thm:SBMSTRM}) we show that for any $\mu>1$, $\frac{2}{\gamma}X_1$  under its cluster law is in fact a STRM with $\beta=1$, with $\cL(Z)$ geometric with mean $\mu$, and with $\cL(X)$ a centered $d$-dimensional Gaussian with covariance $(\mu-1)I_d$, thus making the connection between SBM and STRMs even closer.  
The factor of $\frac{2}{\gamma}$ is only needed for normalization because as noted above the mean measure of any STRM is a probability.

Our main results (Theorems \ref{thm:perc2} and \ref{thm:totdisc} below) concern connectedness properties of $\supp(\nu)$.  
Let $\text{comp}(x)$ denote the connected component in $\supp(\nu)$ containing a point $x\in\supp(\nu)$ and use the same notation for $X_1$ in place of $\nu$. 
For $X_1$, Tribe \cite{T91} proved that if $d>2$ then w.p.$1$ for $X_1$-a.a.\ $x$, $\text{comp}(x)=\{x\}$. 
His proof just fails if $d=2$. 
In Section~\ref{sec:wdisc} we adapt Tribe's argument to prove the analogue for $B$-ary CSTRMs, which again agrees with SBM when $\beta=1$.
\begin{theorem}\label{thm:nutribe}
If $\nu$ is a $B$-ary CSTRM with $d>\frac{2}{\beta}$ then w.p.$1$ for $\nu$-a.a. $x$, $\text{comp}(x)=\{x\}$. 
\end{theorem}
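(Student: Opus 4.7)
The plan is to adapt Tribe's SBM argument to the CSTRM setting, using the branching cluster decomposition at varying depths in the tree in place of the historical process.

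For each $n\ge 1$, the branching structure expresses $\nu$ as a sum of clusters indexed by the generation-$n$ particles,
\[
\nu = \sum_i \mu^{-n}\, \nu^{(n,i)},
\]
where, conditional on $\sigma(\nu_k,\,k\le n)$, the clusters $\nu^{(n,i)}$ are independent rescaled copies (by factor $B^{-n}$) of a $B$-ary CSTRM, based at the respective particle location $Y_i^{(n)}$. Each $\supp(\nu^{(n,i)})$ is contained in the hypercube $Y_i^{(n)}+[0,B^{-n}]^d$, and so has diameter at most $\sqrt{d}\,B^{-n}$. Call $x\in\supp(\nu)$ \emph{isolated at level $n$} if every cluster whose support contains $x$ has support disjoint from all other clusters at level $n$. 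The key observation is that if $x$ is isolated at level $n$, then $\text{comp}(x)$ is contained in the union of the clusters through $x$, hence has diameter $\le\sqrt{d}\,B^{-n}$; so if $x$ is isolated along a sequence $n_k\to\infty$, then $\text{comp}(x)=\{x\}$.

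Setting $R_n=\{x\in\supp(\nu):x\text{ is not isolated at level }n\}$, it suffices to show $\nu(\limsup_k R_{n_k})=0$ a.s.\ for some subsequence $n_k\to\infty$. By a Borel--Cantelli / Fatou argument this reduces to proving $\sum_k \E[\nu(R_{n_k})]<\infty$. To estimate $\E[\nu(R_n)]$, I would bound the mass of points whose cluster meets another cluster by summing, over ordered pairs $(i,j)$ of distinct level-$n$ clusters, the product of the expected mass of cluster $i$ and the conditional probability (given $\sigma(\nu_k,\,k\le n)$) that the independent cluster $\nu^{(n,j)}$ reaches $\supp(\nu^{(n,i)})$. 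The latter probability is controlled by combining the separation $|Y_i^{(n)}-Y_j^{(n)}|$ with the hitting estimate of Proposition~\ref{hitprob2} at the rescaled scale. The expected pair count is then computed by conditioning on the generation $k$ of the pair's most recent common ancestor in the GW tree and analysing the $B$-ary displacement law, which produces a geometric-series bound in $k$ with ratio $\mu/B^d$. The hypothesis $d>2/\beta$, equivalently $\mu<B^d$, makes this ratio strictly less than $1$ and yields the summable estimate.

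The main obstacle is the step controlling $\E[\nu(R_n)]$. The crude bound ``some other level-$n$ particle lies within distance $\sqrt{d}\,B^{-n}$'' does not tend to zero, because with probability $B^{-d}$ two siblings already share a location through matching final displacements, contributing a constant order of mass at every level. To get a summable bound one must invoke the hitting estimate at finer scales inside coincident cells: even when two cluster base points agree, the rescaled clusters are independent $B$-ary CSTRMs in a common unit cube, and the probability that their supports actually intersect can be analysed recursively via Proposition~\ref{hitprob2}, with the subcritical ratio $\mu/B^d$ driving the iteration to zero.
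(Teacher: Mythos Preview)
Your Borel--Cantelli approach has a genuine gap that the suggested recursion does not close. Pass to the Campbell measure $\Q$ (so that $\E[\nu(R_n)]=\Q(S\in R_n)$, where $S$ is a $\nu$-typical point with ancestral spine $V$). The count $|J_n^{S_n}|$ of generation-$n$ particles co-located with the spine is, minus one, a subcritical Galton--Watson process with immigration; it is positive recurrent, so $\Q(|J_n^{S_n}|\ge2)$ stays bounded away from $0$. On that event the spine's cluster shares the cube $\overline{C_n(S_n)}$ with an independent rescaled copy of $\supp(\nu)$, and by scale invariance the probability the two supports meet is a constant $p^*$ independent of $n$. The branching ratio governing your recursion is not $\mu/B^d$ but $\mu^2/B^d$: a co-located \emph{pair} at one level produces on average $\E[Z]^2 B^{-d}=\mu^2 B^{-d}$ co-located child pairs. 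For $2/\beta<d<4/\beta$ (in particular $d=3,\ \beta=1$) this exceeds $1$, the pair process is supercritical, and hence $p^*>0$. Consequently $\E[\nu(R_n)]$ is bounded below by a positive constant and no subsequence is summable.

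The paper's proof is structurally different and avoids any such decay estimate. Under $\Q$ it observes that $(|J_m^{S_m}|-1)_{m\ge0}$ is a \emph{recurrent} subcritical GW process with immigration, so $|J_m^{S_m}|=1$ infinitely often a.s. At each such time one further imposes, with fixed positive conditional probability, a specific two-generation displacement pattern (all children to sub-cube $\vec 1$, then all grandchildren to sub-cube $\vec 0$) which leaves the spine's level-$(m{+}2)$ cube surrounded by empty neighbouring cubes; a conditional Borel--Cantelli then gives isolation of $S$ at scale $B^{-(m+2)}$ infinitely often, whence $\text{comp}(S)=\{S\}$. The essential difference is that the paper exploits \emph{return} of the co-location count to $1$, not smallness of the non-isolation probability at any fixed level.
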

\noindent Note that the conclusion above does not exclude the existence of a non-trivial $\nu$-null connected component of $\supp(\nu)$. 
In fact a natural example of this behaviour will follow from one of our main results below (see Remark~\ref{rem:ccomp}). 

To describe our result on the existence of a non-trivial connected component in $\supp(\nu)$ we need some notation and terminology. 
Let $\Z^d$ denote the standard cubic lattice (each site has $2d$ neighbours) and let $p_c(\Z^d)$ denote the critical probability for site percolation on $\Z^d$.  
Later we will use the same notation for other graphs.  
See Section~1.6 of \cite{G99} for basic information about site percolation on $\Z^d$.

\begin{definition}
Following \cite{FG92}, we say a subset $S$ of $[0,1]^d$ percolates iff $S$ contains a connected set which intersects both $\{0\}\times [0,1]^{d-1}$ and $\{1\}\times [0,1]^{d-1}$. 
\end{definition}
We will only use this definition for the closed support of a $B$-ary CSTRM, where it is particularly well-suited since it is a subset of $[0,1]^d$.

\begin{theorem}\label{thm:perc2} Assume $\nu$ is a $B$-ary CSTRM associated with the Poisson distribution and $d\ge 2$. There is a $B_0=B_0(d)\in\N^{\ge 2}$ so that the following hold:
\begin{enumerate}[label=\emph{(\alph*)}, ref=(\alph*)]
	\item\label{it:perc2:a} Let $d\ge 3$. 
	If $B\ge B_0$ there is an $\veps=\veps(d,B)>0$ such that if $0<\beta\le \frac{2}{d}+\veps$, then 
	\begin{equation}\label{percandnottdthm}
\P(\supp(\nu) \text{ percolates})>0 \quad \text{ and } \quad \P(\supp(\nu)\text{ not TD}\,|\, \nu\neq 0)=1.
\end{equation}
	In particular, \eqref{percandnottdthm} holds for the critical case $\beta=\frac{2}{d}$ and any $B\ge B_0$.
	\item\label{it:perc2:b} Let $d=2$ and assume
	\begin{equation}\label{pccond}
		p_c(\Z^2)<1-e^{-1}.
	\end{equation}
If $B\ge B_0$ there is an $\veps=\veps(2,B)>0$ such that if $0<\beta\le 1+\veps$, then \eqref{percandnottdthm} holds.
In particular, \eqref{percandnottdthm} holds for the critical case $\beta=1$ and any $B\ge B_0$.
\end{enumerate} 
\end{theorem}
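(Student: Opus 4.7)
The plan is to exploit the Poisson offspring law to couple the $B$-ary box-occupation process driven by $\nu$ with Mandelbrot fractal percolation, invoke the classical phase-transition results for the latter in the supercritical regime, and then use a branching-based argument to upgrade ``percolation with positive probability'' to ``not totally disconnected almost surely on survival.'' As a first step, for each $n\ge 0$ I partition $[0,1]^d$ into the $B^{dn}$ standard half-open $B$-ary sub-cubes of side $B^{-n}$ and call such a cube \emph{occupied} if it contains a generation-$n$ particle of $\nu$. Because all descendants of a particle at generation $n$ remain in its level-$n$ cube, the family of occupied cubes is monotone along the tree of cubes; write $F_n$ for the union of closures of occupied level-$n$ cubes and $F_\infty:=\bigcap_n F_n$. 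A nested-box argument then identifies $\supp(\nu)=F_\infty$ a.s.: if $x\in F_\infty$ sits in a nested sequence of occupied cubes $b_n(x)\downarrow\{x\}$, at least one generation-$n$ particle in each $b_n(x)$ must have an infinite line of descent (otherwise some $b_m(x)\subset b_n(x)$ would contain no generation-$m$ particles, contradicting occupancy there), so $\nu(b_n(x))>0$ and $x\in\supp(\nu)$.

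Next I set up the coupling with fractal percolation. Given that a level-$n$ cube $b$ contains $K_b\ge 1$ particles, Poisson thinning of the Poisson$(\mu)$ offspring law gives that the numbers of generation-$(n+1)$ particles in the $B^d$ sub-cubes of $b$ are i.i.d.\ Poisson$(K_b\mu/B^d)$, with independence also across different parent cubes. Hence each sub-cube is occupied with conditional probability $1-e^{-K_b\mu/B^d}\ge p:=1-e^{-\mu/B^d}$. Driving all sub-cubes by i.i.d.\ uniforms $U_{b'}\sim\mathrm{Unif}[0,1]$ and using inverse-CDF sampling to produce the Poisson counts, I declare $b'$ \emph{retained} iff $U_{b'}>e^{-\mu/B^d}$ (so retention probabilities are exactly $p$, independently). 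Because $e^{-\mu/B^d}\ge e^{-K_b\mu/B^d}$ whenever $K_b\ge 1$, every retained cube is automatically occupied in the CSTRM; restricting retention to descendants of retained parents then yields a standard $B$-ary Mandelbrot fractal percolation with limit set $F_\infty^{\mathrm{frac}}$ and parameter $p$, and one obtains $F_\infty^{\mathrm{frac}}\subset F_\infty=\supp(\nu)$ almost surely.

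The heart of the argument is then to invoke the classical fact (Chayes--Chayes--Durrett for $d=2$ and higher-dimensional analogues) that the critical retention threshold $p_c^{\mathrm{frac}}(B,d)$ for a crossing cluster in $[0,1]^d$ satisfies $p_c^{\mathrm{frac}}(B,d)\to p_c(\Z^d)$ as $B\to\infty$. For $d\ge 3$ one has $p_c(\Z^d)\le p_c(\Z^3)<1/2<1-e^{-1}$, while for $d=2$ the hypothesis \eqref{pccond} yields $p_c(\Z^2)<1-e^{-1}$ directly. In either case I pick $B_0=B_0(d)$ so that $p_c^{\mathrm{frac}}(B,d)<1-e^{-1}$ for all $B\ge B_0$. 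In the critical case $\beta=2/d$ we have $\mu=B^d$ and hence $p=1-e^{-1}>p_c^{\mathrm{frac}}(B,d)$, so the fractal percolation produces a crossing with positive probability, and the coupling transports this to $\supp(\nu)$. Continuity of $p=1-\exp(-B^{2/\beta-d})$ in $\beta$ extends the same conclusion to a strip $0<\beta\le 2/d+\veps(d,B)$, yielding the first half of \eqref{percandnottdthm}.

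For the second conclusion, let $q^*>0$ denote the crossing probability just obtained. By the branching property each generation-$n$ particle $P_i$ roots an independent sub-CSTRM whose support, after rescaling its level-$n$ sub-cube to $[0,1]^d$, is distributed as $\supp(\nu)$; percolation of any such sub-support within its sub-cube produces a connected subset of $\supp(\nu)$ with more than one point, so $\supp(\nu)$ is not TD. Conditioning on the total count $N_n$ of generation-$n$ particles, the $N_n$ sub-CSTRMs are i.i.d., so
\[
\P(\supp(\nu)\text{ is TD})\le\E\bigl[(1-q^*)^{N_n}\bigr]\xrightarrow[n\to\infty]{}\P(W=0)=\P(\nu=0)
\]
by dominated convergence together with $N_n\to\infty$ a.s.\ on $\{\nu\neq 0\}$. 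Combined with the trivial inclusion $\{\nu=0\}\subset\{\supp(\nu)\text{ is TD}\}$, this forces $\P(\supp(\nu)\text{ is TD}\mid\nu\neq 0)=0$. The main obstacle in this plan is the fractal-percolation fact $p_c^{\mathrm{frac}}(B,d)\to p_c(\Z^d)$ invoked in the third paragraph: once it is available, the Poisson coupling and the branching bootstrap make the rest essentially mechanical.
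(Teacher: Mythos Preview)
Your approach is essentially the paper's: couple $\supp(\nu)$ with a fractal percolation via Poisson thinning (the paper's Proposition~\ref{prop:Anucoupling}), invoke the large-$B$ asymptotics of the fractal-percolation threshold, and upgrade positive percolation probability to almost-sure non-TD on survival via the branching structure (the paper's Lemma~\ref{lem:01}). One correction: the limit you quote is not $p_c(\Z^d)$ but $p_c(\cL^d)$, the site-percolation threshold on the lattice where $x\sim y$ iff $|x_i-y_i|\le 1$ for all $i$ and $x_i=y_i$ for \emph{some} $i$ (this is Falconer--Grimmett, Theorem~\ref{thm:limit pc for fp} in the paper; the $d=2$ case is Chayes--Chayes, not Chayes--Chayes--Durrett). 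For $d=2$ the two lattices coincide, and for $d\ge 3$ one has $p_c(\cL^d)\le p_c(\Z^d)$, so your sufficient condition $p_c(\Z^d)<1-e^{-1}$ still gives what is needed---the misstatement is harmless for the argument but should be fixed.
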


\begin{remark}\label{rem:perclit}
There is a large literature on approximating $p_c(\Z^2)$ by simulations and by using exact computations for crossing large finite squares. 
For example, Table 1 of \cite{M22}  gives a chronology of progressively more precise  approximations culminating in the (non-rigorous) value $p_c(\Z^2)=.59274605\dots$.  
This gives strong evidence that $p_c(\Z^2)<1-e^{-1}=.632\dots$ is in fact true.  
The best rigorous bounds we know of are $.556<p_c(\Z^2)<.679492$, where the lower bound is from \cite{BE96} and the more relevant (for our purposes) upper bound is proved by Wierman in \cite{W95}.  
So although \eqref{pccond} is undoubtedly true, a proof remains just out of reach. \end{remark}
\begin{remark}\label{rem:ccomp}
Under the hypotheses of Theorem~\ref{thm:perc2} we see from that result and Theorem~\ref{thm:nutribe} that for $B\ge B_0$, $d\ge 2$ and some $\veps=\veps(d,B)>0$, if $\frac{2}{d}<\beta\le \frac{2}{d}+\veps$, then w.p.$1$ $\text{comp}(x)=\{x\}$ for $\nu$-a.a. $x\in\supp(\nu)$, but $\supp(\nu)$ is not TD whenever $\nu\neq 0$. Here we assume \eqref{pccond} if $d=2$. Another STRM for which the same conclusion holds (without any additional condition for $d=2$) arises in fractal percolation (see Remark~\ref{rem:weak disconnectedness result also holds for fp}).
\end{remark}
\begin{remark}
The question of whether a $B$-ary CSTRM with $B=2$ and reproduction distribution identically $4$ in dimension $d=2$ (hence with $\beta=1$) percolates with positive probability was raised by Nicolas Curien in 2017 in an open problem session at a workshop held at Bellairs institute in Barbados. 
This question was motivated by the potential link between this model and the trace of Brownian motion in 2D, inspired by \cite{Per96}. 
Unfortunately, Theorem~\ref{thm:perc2}\ref{it:perc2:b} does not cover this case, as the result only holds for $B$ large enough.
\end{remark}
The proof of Theorem~\ref{thm:perc2} uses a comparison with fractal percolation (see Section~\ref{sec:percn} for more on these random Cantor sets) and an asymptotic result in $B$ for $p_c$ for percolation on the resulting random Cantor set due to Falconer and Grimmett \cite{FG92}. 
The latter leads to the condition \eqref{pccond} on $p_c(\Z^2)$ when $d=2$. 
(Our proof uses the same bound \eqref{pccond} on $p_c(\Z^d)$ for $d\ge 3$  but here it is a known bound.)
We believe this condition for $d=2$ is spurious and the hypotheses in Theorem~\ref{thm:perc2} are not sharp as the comparison with fractal percolation leaves a gap. 
The Poisson offspring condition is used to couple the CSTRM with a Cantor set obtained through fractal percolation but should also not be needed for the result to hold. 
The proof also uses the fact that the first conclusion in \eqref{percandnottdthm} implies the second (see Lemma~\ref{lem:01}).
 Whether or not these conditions are equivalent remains open.

In Section~\ref{secTDBary} we establish the following simple sufficient condition for total disconnectedness of the support of a $B$-ary CSTRM which complements Theorem~\ref{thm:perc2}.  
\begin{theorem}\label{thm:totdisc}
Let $\nu$ be a $B$-ary CSTRM in $\R^d$ associated with $p$.  
If $d\ge \frac{4}{\beta}-1$, then $\supp(\nu)$ is TD in $\R^d$ a.s.
In particular for $\beta=1$ this holds for $d\ge 3$.
\end{theorem}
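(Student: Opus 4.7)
The plan is to show that, under $d \ge 4/\beta - 1$, almost surely every pair of adjacent cluster supports at every generation $n$ is disjoint. Once this is established, the cluster decomposition $\supp(\nu) = \bigsqcup_{Q \in \mathcal{Q}_n} \supp(\nu^{(n,Q)})$ is, a.s., a union of pairwise-disjoint closed pieces at every $n$; hence any two distinct points of $\supp(\nu)$ eventually lie in distinct $B$-adic cubes and in distinct connected pieces of the decomposition. The components of $\supp(\nu)$ thus have diameter at most $B^{-n}\sqrt{d}$ for every $n$, so are singletons, and $\supp(\nu)$ is TD.

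Two cluster supports in non-adjacent cubes are automatically disjoint, so the non-trivial case is an adjacent pair $Q \sim Q'$ at some generation $n$, whose supports can meet only on the shared $(d-1)$-face $F$. Conditioning on the $B$-ary common ancestor cube being occupied and using independence of the descendant subtrees rooted there, the two traces $\supp(\nu^{(n,Q)}) \cap F$ and $\supp(\nu^{(n,Q')}) \cap F$ are independent random compact subsets of $F \cong \R^{d-1}$, each distributed (up to scaling and rigid motion) as the face-restriction $\supp(\nu) \cap \{x_1 = 0\}$ of a single CSTRM.

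The main technical ingredient is that this face-restriction has Hausdorff dimension at most $2/\beta - 1$ almost surely, by a codimension-$1$ slicing argument applied to Theorem~\ref{Hdim}. Under $d \ge 4/\beta - 1$ the sum of dimensions of two independent traces is at most $2(2/\beta - 1) \le d - 1 = \dim(F)$, and a capacity/energy estimate for independent random sets then yields an empty intersection almost surely. In the strict subcritical regime $d > 4/\beta - 1$ this is a standard consequence of Frostman-type intersection theorems. In the critical case $d = 4/\beta - 1$ (notably $\beta = 1$, $d = 3$), a sharper argument is required, presumably an analogue of the gauge-Hausdorff measure bound of Proposition~\ref{baryhmeas}\ref{it:baryhmeas:a} applied to the face-restriction, combined with a refined second-moment estimate. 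A countable union bound over all generations and adjacent pairs then concludes the proof.

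The main obstacle is precisely the critical case $d = 4/\beta - 1$. Here the dimension sum equals $\dim(F)$ exactly, so off-the-shelf independent-fractal intersection theorems do not apply; one must prove finer gauge regularity of the face-restriction and invoke a refined capacity or second-moment argument to rule out the critical-dimension intersection. The subcritical case $d > 4/\beta - 1$, by contrast, is essentially immediate from the dimension bound.
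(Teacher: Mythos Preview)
Your approach via face-trace dimension and independent-fractal intersection is a natural heuristic, but it leaves a genuine gap at the critical threshold $d = 4/\beta - 1$, which includes the headline case $\beta = 1$, $d = 3$. You acknowledge this, but the fix you suggest---a gauge bound in the spirit of Proposition~\ref{baryhmeas}\ref{it:baryhmeas:a} plus a second-moment refinement---does not go through: the face-trace is itself a $(d-1)$-dimensional $B$-ary CSTRM with branching mean $\mu/B$ and dimension $2/\beta - 1 = (d-1)/2$, so it is \emph{strictly subcritical} in its ambient face and Proposition~\ref{baryhmeas}\ref{it:baryhmeas:a} (which concerns the critical case $2/\beta = d$) is irrelevant. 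More to the point, at $d = 4/\beta - 1$ the expected number of level-$m$ subcubes of $F$ hit by both independent traces stays of constant order as $m \to \infty$, so no moment argument on cube counts can force the intersection empty. Two smaller issues: adjacent cubes can share an $\ell$-face for any $0 \le \ell \le d-1$, not only $\ell = d-1$ as you write; and ``Frostman-type intersection theorems'' concern intersections with random translates, not two independent random sets at fixed positions---even for $d > 4/\beta - 1$ the right tool is the first-moment hitting estimate of Proposition~\ref{hitprob1}\ref{it:hitprob1:b}, not a capacity argument.

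The paper's proof avoids dimension counting and handles criticality by tracking a different quantity. Fixing an $\ell$-neighbour pair $f,g$ at generation $m$, let $M_n$ count ordered pairs of descendant \emph{particles} (not cubes) in $K^f_n \times K^g_n$ that remain $\ell$-neighbours. The combinatorics of \eqref{gammaevol} give $\E[M_{n+1}\mid \cG_n] \le M_n\,\mu^2 B^{\ell - 2d} = M_n B^{4/\beta + \ell - 2d}$, which is $\le M_n$ precisely when $d \ge 4/\beta - 1$ (using $\ell \le d-1$). At criticality $M_n$ is only a $\Z_+$-valued martingale, but the crucial observation (Lemma~\ref{supermart}) is that from any state $M_n = k > 0$ there is probability at least $\E[(1-B^{-1})^Z]^k > 0$ of jumping to $0$ in one step: force every child on one side to miss a fixed digit in one coordinate of $L'_m(S^f,S^g)$. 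Hence $M_n \to 0$ a.s.\ (Lemma~\ref{gammaext}), so eventually no descendants of $f$ and $g$ are $\ell$-neighbours. Iterating downward in $\ell$ (Lemma~\ref{disjcells} via Corollary~\ref{gamma0}) shows that eventually no descendant pair are neighbours at all, giving a decomposition of $\supp(\nu)$ into pairwise-disjoint closed pieces of diameter $\le \sqrt{d}\,B^{-m}$ for every $m$. This ``$\Z_+$-supermartingale with escape'' mechanism is the missing idea; it does not emerge from intersection-dimension considerations.
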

\begin{remark}\label{TDforSTRM}
\emph{(a)} A natural way to disconnect $\supp(\nu)$ into disjoint closed sets of small diameter is to take cubes of edge length $B^{-n}$ with ``lower left-hand" corners given by the points in $\supp(\nu_n)$. 
This follows the proof of TD for SBM for $d\ge 4$ and would only lead to TD for $d\ge \frac{4}{\beta}$ (to ensure asymptotic disjointness of the cubes).  
In the proof of  Theorem~\ref{thm:totdisc} a more involved decomposition is used and  the proof proceeds using a fairly tight supermartingale convergence argument. 
We believe that the condition $d\ge \frac{4}{\beta}-1$ may be sharp for a.s.\ TD of $\supp(\nu)$.  
If $\beta=1$ this suggests that $d=3$ is critical for TD to hold for $\supp(\nu)$.  

\emph{(b)} Assume the hypotheses of Theorem~\ref{thm:perc2}. In the critical case $2/\beta=d$ we know from Proposition~\ref{baryhmeas}(a) that $\supp(\nu)$ has finite $x^d\log(1/x)$-Hausdorff measure and so is a compact Lebesgue-null set which may percolate by Theorem~\ref{thm:perc2} (with the caveat of assuming \eqref{pccond} if $d=2$).  In fact by Theorem~\ref{thm:perc2} percolation can still occur
for $2/\beta$ slightly smaller than $d$ in which case $\supp(\nu)$ is even more singular as it has finite $x^{2/\beta}$-Hausdorff measure by Proposition~\ref{baryhmeas}\ref{it:baryhmeas:b}. 
Hence $\beta=\frac{2}{d}$ is not critical for being TD. If we set $\beta=1$ we conclude that the argument for proving TD for $d=2$ is ``not critical". See Remark~\ref{critpercdef} for a  more precise definition of ``critical".
\end{remark}
		
\begin{remark}\label{SBMconjs}  In view of the close connections between $B$-ary CSTRMs for $\beta=1$  and super-Brownian motion, $X_1$, described above, we believe that one can make the following analogous conclusions (to those in Remark~\ref{TDforSTRM}) for SBM,  $X_1$:\\
\emph{(a)} $d=3$ is critical for TD of $\supp(X_1)$, and in this case we guess that $\supp(X_1)$ is TD.\\
\emph(b)  We conjecture that in two dimensions $\supp(X_1)$ is not TD a.s.\ on $\{X_1\neq 0\}$ and in this case ``the result is not even critical". 

Although the connection between $B$-ary CSTRMs and SBM with $1+\beta$-stable branching is more tenuous, it would be interesting to see if the disconnection results for $B$-ary CSTRMs are valid for SBM in this more general setting. 
For example: Does TD of $\supp(X_1)$ hold a.s. for SBM with $1+\beta$-stable branching when $d\ge \frac{4}{\beta}-1$?
\end{remark}

\section{Super-Tree Random Measures}\label{sec:STRM}

We start with some notation from \cite{H} and \cite{ADHP}.
\begin{notation}
For $n\in\N$, $F_n=\N^{\{1,\dots,n\}}$, $F_0=\{0\}$, and $F=\cup_{n=0}^\infty F_n$.  We write $|f|=n$ if $f\in F_n$.  Set $I=\N^\N$, equipped with the product topology ($\N$ is given the discrete topology) and its Borel $\sigma$-field $\cB(I)$. If $f\in F_n$ and $1\le m\le n$ we let $f|m\in F_m$ denote the first $m$ coordinates of $f$ and similarly define $i|n\in F_n$ for $i\in I$. We set $i|0=f|0=0$ for $f,i$ as above.
We use $|A|$ to denote the cardinality of a finite set $A$. 
For $i,j\in I$ let $\kappa(i,j)=\sup\{m:i|m=j|m\}\in\Z_+\cup\{\infty\}$, where $\sup\emptyset=0$, and define $d(i,j)=2^{-\kappa(i,j)}$. Then $d$ is a metric on $I$, in fact an ultrametric, inducing the product topology on $I$. 
\end{notation}

We think of $F_n$ as the set of potential individuals in generation $n$ and $i\in I$ as a possible infinite line of descent.  The offspring law on $\Z_+$ is given by $p=(p_k,k\in\Z_+)$, satisfying \eqref{pk}.
We use $Z$ to denote a generic r.v.\ with law $p$.  Let $\{Z^f:f\in F\}$ be a collection of i.i.d.\ random variables with law $p$. Here $Z^f$ will be the number of children of individual $f$.  If $f\in F_m$ for some $m\in\Z_+$ and $n\ge m$, introduce the set of descendants of $f$ in generation $n$ given by
\ARXIV{
\begin{align}\label{eq:def K^f_n}
K^f_n=\{g\in F_n:g|m=f\text{ and }g_{j+1}\le Z^{g|j}\text{ for all }m\le j<n\}.
\end{align}
}
\SUBMIT{
	\begin{align*}
		K^f_n=\{g\in F_n:g|m=f\text{ and }g_{j+1}\le Z^{g|j}\text{ for all }m\le j<n\}.
	\end{align*}
}
Note that $n\mapsto |K^f_{m+n}|$, $n\in\Z_+$, is a Galton-Watson branching process starting at $1$ with offspring law $\cL(Z)$. 
For $f$ as above we therefore have (e.g. see Theorem~8.1 and Remark~1 in Section~8.1 of \cite{Har})
\begin{multline}\label{Wfprop}W^f=\lim_{n\to\infty}\mu^{-n}|K_n^f|\text{ exists a.s.\ and in $L^2$}, \quad   \E(W^f)=\mu^{-m},\\
\text{ and } \  (W^f>0) \ \text{ if and only if }  \  (|K^f_n|>0 \text{ for all }n\ge m), \  \text{a.s.}
\end{multline}
We define the set of possible finite and infinite ancestral lines of descent of $f\in F_m$, by
\ARXIV{
\begin{align}\label{eq:def D(f)}
D(f)=\{g\in F:g|m=f\}\quad \text{ and } \quad  \overline D(f)=\{i\in I:i|m=f\},
\end{align}
}
\SUBMIT{
	\begin{align*}
		D(f)=\{g\in F:g|m=f\}\quad \text{ and } \quad  \overline D(f)=\{i\in I:i|m=f\},
	\end{align*}
}
respectively.
Clearly for $i\in I$, $\overline D(i|m)$ is $\overline B(i,2^{-m}):=\{j\in I:d(i,j)\le 2^{-m})$.  
We can, and shall, assume that $Z^f(\omega)<\infty$ for all $f\in F$ and all $\omega$, which implies $|K^0_n|<\infty$ for all $n$ and $\omega$.
The set of finite lines of descent is given by $K_\infty^0:=\cup_{n=1}^\infty K^0_n$ while the infinite lines of descent in our population is given by the random set
\ARXIV{
\begin{equation}\label{Kdef}
K=\{i\in I: i_{j+1}\le Z^{i|j}\text{ for all }j\in\Z_+\}.
\end{equation}
}
\SUBMIT{
\begin{equation*}
		K=\{i\in I: i_{j+1}\le Z^{i|j}\text{ for all }j\in\Z_+\}.
\end{equation*}
}
If $M_m=\max\{Z^f:f\in K^0_{m-1}\}$ (finite for all $m\in\N$, $\omega\in \Omega$) then 
\begin{align}\label{Kbounds}
&K_n^0\subset\prod_{m=1}^n\{1,\dots,M_m\}\text{ for all }n\in\N,\text{ and }K
\text{ is a closed subset of }\prod_{m=1}^\infty\{1,\dots,M_m\}.
 \end{align}
Therefore $K$ is a (random) compact subset of $I$ by Tychonoff's theorem.

To define a random finite measure on the possible infinite lines of descent, fix $\omega$ outside a null set, $\Gamma^c$, so that the almost sure conclusions in \eqref{Wfprop} hold for all $f\in F$, and set
\begin{equation}\label{Ydef1}
Y(\omega,\overline D(f))=\begin{cases}
W^f(\omega)\  \ &\text{ if}\ f\in K^0_m \text{ for some }m\in\Z_+,\\
0&\text{ otherwise}.
\end{cases}
\end{equation}
In particular if $f=0$ we see that 
\begin{equation}\label{Ymass}
	Y(I)=W^0.
\end{equation}
The following result is essentially taken from \cite{H}. 
\begin{proposition}\label{Y} For $\omega\in\Gamma$, $Y(\omega,\cdot)$ extends uniquely to a finite random measure on $\cB(I)$ such that 
$\supp(Y)=K$ and $Y\neq 0$ iff $|K^0_n|>0$ for all $n\in\N$.
\end{proposition}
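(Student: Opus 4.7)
The plan is to first check that $Y$, as currently defined on the semi-algebra of cylinder sets $\{\overline D(f):f\in F\}$, extends finitely additively to the algebra $\cA=\bigcup_n\cA_n$, where $\cA_n$ is the $\sigma$-algebra generated by the partition $\{\overline D(f):f\in F_n\}$, and then to invoke Carathéodory's extension theorem to obtain the unique Borel measure on $I$. The key identity I would rely on is the branching/martingale relation
\begin{equation*}
W^f=\sum_{k=1}^{Z^f}W^{fk}\quad\text{a.s. for every }f\in F,
\end{equation*}
which is immediate on $\Gamma$ from $|K^f_n|=\sum_{k=1}^{Z^f}|K^{fk}_n|$ (valid for $n>|f|$) after dividing by $\mu^n$ and using \eqref{Wfprop}. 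Together with the convention that $Y(\overline D(f))=0$ when $f\notin K^0_{|f|}$ (which forces $Y(\overline D(fk))=0$ for all $k$ as well), this gives consistency $\mathcal Y_{n+1}(\overline D(f))=\sum_k \mathcal Y_{n+1}(\overline D(fk))=\mathcal Y_n(\overline D(f))$ for $f\in F_n$, so $\mathcal Y$ is well-defined and finitely additive on $\cA$.

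The main obstacle is countable additivity on $\cA$, which I would obtain by exploiting the fact that every $A\in\cA_n$ is a union of the clopen partition pieces $\overline D(f)$, $f\in F_n$, and is therefore itself clopen in $I$. Given a decreasing sequence $B_N\downarrow\emptyset$ with $B_N\in\cA_{n_N}$, the sets $B_N\cap K$ form a decreasing sequence of closed subsets of the compact set $K$ (compactness is \eqref{Kbounds}), so by the finite intersection property $B_{N_0}\cap K=\emptyset$ for some $N_0$. Writing $B_{N_0}=\bigsqcup_{f\in C_{N_0}}\overline D(f)$ with $C_{N_0}\subset F_{n_{N_0}}$, each such $f$ satisfies $\overline D(f)\cap K=\emptyset$, so either $f\notin K^0_{n_{N_0}}$, or the sub-tree rooted at $f$ goes extinct, in which case $W^f=0$ by \eqref{Wfprop}. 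In either case $\mathcal Y(\overline D(f))=0$, hence $\mathcal Y(B_{N_0})=0$, which delivers the required continuity at $\emptyset$ and hence countable additivity on $\cA$. Carathéodory then yields a unique Borel measure $Y$ on $I$ with the prescribed values on cylinders, of total mass $Y(I)=W^0$, and by construction the null sets $S_n=I\setminus\bigcup_{f\in K^0_n}\overline D(f)$ satisfy $Y(S_n)=0$, so $Y(K^c)=\lim_n Y(S_n)=0$ and $\supp(Y)\subset K$.

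To finish, I would check $\supp(Y)\supset K$ and the non-triviality criterion. For any $i\in K$ and any neighbourhood $U$ of $i$, pick $m$ with $\overline D(i|m)\subset U$; since $i\in K$ one has $|K^{i|m}_n|>0$ for all $n\geq m$, and since \eqref{pk} implies the Kesten--Stigum condition, this forces $W^{i|m}>0$, so $Y(U)\geq Y(\overline D(i|m))=W^{i|m}>0$, giving $K\subset\supp(Y)$. Finally, $Y\neq 0$ iff $Y(I)=W^0>0$, which by \eqref{Wfprop} is exactly non-extinction $|K^0_n|>0$ for all $n$. I expect the countable-additivity step to be the substantive point; the rest is routine bookkeeping, and I would organise it briefly and cite \cite{H} for the line of argument.
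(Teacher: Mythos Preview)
Your proof is correct and follows essentially the same approach as the paper. The paper defers existence entirely to \cite{H} and focuses on uniqueness (via the $\pi$-system of closed balls $\overline D(f)$), the support identification, and the non-triviality criterion; your Carath\'eodory argument using compactness of $K$ spells out the existence step that the paper omits, while your arguments for $\supp(Y)=K$ and for $Y\neq 0\Leftrightarrow$ non-extinction match the paper's almost verbatim. One small remark: you do not need to invoke Kesten--Stigum separately for $W^{i|m}>0$, since this is already part of the content of \eqref{Wfprop} (which holds on $\Gamma$ by definition).
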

\begin{proof} The existence of $Y$ is in \cite{H}. The above discussion shows that \eqref{Ydef1} gives $Y$ on the set of closed balls in $I$. This is a class of sets that contains $I$ (recall $\overline D(0)=I$) and is closed under finite intersections in our ultrametric setting if one adds the empty set.  It follows that $Y$ is uniquely determined on $\cB(I)$, which is generated by the closed balls. Fix $\omega\in\Gamma$.  If $i\in K$, then for all $n\ge m\in\Z_+$, $i|n\in K^{i|m}_n$, and so by \eqref{Wfprop} with $f=i|m$ and the above definition, we have $Y(\overline B(i,2^{-m}))=Y(\overline D(i|m))=W^{i|m}>0$. Letting $m\to\infty$ we see that $i\in\supp(Y)$. 
Assume next that $i\notin K$. Then $i_{m+1}>Z^{i|m}$ for some $m\in\N$ which means that $i|(m+1)\notin K^0_{m+1}$. 
This implies that $Y(\overline B(i,2^{-m-1}))=Y(\overline D(i|(m+1)))=0$ and so $i\notin\supp(Y)$.  We have proved that $\supp(Y)=K$.  The final conclusion is immediate from $Y(I)=W^0>0$ iff $|K^0_n|>0$ for all $n$ which follows from \eqref{Wfprop} and our choice of $\omega$.
\end{proof}
We set $Y=0$ on the null set $\Gamma^c$.

Recall that $\rho=\mu^{-\beta/2}\in(0,1)$ for a given parameter $\beta>0$.  
To describe the spatial dynamics let $T=\{(m,k)\in\N\times\N:m\le k\}$ and let $Q^s$ (the superscript $s$ stands for \emph{spatial}) denote a law on $(\R^d)^T$ so that the coordinate vectors $\{X_{m,k}:(m,k)\in T\}$ are identically distributed under $Q^s$. 
We will let $X$ denote a generic random vector in $\R^d$ with this common distribution and assume $X$ has a finite mean.
 For each $k\in\N$ we let $Q_k$ denote the symmetrised law of $(X_{1,k},\dots,X_{k,k})$ on $(\R^d)^k$ under $Q^s$, that is,
\begin{equation}\label{eq:def symmetric displacement laws}
Q_k(\cdot)=\sum_{\pi\in S_k}(k!)^{-1}Q^s((X_{\pi 1,k},\dots,X_{\pi k,k})\in\cdot).
\end{equation}
For all $k\in \N$ the marginals of $Q_k$ are equal to $\cL(X)$, and $(Q_k, \, k\in\N)$ can be any family of symmetric laws on $(\R^d)^k$, $k\in\N$, (symmetric in the $k$ variables) with a common marginal law on $\R^d$ over all $k$. 
We call $\{Q_k\}$ the \emph{symmetric displacement laws} associated with $Q^s$.

Let $\left\lbrace X^f, \ f\in F\right\rbrace$ denote a collection of i.i.d.\ random vectors with common law $Q^s$ which is independent from the collection $\lbrace Z^f, \ f\in F\rbrace$.   
Here $\rho^{|f|+1}X^f_{m,k}$ will be the displacement of the $m$th child of individual $f$ from the location of its parent, $f$, given that $f$ has $k$ children. 
At times it will be convenient to assume the branching variables, $\{Z^f\}$, are defined on $(\Omega_b,\cF_b,\P_b)$, the displacement variables, $\{X^f\}$, are defined on $(\Omega_d,\cF_d,\P_d)$, and we work on product space under $\P=\P_b\times\P_d$. Sometimes we will write $X^f_{m,k}$ even for $m>k$, which we define to be $X^f_{k,k}$, although the choice of extension will not affect the processes of interest.

In \cite{ADHP} it was assumed that $X_{m,k}=X_m$ where $\{X_m\}$ is i.i.d., that is, the sibling locations are independent of each other and  of the total offspring number.  
Now we allow the locations of the siblings to be dependent and also allow this law to depend on the total number of siblings. 
There still is 
complete independence of the dynamics for distinct parents, and as a result the required changes in the proofs of the relevant results in \cite{ADHP} will be fairly easy to handle, as we show below. 

The position in $\R^d$ of particle $f\in F$ is given by
\begin{equation}\label{Sfdefn}S^f=\sum_{m=1}^{|f|} \rho^mX^{f|(m-1)}_{f_m,Z^{f|(m-1)}},\end{equation}
where the empty sum $S^0$ is $0$, and we are primarily interested in the above for $f\in K_\infty^0$. 
Let $\{X_m, \ m\in\N\}$ denote a generic i.i.d.\ sequence with $X_m$ equal in law to $X$ and set 
\begin{equation}\label{SSndef} S_n=\sum_{m=1}^n\rho^mX_m \quad \text{and} \quad S=\sum_{m=1}^\infty \rho^m X_m.
\end{equation}
The latter sum converges a.s.\ since $X$ has a finite mean, and the same holds for 
\[T_n=\sum_{m=n+1}^\infty \rho^mX_m.\]
If $f\in F_n$ it is easy to condition on the values of $Z^{f|(m-1)}$ for $1\le m\le n$ to see that $S^f$ is equal in law to $S_n$.  For $i\in I$ fixed, let 
\begin{equation}\label{Sidefn}S^i=\sum_{m=1}^\infty\rho^mX^{i|(m-1)}_{i_m,Z^{i|(m-1)}},\end{equation}
and note by the above reasoning it is equal in law to $S$ and the sum converges a.s.\ for our fixed line of descent $i$. To extend this to all values of $i$ in $K$ simultaneously we proceed as in \cite{ADHP} and introduce a tail condition on $X$ depending on a positive parameter ${\zeta}$:
\begin{equation}\label{TC}\tag*{$(TC)_\zeta$}
\text{there is a }C_{\zeta}>0\text{ such that }\P(|X|>r)\le C_{\zeta} r^{-{\zeta}}\text{ for all }r>0,
\end{equation}
where for any $x\in \R^d$, we write $|x|$ for the Euclidean norm of $x$.
Clearly \ref{TC} holds for ${\zeta}\le 1$ by Markov's inequality because $X$ has finite mean.
\begin{lemma}\label{CauchyC} Assume \ref{TC} for some $\zeta>\frac{2}{\beta}$. If $\Delta_n=\sup\{|S^g-S^{g|n}|:g\in K^0_\infty, |g|\ge n\}$, then 
\begin{equation*}
	\limsup_{n\to\infty} \frac{1}{n}\log \Delta_n\le \log(\rho\mu^{1/{\zeta}})\quad  \text{a.s.} 
\end{equation*}
\end{lemma}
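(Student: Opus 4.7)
The plan is to reduce control of $\Delta_n$ to the generation-by-generation maxima of the i.i.d.\ displacement vectors and then use a union bound together with $(TC)_\zeta$ and Borel--Cantelli.

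First, for any $g\in K^0_\infty$ with $|g|=m\ge n$, the definition \eqref{Sfdefn} telescopes to
\begin{equation*}
S^g-S^{g|n}=\sum_{j=n+1}^{m}\rho^{j}\,X^{g|(j-1)}_{g_j,\,Z^{g|(j-1)}}.
\end{equation*}
Introduce the generation-$j$ maximal displacement
\begin{equation*}
M_j=\max\bigl\{\,\bigl|X^{g|(j-1)}_{g_j,\,Z^{g|(j-1)}}\bigr|\,:\,g\in K^0_j\bigr\},
\end{equation*}
with $M_j=0$ if $K^0_j=\emptyset$. Taking absolute values and using $m\ge n$ arbitrary yields the deterministic bound
\begin{equation*}
\Delta_n\le\sum_{j=n+1}^{\infty}\rho^{j}M_j.
\end{equation*}
It therefore suffices to show that for every $\eps>0$, a.s.\ eventually $M_j\le \mu^{j/\zeta}e^{\eps j}$, since then $\Delta_n\le\sum_{j>n}(\rho\mu^{1/\zeta}e^{\eps})^{j}$, a convergent geometric sum as long as $\eps<-\log(\rho\mu^{1/\zeta})$ (which is positive by the hypothesis $\zeta>2/\beta$).

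Next, I control $M_j$ by a union bound conditional on the branching sigma-field $\cF_b$. Since $g\in K^0_j$ forces $g_j\le Z^{g|(j-1)}$, each variable $X^{g|(j-1)}_{g_j,Z^{g|(j-1)}}$ is a $T$-indexed coordinate of some $X^{g|(j-1)}\sim Q^s$, and by construction all coordinates $X_{m,k}$ of $Q^s$ share the common marginal law $\cL(X)$. Hence, conditionally on $\cF_b$, each of the $|K^0_j|$ summand vectors is marginally distributed as $X$, and $(TC)_\zeta$ together with the union bound gives
\begin{equation*}
\P\bigl(M_j>r\,\big|\,\cF_b\bigr)\le |K^0_j|\,C_\zeta\, r^{-\zeta}.
\end{equation*}
Taking expectations and using $\E|K^0_j|=\mu^{j}$ yields $\P(M_j>r)\le \mu^{j}C_\zeta r^{-\zeta}$.

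Finally, set $r_j=\mu^{j/\zeta}e^{\eps j}$. Then
\begin{equation*}
\P(M_j>r_j)\le C_\zeta e^{-\eps\zeta j},
\end{equation*}
which is summable, so by Borel--Cantelli a.s.\ $M_j\le r_j$ for all but finitely many $j$. Combining with the geometric bound on $\Delta_n$ above, we obtain for all sufficiently large $n$
\begin{equation*}
\Delta_n\le C(\omega)\,(\rho\mu^{1/\zeta}e^{\eps})^{n},
\end{equation*}
so $\limsup_n n^{-1}\log\Delta_n\le \log(\rho\mu^{1/\zeta})+\eps$. Letting $\eps\downarrow 0$ concludes. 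The only mildly delicate point is the first step: one must make sure the union bound in generation $j$ uses only the marginal law of the relevant coordinate of $Q^s$, because the sibling displacements under $Q_{Z^f}$ need not be independent; this is handled by the common marginal property built into the definition of the symmetric displacement laws.
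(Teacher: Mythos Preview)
Your proof is correct and follows essentially the same approach the paper describes: the paper omits the argument, noting only that the proof from \cite{ADHP} ``goes through unchanged in the present dependent setting since it is based on simple union bounds,'' which is precisely the union bound plus Borel--Cantelli argument you carry out. Your observation that the dependence among sibling displacements is harmless because the union bound only uses the common marginal law $\cL(X)$ is exactly the point the paper is making.
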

\noindent For independent offspring locations and $p_0=0$ this is Equation (4.4) appearing in Lemma~4 of \cite{ADHP}. 
That simple proof goes through unchanged in the present dependent setting since it is based on simple union bounds.
We omit the argument.
\begin{proposition}\label{unifcontK} Assume \ref{TC} for some ${\zeta}>{2\over \beta}$. The following hold with probability one:
	\begin{enumerate}[label=\emph{(\alph*)}, ref=(\alph*), leftmargin=*]
		\item For all $i\in K$, the series defining $S^i$ converges.
		\item\label{it:uniform convergence Si} Let $\veps>0$ satisfy $(\rho+\veps)\mu^{1/{\zeta}}<1$ (it exists by hypotheses). Then there is an $N\in \N$ so that for $n\ge N$, $\sup_{i\in K}|S^i-S^{i|n}|\le ((\rho+\veps)\mu^{1/{\zeta}})^n$.  In particular $S^{i|n}$ converges to $S^i$ uniformly on $K$.
		\item\label{it:imapstoSi is continuous} The map $i\mapsto S^i$ is continuous on $K$ and the set $\{S^i:i\in K\}$ is compact in $\R^d$.
	\end{enumerate}
\end{proposition}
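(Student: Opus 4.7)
My plan is to deduce everything from Lemma~\ref{CauchyC}, which already controls the increments of the truncated sums over all finite descent lines $g\in K_\infty^0$. The key observation is that for any $i\in K$ and any $m\geq n$, the truncation $i|m$ lies in $K^0_m\subset K^0_\infty$, so that $|S^{i|m}-S^{i|n}|=|S^{i|m}-S^{(i|m)|n}|\leq \Delta_n$. Hence the deterministic geometric decay of $\Delta_n$ supplied by Lemma~\ref{CauchyC} transfers to all ``vertical'' increments along any infinite line in $K$ simultaneously.

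More precisely, fix $\varepsilon>0$ with $(\rho+\varepsilon)\mu^{1/\zeta}<1$; such an $\varepsilon$ exists because $\zeta>2/\beta$ gives $\rho\mu^{1/\zeta}=\mu^{1/\zeta-\beta/2}<1$. By Lemma~\ref{CauchyC}, outside a null set there is $N=N(\omega,\varepsilon)\in\N$ with $\Delta_n\leq ((\rho+\varepsilon)\mu^{1/\zeta})^n$ for all $n\geq N$. Combining this with the observation above shows that for each $i\in K$, the sequence $(S^{i|m})_{m\geq N}$ is Cauchy with $|S^{i|m}-S^{i|n}|\leq ((\rho+\varepsilon)\mu^{1/\zeta})^n$ for $m\geq n\geq N$. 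Since $S^{i|n}$ is precisely the $n$-th partial sum of the series defining $S^i$ in \eqref{Sidefn}, the series converges for every $i\in K$, proving (a). Letting $m\to\infty$ in the inequality above yields $\sup_{i\in K}|S^i-S^{i|n}|\leq ((\rho+\varepsilon)\mu^{1/\zeta})^n$ for $n\geq N$, which is (b).

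For (c), note that the map $i\mapsto S^{i|n}$ on $I$ depends only on the first $n$ coordinates of $i$, so it is locally constant (hence continuous) on $I$ equipped with the ultrametric $d$. Part (b) exhibits $i\mapsto S^i$ as a uniform limit on $K$ of these continuous maps, so the limit is continuous on $K$. Since $K$ is compact by \eqref{Kbounds} and Tychonoff, its continuous image $\{S^i:i\in K\}$ is compact in $\R^d$.

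The only real work is the step at the start: recognizing that $\Delta_n$, defined as a supremum only over \emph{finite} descent lines, already controls the increments along \emph{infinite} descent lines, via the truncation identity $S^{(i|m)|n}=S^{i|n}$. Once that is noted, the rest is standard uniform Cauchy and uniform-limit arguments. In particular, no new probabilistic input is required beyond Lemma~\ref{CauchyC} and the compactness of $K$.
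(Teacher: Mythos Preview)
Your proof is correct and follows essentially the same approach as the paper's own argument: both deduce (a) and (b) from Lemma~\ref{CauchyC} by noting that $|S^{i|m}-S^{i|n}|\le\Delta_n$ for all $i\in K$ and $m\ge n$, and then obtain (c) from the uniform limit of the locally constant maps $i\mapsto S^{i|n}$ together with the compactness of $K$. You make explicit the observation that $i|m\in K^0_\infty$ (hence the $\Delta_n$-bound applies), which the paper leaves implicit.
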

\begin{proof} Choose $\veps>0$ as in \ref{it:uniform convergence Si}.  Next choose $\omega$ outside a null set so that the conclusion of Lemma~\ref{CauchyC} holds. Then there is a $N\in\N$ so that for $m\ge n\ge N$,
\[ \sup_{i\in K}|S^{i|n}-S^{i|m}|\le \Delta_n\le ((\rho+\veps)\mu^{1/{\zeta}})^n.\]
This implies $S^{i|n}$ converges uniformly on $K$ as $n\to\infty$ to $S^i$, and letting $m\to\infty$ in the above gives the bound in \ref{it:uniform convergence Si}. 
The other conclusions now follow easily from this, the trivial continuity of $i\mapsto S^{i|n}$ on $I$, and the  compactness of $K$.
\end{proof}

\begin{remark} Recall the metric $d(i,j)=2^{-\kappa(i,j)}$ on $I$. It follows easily from \ref{it:uniform convergence Si} that w.p. $1$, the map $i\mapsto S^i$ is $r$-H\"older continuous on $K$ for any $r<\Bigl({\beta\over 2}-{1\over {\zeta}}\Bigr){\log \mu\over \log 2}$.
\end{remark}
\begin{equation}\label{TCH}
\text{{\bf Henceforth we will assume that \ref{TC} holds for some ${\zeta}>\frac{2}{\beta}$.} }
\end{equation}
\begin{definition}
The super-tree random measure (STRM) $\nu\in M_F(\R^d)$ associated with $(p,Q^s,\beta)$ is defined by
\begin{equation}\label{nudef}
\nu(A)=\int_K 1(S^i\in A)\,\dd Y(i).
\end{equation}
\end{definition}

 This is the definition used in \cite{ADHP} where $Q^s$ is given by setting $X_{i,k}=X_i$ for a single i.i.d.\ sequence $(X_i)$. 
 We slightly change  the terminology introduced there and refer to $\nu$ in that particular setting as a \emph{classical} super-tree random measure (CSTRM).  

Recalling the laws $Q_k$ from \eqref{eq:def symmetric displacement laws}, we see that $Q_k$ is the (symmetric) distribution of the unscaled displacements of the offspring from their parent conditioned on there being $k$ offspring. We shall see below that the law of $\nu$ depends on $Q^s$ only through the collection $\{Q_k\}$ (Remark~\ref{rem:Qklaw}) and so we also call $\nu$ the STRM associated with $(p,\{Q_k\},\beta)$.

Introduce the discrete time filtration $\cG_n=\sigma(Z^f,X^f:|f|<n)$ for $n\in\Z_+\cup\{\infty\}$, and a $(\cG_n)$-adapted $M_F(\R^d)$ process, $\nu_n=\sum_{f\in K_n^0}\mu^{-n}\delta_{S^f}$ for $n\in\Z_+$. Clearly $\nu_n$ is the suitably normalized empirical distribution of the population in generation $n$. We also introduce a non-adapted process, $\tilde \nu_n=\sum_{f\in K^0_n}W^f\delta_{S^f}$. 

\begin{lemma}\label{tildenu} $\tilde \nu_n\to \nu$ a.s.\ as $n\to \infty$.
\end{lemma}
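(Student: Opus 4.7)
The plan is to identify both $\tilde\nu_n$ and $\nu$ as pushforwards of the (compactly supported) measure $Y$ under maps from $K$ into $\R^d$, and then deduce convergence from the uniform convergence of these maps established in Proposition~\ref{unifcontK}.

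First, I would verify the representation $\nu = S_* Y$ where $S:K\to\R^d$ is defined by $S(i)=S^i$ — this is immediate from the defining formula \eqref{nudef}. Next, I would show that $\tilde\nu_n = (S_n^*)_* Y$ where $S_n^*:K\to\R^d$ is defined by $S_n^*(i)=S^{i|n}$. To see this, partition $K$ along generation $n$: for every $i\in K$ one has $i|n\in K^0_n$, so $K$ is the disjoint union $\bigsqcup_{f\in K^0_n}(K\cap\overline D(f))$, and by Proposition~\ref{Y} combined with \eqref{Ydef1}, $Y(K\cap\overline D(f))=Y(\overline D(f))=W^f$ for $f\in K^0_n$. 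Since $S^{i|n}=S^f$ is constant on $K\cap\overline D(f)$, for any Borel $A\subset\R^d$
\begin{align*}
(S_n^*)_* Y(A)=Y\bigl(\{i\in K:S^{i|n}\in A\}\bigr)=\sum_{f\in K^0_n}W^f\,1_A(S^f)=\tilde\nu_n(A),
\end{align*}
as claimed. In particular $\tilde\nu_n(\R^d)=\nu(\R^d)=Y(K)=W^0$ for every $n$, so the total masses already match.

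Now I would fix $\omega$ outside a null set so that the conclusions of Proposition~\ref{unifcontK} hold: $K$ is compact in $I$, $S$ is continuous on $K$ with compact image in $\R^d$, and $S_n^*\to S$ uniformly on $K$. Given any $\phi\in C_b(\R^d)$, all the images $S_n^*(K)$ are contained in a fixed compact neighborhood of $S(K)$ for $n$ large, so $\phi$ is uniformly continuous there, and uniform convergence $S_n^*\to S$ yields $\|\phi\circ S_n^*-\phi\circ S\|_\infty\to 0$. Therefore
\begin{align*}
\Bigl|\int\phi\,\dd\tilde\nu_n-\int\phi\,\dd\nu\Bigr|=\Bigl|\int_K\bigl(\phi(S_n^*(i))-\phi(S(i))\bigr)\,\dd Y(i)\Bigr|\le \|\phi\circ S_n^*-\phi\circ S\|_\infty\,Y(K)\longrightarrow 0,
\end{align*}
which is exactly weak convergence $\tilde\nu_n\to\nu$ in $M_F(\R^d)$, a.s.

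Nothing in this argument looks subtle: the only non-trivial input is the uniform convergence of $i\mapsto S^{i|n}$ to $i\mapsto S^i$ on the compact random set $K$, which is already handed to us by Proposition~\ref{unifcontK}\ref{it:uniform convergence Si}. The small bookkeeping point worth double-checking is that the pushforward identity for $\tilde\nu_n$ correctly handles the possibility $p_0>0$: if $f\in F_n\setminus K^0_n$ then $\overline D(f)\cap K=\emptyset$ and $Y(\overline D(f))=0$, so such $f$ contribute nothing to either side of the pushforward computation.
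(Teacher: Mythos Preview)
Your proof is correct and follows essentially the same approach as the paper: both represent $\tilde\nu_n$ and $\nu$ as pushforwards of $Y$ under $i\mapsto S^{i|n}$ and $i\mapsto S^i$ respectively (the paper's \eqref{tilderep} is exactly your pushforward identity), and both conclude via the uniform convergence in Proposition~\ref{unifcontK}\ref{it:uniform convergence Si}. The only cosmetic difference is that the paper tests against $\phi\in\text{Lip}_1$ and bounds directly by $\int_K |S^i-S^{i|n}|\wedge 1\,\dd Y(i)$ (invoking the Vasershtein metric), whereas you test against general $\phi\in C_b(\R^d)$ and use uniform continuity on a compact set containing the images; both routes are equally valid.
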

\begin{proof} 
Note that by \eqref{Ydef1} for bounded measurable $\phi$ on $\R^d$,
\begin{equation}\label{tilderep}
\tilde\nu_n(\phi)=\sum_{f\in K^0_n}\nu(\overline D(f))\phi(S_f)=\int\phi(S^{i|n})\dd Y(i).
\end{equation}
Write $\phi\in\text{Lip}_1$ iff $\phi:\R^d\to \R$ satisfies $|\phi(x)-\phi(y)|\le |x-y|\wedge 1$. 
Then \eqref{tilderep} shows that 
\begin{align*}
\sup_{\phi\in\text{Lip}_1}|\nu(\phi)-\tilde \nu_n(\phi)|&=\sup_{\phi\in\text{Lip}_1}\Bigl|\int \phi(S^i)-\phi(S^{i|n})\,\dd Y(i)\Bigr|\\
&\le\int_K |S^i-S^{i|n}|\wedge 1\,\dd Y(i)\\
&\to 0\text{ a.s.\ as }n\to\infty,
\end{align*}
where the last two lines hold by Proposition~\ref{Y} and Proposition~\ref{unifcontK}\ref{it:uniform convergence Si}, respectively. The left-hand side of the above defines the Vasherstein metric on $M_F(\R^d)$ which metrizes the topology of weak convergence (see e.g. p. 150 of \cite{EK}) and so the result follows.
\end{proof}
The next result is the analogue of Theorem~1 from \cite{ADHP} in our dependent setting. Note also that now $\P(\nu=0)$ may be positive because we allow $p_0=0$.
\begin{theorem}\label{nuprops} The following holds:
\begin{enumerate}[label=\emph{(\alph*)}, ref=(\alph*)]
\item\label{it:nuprops:a} With probability $1$: \ $(\nu(\R^d)>0)$ \  iff \ $(K^0_n\neq\emptyset \text{ for all }n) $   \ iff \  $(\nu_n\neq 0\text{ for all }n)$.
\item\label{it:nuprops:b} $\supp\,(\nu)=\{S^i:i\in K\}$ and, in particular, $\supp\,(\nu)$ is compact a.s. 
\item\label{it:nuprops:c} For any $A\in\cB(\R^d)$, we have $\E(\nu(A))=\P(S\in A)$.
\item\label{it:nuprops:d} $\nu_n\to\nu$ a.s.\ as $n\to\infty$.
\end{enumerate}
\end{theorem}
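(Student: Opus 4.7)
The central observation is that $\nu = \psi_* Y$ where $\psi\colon K \to \R^d$, $\psi(i) = S^i$, is continuous on the compact set $K$ by Proposition~\ref{unifcontK}\ref{it:imapstoSi is continuous}. I would handle the four parts in the order \ref{it:nuprops:a}, \ref{it:nuprops:b}, \ref{it:nuprops:d}, \ref{it:nuprops:c}. Parts \ref{it:nuprops:a} and \ref{it:nuprops:b} are short from this pushforward picture: for \ref{it:nuprops:a}, $\nu(\R^d) = Y(K) = Y(I) = W^0$ (using $\supp Y = K$ from Proposition~\ref{Y} and \eqref{Ymass}), so the three stated events are equivalent by \eqref{Wfprop} together with the trivial $\nu_n \neq 0 \Leftrightarrow K^0_n \neq \emptyset$. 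For \ref{it:nuprops:b}, $\psi(K)$ is compact as the continuous image of a compact set, and the identity $\supp \psi_* Y = \psi(\supp Y) = \psi(K)$ is standard: each $x \in \psi(K)$ has every neighbourhood pulled back to an open subset of $K$ meeting $\supp Y = K$, while $\R^d \setminus \psi(K)$ is open with empty $\psi$-preimage.

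The crux is \ref{it:nuprops:d}, which I would deduce from Lemma~\ref{tildenu} by showing $\nu_n - \tilde\nu_n \to 0$ weakly a.s. For $\phi \in C_b(\R^d)$, write
\[(\nu_n - \tilde\nu_n)(\phi) = \sum_{f \in K^0_n}(\mu^{-n} - W^f)\phi(S^f).\]
The key structural point is that for $|f|=n$, $W^f$ depends only on $\{Z^g, X^g : g \in D(f)\}$, a collection disjoint over distinct $f \in F_n$ and independent of $\mathcal{G}_n$, while $K^0_n$ and $\{S^f : |f|=n\}$ are $\mathcal{G}_n$-measurable. Conditioning on $\mathcal{G}_n$ thus makes $\{W^f : |f|=n\}$ i.i.d.\ with mean $\mu^{-n}$ and variance $\mu^{-2n}\mathrm{Var}(W^0)$ (the scaling follows from $W^f \stackrel{d}{=} \mu^{-n}W^0$, and $W^0 \in L^2$ by \eqref{Wfprop}). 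The conditional variance of the display is at most $\|\phi\|_\infty^2 |K^0_n|\mu^{-2n}\mathrm{Var}(W^0)$, giving
\[\E\bigl[((\nu_n - \tilde\nu_n)(\phi))^2\bigr] \le \|\phi\|_\infty^2 \mu^{-n}\mathrm{Var}(W^0);\]
this is summable in $n$, so Borel--Cantelli yields $(\nu_n - \tilde\nu_n)(\phi) \to 0$ a.s.\ for each fixed $\phi$, and a countable convergence-determining class upgrades this to a.s.\ weak convergence.

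For \ref{it:nuprops:c}, conditioning on the offspring tree $\sigma(Z^g : |g|<n)$ and using that each $Q_k$ has marginal $\cL(X)$ shows that for every $f \in K^0_n$, $S^f$ is conditionally distributed as $S_n$ (the displacements $\rho^m X^{f|(m-1)}_{f_m,Z^{f|(m-1)}}$ are independent across $m$ with the prescribed marginals), from which $\E[\nu_n(\phi)] = \E[\phi(S_n)]$ for bounded continuous $\phi$. Since $\nu_n(\R^d) = \mu^{-n}|K^0_n|$ is the $L^2$-bounded Galton--Watson martingale, $|\nu_n(\phi)| \le \|\phi\|_\infty \nu_n(\R^d)$ is uniformly integrable; combining with \ref{it:nuprops:d} and $S_n \to S$ a.s.\ yields $\E[\nu(\phi)] = \E[\phi(S)]$, which extends to arbitrary Borel $A$ by a standard monotone-class argument. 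The main obstacle I anticipate is the conditioning bookkeeping in \ref{it:nuprops:d}: one must carefully separate the data used by $W^f$ (the subtree rooted at $f$, including $Z^f$ itself, which crucially is \emph{not} in $\mathcal{G}_n$) from the data determining $K^0_n$ and the positions $S^f$ (strict ancestors only). Once this is sorted out and the self-similarity $W^f \stackrel{d}{=} \mu^{-n}W^0$ is in hand, the second-moment estimate is routine.
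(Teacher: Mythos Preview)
Your proposal is correct and matches the paper's argument almost exactly: parts \ref{it:nuprops:a} and \ref{it:nuprops:b} are handled identically via the pushforward picture and Proposition~\ref{Y}, and your second-moment/Borel--Cantelli argument for \ref{it:nuprops:d} is precisely what the paper does. The only cosmetic difference is that the paper proves \ref{it:nuprops:c} before \ref{it:nuprops:d}, computing the mean measure via $\tilde\nu_n$ and Lemma~\ref{tildenu} directly rather than via $\nu_n$ and uniform integrability; both routes are equally short and valid.
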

\begin{proof}
\ref{it:nuprops:a} is immediate from Proposition~\ref{Y} and the definition of $\nu$.

\ref{it:nuprops:b} If $F$ is a continuous map from a compact metric space $K$ to a metric space ${\bf X}$ then the push forward of a finite measure $M$ with $\supp(M)=K$ by $F$ (i.e., the finite measure $M(F^{-1}(\cdot))$ on ${\bf X}$) has compact support $F(K)$.  
This is an easy exercise.  To prove \ref{it:nuprops:b} use Proposition~\ref{Y}, the a.s.\ continuity of $i\mapsto S^i$ on $K$ (from Proposition~\ref{unifcontK}\,\ref{it:nuprops:c}), and apply the above with $M=Y$, $F=S$ and ${\bf X}=\R^d$.  

\ref{it:nuprops:c} Let $\phi\in C_b(\R^d)$ be non-negative.  
Then \eqref{tilderep} and \eqref{Ymass} imply 
$|\int\phi \, \dd \tilde \nu_n|\le \Vert\phi\Vert_\infty Y(I)= \Vert\phi\Vert_\infty W^0\in L^2$. 
So Lemma~\ref{tildenu} and Dominated Convergence give 
\begin{equation}\label{intconv}
\E\Bigl(\int\phi\,\dd \nu\Bigr)=\lim_{n\to\infty} \E\Bigl(\int\phi\,\dd \tilde  \nu_n\Bigr)=\lim_{n\to\infty} \E\Bigl(\sum_{f\in K^0_n}\phi(S^f)W^f\Bigr).
\end{equation}
Note that for $f\in F_n$, the random variables $S^f$ and $1(f\in K^0_n)$ are $\cG_n$-measurable, $W^f$ is $\sigma(Z^{f|i},i\ge n)$-measurable, and that these $\sigma$-fields are independent. Therefore
\eqref{intconv} gives
\begin{align*}
\E\Bigl(\int\phi\,\dd \nu\Bigr)&=\lim_{n\to\infty}\sum_{f\in F_n}\E(1(f\in K^0_n)\phi(S^f))\E(W^f)\\
&=\lim_{n\to\infty}\E\Bigl(\sum_{f\in K^0_n}\E\Bigl(\phi\Bigl(\sum_{k=1}^n \rho^k X^{f|(k-1)}_{f_k,Z^{f|(k-1)}}\Bigr)\Bigl|Z^{f|m},0\le m<n\Bigr)\Bigr)\mu^{-n}.
\end{align*}
The above conditional expectation equals $\E(\phi(S_n))$ because if $Z^{f|m}=z_m$ for $0\le m<n$, then $\{X^{f|(k-1)}_{f_k,z_{k-1}},k=1,\dots n\}$ are i.i.d.\ and equal in law to $X$.  Therefore
\[\E\Bigl(\int\phi\,\dd \nu\Bigr)=\lim_{n\to\infty}\E(|K^0_n|)\mu^{-n}E(\phi(S_n))=\E(\phi(S)).\]
The above equality extends to any bounded Borel $\phi$ and \ref{it:nuprops:c} is proved. 

\ref{it:nuprops:d} Let $\phi\in C_b(\R^d)$. Then
\begin{align}
\nonumber\E\Bigl(\Bigl( \int\phi\,\dd \tilde \nu_n-\int\phi\,\dd \nu_n\Bigr)^2\Bigr)&=\E\Bigl(\Bigl(\sum_{f\in K_n^0}\phi(S^f)(W^f-\mu^{-n})\Bigr)^2\Bigr)\\
\nonumber&=\E\Bigl({\sum\sum}_{f\neq g\in K^0_n} \phi(S^f)\phi(S^g)\E((W^f-\mu^{-n})(W^g-\mu^{-n}) \,| \,\cG_n)\Bigr)\\
\label{L2exp}&\quad+\E\Bigl(\sum_{f\in K^0_n}\phi(S^f)^2\,\E((W^f-\mu^{-n})^2 \, | \, \cG_n)\Bigr).
\end{align}
For $f\neq g$ in $F_n$, $W^f-\mu^{-n}$ and $W^g-\mu^{-n}$ are independent mean zero r.v.'s which are jointly independent of $\cG_n$. 
Therefore the first term on the right-hand side of \eqref{L2exp} is zero.  
The second term is bounded by $\Vert\phi\Vert_\infty^2\E(|K^0_n|)\mu^{-2n}\text{Var}(W^0)= \Vert\phi\Vert_\infty^2\text{Var}(W^0)\mu^{-n}$ which is summable in $n$.  
Therefore the left-hand side of \eqref{L2exp} is also summable. 
For each $M\in \N$, apply  Chebychev and then Borel-Cantelli to the events $E_n=\Bigl\{\Bigl| \int\phi\,\dd \tilde \nu_n-\int\phi\,\dd \nu_n\Bigr|>M^{-1}\Bigr\}$, and so conclude that $\lim_{n\to\infty}\int\phi\,\dd \tilde \nu_n-\int\phi\,\dd \nu_n=0$ a.s. Lemma~\ref{tildenu} now implies that  $\lim_{n\to\infty}\int\phi\,\dd \tilde \nu_n=\int\phi\,\dd \nu$. 
The required result
now follows by choosing a countable convergence determining set of bounded continuous functions $\phi$.
\end{proof} 

We next turn to questions on the Hausdorff dimension of $\supp(\nu)$.  We let $\dim A$ denote the Hausdorff dimension of a set $A\subset\R^d$. 
The proof of the upper bound on $\dim(\supp(\nu))$ proceeds just as in Theorem~5 of \cite{ADHP} for CSTRMs, using Lemma~\ref{CauchyC} to carry through the direct covering argument, and so we omit the proof of the following result.
\begin{proposition}\label{ubnd}
Assume \ref{TC} for all ${\zeta}>0$.  Then $\dim(\mathrm{supp}(\nu))\le {2\over \beta}\wedge d$ a.s.
\end{proposition}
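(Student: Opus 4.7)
The plan is to run a direct covering argument on $\supp(\nu)$ using the tree-level Cauchy control from Lemma~\ref{CauchyC} together with the standard fact that $|K^0_n|\sim W^0\mu^n$ a.s. By Theorem~\ref{nuprops}\ref{it:nuprops:b}, $\supp(\nu)=\{S^i:i\in K\}$, so for each $n$, the collection of closed balls
\begin{equation*}
\{B(S^f,\Delta_n):f\in K^0_n\}
\end{equation*}
forms a cover of $\supp(\nu)$, because any $i\in K$ satisfies $|S^i-S^{i|n}|\le \Delta_n$ by the very definition of $\Delta_n$. This reduces the problem to controlling the number of centers and the common radius.

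First I would fix $\zeta>2/\beta$ and an arbitrary $\veps>0$ with $(\rho+\veps)\mu^{1/\zeta}<1$. Lemma~\ref{CauchyC} gives, a.s., a random $N=N(\omega)$ such that $\Delta_n\le ((\rho+\veps)\mu^{1/\zeta})^n$ for all $n\ge N$. Since $\mu^{-n}|K^0_n|\to W^0$ a.s.\ and $W^0$ is finite a.s., we also have a (possibly larger) random $N$ with $|K^0_n|\le (W^0+1)\mu^n$ for $n\ge N$. Combining these, for any $s>0$ and $n\ge N$,
\begin{equation*}
\sum_{f\in K^0_n}(2\Delta_n)^s\le 2^s(W^0+1)\,\mu^n\bigl((\rho+\veps)\mu^{1/\zeta}\bigr)^{ns}=2^s(W^0+1)\exp\!\Bigl(n\bigl[\log\mu+s\bigl(\log(\rho+\veps)+\tfrac{\log\mu}{\zeta}\bigr)\bigr]\Bigr).
\end{equation*}

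The right-hand side tends to $0$ as $n\to\infty$ whenever the bracket is negative, i.e.\ whenever
\begin{equation*}
s>\frac{\log\mu}{-\log(\rho+\veps)-\tfrac{\log\mu}{\zeta}}.
\end{equation*}
Recalling that $\rho=\mu^{-\beta/2}$ and letting $\veps\downarrow 0$ first, the threshold simplifies to $s>\bigl(\tfrac{\beta}{2}-\tfrac{1}{\zeta}\bigr)^{-1}$. For any such $s$ we conclude, a.s., that the $s$-Hausdorff measure of $\supp(\nu)$ is zero, hence $\dim(\supp(\nu))\le \bigl(\tfrac{\beta}{2}-\tfrac{1}{\zeta}\bigr)^{-1}$ a.s.\ Finally, because the hypothesis \ref{TC} is assumed for every $\zeta>0$, I let $\zeta\to\infty$ along a countable sequence and take the intersection of the corresponding almost sure events to obtain $\dim(\supp(\nu))\le 2/\beta$ a.s. The trivial bound $\dim(\supp(\nu))\le d$ (since $\supp(\nu)\subset\R^d$) then yields the stated minimum.

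The only delicate step is managing the two simultaneous a.s.\ controls — on $\Delta_n$ and on $|K^0_n|$ — uniformly in $n$, but both are given for free by Lemma~\ref{CauchyC} and \eqref{Wfprop}, so the argument reduces to the elementary exponent-balancing above. I expect no genuine obstacle: this is the same covering strategy used in Theorem~5 of \cite{ADHP} for CSTRM's, and since Lemma~\ref{CauchyC} has already been extended to the present dependent-sibling setting, the proof goes through verbatim.
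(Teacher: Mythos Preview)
Your proposal is correct and follows essentially the same approach the paper indicates: a direct covering argument using Lemma~\ref{CauchyC} to control the radii and the a.s.\ growth $|K^0_n|\sim W^0\mu^n$ to count centers, exactly as in Theorem~5 of \cite{ADHP}. One tiny wording point: $\Delta_n$ is defined as a supremum over finite paths $g\in K^0_\infty$, so the inequality $|S^i-S^{i|n}|\le \Delta_n$ for $i\in K$ is not literally ``by definition'' but follows by passing to the limit $|S^{i|m}-S^{i|n}|\le\Delta_n$ as $m\to\infty$ (or one can cite Proposition~\ref{unifcontK}\ref{it:uniform convergence Si} directly); this does not affect the argument.
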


The lower bound on the dimension in our dependent setting does require a bit more effort. In particular, the second moment formulae in our dependent setting seems to be a bit more involved because at a branch point the dependence of the sibling locations enters. We will use the product setting $(\Omega,\cF,\P)=(\Omega_b,\cF_b,\P_b)\times(\Omega_d,\cF_d,\P_d)$, mentioned earlier in this section.  We start with an elementary Fubini lemma. 

\begin{lemma}\label{Fubini}If $\phi:I\times I\times\Omega\to[0,\infty]$ is product measurable, then
\begin{align*}\E\Bigl(\int\int &\phi(i,j,\omega_b,\omega_d)\,\dd Y(i)\,\dd Y(j)\Bigr)\\
&=\E_b\Bigl[\int_I\int_I\Bigl(\int_{\Omega_d}\phi(i,j,\omega_b,\omega_d)\dd\P_d(\omega_d)\Bigr)\,\dd Y(i,\omega_b)\,\dd Y(j,\omega_b)\Bigr].
\end{align*}
\end{lemma}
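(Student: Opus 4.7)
The plan is to exploit the crucial fact that the random measure $Y$ depends only on the branching coordinate $\omega_b$: inspecting \eqref{Ydef1} and the construction that precedes it shows that $Y(\cdot,\omega)=Y(\cdot,\omega_b)$ since the building blocks $\{W^f\}$ and $\{K^0_n\}$ are all $\cF_b$-measurable. Consequently, for $\P_b$-a.e.\ fixed $\omega_b$, $Y(\cdot,\omega_b)$ is a finite Borel measure on the compact metric space $I$, and $\P_d$ is a probability measure on $\Omega_d$, so all three of $Y(\cdot,\omega_b)$, $Y(\cdot,\omega_b)$ and $\P_d$ are $\sigma$-finite. The identity is then a routine application of Tonelli's theorem applied to the nonnegative measurable function $\phi$.

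More concretely, I would proceed in two steps. First, using $\P=\P_b\times\P_d$, write the left-hand side as an iterated integral over $\Omega_b\times\Omega_d$. Since the integrand is nonnegative, Fubini-Tonelli applied to the product probability space yields
\[
\E\Bigl(\int\!\!\int\phi\,dY(i)\,dY(j)\Bigr) = \int_{\Omega_b}\Bigl(\int_{\Omega_d}\int_I\!\int_I\phi(i,j,\omega_b,\omega_d)\,dY(i,\omega_b)\,dY(j,\omega_b)\,d\P_d(\omega_d)\Bigr)d\P_b(\omega_b),
\]
where pulling $\P_b$ to the outside requires only that the inner expression be $\cF_b$-measurable; this uses again the fact that $Y$ is $\cF_b$-measurable. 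Second, for each fixed $\omega_b$ apply Tonelli on the product measure $Y(\cdot,\omega_b)\otimes Y(\cdot,\omega_b)\otimes \P_d$ on $I\times I\times\Omega_d$ to permute the order of integration and push the $\P_d$-integral to the innermost position, yielding the desired right-hand side.

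The main (mild) obstacle is keeping track of measurability throughout, in particular verifying that the map $(\omega_b,i,j)\mapsto \int_{\Omega_d}\phi(i,j,\omega_b,\omega_d)\,d\P_d(\omega_d)$ is jointly measurable in $(\omega_b,i,j)$ so that one may integrate it against $dY(i,\omega_b)\,dY(j,\omega_b)\,d\P_b(\omega_b)$. This follows by the standard monotone class argument: the identity is immediate for indicators of measurable rectangles $A\times B\times C\times D$ with $A,B\in\cB(I)$, $C\in\cF_b$, $D\in\cF_d$ (where it reduces to $\E_b[Y(A)Y(B)\mathbf{1}_C]\P_d(D)$ on both sides), extends by linearity to simple functions, and then by monotone convergence to arbitrary nonnegative product-measurable $\phi$. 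Beyond this routine bookkeeping no further work is required.
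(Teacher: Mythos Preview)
Your proposal is correct and follows essentially the same route as the paper: both arguments reduce to verifying the identity on a generating class (product functions or indicators of rectangles), invoke a monotone class theorem, and extend to general nonnegative $\phi$ by monotone convergence, with the key observation being that $Y$ depends only on $\omega_b$.
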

\begin{proof} If $\phi(i,j,\omega_b,\omega_d)=\phi_1(i,j)\phi_2(\omega_b)\phi_3(\omega_d)$ for measurable non-negative $\phi_\ell$, the result holds by Fubini's theorem because $\int\int\phi_3(\omega_b)\phi_1(i,j)\,\dd Y(i)\,\dd Y(j)$ is a measurable function of $\omega_b$.  By a Monotone Class Theorem (e.g. Corollary~4.4 of the Appendix in \cite{EK}) the result follows for any bounded measurable $\phi$ as above.  By monotone convergence it follows for non-negative $\phi$ as in the Lemma.
\end{proof}

Recall $S$ is as in \eqref{SSndef}.  
In addition,
\begin{equation}\label{S'def}
S'\text{ denotes a r.v.\ independent of $S$ and with the same law as $S$}.
\end{equation}

\begin{lemma}\label{secondmom}
 Let $\phi:\R^d\times\R^d\to[0,\infty]$ be Borel, and set
\ARXIV{
\begin{equation}\label{Phindef}
\Phi_n=\sup_{x_1,x_2\in\R^d}\E(\phi(S_n+\rho^{n+1}(x_1+S), S_n+\rho^{n+1}(x_2+S'))),
\end{equation}
}
\SUBMIT{
	\begin{equation*}
		\Phi_n=\sup_{x_1,x_2\in\R^d}\E(\phi(S_n+\rho^{n+1}(x_1+S), S_n+\rho^{n+1}(x_2+S'))),
	\end{equation*}
}
where $\{S_n\}$, $S$ and $S'$ are mutually independent, $\{S_n\}$ is distributed as in \eqref{SSndef}, and $S,S'$ are as in \eqref{S'def}.
 Then
\[\E\Bigl(\int\int \phi(x,y)\,\dd \nu(x)\,\dd \nu(y)\Bigr)\le (1-\mu^{-1})\E((W^0)^2)\sum_{n=0}^\infty \mu^{-n}\Phi_n.\]
\end{lemma}
\begin{proof} We slightly abuse our notation and set $S^i(\omega)=\infty$ if $i\notin K(\omega_b)$ and also set $\phi(s_1,s_2)=0$ if either $s_1$ or $s_2$ is $\infty$. For $n\in\Z_+$, let 
\[\psi_n(i,j,\omega)=\phi(S^i(\omega),S^j(\omega))1(\kappa(i,j)=n).\]  
Joint measurability of $\psi_n$ follows easily from the continuity of $i\mapsto S^i$ on $K$ (Proposition~\ref{unifcontK}(c)).
It is easy to check (e.g. see (2.5) of \cite{ADHP}) that 
\[\kappa(i,j)<\infty \text{ for }Y\times Y-\text{a.a.}\  (i,j)\ \text{a.s.}\]
If $\overline D_K(f)=\overline D(f)\cap K$ we may use the above and Lemma~\ref{Fubini} to obtain
\begin{align}\label{doubleint}
\nonumber\E\Bigl(&\int\int \phi(x,y)\,\dd \nu(x)\,\dd \nu(y)\Bigr)\\
\nonumber&=\sum_{n=0}^\infty\E\Bigl(\int\int \psi_n(i,j)\,\dd Y(i)\,\dd Y(j)\Bigr)\\
\nonumber&=\sum_{n=0}^\infty\E_b\Bigl(\int\int\Bigl[\int \psi_n(i,j,\omega_b,\omega_d)\,\dd \P_d(\omega_d)\Bigr]\,\dd Y(i)\,\dd Y(j)\Bigr)\\
\nonumber&=\sum_{n=0}^\infty\E_b\Bigl(\sum_{f\in K^0_n}\int_{\overline D_K(f)}\int_{\overline D_K(f)} \Bigl[\int\phi(S^i(\omega_b,\omega_d),S^j(\omega_b,\omega_d)\,\dd \P_d(\omega_d)\Bigr]\\
&\phantom{=\sum_{n=0}^\infty\E_b\Bigl(\sum_{f\in K^0_n}\int_{\overline D_K(f)}\int_{\overline D_K(f)}}\times 1(i_{n+1}\neq j_{n+1})\dd Y(i)\dd Y(j)\Bigr).
\end{align}
In the last line we sum over the common value $f=i|n=j|n$ on $\{\kappa(i,j)=n\}$. Now fix $\omega_b$ outside a $\P_b$-null set so that the conclusion of Proposition~\ref{unifcontK} holds for $(\omega_b,\omega_d)$ for $\P_d$-a.a.~$\omega_d$, let $f\in K^0_n(\omega_b)$, and let $i,j\in\overline D_K(f)(\omega_b)$ with $i_{n+1}\neq j_{n+1}$.  Define
\[T^i_{n+1}(\omega_b,\omega_d)=\sum_{m=n+2}^\infty\rho^{m-n-1}X^{i|(m-1)}_{i_m,Z^{i|(m-1)}(\omega_b)}(\omega_d),\]
and similarly define $T^j_{n+1}$. Then
\begin{align}\label{innerint}
\nonumber\int&\phi(S^i(\omega_b,\omega_d),S^j(\omega_b,\omega_d))\,\dd \P_d(\omega_d)\\
&=\int \phi(S^f(\omega_b,\omega_d)+\rho^{n+1}[X^f_{i_{n+1},Z^f(\omega_b)}(\omega_d)+T^i_{n+1}(\omega_b,\omega_d)],\\
\nonumber&\phantom{=\int \phi(S }S^f(\omega_b,\omega_d)+\rho^{n+1}[X^f_{j_{n+1},Z^f(\omega_b)}(\omega_d)+T^j_{n+1}(\omega_b,\omega_d)])\,\dd \P_d(\omega_d).
\end{align}
For our fixed $\omega_b$, the random variables $X^{i|(m-1)}_{i_m,Z^{i|(m-1)}(\omega_b)}$ for $m=n+1,n+2,\dots,$ are i.i.d.\ distributed like our generic $X$ under $\P_d$ and so $T^i_{n+1}(\omega_b,\cdot)$ and $T^j_{n+1}(\omega_b,\cdot)$ are both distributed like $S$ under $\P_d$.
Similarly under $\P_d$, $S^f(\omega_b,\cdot)$ is distributed as $S_n$. The fact that $i_{n+1}\neq j_{n+1}$ implies that $S^f(\omega_b,\cdot)$, $(X^f_{i_{n+1},Z^f(\omega_b)}(\cdot),X^f_{j_{n+1},Z^f(\omega_b)}(\cdot))$, $T^{i}_{n+1}(\omega_b,\cdot)$, and $T^{j}_{n+1}(\omega_b,\cdot)$ are independent random vectors under $\P_d$ (the coordinates of the second vector need not be independent) because they depend on disjoint collections of $X^g$'s ($g\in F$) which are independent under $\P_d$. So in calculating the integral on the right-hand side of \eqref{innerint}, we can integrate out $(X^f_{i_{n+1},Z^f(\omega_b)}(\cdot),X^f_{j_{n+1},Z^f(\omega_b)}(\cdot))$ last, and conclude that
\begin{equation*}
\int\phi(S^i(\omega_b,\omega_d),S^j(\omega_b,\omega_d))\,\dd \P_d(\omega_d)\le \Phi_n.
\end{equation*}
Insert the above into the right-hand side of \eqref{doubleint} and reassemble our decomposition of the event $\{\kappa(i,j)=n\}$ to see that the far left side of \eqref{doubleint} is at most
\begin{align*}
\sum_{n=0}^\infty& \Phi_n\E_b\Bigl(\int\int1(\kappa(i,j)=n)\,\dd Y(i)\,\dd Y(j)\Bigr)\\
&=\sum_{n=0}^\infty \Phi_n (\mu^{-n}-\mu^{-n-1})\E((W^0)^2)\qquad\text{(by (2.5) of \cite{ADHP}})\\
&=(1-\mu^{-1})\E((W^0)^2)\sum_{n=0}^\infty \mu^{-n}\Phi_n.
\end{align*}
The result is proved. 
\end{proof}

We need the following density condition which will hold in all the cases of interest in this work. 
Recall $S'$ from \eqref{S'def}.
\begin{equation*}\tag{DC}\label{DC}
	S-S' \text{ has a bounded density.}
\end{equation*}
\begin{notation} If $m$ is a measure on $\R^d$ and $\alpha>0$ let 
\[\cE_\alpha(m)=\int\int|x-y|^{-\alpha}\,\dd m(x)\,\dd m(y)\in[0,\infty].\]
\end{notation}
\begin{proposition}\label{lbnd}
Assume \eqref{DC}.  Then $\dim(\supp(\nu))\ge \frac{2}{\beta}\wedge d$ a.s.\ on $\{\nu\neq 0\}$. 
\end{proposition}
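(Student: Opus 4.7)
The plan is to use the energy method (Frostman's lemma): if a finite non-zero measure $m$ on $\R^d$ satisfies $\cE_\alpha(m) < \infty$, then $\dim(\supp(m)) \geq \alpha$. So it suffices to show that for each $\alpha < \frac{2}{\beta} \wedge d$ we have $\cE_\alpha(\nu) < \infty$ a.s., then take a countable sequence $\alpha_k \uparrow \frac{2}{\beta} \wedge d$.

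To bound $\E(\cE_\alpha(\nu))$ I would apply Lemma~\ref{secondmom} with $\phi(x,y) = |x-y|^{-\alpha}$. For this choice,
\[
\Phi_n = \sup_{x_1,x_2} \E\bigl(|\rho^{n+1}(x_1 - x_2 + S - S')|^{-\alpha}\bigr) = \rho^{-\alpha(n+1)} \sup_{z \in \R^d} \E\bigl(|z + S - S'|^{-\alpha}\bigr).
\]
Here condition \eqref{DC} enters: since $S - S'$ has a bounded density $g$, we can split the integral $\int |z+y|^{-\alpha} g(y)\,\dd y$ into a piece over $\{|z+y| \leq 1\}$, bounded by $\|g\|_\infty \int_{|w|\leq 1}|w|^{-\alpha}\dd w$, which is finite precisely because $\alpha < d$, and a piece over $\{|z+y|>1\}$ bounded by $1$. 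This supremum is finite uniformly in $z$, so $\Phi_n \leq C \rho^{-\alpha(n+1)}$.

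Plugging into Lemma~\ref{secondmom} gives
\[
\E(\cE_\alpha(\nu)) \leq C'\, \E((W^0)^2) \sum_{n=0}^\infty \mu^{-n}\rho^{-\alpha n} = C' \E((W^0)^2) \sum_{n=0}^\infty \mu^{-n(1 - \alpha\beta/2)},
\]
using $\rho = \mu^{-\beta/2}$. This geometric series converges iff $\alpha < 2/\beta$, and $\E((W^0)^2) < \infty$ by the second-moment assumption in \eqref{pk}. Hence for any $\alpha < \frac{2}{\beta} \wedge d$ we get $\cE_\alpha(\nu) < \infty$ a.s. On the event $\{\nu \neq 0\}$, Frostman's lemma (applied $\omega$ by $\omega$ to the finite measure $\nu$) yields $\dim(\supp(\nu)) \geq \alpha$, and intersecting over the countable sequence $\alpha_k$ completes the proof.

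The only genuinely delicate point is the verification that $\sup_z \E(|z + S - S'|^{-\alpha}) < \infty$, where the boundedness of the density in \eqref{DC} is exactly what is needed; everything else is a routine combination of the second-moment formula of Lemma~\ref{secondmom} with the standard energy criterion.
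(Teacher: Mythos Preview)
Your proof is correct and follows essentially the same approach as the paper: apply Lemma~\ref{secondmom} with $\phi(x,y)=|x-y|^{-\alpha}$, use the bounded density from \eqref{DC} to bound $\sup_z\E(|z+S-S'|^{-\alpha})$ uniformly (this is where $\alpha<d$ is used), obtain $\Phi_n\le C\rho^{-\alpha(n+1)}$, and sum the resulting geometric series using $\alpha<2/\beta$. The paper's argument is line-for-line the same.
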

\begin{proof} By the energy method (see, e.g. Theorem~4.27 and Remark~4.28 in \cite{MP}) it suffices to let $0<\alpha<\frac{2}{\beta}\wedge d$ and show
\begin{equation}\label{finenergy}
\E\Bigl(\cE_\alpha(\nu)\Bigr)<\infty.
\end{equation}
Let $h$ denote the bounded density of $S-S'$. We will use Lemma~\ref{secondmom} with $\phi(x,y)=|x-y|^{-\alpha}$. 
Then 
\begin{align*}
\Phi_n=\rho^{-\alpha(n+1)}\sup_{x_1,x_2} \E(|x_1-x_2+S-S'|^{-\alpha})&=\rho^{-\alpha(n+1)}\sup_{x_1,x_2}\int|z|^{-\alpha}h(z+x_2-x_1)\,\dd z\\
&\le \rho^{-\alpha(n+1)}\Bigl[\Vert h\Vert_\infty c_d\int_0^1r^{-\alpha+d-1}\,\dd r +1\Bigr]\\
&\le C_{d,\alpha,\Vert h\Vert_\infty} \rho^{-\alpha(n+1)},
\end{align*}
where $\alpha<d$ is used in the last line. Use this in Lemma~\ref{secondmom} to conclude that
\[ \E(\cE_\alpha(\nu))\le C\sum_{n=0}(\mu\rho^{\alpha})^{-n}=C\sum_{n=0}(\mu^{1-(\alpha\beta/2)})^{-n}<\infty.\]
The fact that $\alpha<\frac{2}{\beta}$ is used in the last inequality. This gives \eqref{finenergy} and we are done. 
\end{proof}
The above result and Proposition~\ref{ubnd} give the following extension of Theorem~2(d) in \cite{ADHP} which was established there for CSTRMs. (\cite{ADHP} also assumed that $p_0=0$ which meant there was no need to restrict to $\{\nu>0\}$.) 
\begin{theorem}\label{Hdim}
Assume \ref{TC} for all ${\zeta}>0$ and \eqref{DC}.  Then $\dim (\supp(\nu))=\frac{2}{\beta}\wedge d$ a.s.\ on $\{\nu\neq 0\}$.
\end{theorem}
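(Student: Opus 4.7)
The plan is to combine the two bounds from Proposition~\ref{ubnd} and Proposition~\ref{lbnd}, which were proved separately in the preceding subsections. No further work is really required: the statement of Theorem~\ref{Hdim} is nothing more than their conjunction, once we account for the subtlety that Proposition~\ref{ubnd} gives an a.s.\ upper bound on $\dim(\supp(\nu))$ without restriction (trivially true when $\nu=0$, since $\dim(\emptyset)=-\infty$ or $0$ by convention), while Proposition~\ref{lbnd} gives the matching lower bound only on the event $\{\nu\neq 0\}$.

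First I would invoke Proposition~\ref{ubnd}, whose hypothesis is $(TC)_\zeta$ for all $\zeta>0$, matching our assumption. This yields $\dim(\supp(\nu))\le (2/\beta)\wedge d$ a.s. Then I would invoke Proposition~\ref{lbnd}, whose sole hypothesis is (DC), to obtain $\dim(\supp(\nu))\ge (2/\beta)\wedge d$ a.s.\ on $\{\nu\neq 0\}$. Intersecting the two almost-sure events gives equality on $\{\nu\neq 0\}$, which is the stated conclusion.

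There is essentially no obstacle: all the real work was done in Lemma~\ref{CauchyC} and the covering argument for the upper bound, and in Lemma~\ref{secondmom} together with the energy method for the lower bound. The only thing worth briefly remarking on in the write-up is that the assumption $(TC)_\zeta$ for all $\zeta>0$ is slightly stronger than what is needed for the lower bound (which uses no tail condition beyond what $(TC)_{\zeta}$ for some $\zeta>2/\beta$ already provides via \eqref{TCH} for the construction of $\nu$), but since we are assuming both hypotheses in the statement this is harmless. In short, the proof is a one-line deduction and I would simply write: ``This is immediate from Propositions~\ref{ubnd} and \ref{lbnd}.''
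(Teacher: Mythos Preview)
Your proposal is correct and matches the paper's own approach exactly: the paper simply states that Theorem~\ref{Hdim} follows from Proposition~\ref{ubnd} (upper bound) and Proposition~\ref{lbnd} (lower bound), without giving any further argument.
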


We close this section with a simple Markovian characterization of $(\nu_n, \ n\geq 0)$ which will be useful in the next section when showing how SBM fits into this framework. 

For a metric space $E$ let $N_F(E)$ be the subspace of $M_F(E)$ consisting of $\Z_+$-valued finite measures.
Recall (e.g. Ch. 3 of \cite{DI78}) the probability generating function (p.g.f.) of a random measure in $N_F(\R^d)$, $\bar\nu$, is 
\begin{equation*}
	G\xi=\E(\exp(\bar\nu(\log\xi))) =\E\left(\exp \left( \int_{\R^d} (\log \xi)  \dd \bar\nu\right)\right), 
\end{equation*}
where $\xi:E\to(0,1]$ is Borel. 
A simple monotone class theorem (e.g. Corollary~4.4 in Appendix~4 of \cite{EK}) shows that the p.g.f.\ even restricted to continuous $\xi$ uniquely determines the law of $\bar \nu$.

We let
\[\bar\nu_n=\mu^n\nu_n=\sum_{f\in K^0_n}\delta_{S^f}\in N_F(\R^d),\]
and note that the p.g.f. of $\bar\nu_n$ is (the empty product is $1$)
\begin{equation}\label{nunpgf}
G_n\xi=\E\Bigl(\prod_{f\in K^0_n}\xi(S^f)\Bigr).
\end{equation}
 Recall that by hypothesis, for every $k\in \N$ the marginals of $Q_k$ are all equal to $\cL(X)$.
Let $\xi:\R^d\to(0,1]$ be a Borel map and let $(Z,(X_{m,k},m\le k, k,m\in\N))$ have law $\cL(Z)\times Q^s$.  For $n\in\N$, define the function $H_n\xi:\R^d\to(0,1]$ by
\begin{equation}\label{Hndef}H_n\xi(x)=\E\Bigl(\prod_{m=1}^Z\xi(x+\rho^{n}X_{m,Z})\Bigr)=\sum_{k=0}^\infty \P(Z=k)\int_{(\R^d)^k}\prod_{m=1}^k\xi(x+\rho^nx_m)\,\dd Q_k(x_1,\dots,x_k).
\end{equation}
Finally for $\bar\nu\in N_F(\R^d)$ we introduce
\[G_{n,n+1}\xi(\bar\nu)=\exp(\bar\nu(\log(H_{n+1}\xi))),\quad n\in\Z_+.\]

\begin{notation} If $f\in F_n$ for some $n\in\N$ let $\pi f=(f_1,\dots, f_{n-1})\in F_{n-1}$ (if $n=1$ this is $0$) and call $\pi f$ the parent of $f$. 
\end{notation} 

\begin{proposition}\label{nunmarkov}
The conditional p.g.f.'s of $(\bar\nu_n)$ are given by 
\begin{align}\label{condpgf}\nonumber \E(\exp\{\bar\nu_{n+1}(\log\xi)\} \, | \, \cG_n)=G_{n,n+1}\xi(\bar\nu_n)&=\prod_{f\in K^0_n}H_{n+1}\xi(S^f)\\
&\qquad \text{for all } n\in\Z_+ \text{ and Borel }\xi:\R^d\to(0,1].
\end{align}
The $N_F(\R^d)$-valued process $\left(\bar \nu_n,n\in\Z_+\right)$ is the unique in law time-inhomogeneous $(\cG_n)$-Markov chain starting at $\bar\nu_0=\delta_0$, and satisfying \eqref{condpgf}.
\end{proposition}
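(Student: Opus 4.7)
The plan is to establish \eqref{condpgf} by a direct computation that unpacks $\bar\nu_{n+1}(\log\xi)$ along the parent–child structure and exploits the independence built into the construction. Start from the identity
\[\exp\{\bar\nu_{n+1}(\log\xi)\}=\prod_{g\in K^0_{n+1}}\xi(S^g)=\prod_{f\in K^0_n}\prod_{m=1}^{Z^f}\xi\bigl(S^f+\rho^{n+1}X^f_{m,Z^f}\bigr),\]
where in the last equality I use the definition \eqref{Sfdefn} together with the fact that every $g\in K^0_{n+1}$ has a unique parent $f=\pi g\in K^0_n$ and $g_{n+1}$ runs over $\{1,\dots,Z^f\}$.

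Next observe that $K^0_n$ and $\{S^f:f\in K^0_n\}$ are $\cG_n$-measurable, while the collection $\{(Z^f,X^f):|f|=n\}$ is independent of $\cG_n$ and its members are mutually independent. Therefore, conditioning on $\cG_n$, the product over $f\in K^0_n$ factorises and each factor becomes
\[\E\!\left[\prod_{m=1}^{Z^f}\xi\bigl(S^f+\rho^{n+1}X^f_{m,Z^f}\bigr)\,\Big|\,\cG_n\right]\Bigg|_{S^f=x}=\E\!\left[\prod_{m=1}^{Z}\xi\bigl(x+\rho^{n+1}X_{m,Z}\bigr)\right]=H_{n+1}\xi(x),\]
with $(Z,(X_{m,k}))\sim\cL(Z)\times Q^s$. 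The product inside the expectation is symmetric in the displacement labels $X_{1,k},\dots,X_{k,k}$, so averaging over $Q^s$ is the same as averaging over its symmetrisation $Q_k$; this is the only subtle point, and it is exactly what makes the law of $\nu$ (and the transition of $\bar\nu_n$) depend on $Q^s$ only through $\{Q_k\}$, matching the definition \eqref{Hndef}. Assembling the factors and writing the product as an exponential of a sum yields $\prod_{f\in K^0_n}H_{n+1}\xi(S^f)=\exp\bigl(\bar\nu_n(\log H_{n+1}\xi)\bigr)=G_{n,n+1}\xi(\bar\nu_n)$, which is \eqref{condpgf}.

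For the Markov property and uniqueness, note that the right-hand side of \eqref{condpgf} is a measurable function of $\bar\nu_n$ alone, and the class $\{\xi:\R^d\to(0,1]\text{ Borel}\}$ (indeed already the continuous $\xi$) determines the conditional law of $\bar\nu_{n+1}$ by the monotone class theorem recalled before the statement. Hence there exist transition kernels $P_{n,n+1}(\bar\nu_n,\cdot)$ on $N_F(\R^d)$ realising these conditional distributions, so $(\bar\nu_n)$ is a $(\cG_n)$-Markov chain. Uniqueness in law of any $(\cG_n)$-Markov chain starting at $\delta_0$ and satisfying \eqref{condpgf} is then standard: the initial law and the transition kernels determine all finite-dimensional distributions. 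The only step requiring real care is the symmetrisation argument that justifies passing from $Q^s$ to $Q_k$; everything else is bookkeeping of the independence structure.
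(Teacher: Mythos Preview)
Your proof is correct and follows essentially the same approach as the paper: decompose $\bar\nu_{n+1}(\log\xi)$ along the parent--child structure, use that $K^0_n$ and the $S^f$ are $\cG_n$-measurable while the level-$n$ branching and displacement variables are independent of $\cG_n$ and of each other, then factorise. The paper organises the computation slightly differently by first conditioning on the enlarged $\sigma$-field $\bar\cG_n=\cG_n\vee\sigma(Z^f:f\in F_n)$ and then on $\cG_n$, whereas you do it in one step using the joint independence of $\{(Z^f,X^f):|f|=n\}$ from $\cG_n$; the content is the same.
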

\begin{proof} Let $\xi$ be as in \eqref{condpgf} and for $k,n\in\Z_+$ define
\[H_{k,n+1}\xi(x)=\E\left(\prod_{m=1}^k\xi(x+\rho^{n+1}X_{m,k})\right),\]
so that 
\begin{equation}\label{Hnform}H_{n+1}\xi(x)=\E(H_{Z,n+1}\xi(x)).
\end{equation}
Recall that for $f\in F_{n+1}$,
\begin{equation*}
f\in K^0_{n+1} \quad \text{iff} \quad \pi f\in K^0_n\text{ and }f_{n+1}\le Z^{\pi f},
\end{equation*}
and
\begin{equation*}
S^f=S^{\pi f}+\rho^{n+1}X^{\pi f}_{f_{n+1},Z^{\pi f}}.
\end{equation*}
Define $\bar \cG_n=\cG_n\vee\sigma(Z^f:f\in F_n)$. Then by \eqref{nunpgf} and the last two displays,
\begin{align}\label{condpdfexp}
\nonumber \E(\exp\{\bar\nu_{n+1}(\log\xi)\} \, | \, \cG_n)&=\E\Bigl(\E\Bigl(\prod_{f\in K^0_n}\prod_{f_{n+1}=1}^{Z^f}\xi(S^f+\rho^{n+1}X^f_{f_{n+1},Z^f})\, \Bigl| \, \bar \cG_n\Bigl) \, \Bigl| \, \cG_n\Bigl)\\
\nonumber&=\E\Bigl(\prod_{f\in K^0_n}\E\Bigl(\prod_{f_{n+1}=1}^{Z^f}\xi(S^f+\rho^{n+1}X^f_{f_{n+1},Z^f})\, \Bigl| \, \bar \cG_n\Bigr) \, \Bigr| \, \cG_n\Bigr)\\
&=\E\Bigl(\prod_{f\in K^0_n} H_{Z^f,n+1}(S^f)\, \Bigr| \, \cG_n\Bigr).
\end{align}
For the last two equalities we use the independence of $\{X^f:f\in F_n\}$, the independence of this collection from $\bar\cG_n$, and the fact that $K^0_n$ and $\{Z^f,S^f: f\in F_n\}$ are $\bar\cG_n$-measurable.  In fact $S^f$ and $K^0_n$ are $\cG_n$-measurable, while each $Z^f$ is independent of $\cG_n$. Therefore by \eqref{Hnform}  the right-hand side of \eqref{condpdfexp} equals
\[\prod_{f\in K^0_n}H_{n+1}\xi(S^f)=\exp\{\bar\nu_n(\log H_{n+1}\xi)\}.\]
Inserting this into \eqref{condpdfexp} gives \eqref{condpgf}. By the aforementioned monotone class theorem this  establishes the $\cG_n$-Markov property of $\bar \nu_n$ and uniquely identifies its time-inhomogeneous transition kernel. The result follows.
\end{proof}

\begin{remark}\label{rem:Qklaw}
Recall that $\nu=\lim_{n\to\infty}\mu^{-n}\bar\nu_n$ a.s.\ (by Proposition~\ref{nuprops}\ref{it:nuprops:d}).  
The above results shows that the joint law of $\left(\nu,(\bar\nu_n)_{n\in\N}\right)$ depends only on the parameter $\beta$, the reproduction law $\cL(Z)$ and the family of symmetric laws $\left(Q_k,k\in\N\right)$, all with common marginals $\cL(X)$, as we claimed earlier after defining a STRM.
\end{remark}

\section{Super-Brownian motion as a super-tree random measure}\label{sec:SBM}

Our goal is to show (Theorem~\ref{thm:SBMSTRM}) that if $X_1$ is $d$-dimensional super-Brownian motion with branching rate $\gamma>0$, then $\frac{2}{\gamma}X_1$ is a STRM $\nu$ with $\beta=1$ and the other parameters described in Theorem~\ref{thm:SBMSTRM} below.
Here, $X_1$ is considered under its canonical measure starting at $\delta_0$, conditioned to be non-zero, that is, a super-Brownian cluster starting with a single individual at the origin. Details are  recalled below. 

\begin{notation} $C=C([0,1],\R^d)$ is the space of continuous $\R^d$-valued paths on the unit interval with the topology of uniform convergence. For $t\in[0,1]$,   $\pi_t(y)=y(t)$ for $y\in C$ are the projection maps. 
We write $y^t$ for the stopped path $s\mapsto y(s\wedge t)$ in $C$ and $C^t=\{y^t,y\in C\}$ for the set of paths stopped at $t$. 
The space $C^0$ will be identified with $\R^d$. 
For $t\in[0,1]$, $\cC_t=\sigma(y(s),s\le t)$ defines the canonical filtration on $C$.  If $s\in [0,1]$ and $y,w\in C$ satisfy $w(0)=y(s)$, define $y/s/w\in C$ by 
\ARXIV{
\begin{equation}\label{eq:definition concatenation paths}
(y/s/w)(t)=\begin{cases} y(t),\ &\text{ if }t\in[0,s),\\
w(t-s), \ &\text{ if }t\in[s,1].
\end{cases}
\end{equation}}
\SUBMIT{
\begin{equation*}
		(y/s/w)(t)=\begin{cases} y(t),\ &\text{ if }t\in[0,s),\\
			w(t-s), \ &\text{ if }t\in[s,1].
		\end{cases}
\end{equation*}}
If $E$ is a metric space and $I\subset [0,\infty)$ is a left-closed interval, $D(I,E)$ is the space of c\`adl\`ag paths with the Skorohod topology (see Ch. 3 of \cite{EK} and the references there). 
\end{notation}

If $(B_t, \, t\ge 0)$ is a standard Brownian motion in $\R^d$ starting at $x\in \R^d$ under $P_x$, we will consider the time-inhomogeneous $C$-valued Markov process $t\mapsto B^t$, taking values in $C^t$ at time $t$ for $t\in[0,1]$. 
If $\phi:C\to\R$ is bounded Borel, the time inhomogeneous semigroup associated with the above process is (see Ch. 2 of \cite{DP91}) 
\ARXIV{
\begin{equation}
	\label{eq:def semi group HBM}
T_{s,t}\phi(y)=E_{y(s)}(\phi(y/s/B^{t-s}))\quad \text{ for }y\in C\text{ and }0\le s\le t\le 1.
\end{equation}}
\SUBMIT{
\begin{equation*}
	T_{s,t}\phi(y)=E_{y(s)}(\phi(y/s/B^{t-s}))\quad \text{ for }y\in C\text{ and }0\le s\le t\le 1.
\end{equation*}
}
Therefore $T_{s,t}:\mathcal{B}_b(C)\to \mathcal{B}_b(C^s)$, i.e.\ $T_{s,t}$ maps any bounded Borel functional over $C$ to a Borel functional over $C^s$.

Historical Brownian motion $(H_t,t\in[0,1])$ (the restriction to $[0,1]$ is just to suit our purposes) is an enrichment of super-Brownian motion in which the past history of a particle alive at time $t$ is also recorded.  It is simply the superprocess whose underlying spatial motion is $t\mapsto B^t$, rather than $t\mapsto B(t)$, as is the case for SBM.  $H$ is a time-inhomogeneous $M_F(C)$-valued Markov processes such that $H^t$ is supported by $C^t$ for all $t$.  
If $m\in M_F(\R^d)$ we will assume that under the probability $\Q_m$, the process $(H_t,t\in[0,1])$ is a historical Brownian motion starting at $H_0=m$ with branching rate $\gamma>0$.  
Again $\gamma>0$ arises as the branching variance in a branching particle approximation to $H$ (see Section~7 of \cite{DP91}).
The $M_F(\R^d)$-valued process $X_t(\cdot)=H_t(\pi_t^{-1}(\cdot))$ is a SBM on $[0,1]$, starting at $m$ with branching rate $\gamma$ (see (II.8.4) in \cite{Per02}). See \cite{DP91} or Section~II.8 of \cite{Per02} for more information on historical Brownian motion and historical processes in general. 

We describe a construction of $H_1$ using a branching particle system taken from Chapter~3 of \cite{DP91}. Assume $H_0=\delta_0$ for now.  The canonical measure associated with $H_1$ (an infinitely divisible random measure) will be denoted by $R_1$ (see Sections II.7 and II.8 of \cite{Per02}).  Recall from (II.8.6) of \cite{Per02} that $R_1$ is a finite measure on $M_F(C)\setminus\{0\}$ with total mass $2/\gamma$ such that $H_1=\int\eta \, \Xi(d\eta)$, where $\Xi$ is a Poisson point process on $M_F(C)\setminus\{0\}$ with intensity $R_1$. If $\P^*=R_1/(2/\gamma)$ is the ``cluster law", this means that 
\begin{equation}\label{Poisdec}
	\parbox[t]{0.9\textwidth}{$\displaystyle H_1=\sum_{i=1}^NH^i_1$ where $\{H^i_1:i \in\N\}$ are i.i.d.\ random measures with law $\P^*$ and $N$ is an independent Poisson r.v.\ with mean $2/\gamma.$}
\end{equation}
This decomposes $H_1$ into a Poisson number of i.i.d. clusters corresponding to the distinct ancestors at time $0$ with descendants alive at $t=1$.  In many respects $\P^*$ is a more fundamental law than $\Q_{\delta_0}(H_1\in\cdot)$, but it is usually easy to transfer properties from one to the other.  We often will abuse notation slightly and use $H_1$ to denote a historical cluster when working under $\P^*$, while working with clusters denoted by $H^i_1$ under $\Q_{\delta_0}$. As noted above, under $\Q_{\delta_0}$, $X_1(\cdot)=H_1(\pi_1^{-1}(\cdot))$ is SBM starting at $\delta_0$ with branching rate $\gamma$, and by (II.8.7) of \cite{Per02}, 
\begin{equation}\label{SBMcluster}
\text{$X_1$ has cluster law $\P^*(H_1\circ\pi_1^{-1}\in\cdot)$.}
\end{equation}
 (Recall from the Introduction that the cluster law of $X_1$ is its canonical measure conditioned on $X_1\neq 0$.)

Let $\lambda:[0,1)\to [0,\infty)$ be continuous.  Consider a particle system starting at positions $x_1,\dots, x_{N(0)}$ where the $x_i$'s need not be distinct and $N(0)\in \N$. Each particle has an associated independent   inhomogeneous Poisson process with intensity $\lambda(s)\,ds$, and follows an independent standard $d$-dimensional Brownian motion until the first jump of its Poisson process.    At that time the ``parent" particle is replaced by two particles at the same location as the parent. After the first binary split, the $N(0)+1$ particles follow independent Brownian motions with their independent Poisson processes, and the alternating pattern of Brownian migration and binary branching continues for $t\in[0,1)$.  If $t<1$, the number of particles at time $t$, $N(t)$, can be dominated by a pure birth process at time $t$ with constant birth rate $\sup_{s\le t}\lambda(s)$, and so is finite on $[0,1)$. If $(y_1(s), s\le t),\dots,(y_{N(t)}(s), s\le t)$ are the past Brownian trajectories of the $N(t)$ particles alive at time $t\in[0,1)$, extend each $y_i$ to $[0,1]$ by making it constant on $[t,1]$ and define
\begin{equation}\label{Hrep1}H^*_t=\sum_{i=1}^{N(t)}\delta_{y_i}\in N_F(C^t).
\end{equation}
Then $(H^*_t,0\le t<1)$ is an inhomogeneous $N_F(C)$-valued Markov process with sample paths in $D([0,1),N_F(C))$ and law $Q^*_{0,m}$ on this space. We call $H^*$ a historical branching Brownian motion with rate function $\lambda$. More information can be found on pp. 44-46 in \cite{DP91} where a more general class of branching particle systems is studied.   As discussed there, we may of course start $H^*$ at time $s\in[0,1)$ in state $m\in\N_F(C^s)$ and let $Q^*_{s,m}$ denote the law of $H^*$ on $D([s,1),N_F(C))$.  

The probability generating function of $H^*_t$ is
\begin{equation}\label{eq:def overline G st}
	\overline{G}_{s,t}\theta(m)=Q^*_{s,m}(\exp(H^*_t(\log\theta)))\text{ for Borel }\theta:C\to(0,1],\ m\in N_F(C^s),\  0\le s\le t< 1.
\end{equation}
Clearly $\overline{G}_{s,t}\theta\in(0,1]$,  and for each $s,t$ as above, $\overline{G}_{s,t}$ uniquely determines the transition probabilities $Q^*_{s,m}(H^*_t\in \cdot)$ for $m\in N_F(C^s)$, e.g.\ by Lemma~II.5.9 of \cite{Per02}. We write $\overline{G}_{s,t}\theta(y)$ for $\overline{G}_{s,t}\theta(\delta_y)$ for $y\in C^s$.  The independent evolution of the particle system from each path in the support of the initial measure implies that if $m=\sum_{i=1}^n\delta_{y_i}$ for some $y_1,\dots,y_n\in C^s$, then
\begin{equation}\label{multprop}\overline{G}_{s,t}\theta(m)=\prod_{i=1}^n\overline{G}_{s,t}\theta(y_i)=\exp(m(\log \overline{G}_{s,t}\theta)).
\end{equation}
This is often called the multiplicative property of the historical BBM (e.g.\ see Section~3 of \cite{DI78}), and shows that the mappings $(s,y)\mapsto \overline{G}_{s,t}\theta(y)$ on $W_t=\{(s,y):y\in C^s,s\in[0,t]\}$, for $t\in[0,1)$ and $\theta$ as above, uniquely determine the laws $Q^*_{s,m}$ of $H^*$.
Theorem~3.6(a) of \cite{DP91} implies that  $(s,y)\mapsto \overline{G}_{s,t}\theta(y)$ on $W_t$ (so $s\in [0,t]$) is the unique bounded solution of 
\begin{align}\label{brnle}
\nonumber\overline{G}_{s,t}\theta(y)&=\exp\Bigl(-\int_s^t\lambda(v)\,dv\Bigr)T_{s,t}\theta(y)+\int_s^tT_{s,u}((\overline{G}_{u,t}\theta)^2)(y)\exp\Bigl(-\int_s^u\lambda(v)\,dv\Bigr)\lambda(u)\,du,\\
\overline{G}_{t,t}\theta(y)&=\theta(y).
\end{align}
Intuitively, the first term on the right side of \eqref{brnle} is the contribution from the event where there is no branching in $[s,t]$, and the second term calculates the contribution where there is a first branch time $u\in[s,t]$ by conditioning on $u$ and integrating it out. 
The usual  proof of uniqueness using Gronwall's lemma also gives local uniqueness. 
By this we mean that if $t_0\in[0,t)$ and $\tilde G_{s,t}(y)$ (bounded by $M$, say) satisfies \eqref{brnle} for $(s,y)\in W_t$ and $s\ge t_0$, then $\tilde G_{s,t}(y)=\overline{G}_{s,t}\theta(y)$ for $(s,y)$ as above.  To see this, let $h^s(y)=| \tilde G_{s,t}(y)-\overline{G}_{s,t}\theta(y)|$ for $(s,y)\in W_t$ with $s\ge t_0$, $K(du)=
\lambda(u)\,du$ ($0\le u\le t$), recall $\bar G_{s,t}\theta\le 1$, and note that (for $s\in [t_0,t]$),
\begin{align*}h^s(y)\le\int_s^t |T_{s,u}((\tilde G_{u,t})^2-(\overline{G}_{u,t}\theta)^2)(y)|K(du)
&\le \int_s^t (1+M)T_{s,u}(h^u)(y)K(du)\\
&\le (1+M)E_{s,y}\Bigl[\int_s^t h^u(B^u)K(du)\Bigr],
\end{align*}
where the expectation $E_{s,y}$ is under the measure $P_{s,y}$ under which the path-valued process $(B^u)_{u\geq s}$ is started from $y\in C^s$ at time $s$. 
A generalized Gronwall lemma of Dynkin (see Lemma~3.2 of \cite{Dyn91}) implies $h=0$, as required.

Let $(P_t,t\ge 0)$ be the standard $d$-dimensional Brownian semigroup.
The ordinary branching Brownian motion (BBM) $(X^*_t, t<1)$ with branching rate $\lambda$ associated with $H^*$ is the $N_F(\R^d)$-valued time-inhomogeneous Markov process 
given by
\begin{equation}\label{XHdefn}X^*_t(\cdot)=H^*_t(\pi_t^{-1}(\cdot))=\sum_{i=1}^{N(t)}\delta_{y_i(t)}(\cdot),\text{ where $H^*_t$ is as in \eqref{Hrep1}}.\end{equation}
Formally, and independently of the above representation in terms of $H^*$, $X^*$ has laws $P^*_{s,\eta}$ for $s\in[0,1)$ and $\eta\in\N_F(\R^d)$, which by the multiplicative property are uniquely determined by its p.g.f.,
\begin{equation}\label{eq:def G st xi}
G_{s,t}\xi(x)=P^*_{s,\delta_x}(\exp(X^*_t(\log\xi))) \text{ for Borel $\xi:\R^d\to(0,1]$ and }0\le s\le t<1.
\end{equation}
The map $(s,x)\mapsto G_{s,t}\xi(x)$ is in turn the unique (and locally unique) solution of (c.f. \eqref{brnle})
\begin{align}\label{genpdfeq}
\nonumber G_{s,t}\xi(x)&=\exp\Bigl(-\int_s^t\lambda(v)\,dv\Bigr)P_{t-s}\xi(x)+\int_s^t P_{u-s}((G_{u,t}\xi)^2)(x)\exp\Bigl(-\int_s^u\lambda(v)\,dv\Bigr)\lambda(u)\,du,\\
G_{t,t}\xi(x)&=\xi(x)
\end{align}
(see, e.g., Section~4 of \cite{DI78} for the constant $\lambda$ case).
If we set $\theta(y)=\xi(y(t))$ for $y\in C$, and $\xi$ as above, then $T_{s,t}\theta(y)=P_{t-s}\xi(y(s))$ for $y\in C^s$, and $s\le t$.  It is easy to check that $\widehat G_{s,t}(x)$ satisfies \eqref{genpdfeq} iff $\widehat{\overline G}_{s,t}(y):=\widehat G_{s,t}(y(s))$ satisfies \eqref{brnle}. Therefore 
\begin{equation}\label{HXpgfs}\overline G_{s,t}\theta(y)=G_{s,t}\xi(y(s)),
\end{equation}
which also gives independent confirmation that if $H^*$ starts at $(s,m)$ for $m\in N_F(C^s)$, then \eqref{XHdefn}  defines BBM starting at $(s,m\circ\pi_s^{-1})$, as should be clear.  (It also gives independent verification of existence, uniqueness, and local uniqueness in \eqref{genpdfeq} from that of \eqref{brnle}.)

Henceforth we will set
\[\lambda(s)=\frac{1}{1-s}\text{ for }s\in[0,1),\]
and write $Q^*$ for $Q^*_{0,\delta_0}$. In this case \eqref{brnle} becomes
\begin{equation*}
\overline{G}_{s,t}\theta(y)=\frac{1-t}{1-s}T_{s,t}\theta(y)+\int_s^tT_{s,u}((\overline{G}_{u,t}\theta)^2)(y)(1-s)^{-1}du,\ (s,y)\in W_t,
\end{equation*}
and \eqref{genpdfeq} becomes
\begin{align}\label{BBMnle}
\nonumber  &G_{s,t}\xi(x)=\frac{1-t}{1-s}P_{t-s}f(x)+\int_s^tP_{u-s}((G_{u,t}\xi)^2)(x)(1-s)^{-1}\,du\text{ for }(s,x)\in[0,t]\times\R^d,\\ &G_{t,t}\xi(x)=\xi(x)\text{ for all }x\in\R^d.
\end{align}

\begin{notation}  If $s\in [0,1]$, define the restriction map $r_s:M_F(C)\to M_F(C)$ by 
\[r_sm(A)=m(\{y:y^s\in A\}).\]
\end{notation}
\begin{proposition}\label{bpshbm}
Consider $H_1$ under its cluster law $\P^*$ and with branching rate $\gamma>0$. One can define a historical branching Brownian motion $H^*$, with law $Q^*$, as a functional of $H_1$ so that for any $\veps_n\downarrow 0, \veps_n\in(0,1)$, 
\begin{equation}\label{bpconv}
\frac{\veps_n\gamma}{2}H^*_{1-\veps_n}\to H_1 \quad  \P^*-\text{a.s.}
\end{equation}
Let $X_1(\cdot)=H_1(\pi_1^{-1}(\cdot))$ denote the associated SBM under its cluster law (by \eqref{SBMcluster}) and let $X^*_t(\cdot)=H^*_t(\pi_t^{-1}(\cdot)),\,t<1$ be the BBM associated with the historical BBM $H^*$. Then
\begin{equation*}
\frac{\veps_n\gamma}{2}X^*_{1-\veps_n}\to X_1 \quad \P^*-\text{a.s.}
\end{equation*}
\end{proposition}
\begin{proof} This follows readily from Theorem~3.9(b) and Theorem~3.10 of \cite{DP91} in the setting there with $g(\veps)=g_b(\veps)= (\veps \gamma/2)$ and $t=1$. So we are setting the parameter $\beta=1$ in that reference and warn the reader that the $\gamma$ in that reference corresponds to our $\gamma/2$.   
The first theorem cited above constructs $H^*_s$ from $H_1$, and the second (Theorem~3.10) establishes the a.s.\ convergence to $H_1$.  
Theorem~3.10 of the above reference works under the law of historical Brownian motion, $\Q_{\delta_0}$, not $\P^*$.  
It is not hard
to use the cluster decomposition \eqref{Poisdec} and consider the convergence under $\Q_{\delta_0}$ on the set $\{N=1\}$ to obtain the result under $\P^*$.  
A second minor issue is that to make the approximating sequence in Theorem~3.10 correspond to the historical branching particle system constructed in Theorem~3.9 one must apply the restriction mapping $r_{1-\veps_n}$ to the approximating measures in the a.s.\ convergence in Theorem~3.10 and show that one still has a.s.\ convergence. 
As $r_{1-\veps_n}$ approaches the identity this turns out to be trivial and we obtain \eqref{bpconv}.  
The final assertion is then immediate because the map $m\mapsto m\circ\pi_1^{-1}$ from $M_F(C)$ to $M_F(\R^d)$ is continuous.
\ARXIV{A more detailed proof can be found in Section~\ref{app:subsec:bpshbm} of the Appendix.}
\end{proof}

\begin{remark}\label{H*defn}
To see what $H^*_{1-\veps_n}$ actually is, recall that  $r_{1-\veps_n}H_1$ has a finite number of atoms in $C^{1-\veps_n}$ (see Theorem~III.1.1 of \cite{Per02} and note the same reasoning applies under the canonical measure) corresponding to the histories of particles at $t=1-{\veps_n}$ which have descendants alive at time $1$. 
$H^*_{1-\veps_n}$ records the evolution of these particles: it has atoms at the same positions as $r_{1-\veps_n}H_1$ does and they all have mass $1$.
\end{remark}

\begin{notation}  $N_t=H^*_t(C)=X^*_t(\R^d)$ is the number of points in the branching Brownian motion at time $t$.  We often write $N_{s,t}$ for $N_t$ if we are starting at time $s\in[0,t]$.  
\end{notation}

\begin{proposition}\label{geometricZ} If $0\le s<t <1$ and $y\in C^s$, $Q^*_{s,\delta_y}(N_{s,t}\in\cdot)$ is a geometric distribution with mean $\frac{1-s}{1-t}$. In particular, $Q^*_{1-\mu^{-n},\delta_y}(N_{1-\mu^{-n},1-\mu^{-(n+1)}}\in\cdot)$ is a geometric distribution with mean $\mu$, for any $\mu>1$ and $n\in\Z_+$. 
\end{proposition}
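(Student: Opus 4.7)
The plan is to reduce the problem to an ODE for the probability generating function of $N_{s,t}$, exploiting the fact that in historical branching Brownian motion the branching tree and the spatial motion are independent, so that $N_{s,t}$ depends only on the branching clock. The main ingredient will be the nonlinear integral equation \eqref{hbbmnle} (equivalently \eqref{BBMnle}) specialized to a constant test function.

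First, I would fix $t \in (s,1)$ and take $\xi \equiv c \in (0,1]$ constant in \eqref{BBMnle}. Since the Brownian semigroup acts as the identity on constants, the map $x \mapsto G_{s,t}\xi(x)$ is itself a constant $g(s)$, and by \eqref{HXpgfs},
\[
g(s) = \overline G_{s,t}\theta(y) = Q^*_{s,\delta_y}\!\bigl[c^{N_{s,t}}\bigr] \qquad \text{for every } y \in C^s,
\]
which already verifies that the distribution does not depend on $y$. Equation \eqref{BBMnle} then reduces to the one-dimensional integral equation
\[
g(s) = \frac{1-t}{1-s}\,c \;+\; \frac{1}{1-s}\int_s^t g(u)^2\, du, \qquad g(t) = c.
\]

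Next, multiplying through by $(1-s)$ and differentiating in $s$ turns this into the separable Riccati ODE
\[
(1-s)\, g'(s) \;=\; g(s)\bigl(1 - g(s)\bigr), \qquad g(t) = c,
\]
which is solved by separation of variables $\frac{dg}{g(1-g)} = \frac{ds}{1-s}$. Matching the terminal condition gives
\[
g(s) \;=\; \frac{(1-t)\,c}{(1-s) - (t-s)\,c}.
\]
This is exactly the p.g.f. of a geometric distribution on $\N$ with parameter $p = (1-t)/(1-s)$, hence with mean $(1-s)/(1-t)$. Since a geometric is determined by its p.g.f. on $(0,1]$, this identifies the law of $N_{s,t}$. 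Specializing to $s = 1-\mu^{-n}$ and $t = 1-\mu^{-(n+1)}$ gives $(1-s)/(1-t) = \mu$, proving the second assertion.

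I do not expect a serious obstacle: everything above is a routine reduction to a classical Riccati equation. The only points requiring a little care are (i) justifying that one may legitimately plug a constant into \eqref{BBMnle} to obtain a solution of the corresponding scalar equation, which follows from the local uniqueness statement quoted after \eqref{genpdfeq}, and (ii) checking that $c \in (0,1] \mapsto g(s)$ really is a p.g.f. and uniquely determines the law, which is the standard monotone-class argument already used (via Lemma~II.5.9 of \cite{Per02}) in the discussion around \eqref{eq:def overline G st}.
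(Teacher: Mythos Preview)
Your proof is correct, and it takes a genuinely different route from the paper. The paper argues by first-step analysis: it shows the first branch time $T_1$ is uniform on $[s,1]$ (since $Q_s^*(N_{s,t}=1)=\exp\{-\int_s^t(1-u)^{-1}du\}=(1-t)/(1-s)$), then conditions on $T_1$ to get the convolution recursion
\[
Q_s^*(N_{s,t}=k)=\sum_{j=1}^{k-1}\int_s^t Q_u^{*}(N_{u,t}=j)\,Q_u^{*}(N_{u,t}=k-j)\,\frac{du}{1-s},
\]
and finishes by induction on $k$. Your approach instead packages the entire distribution into the p.g.f.\ and reduces to a single Riccati ODE, which is solved in closed form. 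The paper's argument is more hands-on and makes the branching structure explicit, while yours is slicker and avoids the induction; both are standard for Yule-type processes. One small remark: your claim that $G_{s,t}\xi$ is constant in $x$ when $\xi\equiv c$ follows most directly from translation invariance of the branching mechanism (the particle count is independent of the starting point), which is slightly cleaner than invoking uniqueness in \eqref{BBMnle}.
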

\begin{proof} This follows from Theorem~3.11(a) and Theorem~3.9(b) of \cite{DP91}. Alternatively it is easy to prove  directly. If $T_1\ge s$ is the first branch time after time $s$, then for $Q_s^*=Q^*_{s,\delta_y}$ (the probabilities are independent of the choice of $y$ by the Markov property of $X^*$ and translation invariance) and $t>s$,
\[Q_s^*(T_1\le t)=Q_s^*(N_{s,t}>1)=1-Q_s^*(N_{s,t}=1)=1-\exp\Bigl\{-\int_s^t\frac{1}{1-u}\,du\Bigr\}=\frac{t-s}{1-s}.\]
So in particular $T_1$ is uniform on $[s,1]$. By conditioning on this uniform time we get for $k\ge 2$,
\[Q_s^*(N_{s,t}=k)=\sum_{j=1}^{k-1}\int_s^t Q_u^{*}(N_{u,t}=j)Q_u^{*}(N_{u,t}=k-j)du(1-s)^{-1}.\]
The obvious induction on $k$ (prove the result for all  times $s<t<1$) gives the result. 
\end{proof}

\begin{proposition}\label{HBBMmean} 
If $t\in(0,1)$, then for $k\in\N$ and bounded Borel $\phi:C([0,1],\R^d)\to \R$,
\begin{equation}\label{condhmm}
Q^*(H^*_t(\phi) \, | \, H^*_t(1)=k)=kE_0(\phi(B^t)),
\end{equation}
where $B$ is a standard $d$-dimensional Brownian motion under $P_0$.  In particular, for any bounded Borel $\psi:\R^d\to\R$, 
\begin{equation}\label{condmm}
Q^*(X^*_t(\psi) \, | \, X^*_t(1)=k)=kE_0(\psi(B_t)).
\end{equation}
\end{proposition}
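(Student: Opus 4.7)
The plan is to reduce the statement about the historical measure $H^*_t$ to a simple many-to-one identity via the natural decomposition of historical BBM into (i) a random genealogical tree $\mathcal{T}$ (combinatorial genealogy together with the branching times) and (ii) a family of independent standard Brownian motions indexed by the edges of $\mathcal{T}$; cf.\ pp.\ 44--46 of \cite{DP91}. First I would observe that (4.11) follows immediately from (4.10) by taking $\phi(y)=\psi(y(t))$: since $X^*_t(\cdot)=H^*_t(\pi_t^{-1}(\cdot))$, we have $H^*_t(\phi)=X^*_t(\psi)$, and $\phi(B^t)=\psi(B_t)$. So the work is in proving (4.10).

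On $\{H^*_t(1)=k\}$, label the surviving particles $1,\dots,k$ in any measurable fashion and write their historical paths as $y_1,\dots,y_k$, so that $H^*_t=\sum_{i=1}^k\delta_{y_i}$ and $H^*_t(\phi)=\sum_{i=1}^k\phi(y_i)$. Conditional on $\mathcal{T}$ (which determines $H^*_t(1)$), the path $y_i$ is the concatenation of the independent standard Brownian motions carried by the finite sequence of ancestral edges from the root to leaf $i$, extended as a constant on $[t,1]$. By the strong Markov property of Brownian motion, such a concatenation has, irrespective of the branching times on the ancestral line, the law of a standard Brownian motion on $[0,t]$ started at $0$, viewed as an element of $C^t$. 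Consequently
\[
Q^*\bigl(\phi(y_i)\,\bigl|\,\mathcal{T}\bigr) \;=\; E_0\bigl(\phi(B^t)\bigr)\qquad\text{on}\ \{H^*_t(1)\ge i\}.
\]

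Summing over $i=1,\dots,k$ on $\{H^*_t(1)=k\}$ and integrating over $\mathcal{T}$ gives
\[
Q^*\bigl(H^*_t(\phi)\,;\,H^*_t(1)=k\bigr)\;=\;k\,E_0\bigl(\phi(B^t)\bigr)\,Q^*\bigl(H^*_t(1)=k\bigr).
\]
Since $Q^*(H^*_t(1)=k)>0$ by Proposition~\ref{geometricZ}, dividing yields (4.10), and (4.11) follows as noted.

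The one step that actually requires care is the conditional identification of the marginal law of $y_i$ given $\mathcal{T}$: one has to justify the "tree $\otimes$ edge-Brownian-motions" decomposition under $Q^*$ (the tree is generated by the inhomogeneous Poisson rate $\lambda(s)=1/(1-s)$ and is independent of the edge motions), and then invoke that a concatenation of independent Brownian motions along a random, $\mathcal{T}$-measurable, finite sequence of time intervals whose lengths sum to $t$ is itself a Brownian motion on $[0,t]$. These are standard facts about BBM built from a particle system, but they are the only nontrivial input; everything else is linearity and Fubini.
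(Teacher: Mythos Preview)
Your proof is correct and follows essentially the same approach as the paper: condition on the genealogical tree (equivalently, the Poisson branching times), use that each surviving particle's path is then a standard Brownian motion on $[0,t]$ started at $0$, sum the resulting identical conditional expectations, and integrate over $\{H^*_t(1)=k\}$. The paper's detailed proof makes the tree-indexed construction explicit and verifies that each $B^\beta$ is a Brownian motion stopped at $T^\beta$, which is precisely the ``one step that requires care'' you flagged.
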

\begin{proof} (Sketch). This can be proved using the construction of $H^*$ through a tree-indexed sequence of i.i.d.\  Poisson processes with rate $ds/(1-s)$ (giving the branch times and ``tree shape" of $H^*_t$) and an independent tree-indexed i.i.d.\ collection of standard Brownian motions which fill in the spatial motions between the branch times (e.g., see \cite{JB} or \cite{W86}).  If we condition on all the Poisson processes, $H^*_t$ is then a sum of point masses at (correlated) Brownian paths stopped at time $t$.  The conditional mean of $H^*_t(\phi)$ is then just $E_0(\phi(B^t))H^*_t(1)$. Integrating this conditional expectation over the event $\{H^*_t(1)=k\}$ (which is in the sigma-field generated by the Poisson processes) gives \eqref{condhmm}, and \eqref{condmm} is then immediate.  
\ARXIV{A detailed proof can be found in Section~\ref{app:subsec:HBBMmean} of the Appendix.}
\end{proof}

\begin{notation} If $x,y\in\R^{d'}$ and $c>0$, let $S_c(y)=\sqrt c y$ and $\tau_x(y)=x+y$. Often $d'$ will be $d$.
\end{notation} 

Here is an elementary scaling result for $G_{s,t}$.

\begin{lemma}\label{lem:scaling}
If $0\le r\le t<1$, then for any Borel $\xi:\R^d\to(0,1]$, 
\begin{equation}\label{scaeq}
\forall (s,x)\in[r,t]\times\R^d, \qquad G_{\frac{s-r}{1-r},\frac{t-r}{1-r}}(\xi\circ S_{1-r})(x/\sqrt{1-r})=G_{s,t}\xi(x).
\end{equation}
\end{lemma}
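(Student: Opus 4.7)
The plan is to define
\[
\widetilde G_{s,t}\xi(x) := G_{\frac{s-r}{1-r},\,\frac{t-r}{1-r}}(\xi\circ S_{1-r})(x/\sqrt{1-r})
\qquad((s,x)\in[r,t]\times\R^d)
\]
and verify that $\widetilde G_{s,t}\xi$ satisfies the same nonlinear integral equation \eqref{BBMnle} with the same terminal condition $\widetilde G_{t,t}\xi=\xi$. The terminal condition is immediate since at $s=t$ we have $(s-r)/(1-r)=(t-r)/(1-r)$, so $\widetilde G_{t,t}\xi(x)=(\xi\circ S_{1-r})(x/\sqrt{1-r})=\xi(x)$. The conclusion \eqref{scaeq} then follows from the local uniqueness of \eqref{BBMnle} noted after that equation.

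To verify the equation, write $s'=(s-r)/(1-r)$, $t'=(t-r)/(1-r)$, $u'=(u-r)/(1-r)$, and record the elementary identities
\[
1-s'=\frac{1-s}{1-r},\quad 1-t'=\frac{1-t}{1-r},\quad u'-s'=\frac{u-s}{1-r},\quad du=(1-r)\,du',
\]
so in particular $(1-t')/(1-s')=(1-t)/(1-s)$ and $(1-s')^{-1}du'=(1-s)^{-1}du$. Apply \eqref{BBMnle} to $G_{s',t'}(\xi\circ S_{1-r})$ evaluated at $x/\sqrt{1-r}$. Using Brownian scaling, for any bounded Borel $\psi:\R^d\to\R$,
\[
P_{u'-s'}(\psi\circ S_{1-r})(x/\sqrt{1-r}) \;=\; E\!\left[\psi\!\left(x+\sqrt{1-r}\,B_{u'-s'}\right)\right] \;=\; P_{u-s}\psi(x),
\]
since $\sqrt{1-r}\,B_{u'-s'}\stackrel{d}{=}B_{(1-r)(u'-s')}=B_{u-s}$. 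Taking $\psi=\xi$ handles the linear term, while for the quadratic term note that by the very definition of $\widetilde G$,
\[
\bigl(G_{u',t'}(\xi\circ S_{1-r})\bigr)^{2}(z'/\sqrt{1-r})\;=\;\bigl(\widetilde G_{u,t}\xi\bigr)^{2}(z')\quad\text{with }z'=\sqrt{1-r}\,z'',
\]
so the same Brownian scaling gives
\[
P_{u'-s'}\bigl((G_{u',t'}(\xi\circ S_{1-r}))^{2}\bigr)(x/\sqrt{1-r})\;=\;P_{u-s}\bigl((\widetilde G_{u,t}\xi)^{2}\bigr)(x).
\]

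Combining these with the rescaling of the prefactor and measure computed above, \eqref{BBMnle} for $G_{s',t'}(\xi\circ S_{1-r})$ at $x/\sqrt{1-r}$ becomes precisely \eqref{BBMnle} for $\widetilde G_{s,t}\xi$ at $x$. By the stated local uniqueness this identifies $\widetilde G_{s,t}\xi=G_{s,t}\xi$, which is \eqref{scaeq}.

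There is no serious obstacle here; the calculation is just bookkeeping on the effect of the parabolic rescaling $(u,z)\mapsto((u-r)/(1-r),z/\sqrt{1-r})$ on each term of the integral equation. The only substantive point is that the branching intensity $du/(1-u)$ is preserved by the time change $u=r+(1-r)u'$, which is precisely what makes $\lambda(u)=1/(1-u)$ the right choice to obtain a scale-invariant theory and ultimately makes the SBM cluster measure a STRM in Theorem~\ref{thm:SBMSTRM}. (A fully probabilistic alternative — couple the two BBMs via this time-space change so that the products in their p.g.f.s agree path by path — yields the same identity, but the PDE route fits the paper's framework more directly.)
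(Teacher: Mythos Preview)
Your proof is correct and follows essentially the same route as the paper: define the rescaled function, check the terminal condition, use Brownian scaling to verify that it satisfies \eqref{BBMnle} on $[r,t]$, and invoke local uniqueness. Your primed-variable bookkeeping is a cleaner presentation of the same computation the paper carries out with explicit fractions, and your closing remark on why $\lambda(u)=1/(1-u)$ is the scale-invariant choice is a nice bonus not spelled out in the paper.
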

\begin{proof} Let $\hat G_{s,t}(x)\in(0,1]$ denote the left-hand side of \eqref{scaeq}.  By the local uniqueness of solutions to \eqref{BBMnle} it suffices to check that $\hat G$ solves \eqref{BBMnle} for $(s,x)\in[r,t]\times\R^d$.  
First we have
\[\hat G_{t,t}(x)=\xi\circ S_{1-r}(x/\sqrt{1-r})=\xi(x).\]
The definition of $\hat G$ and \eqref{BBMnle} imply that for $s\in[r,t]$,
\begin{align}\label{scaleq1}
\hat G_{s,t}(x)&=\frac{1-\frac{t-r}{1-r}}{1-\frac{s-r}{1-r}}P_{\frac{t-s}{1-r}}(\xi\circ S_{1-r})\Bigl(\frac{x}{\sqrt{1-r}}\Bigr)\\
\nonumber&\quad+\int_{\frac{s-r}{1-r}}^{\frac{t-r}{1-r}}P_{v-\frac{s-r}{1-r}}\Bigl((G_{v,\frac{t-r}{1-r}}(\xi\circ S_{1-r}))^2\Bigr)\Bigl(\frac{x}{\sqrt{1-r}}\Bigr)\Bigl(1-\frac{s-r}{1-r}\Bigr)^{-1}dv.
\end{align}
Brownian scaling implies that for any $u\ge 0$,
\begin{equation}\label{Brsc}
P_{\frac{u}{1-r}}(\xi\circ S_{1-r})\Bigl(\frac{x}{\sqrt{1-r}}\Bigr)=P_u\xi(x).
\end{equation}
Use this to see the first term on the right-hand side of \eqref{scaleq1} is
\begin{equation}\label{term1}\frac{1-t}{1-s}P_{t-s}\xi(x).
\end{equation}
First do a change of variable ($v=(u-r)/(1-r)$) and then use \eqref{Brsc}
to see that the second term on the right-hand side of \eqref{scaleq1} is
\begin{align}\label{term2}
\nonumber \int_s^t&P_{\frac{u-s}{1-r}}\Bigl(G_{\frac{u-s}{1-r},\frac{t-r}{1-r}}(\xi\circ S_{1-r})^2\Bigr)\Bigl(\frac{x}{\sqrt{1-r}}\Bigr)((1-r)-(s-r))^{-1}du\\
\nonumber&=\int_s^t P_{\frac{u-s}{1-r}}((\hat G_{u,t}\circ S_{1-r})^2)\Bigl(\frac{x}{\sqrt{1-r}}\Bigr)(1-s)^{-1}du\\
&=\int_s^t P_{u-s}((\hat G_{u,t})^2)(x)(1-s)^{-1}du.
\end{align}
Insert \eqref{term1} and \eqref{term2} into the right-hand side of \eqref{scaleq1} and conclude
\[
\forall s\in[r,t], \forall x\in\R^d,
\qquad\hat G_{s,t}(x)=\frac{1-t}{1-s}P_{t-s}\xi(x)+\int_s^t P_{u-s}((\hat G_{u,t})^2)(x)(1-s)^{-1}du. 
\]
Therefore $\hat G_{s,t}$ satisfies \eqref{BBMnle} for $s\in[r,t]$, as required.
\end{proof}

We need a simple result about random point processes.  
\begin{proposition}\label{Qk}
Let $\bar\nu$ be a random measure in $N_F(\R^d)$ such that $\P(\bar\nu(1)=k)>0$ for all $k\in\N$.  For each $k\in\N$ there is a unique symmetric probability, $\tilde Q_k$, on $(\R^d)^k$ (symmetric in the $k$ $\R^d$-valued components) such that for all Borel $\xi:\R^d\to(0,1]$,
\begin{equation*}
\int\prod_{i=1}^k\xi(x_i)\dd \tilde Q_k(x_1,\dots,x_k)=\E(\exp\{\bar\nu(\log\xi)\} \, | \, \bar\nu(1)=k).
\end{equation*}
Moreover for all $i\le k$, 
\begin{equation}\label{Qkmar}
\int 1(x_i\in\cdot)\dd \tilde Q_k(x_1,\dots,x_k)=\frac{1}{k}\E(\bar\nu(\cdot) \, | \, \bar\nu(1)=k).
\end{equation}
\end{proposition}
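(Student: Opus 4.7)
The plan is to construct $\tilde Q_k$ explicitly by writing $\bar\nu$ conditioned on $\{\bar\nu(1)=k\}$ as a sum of atoms and then symmetrizing. On the event $\{\bar\nu(1)=k\}$ we may enumerate the (with multiplicity) atoms of $\bar\nu$ in a measurable way, for instance by the lexicographic order on $\R^d$ repeated with multiplicity, to obtain $\R^d$-valued random variables $Y_1,\dots,Y_k$ such that $\bar\nu=\sum_{i=1}^k\delta_{Y_i}$. That such a measurable enumeration of a point process with a prescribed number of atoms exists is standard (e.g.\ a Borel isomorphism between $\{m\in N_F(\R^d):m(1)=k\}$ and the quotient of $(\R^d)^k$ by the permutation group). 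Define
\begin{equation*}
\tilde Q_k(\cdot)=\frac{1}{k!}\sum_{\sigma\in S_k}\P\big((Y_{\sigma(1)},\dots,Y_{\sigma(k)})\in\cdot\,\big|\,\bar\nu(1)=k\big),
\end{equation*}
which is a symmetric probability on $(\R^d)^k$.

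Next I would verify the defining p.g.f.\ identity. For any Borel $\xi:\R^d\to(0,1]$, on $\{\bar\nu(1)=k\}$ we have $\exp\{\bar\nu(\log\xi)\}=\prod_{i=1}^k\xi(Y_i)$, and since the product $\prod_{i=1}^k\xi(x_i)$ is symmetric in its arguments, the symmetrization drops out:
\begin{equation*}
\int\prod_{i=1}^k\xi(x_i)\,\dd \tilde Q_k=\E\Bigl(\prod_{i=1}^k\xi(Y_i)\,\Bigl|\,\bar\nu(1)=k\Bigr)=\E\bigl(\exp\{\bar\nu(\log\xi)\}\,\bigl|\,\bar\nu(1)=k\bigr).
\end{equation*}
The marginal identity \eqref{Qkmar} is then immediate: for any Borel $A\subset\R^d$,
\begin{equation*}
\int 1(x_i\in A)\,\dd \tilde Q_k=\frac{1}{k!}\sum_{\sigma\in S_k}\P(Y_{\sigma(i)}\in A\,|\,\bar\nu(1)=k)=\frac{1}{k}\sum_{j=1}^k\E(1(Y_j\in A)\,|\,\bar\nu(1)=k)=\frac{1}{k}\E(\bar\nu(A)\,|\,\bar\nu(1)=k).
\end{equation*}

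For uniqueness, I would test the p.g.f.\ identity against functions of the form $\xi=a\mathbf 1_{\R^d}+\sum_{j=1}^mb_j\mathbf 1_{A_j}$, where $m\in\N$, the sets $A_j\subset\R^d$ are disjoint Borel, and $a,b_j>0$ are chosen so that $\xi$ takes values in $(0,1]$. Expanding the product $\prod_{i=1}^k\xi(x_i)$ and integrating against any symmetric $Q$ yields a polynomial in $(a,b_1,\dots,b_m)$ whose coefficients are, up to multinomial factors, the probabilities under $Q$ of the event that $n_j$ of the $k$ points lie in $A_j$ (with the remainder in none of the $A_j$'s). Varying $(a,b_j)$ over a non-empty open subset of $(0,1]^{m+1}$ recovers all these count probabilities, which by a monotone class argument (plus symmetry of $Q$) determine $Q$ on $(\R^d)^k$. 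Hence any symmetric $Q$ satisfying the identity must coincide with $\tilde Q_k$, proving uniqueness. The only delicate step is the construction of a measurable enumeration of the atoms of $\bar\nu$, but since $\R^d$ is Polish and the number of atoms is deterministic on the conditioning event, this is routine.
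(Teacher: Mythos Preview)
Your proof is correct and the existence and marginal arguments are essentially identical to the paper's: both order the atoms lexicographically (with multiplicity), symmetrize to get $\tilde Q_k$, and then read off the p.g.f.\ identity and the marginal formula directly.

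The uniqueness argument differs. The paper applies Stone--Weierstrass on the locally compact quotient space $(\R^d)^k/S_k$ to the algebra generated by the symmetric products $\prod_i\xi(x_i)$ with $\xi\in C_0(\R^d,(0,1])$, concluding that the p.g.f.\ identity determines $\int\phi\,\dd\tilde Q_k$ for every symmetric $\phi\in C_0$, and then uses symmetry of $\tilde Q_k$ to pass to arbitrary $\psi\in C_0$. Your route via simple test functions $\xi=a+\sum_j b_j\mathbf{1}_{A_j}$ and polynomial identification is more elementary---no Stone--Weierstrass---and equally effective. The key point you use implicitly is that for a symmetric law $Q$ the joint count distribution over any finite measurable partition determines $Q$, because symmetry forces $Q(X_1\in A_{j_1},\dots,X_k\in A_{j_k})$ to depend only on the count vector, so it equals the corresponding multinomial probability divided by $\binom{k}{n_0,\dots,n_m}$; from there the rectangle probabilities for arbitrary Borel $B_1,\dots,B_k$ are recovered by refining to a disjoint partition. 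Calling this a ``monotone class argument'' is slightly imprecise, but the logic is sound.
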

\ARXIV{The proof can be found in Section~\ref{app:subsec:Qk} of the Appendix.}
\SUBMIT{The proof of this result is elementary and left to the reader.}

\begin{definition} We call $\{\tilde Q_k\}$ the conditional support measures associated with $\bar\nu$. 
\end{definition} 

Recall from \eqref{SBMcluster} that $\P^*(X_1\in\cdot)$ is the cluster law of a SBM, $X_1$, with branching rate $\gamma>0$. 
Recall also the historical BBM, $(H^*_t, \,t<1)$, and BBM, $(X^*_t(\cdot), \,t<1)$, constructed from $H_1$ in Proposition~\ref{bpshbm} under $\P^*$. See Figure~\ref{fig:SBM is STRM} for a pictorial representation of the following result.

\begin{theorem}\label{thm:SBMSTRM}
Let $\mu>1$ and work under $\P^*$. 
Let $\{\tilde Q_k\}$ be the conditional support measures of the BBM $X^*_{1-\frac{1}{\mu}}$ and define $Q_k$ on the Borel sets in $(\R^d)^k$ by $Q_k=\tilde Q_k\circ S_{\mu}^{-1}$.  
Then the following properties hold:
\begin{enumerate}[label=(\alph*)]
	\item The random measure $\frac{2}{\gamma}X_1$ is a STRM with $\beta=1$, offspring law, $\cL(Z)$, geometric with mean $\mu$, and symmetric displacement laws $\{Q_k\}$.
	\item  The common marginal distribution of the $Q_k$'s, $\cL(X)$, is the $d$-dimensional Gaussian law with mean $0$ and covariance $(\mu-1)I_d$. 
	\item  The approximating sequence $(\nu_n)$ corresponding to $\nu=\frac{2}{\gamma}X_1$ is given by $\nu_n=\mu^{-n}X^*_{1-\mu^{-n}}$.
\end{enumerate}
\end{theorem}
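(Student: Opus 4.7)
The plan is to identify the rescaled BBM $\bar\nu_n := \mu^n \nu_n = X^*_{1-\mu^{-n}}$ (under $\P^*$) as the time-inhomogeneous Markov chain from Proposition~\ref{nunmarkov} associated with the claimed STRM parameters. Combined with Proposition~\ref{bpshbm} applied to $\veps_n = \mu^{-n}$ (which gives $\nu_n = \mu^{-n}\bar\nu_n \to \frac{2}{\gamma} X_1$ a.s.\ under $\P^*$) and the a.s.\ convergence in Theorem~\ref{nuprops}\ref{it:nuprops:d}, this will identify $\frac{2}{\gamma} X_1$ as the advertised STRM $\nu$. Checking the starting state is immediate: $\bar\nu_0 = X^*_0 = \delta_0$.

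The core step is to verify the conditional p.g.f.\ identity~\eqref{condpgf}. By the Markov property of the BBM $X^*$ and its multiplicative property (which follows from~\eqref{multprop} via~\eqref{HXpgfs}), the conditional p.g.f.\ reduces to a product over the atoms $x$ of $\bar\nu_n$ of $G_{1-\mu^{-n},\,1-\mu^{-(n+1)}}\xi(x)$. I would apply Lemma~\ref{lem:scaling} with $r=s=1-\mu^{-n}$ and $t=1-\mu^{-(n+1)}$ to rewrite this as $G_{0,\,1-1/\mu}(\xi \circ S_{\mu^{-n}})(\mu^{n/2} x)$. Unpacking the right-hand side via translation invariance of Brownian motion, Proposition~\ref{geometricZ} (which gives the geometric-mean-$\mu$ law of $X^*_{1-1/\mu}(1)$ supported on $\{1,2,\dots\}$), and the defining property of the conditional support measures $\tilde Q_k$, one obtains a sum-integral formula with displacements of size $\mu^{-n/2}$. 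The change of variables $y_i \mapsto \sqrt{\mu}\, y_i$ trades $\tilde Q_k$ for $Q_k = \tilde Q_k \circ S_\mu^{-1}$, turning $\mu^{-n/2}$ into $\mu^{-(n+1)/2} = \rho^{n+1}$; this matches $H_{n+1}\xi(x)$ exactly as defined in~\eqref{Hndef}. The marginal law $\cL(X)$ is then identified by combining~\eqref{Qkmar} with Proposition~\ref{HBBMmean}: the $\tilde Q_k$-marginals equal the law of $B_{1-1/\mu}$, namely $N(0,(1-1/\mu)I_d)$, and pushing forward by $S_\mu$ yields $N(0,(\mu-1)I_d)$, as claimed.

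With~\eqref{condpgf} verified, Proposition~\ref{nunmarkov} identifies $(\bar\nu_n)$ in law with the STRM Markov chain for the parameters $(p,\{Q_k\},\beta=1)$ of the theorem; the two a.s.\ limits then give $\frac{2}{\gamma}X_1 = \nu$ as a STRM. I expect the main obstacle to be the bookkeeping in the scaling computation of the second paragraph: the translation between the Brownian time scale $(1-\mu^{-n},1-\mu^{-(n+1)})$ and the STRM generation~$n$ is precisely what forces the normalization $Q_k = \tilde Q_k \circ S_\mu^{-1}$ and the variance $\mu-1$ in $\cL(X)$, and this must be done with care to line up $G_{1-\mu^{-n},\,1-\mu^{-(n+1)}}\xi$ with $H_{n+1}\xi$ exactly as in~\eqref{Hndef}.
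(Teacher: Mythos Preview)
Your proposal is correct and follows essentially the same approach as the paper's proof: identify the conditional p.g.f.\ of $X^*_{1-\mu^{-(n+1)}}$ given the past via the Markov/multiplicative property, apply the scaling Lemma~\ref{lem:scaling} (the paper applies translation invariance first and then scaling, while you reverse the order, but both are fine), decompose according to $X^*_{1-1/\mu}(1)=k$ using Proposition~\ref{geometricZ} and the conditional support measures, and then match with $H_{n+1}\xi$ from~\eqref{Hndef} after the change of variables; the marginal identification and the a.s.\ limit via Proposition~\ref{bpshbm} are also exactly as in the paper. The only small omission is an explicit remark that the Gaussian $\cL(X)$ satisfies \ref{TC} for all $\zeta>0$, so that the STRM with these parameters is well-defined per~\eqref{TCH}; this is trivial but worth stating.
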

\begin{proof} Fix $\mu>1$ and let $\xi:\R^d\to(0,1]$ be Borel. Let $(\cF^*_t)_{t<1}$ be the right-continuous filtration generated by $H^*$. The Markov and multiplicative properties of $H^*$ (see \eqref{multprop}) imply that if $\theta(y)=\xi(y(1))$, then
\begin{align}\label{condpgf2}
\nonumber \P^*(\exp\{X^*_{1-\mu^{-n-1}}(\log\xi)\} \, | \, \cF^*_{1-\mu^{-n}})&=\exp(H^*_{1-\mu^{-n}}(\log \overline G_{1-\mu^{-n},1-\mu^{-n-1}}\theta))\\
&=\exp(X^*_{1-\mu^{-n}}(\log G_{1-\mu^{-n},1-\mu^{-n-1}}\xi)),
\end{align}
where \eqref{HXpgfs} is used in the last equality, along with the fact that under $\overline G_{1-\mu^{-n},1-\mu^{-n-1}}$ paths are stopped at $1-\mu^{-n-1}$ (recall \eqref{eq:def overline G st}).
 Use translation invariance and then the scaling property (Lemma~\ref{lem:scaling} with $r=s=1-\mu^{-n}$ and $t=1-\mu^{-n-1}$) to see that
\begin{align}\label{pgfrepn}
\nonumber G_{1-\mu^{-n},1-\mu^{-n-1}}\xi(x)&=G_{1-\mu^{-n},1-\mu^{-n-1}}(\xi\circ\tau_x)(0)\\
\nonumber&=G_{0,1-\mu^{-1}}(\xi\circ\tau_x\circ S_{\mu^{-n}})(0)\\
\nonumber&=\P^*(\exp(X^*_{1-\mu^{-1}}(\log(\xi\circ\tau_x\circ S_{\mu^{-n}}))))\\
 &=\sum_{k=1}^\infty \P^*(X^*_{1-\mu^{-1}}(\R^d)=k)\\
\nonumber&\phantom{=\sum_{k=1}^\infty}\times \P^*\Bigl(\exp(X^*_{1-\mu^{-1}}\left(\log \left(\xi\circ\tau_x\circ S_{\mu^{-n}}\right)\right)) \, \Bigl| \, X^*_{1-\mu^{-1}}(\R^d)=k\Bigr).
\end{align}
By Proposition~\ref{geometricZ}
\begin{equation}\label{massgeom}
\P^*(X^*_{1-\mu^{-1}}(\R^d)=k)=\P(Z=k),\text{where $Z$ has a geometric distribution with mean $\mu$}.
\end{equation}
Recall that $\{\tilde Q_k\}$ are the conditional support measures of $X^*_{1-\mu^{-1}}$. 
Proposition~\ref{Qk} implies 
\begin{multline}\label{Qkrep}
\P^*\Bigl(\exp\left(X^*_{1-\mu^{-1}}\left(\log \left(\xi\circ\tau_x\circ S_{\mu^{-n}}\right)\right)\right) \, \Bigl| \, X^*_{1-\mu^{-1}}(\R^d)=k\Bigr)\\
=\int\prod_{i=1}^k\xi(x+\mu^{-n/2}x_i)\dd \tilde Q_k(x_1,\dots,x_k).
\end{multline}
Substitute \eqref{massgeom} and \eqref{Qkrep} into \eqref{pgfrepn} and conclude
\begin{align*}
G_{1-\mu^{-n},1-\mu^{-n-1}}\xi(x)&=\sum_{k=1}^\infty \P(Z=k)\int\prod_{i=1}^k\xi(x+\mu^{-n/2}x_i)\dd \tilde Q_k(x_1,\dots,x_k)\\
&=\sum_{k=1}^\infty \P(Z=k)\int\prod_{i=1}^k\xi(x+\mu^{-(n+1)/2}x_i)\dd Q_k(x_1,\dots,x_k). 
\end{align*}
Use this representation in the expression \eqref{condpgf2} for the conditional p.g.f. and compare with that in
 Proposition~\ref{nunmarkov} (using also \eqref{Hndef}) to see that the $N_F(\R^d)$-valued Markov Chain, $X^*_{1-\mu^{-n}}$, has the same law as the chain $\{\bar\nu_n\}$ defined from a STRM, $\nu$, with $\beta=1$, offspring law $\cL(Z)$ geometric with mean $\mu$, and symmetric displacement measures $\{Q_k\}$ as in the Theorem.   
 Here note that by \eqref{condmm} and \eqref{Qkmar} each marginal of $Q_k$ is $P_0(\mu^{1/2}B_{1-\mu^{-1}}\in\cdot)$, i.e., is mean zero Gaussian on $\R^d$ with covariance $(\mu-1)I_d$. Therefore \ref{TC} holds for all $\zeta>0$
  and so $\beta=1$, $\cL(Z)$ and $\{Q_k\}$ do determine a unique in law STRM, $\nu$, with its associated $\{\nu_n\}$.  
  (See Remark~\ref{rem:Qklaw} for the uniqueness.) Proposition~\ref{bpshbm} with $\veps_n=\mu^{-n}$ implies that
\[\nu_n=\mu^{-n}\bar\nu_n=\mu^{-n}X^*_{1-\mu^{-n}}\to\frac{2}{\gamma}X_1\quad  \P^*-\text{a.s.}\text{ as }n\to\infty.\]
Hence $\frac{2}{\gamma}X_1$ is the STRM $\nu$ associated with the sequence $\nu_n=\mu^{-n}X^*_{1-\mu^{-n}}$, and the proof is complete.
\end{proof} 

\begin{remark}\label{SBMdim} One easily sees from the above that the random sum $S$ in \eqref{SSndef} giving the mean measure for $\frac{2}{\gamma}X_1$ has a standard normal distribution on $\R^d$, and so the hypothesis \eqref{DC} of Theorem~\ref{Hdim} holds. Therefore a special case of that result and the above theorem give $\dim(\supp(X_1))=2\wedge d$ $\P^*$-a.s., as is well known (\cite{DH79} and \cite{P89}).
\end{remark}

\begin{figure}
	\centering
	\includegraphics{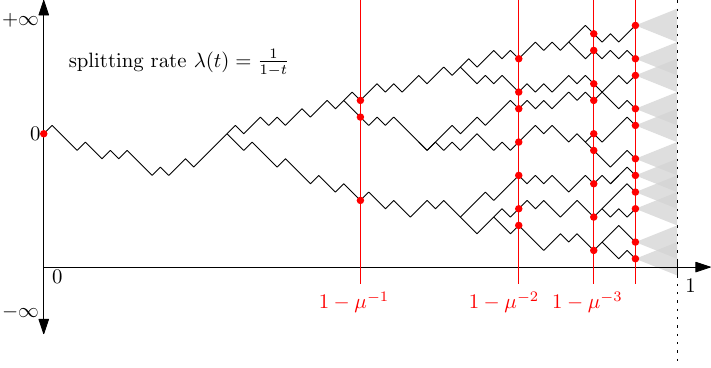}
	\caption{ \label{fig:SBM is STRM}
	Under  the cluster law $\P^*$ the past trajectories of all the  particles ``present at time $1$ in $H_1$"  can be described as those of a particle system that follow independent Brownian motions and split into two at time any $t$ with rate $\lambda(t)=\frac{1}{1-t}$. 
	Note that at any time $t<1$, this system only contains a finite number of particles.  
	Proposition~\ref{bpshbm} ensures that we can recover the random measure $H_1$, historical Brownian motion at time $1$, from the trajectories of this particle system on $[0,1)$. 
	We then take snapshots of the positions of the particles in this process at times $1-\mu^{-n}$ for $n\geq 1$. Theorem~\ref{thm:SBMSTRM} ensures that the evolution of this process along this discrete sequence of times indeed fits into the STRM framework. }
\end{figure} 

\begin{remark}\label{Qkprops} To describe $Q_k$ in the above it suffices to do so for $\tilde Q_k$.  
Recall that $\tilde Q_k$ in the above is the symmetric law of the $k$ points $\{X_1,\dots,X_k\}$ in the support of $X^*_{1-\frac{1}{\mu}}$ under $\P^*(\, \cdot\,| \, X^*_{1-\frac{1}{\mu}}(\R^d)=k)$. 
We sketch a simple construction of these points.  
If $k=1$ we clearly have a single $d$-dimensional Brownian motion $B$ starting at $0$ and $X_1$ is $B(1-\frac{1}{\mu})$.  
For $k\ge 2$, let $V_1,\dots,V_{k-1}$ be i.i.d.\ random times with density $f(v)=(1-v)^{-2}/(\mu-1)$ for $v\in[0,1-\frac{1}{\mu}]$, and let $0<U_1<\dots<U_{k-1}<1-\frac{1}{\mu}$ be the associated order statistics.  
Then $U_1,\dots,U_{k-1}$ will be the $k-1$ branch times of $X^*$ on $[0,1-\frac{1}{\mu}]$. 
To see this let $u_0=0$, $u_k=1-\frac{1}{\mu}$, and let $0<T_1<\dots<T_{k-1}<1-\frac{1}{\mu}$ be the ordered branch times. 
Then $\{T_i\in du_i, i=1,\dots,k-1\text{ and }X^*_{1-\frac{1}{\mu}}(\R^d)=k\}$ holds iff there are no branching events on $(u_{i-1},u_i)$ for $i=1\dots k$ and branching occurs in $du_i$ for $i=1,\dots,k-1$.  
A short calculation then shows this probability is $P((U_1,\dots,U_{k-1})\in du_1... du_{k-1})$.  The construction should now be clear.  
Condition on the branch times occurring at $u_1<\dots<u_{k-1}$. 
Run a $d$-dimensional Brownian path starting from $0$ on $[0,u_1]$. 
Then run two independent Brownian paths on $[u_1,u_2]$ from the end location.  
Randomly pick one of the two new end locations at time $u_2$ and run two independent Brownian paths from this point and a third from the other end location, all on $[u_2,u_3]$.  
Now randomly pick one of the three end locations at time $u_3$ and have this location split into two. 
Continue in this way until we arrive at time $u_{k-1}$ with $k-1$ end locations.  
Pick one at random to split into two particles and run an independent Brownian path from each of these $k$ particles, each ending at time $1-\frac{1}{\mu}$. 
The resulting $k$ end locations give $\{X_1,\dots, X_k\}$.  
The symmetrised joint law of the resulting collection of $k$ correlated Brownian paths on $[0,1-\frac{1}{\mu}]$ leading up to these endpoints, gives the $k$th conditional support measure of $H^*_{1-\frac{1}{\mu}}$. 
\end{remark}

\section{$B$-ary Classical Super-Tree Random Measures}\label{sec:Bary}


Recall the definition of a $B$-ary CSTRM in $\R^d$, associated with $p=(p_k,k\in\Z_+)$ from the Introduction.
In this case we have $\rho=\mu^{-\beta/2}=B^{-1}$ and in the definition of the spatial displacement law $Q^s$ we may take $X_{m,k}=X_m$ for an i.i.d.\ sequence $\{X_m\}$ of uniform r.v.'s on $\{0,\dots,B-1\}^d$. We also may assume that $X^f(\omega)\in\{0,\dots,B-1\}^d$ for all $f\in F$ and all $\omega$. We will alter the notation in Section~\ref{sec:STRM} slightly and, as in \cite{ADHP}, assume $\{X^f:f\in F\setminus F_0\}$ are i.i.d.\ uniform r.v.'s on $\{0,\dots,B-1\}^d$ and, instead of \eqref{Sfdefn} and \eqref{Sidefn}, write
\begin{equation}\label{BaryS}
S^f=\sum_{m=1}^n B^{-m}X^{f|m},\quad f\in F_n; \quad S^i=\sum_{m=1}^\infty B^{-m}X^{i|m},\quad i\in I.
\end{equation}
This new notation amounts to replacing $X^{f|(m-1)}_{f_m}$ in \eqref{Sfdefn} (recall in our simple setting there is no branching dependence, and hence no second subscript) with $X^{f|m}$ for $1\le m\le |f|$, and so clearly does not affect any of the processes defined in Section~\ref{sec:STRM}. Therefore in this setting we have 
\begin{equation}\label{newGdef}
\cG_n=\sigma(Z^f:0\le |f|<n)\vee\sigma(X^f:1\le |f|\le n)\ \ \text{ for }n\in\Z_+.
\end{equation}
Note that $S^i\in[0,1]^d$ for all $i\in I$ and so 
\[\supp(\nu)\subset [0,1]^d.\]
In this setting $S=\sum_{m=1}^\infty B^{-m}X_m$ corresponds to the construction of Lebesgue measure by a random base $B$ expansion in each coordinate (see, e.g., the discussion in Section~V.3a in \cite{Feller2}) and we have
\begin{equation}\label{Bmeanmeas}
\P(S\in A)=\int_{[0,1]^d}1_A(x)\, \dd x.
\end{equation}
Therefore Theorem~\ref{nuprops}(c) shows the mean measure of $\nu$ is Lebesgue measure and we have,
\begin{equation}\label{meanLeb} \E\Bigl(\int \phi(x) \,\dd \nu(x)\Bigr)=\int_{[0,1]^d}\phi(x)\,\dd x\quad\text{for all bounded measurable $\phi$ on $[0,1]^	d$}.
\end{equation}
The boundedness of $X_m$ and \eqref{Bmeanmeas} show that the hypotheses of Theorem~\ref{Hdim} hold and so (as noted in the Introduction) for our $B$-ary CSTRM,
\begin{equation*}
\dim(\supp(\nu))=\frac{2}{\beta}\wedge d=\frac{\log \mu}{\log B}\wedge d\quad \text{a.s.}
\end{equation*}

\begin{notation}  If $x,y\in\R^d$, let $[x,y)=\{z\in \R^d:x_\ell\le z_\ell<y_\ell\text{ for }\ell=1,\dots,d\}$ and let $\vec 1=(1,1,\dots, 1)\in\R^d$. If $m\in\Z_+$, $\vec k\in\{0,1,\dots, B^m-1\}^d$ and $x=B^{-m}\vec k \in [0,1-B^{-m}]^d$, let 
\[ C_m(x)=[x,x+B^{-m}\vec 1)\subset[0,1)^d.\]
That is, $C_m(x)$ is the $d$-dimensional cube of edge length $B^{-m}$ with ``lower left corner" $x\in G_m:=[0,1)^d\cap B^{-m}\Z^d$, see Figure~\ref{fig:definition cube Cm(x)}.
\end{notation}

\begin{figure}
	\centering
	\includegraphics[page=4]{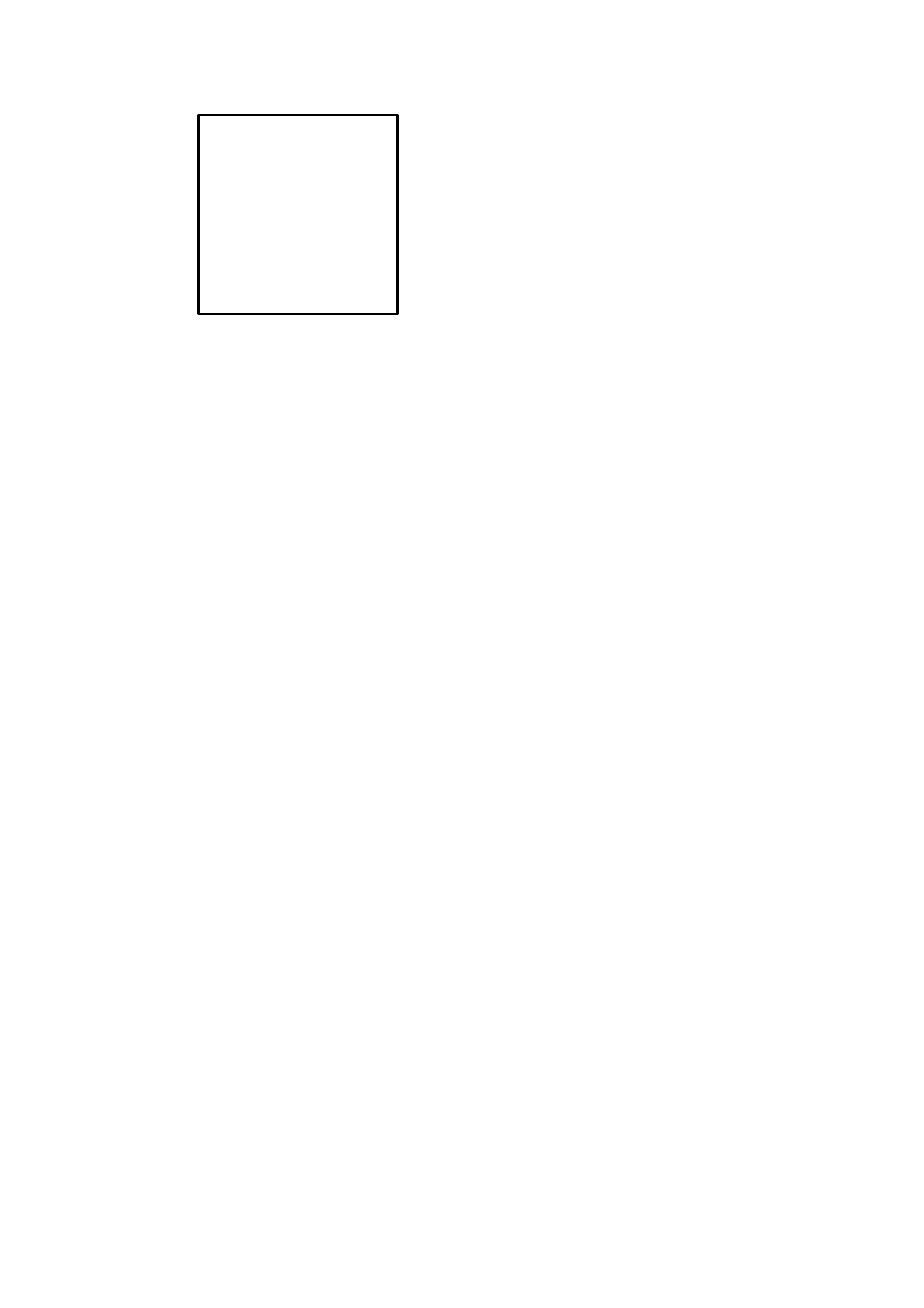}
	\caption{For some site $x\in G_m$ on the grid, the cube $C_m(x)$ is the cube of edge-length $B^{-m}$ whose point with lowest coordinates is $x$.
	\label{fig:definition cube Cm(x)}}
\end{figure}
\begin{figure}
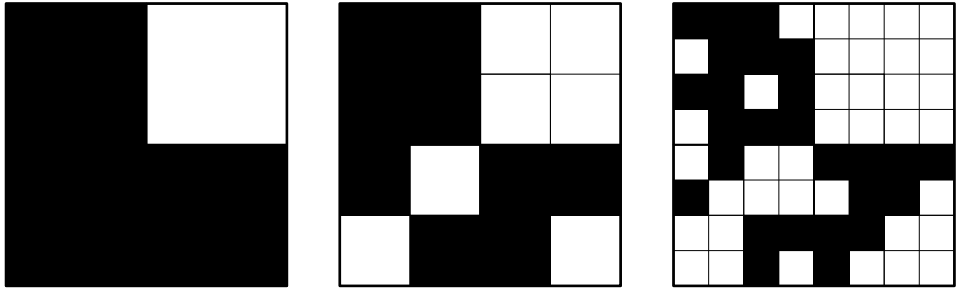

	\centering
	\begin{tabular}{ccc}
		\includegraphics[page=9,width=4cm]{grid-vs-cubes} & 	\includegraphics[page=10,width=4cm]{grid-vs-cubes} &
		\includegraphics[page=11,width=4cm]{grid-vs-cubes}
	\end{tabular}
	\caption{Black cubes correspond to positions $x$ on the grid with at least one particle whereas white cubes corresponds to positions with no particles. The union of the black cubes decreases to $\supp \ \nu$ as $m\rightarrow \infty$, see Lemma~\ref{Cantor}. 
	\label{fig:construction supp cantor set}}
\end{figure}

The following simple random Cantor set description of $\supp(\nu)$, illustrated in Figure~\ref{fig:construction supp cantor set}, will be used frequently.

\begin{lemma}\label{Cantor}
With probability one, $\supp(\nu)=\cap_{m=1}^\infty\cup_{f\in K^0_m}\overline{C_m(S^f)}$.
\end{lemma}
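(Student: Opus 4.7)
The plan is to prove both inclusions directly, using König's lemma on a random subtree of $K^0_\infty$ for the reverse inclusion. The geometric input I would record first is the nested structure of the cubes: for any $n \geq 1$ and $f \in F_n$ with parent $f|(n-1)$, one has
\[
S^f - S^{f|(n-1)} = B^{-n} X^{f|n}, \qquad X^{f|n} \in \{0, 1, \ldots, B-1\}^d,
\]
so each coordinate of $S^f - S^{f|(n-1)}$ lies in $[0, B^{-(n-1)} - B^{-n}]$, and consequently $\overline{C_n(S^f)} \subset \overline{C_{n-1}(S^{f|(n-1)})}$.

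For the forward inclusion I would invoke Theorem~\ref{nuprops}\ref{it:nuprops:b}, which gives $\supp(\nu) = \{S^i : i \in K\}$ a.s. For any $i \in K$ and $n \in \N$ one has $i|n \in K^0_n$ and $S^i - S^{i|n} = \sum_{m=n+1}^\infty B^{-m} X^{i|m}$, whose coordinates lie in $[0, B^{-n}]$; hence $S^i \in \overline{C_n(S^{i|n})} \subset \bigcup_{f \in K^0_n} \overline{C_n(S^f)}$, giving $\supp(\nu) \subset \bigcap_{m \geq 1} \bigcup_{f \in K^0_m} \overline{C_m(S^f)}$.

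For the reverse inclusion I would fix $y$ in the right-hand intersection and consider the random set $T_y := \{f \in K^0_\infty : y \in \overline{C_{|f|}(S^f)}\}$. By hypothesis $T_y \cap K^0_n \neq \emptyset$ for every $n \geq 1$, and by the cube nesting above $T_y$ is closed under taking parents (whenever $|f| \geq 2$), so $T_y$ together with the root $0$ forms an infinite subtree of the full descent tree. Since $Z^f < \infty$ for all $f$ and $\omega$, each level $K^0_n$ is finite, hence so is $T_y \cap K^0_n$; König's lemma then supplies some $i \in I$ with $i|n \in T_y \cap K^0_n$ for all $n \in \N$. Then $i \in K$ by definition of $K$, and $y \in \overline{C_n(S^{i|n})}$ for all $n$ forces $|y - S^{i|n}|_\infty \leq B^{-n} \to 0$, whence $y = \lim_n S^{i|n} = S^i \in \supp(\nu)$.

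No serious obstacle arises; the only mild subtlety is that a point $y$ lying on the shared boundary of several adjacent cubes at some level yields multiple candidate ancestors, but König's lemma needs only the existence of some infinite branch through $T_y$, which is all that is required to recover $y$ as $S^i$ for some $i \in K$.
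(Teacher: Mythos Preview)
Your proof is correct and follows essentially the same strategy as the paper's for the forward inclusion. For the reverse inclusion you use K\"onig's lemma on the subtree $T_y\cup\{0\}$, whereas the paper chooses (possibly incompatible) $f_m\in K^0_m$ with $y\in\overline{C_m(S^{f_m})}$, extends them to $I$, and extracts a convergent subsequence in the compact product $\prod_{\ell}\{1,\dots,M_\ell\}$ via Tychonoff. The two arguments are morally equivalent (K\"onig's lemma is the tree-level incarnation of sequential compactness in a finitely branching product), but your route is slightly more direct: the nesting $\overline{C_n(S^f)}\subset\overline{C_{n-1}(S^{f|(n-1)})}$ immediately makes $T_y$ parent-closed, so K\"onig hands you a coherent branch $i$ without the need to pass to a subsequence or verify that the limit point lands in $K$ level by level. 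The paper's approach, on the other hand, avoids appealing to the cube-nesting property explicitly and works purely at the level of the index space $I$.
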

\begin{proof}
 By \eqref{BaryS} we have for all $m\in\N$,
\[S^i\in[S^{i|m},\infty)\text{ for all $i\in I$ and }\sup_{i\in I}\Vert S^i-S^{i|m}\Vert_\infty\le B^{-m}.\]
Therefore for all $m\in\N$ and $i\in K$ we have $S^i\in \overline{C_m(S^{i|m})}$ and $i|m\in K^0_m$.  This proves that 
\[\{S^i:i\in K\}\subset \cap_{m=1}^\infty\cup_{f\in K^0_m}\overline{C_m(S^f)},\]
and so  by Theorem~\ref{nuprops}\ref{it:nuprops:b}, 
\begin{equation}\label{inc1}\supp(\nu)=\{S^i:i\in K\}\subset\cap_{m=1}^\infty\cup_{f\in K^0_m}\overline{C_m(S^f)}\quad \text{a.s.}
\end{equation}

Now let $x\in\cap_{m=1}^\infty\cup_{f\in K^0_m}\overline{C_m(S^f)}$ and for $m\in\N$ choose $f_m\in K^0_m$ so that $x\in \overline{C_m(S^{f_m})}$. 
Define $\bar f_m\in I$
by appending an infinite string of $1$'s to the finite string $f_m$.  
By \eqref{Kbounds} $\bar f_m\in\prod_{\ell=1}^\infty\{1,\dots,M_\ell\vee 1\}$ for all $m$ and so by Tychonoff's theorem there is a subsequence $(\bar f_{m_n})$ converging to $i$ in $I$.  For each $m\in\N$ there is a natural number $L_m\ge m$ so that for $n\ge L_m$, $i|m=f_{m_n}|m\in K^0_m$ (since $f_{m_n}\in K^0_{m_n}$).  This proves $i\in K$.  If $m\in \N$ and $n\ge L_m$, we have 
\begin{align*}
\Vert x-S^i\Vert_\infty&\le \Vert x-S^{f_{m_n}}\Vert_\infty+\Vert S^{f_{m_n}}-S^{f_{m_n}|m}\Vert_\infty+\Vert S^{f_{m_n}|m}-S^i\Vert_\infty\\
&\le B^{-m_n}+2 B^{-m},
\end{align*}
the last by the choice of $f_{m_n}$ and \eqref{BaryS}. Let $n\to\infty$ and then $m\to\infty$ on the right-hand side to see that $x=S^i$.
This gives the converse inclusion to \eqref{inc1}, and we are done. 
\end{proof}

\begin{notation}  If $x^{(j)}$ denotes the $j$th coordinate of $x\in\R^d$, $m\in\Z_+$, and $x\in G_m$, we write $x=.x_1\dots x_m$, where $x_\ell\in\{0,\dots,B-1\}^d$ are the unique vectors such that for $j=1,\dots,d$, $x^{(j)}=\sum_{\ell=1}^mx_\ell^{(j)}B^{-\ell}$. 
If $g\in F_m$ and $\ell\in\N$, let 
\begin{equation}\label{veenotation}
\text{$g\vee \ell\in F_{m+1}$ be the index obtained by adding $\ell$ to $g$ as the last digit.}
\end{equation} 
\end{notation}

\begin{proposition}\label{hitprob1}
There is a constant $c_{\ref{hitprob1}}>0$ and sequences $\delta_m,\veps_m\to0$, depending only on $(B^d,\cL(Z))$, such that:
\begin{enumerate}[label=\emph{(\alph*)}, ref=(\alph*)]
	\item\label{it:hitprob1:a} If $\frac{2}{\beta}=d$, then
	\[\forall m\in\N, \forall x\in G_m,\quad \P(\nu(C_m(x))>0)=\P(\nu(\overline{C_m(x)})>0)=\frac{2\mu(1+\delta_m)}{(\mu-1)m}.\]
	\item\label{it:hitprob1:b} If $\frac{2}{\beta}<d$, then
	\[\forall m\in\Z_+, \forall x\in G_m,\quad \P(\nu(C_m(x))>0)=\P(\nu(\overline{C_m(x)})>0)=c_{\ref{hitprob1}}(1+\veps_m)(B^{-m})^{(d-\frac{2}{\beta})}.\]
\end{enumerate}
\end{proposition}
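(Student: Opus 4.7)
The plan is to reduce the problem to a one-dimensional recursion for $u_m:=\P(\nu(\overline{C_m(x)})>0)$ via the self-similarity of the $B$-ary CSTRM, and then extract the asymptotics by a Taylor expansion of the offspring p.g.f.\ $\phi(s):=\E[s^Z]$ at $s=1$. By translation invariance of the uniform displacement law on $\{0,\dots,B-1\}^d$, $u_m$ will not depend on $x\in G_m$. Since $\partial C_m(x)$ has Lebesgue measure zero and by \eqref{meanLeb} the mean measure of $\nu$ is Lebesgue on $[0,1]^d$, $\nu(\partial C_m(x))=0$ a.s., which gives the first equality in both cases. To build the recursion I decompose $\nu=\sum_{j=1}^{Z^0}\nu^{(j)}$ into contributions from the descendants of the $j$-th generation-one individual; by self-similarity, conditionally on $X^{(j)}$, $\nu^{(j)}$ has the law of $\mu^{-1}\tilde\nu^{(j)}(B(\cdot-B^{-1}X^{(j)}))$ for an independent copy $\tilde\nu^{(j)}$ of $\nu$. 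With $x=.x_1\dots x_m$, the event $\{\nu^{(j)}(\overline{C_m(x)})>0\}$ equals (modulo Lebesgue-null boundaries) $\{X^{(j)}=x_1\}\cap\{\tilde\nu^{(j)}(\overline{C_{m-1}(x')})>0\}$ with $x'=.x_2\dots x_m\in G_{m-1}$. Since $|\{j\le Z^0:X^{(j)}=x_1\}|$ is $\mathrm{Bin}(Z^0,p)$ with $p=B^{-d}$, conditioning yields
\begin{equation}\label{eq:plan-recursion}
	u_m = 1-\phi(1-p\,u_{m-1}),\qquad u_0=\P(\nu\ne 0).
\end{equation}

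For part~\ref{it:hitprob1:b}, the ratio $\mu p = B^{-(d-2/\beta)}$ lies in $(0,1)$, and expanding $\phi(1-t)=1-\mu t+O(t^2)$ in \eqref{eq:plan-recursion} gives $u_m=\mu p\,u_{m-1}+O(u_{m-1}^2)$. Setting $r_m:=u_m/(\mu p)^m$, the increments $\log r_m-\log r_{m-1}=O((\mu p)^{m-1})$ are summable, so $r_m$ converges to some $c_{\ref{hitprob1}}\in[0,\infty)$; strict positivity follows from a Paley--Zygmund lower bound using $\E[\nu(\overline{C_m(x)})]=B^{-md}$ (from \eqref{meanLeb}) together with a second-moment estimate via Lemma~\ref{secondmom}. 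Setting $\veps_m:=r_m/c_{\ref{hitprob1}}-1\to 0$ and rewriting $(\mu p)^m=B^{-m(d-2/\beta)}$ gives the stated form. For part~\ref{it:hitprob1:a}, the critical case $\mu p=1$, the linear term cancels and one first shows $u_m\downarrow 0$: the map $f(u):=1-\phi(1-pu)$ has $f(0)=0$, $f'(0)=1$, and $f''(u)=-p^2\phi''(1-pu)\le 0$ by convexity of $\phi$, so $f$ is concave on $[0,q]$ with $f(u)<u$ for $u>0$, forcing $(u_m)$ to be strictly decreasing with unique possible limit $0$. The refined expansion $\phi(1-t)=1-\mu t+\tfrac12\phi''(1)t^2+o(t^2)$ (valid since $\phi''(1)=\E[Z(Z-1)]<\infty$ by \eqref{pk}) then yields $u_{m-1}-u_m=\frac{\phi''(1)}{2\mu^2}u_{m-1}^2+o(u_{m-1}^2)$, and telescoping reciprocals gives
\[
	\frac{1}{u_m}-\frac{1}{u_{m-1}} = \frac{\phi''(1)}{2\mu^2}+o(1).
\]
A Ces\`aro summation then produces $m\,u_m\to 2\mu^2/\phi''(1)$; rearranging into the form of part~\ref{it:hitprob1:a} and absorbing the subleading correction into $\delta_m\to 0$ will complete the proof.

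The main obstacle is case~\ref{it:hitprob1:a}. Proving $u_m\downarrow 0$ is not automatic because the recursion is neutral at the linear order --- it is the strict concavity of $f$ that pushes iterates to zero. Extracting the exact asymptotic constant then requires a genuine $o(u_{m-1}^2)$ remainder whose telescoped contribution is $o(m)$, which is exactly at the edge of the finite-second-moment hypothesis in \eqref{pk}. A secondary technical point is that the contributions from first-generation sub-cubes sharing a boundary face with $\overline{C_m(x)}$ must be shown to vanish almost surely, which justifies the clean recursion \eqref{eq:plan-recursion} and the equality $\P(\nu(C_m(x))>0)=\P(\nu(\overline{C_m(x)})>0)$.
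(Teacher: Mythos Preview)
Your approach is correct and differs from the paper's. The paper does not set up your one-step recursion $u_m=1-\phi(1-p\,u_{m-1})$; instead it identifies the set $J_m^x=\{f\in K_m^0:S^f=x\}$ and shows $(|J_m^x|)_{m\ge 0}$ is a Galton--Watson process with offspring law $R=\sum_{\ell=1}^Z e_\ell$ (the $e_\ell$ i.i.d.\ Bernoulli($B^{-d}$), independent of $Z$), then writes $\nu(C_m(x))=\sum_{g\in J_m^x}W^g$ with the $W^g$ i.i.d.\ copies of $W^0$ independent of $J_m^x$, so that $u_m=\P(|J_m^x|>0)\bigl(1-\E[q^{|J_m^x|}\mid |J_m^x|>0]\bigr)$ with $q=\P(W^0=0)$. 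The asymptotics are then obtained by direct citation: Kolmogorov's estimate and Yaglom's theorem in the critical case, and the classical subcritical survival and Yaglom-type limit law otherwise. In fact the two arguments iterate the same map (the p.g.f.\ of $R$ is $\psi(s)=\phi(1-p+ps)$, and your recursion reads $1-u_m=\psi(1-u_{m-1})$), but the paper invokes black-box branching theorems while you re-derive them via Taylor expansion and telescoping of reciprocals. Your route is more self-contained; the paper's gives positivity of the subcritical constant for free from the cited limit theorem. On your side, note that the summability of $\log r_m-\log r_{m-1}$ already forces $\lim_m r_m>0$, so the Paley--Zygmund step is unnecessary.

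One discrepancy to flag: your critical constant $2\mu^2/\phi''(1)$ does not in general equal the $\tfrac{2\mu}{\mu-1}$ displayed in the proposition; the two coincide only when $\mathrm{Var}(Z)=0$. The source is the paper's formula $\sigma_R^2=\tfrac{\mu}{B^d}(1-B^{-d})$, which omits the term $B^{-2d}\,\mathrm{Var}(Z)$; the correct variance in the critical case $\mu=B^d$ is $\sigma_R^2=\phi''(1)/\mu^2$, which gives exactly your constant via Kolmogorov's theorem. So when you try to ``rearrange into the form of part~(a)'' you will find a mismatch that reflects a slip in the stated constant rather than any flaw in your method.
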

\begin{proof} By \eqref{meanLeb} $\nu(\partial C_m(x))=0$ a.s.\ and so we only need prove the above without the closures.  
	Let $m\in\Z_+$ and $x=.x_1\dots x_m\in G_m$.  If $n\ge m$  and $f\in F_n$, then by \eqref{BaryS}, $S^f\in C_m(x)$ iff $S^{f|m}=x$. 
	Therefore
\begin{equation*} \nu_n(C_m(x))=\mu^{-n}|\{f\in K_n^0:S^{f|m}=x\}|=\sum_{g\in K^0_m}1(S^g=x)\mu^{-n}|K^g_n|.
\end{equation*}
Use the above with the facts that $\nu_n\to \nu$ a.s.\ (Theorem~\ref{nuprops}\ref{it:nuprops:d}) and $\nu(\partial C_m(x))=0$ a.s.\ to see that 
\begin{align}\label{nuCm}
\nonumber\nu(C_m(x))=\lim_{n\to\infty}\nu_n(C_m(x))&=\lim_{n\to\infty}\sum_{g\in K^0_m}1(S^g=x)\mu^{-n}|K^g_n|\\
&=\sum_{g\in K_m^0}1(S^g=x)W^g.
\end{align}
If $x\in G_m$, set $J_0^x=K^0_0=\{0\}$ if $m=0$ (and hence $x=0$), and for $m\ge 1$ define
\begin{align}\label{Jdefn}
\nonumber J_m^x&=\{g\in K_m^0:S^g=x\}\\
\nonumber &=\{f\vee g_m: f\in K^0_{m-1},\ S^f=.x_1,\dots x_{m-1},\ 1\le g_m\le Z^f,\ X^{f\vee g_m}=x_m\}\\
&=\dot\cup_{f\in J_{m-1}^{.x_1\dots x_{m-1}}}\dot\cup_{g_m=1}^{Z^f}\{f\vee g_m:\, X^{f\vee g_m}=x_m\}.
\end{align}
Here $\dot \cup$ denotes union of disjoint sets and the empty $B$-ary expansion denotes zero.  
Therefore \eqref{Jdefn} shows that
\begin{equation}\label{eq:evolution Jmx}
|J^x_m|=\sum_{f\in J^{.x_1\dots x_{m-1}}_{m-1}}\sum_{\ell=1}^{Z^f}1(X^{f\vee\ell}=x_m),\quad m\ge 1,\ x\in G_m.
\end{equation}
Note that $J^x_m$ is $\cG_m$-measurable, while for $m\ge 1$, $\{Z^f:f\in F_{m-1}\}$ and $\{X^{f'}:f'\in F_m\}$ are independent i.i.d.\ collections, jointly independent of $\cG_{m-1}$. 
This shows that for $m\in \N$ and $x\in G_m$, $k\mapsto |J^{.x_1\dots x_k}_k|$, $k=0,\dots m$ is a Galton-Watson branching process starting at $1$ and with offspring law, that of $R=\sum_{\ell =1}^Z e_\ell$, where $Z$ is our branching variable for $\nu$, and $\{e_\ell\}$ are i.i.d.\ Bernoulli r.v.'s with parameter $p=B^{-d}$, independent of $Z$. 
Note that this law depends on $(B^d,\cL(Z))$ and not on the choice of $x\in G_m$ or $m$, and satisfies
\[\E(R)=\frac{\mu} {B^{d}}=B^{\frac{2}{\beta}-d},\quad \text{Var}(R)=\sigma^2_R=\frac{\mu}{B^d}(1-B^{-d}).  \]
From \eqref{nuCm} we have for $m\in\Z_+$ and $x\in G_m$,
\begin{align}\label{indWJ}
\nonumber\nu(C_m(x))=\sum_{g\in J^x_m}W^g,&
\text{ where }
\{W^g:g\in F_m\}\text{ are i.i.d.\ r.v.'s with common law $\cL(\mu^{-m}W^0)$}\\
&\text{ and are independent of }\cG_m,
\text{ and hence of }J^x_m.
\end{align}
Note that $q:=\P(W^0=0)<1$ (recall $\E(W^0)=1$).  By \eqref{indWJ},
\begin{align}\label{nuposexp}
\nonumber \P(\nu(C_m(x))>0)&=\P(\exists\, g\in J^x_m \text{ such that }W^g>0)\\
\nonumber&=\P(|J^x_m|>0)\P(\exists\, g\in J^x_m\text{ such that }W^g>0 \ |\, |J^x_m|>0)\\
&=\P(|J^x_m|>0)(1-\E(q^{|J^x_m|}|\,|J^x_m|>0)).
\end{align}
The right-hand side is clearly independent of the choice of $x\in G_m$ by the above branching process description of $|J^x_m|$.

\ref{it:hitprob1:a} Assume now $\frac{2}{\beta}=d$, so that $B^d=\mu$, a	nd let $m\in\N$. 
Then $\E(R)=1$ and we have a critical branching process in the above.  
By a theorem of Kolmogorov (see (10.8) in \cite{Har}) there is a sequence $\delta^{(1)}_m\to 0$ such that 
\begin{equation}\label{prpos1}
\forall x\in G_m,\qquad \P(|J^x_m|>0)=\frac{2(1+\delta^{(1)}_m)}{m\sigma_R^2}=\frac{2\mu(1+\delta_m^{(1)})}{(\mu-1)m}.
\end{equation}
By Yaglom's theorem (see Theorem~10.1 in \cite{Har}) 
\begin{equation*}
\P\left(\frac{|J^x_m|2}{m\sigma^2_R}\in\cdot \ \Bigl| \ |J^x_m|>0\right)\text{ converges weakly to an $\text{Exponential}(1)$ r.v.}
\end{equation*}
This implies that $\E(q^{|J^x_m|} \, | \, |J^x_m|>0)=\delta^{(2)}_m\to 0$ as $m\to\infty$ (and is independent of the choice of $x\in G_m$). 
Use this and \eqref{prpos1} in \eqref{nuposexp} to complete the proof of \ref{it:hitprob1:a}. 
Note that the final sequence $(\delta_m)$ depends only on $\cL(R)$, and hence only on $\cL(Z)$ (recall $B^d=\E(Z)$ in this case), and not on the choice of $x\in G_m$. 

\ref{it:hitprob1:b} Assume $\frac{2}{\beta}<d$ so that $\E(R)<1$ and the above branching process is subcritical. 
Let $m\in\Z_+$. 
By (9.5) in Chapter I of \cite{Har} there is a $c_1>0$ and a sequence $\veps^{(1)}_m\to 0$ so that 
\begin{equation}\label{prpos2}
\P(|J^x_m|>0)=c_1(1+\veps_m^{(1)})\E(R)^m=c_1(1+\veps_m^{(1)})(B^{-m})^{(d-\frac{2}{\beta})}.
\end{equation}
By Theorem~9.1 in Chapter~I of \cite{Har} there is a random variable $J_\infty\in\N$ (whose law depends only on $\cL(R)$) so that 
\[ \lim_{m\to\infty}\E\left(q^{|J^x_m|}\, | \, |J^x_m|>0\right)=\E(q^{J_\infty}):=c_2\in(0,q].\]
Therefore for some $\veps_m^{(2)}\to 0$, we may write
\begin{equation*}\E\left(1-q^{|J^x_m|} \, | \, |J^x_m|>0\right)=(1-c_2)(1+\veps^{(2)}_m).
\end{equation*}
Use this and \eqref{prpos2} in \eqref{nuposexp} to complete the proof of \ref{it:hitprob1:b}, where  $c_{\ref{hitprob1}}$ and $\{\veps_m\}$ depend only on $\cL(R)$, and hence only on $(B^d,\cL(Z))$. 
\end{proof}

\begin{notation}  Let $\Lambda_m=\Lambda_m(d)=\{\overline{C_m(x)}:x\in G_m\}$. 
\end{notation}

It is easy to use Proposition~\ref{hitprob1} to prove Proposition~\ref{baryhmeas}.  

\begin{proof}[Proof of Proposition~\ref{baryhmeas}] \ref{it:baryhmeas:a}  Write $\diam(C)$ for the Euclidean diameter for a set in $\R^d$. We have $\supp(\nu)\subset\cup_{C\in\Lambda_m, \nu(C)>0}\,C$, and so it follows easily from the definition of Hausdorff measure, Fubini's theorem and the fact that $\nu(\partial C)=0$ for all $C\in \Lambda_m$ a.s.\ that
\begin{align*}
\E&(x^d\log(1/x)\text{-}\mathrm{m}(\supp(\nu))\\
&\le \E\Bigl(\liminf_{m\to\infty}\sum_{x\in G_m}\diam(\overline{C_m(x)})^d\log(1/\diam(\overline{C_m(x)}))1(\nu(\overline{C_m(x)})>0)\Bigr)\\
&\le \liminf_{m\to\infty}\sum_{x\in G_m}(dB^{-2m})^{d/2}\log(B^m)\P(\nu(\overline{C_m(x)})>0)\\
&\le\liminf_{m\to\infty}B^{dm}(dB^{-2m})^{d/2}m\log(B)C(B^d,\cL(Z))m^{-1}\\
&\le C(d,B,\cL(Z))<\infty.
\end{align*}
We have used Proposition~\ref{hitprob1}\ref{it:hitprob1:a} in the penultimate line of the display.  
This proves \ref{it:baryhmeas:a}.

\ref{it:baryhmeas:b} Virtually the same argument using Proposition~\ref{hitprob1}\ref{it:hitprob1:b} gives \ref{it:baryhmeas:b}.
\end{proof}

Proposition~\ref{hitprob1} also readily gives Proposition~\ref{hitprob2}. 
\begin{proof}[Proof of Proposition~\ref{hitprob2}.] If $B_\infty(y,r)$ denotes the open $L^\infty$ ball in $\R^d$, then the inclusion $B_\infty(y,r/\sqrt d)\subset B(y,r)\subset B_\infty(y,r)$ shows it suffices to prove the result for $B_\infty(y,r)$ in place of $B(y,r)$. 
For
 $r>0$ sufficiently small, 
choose $m= - \lceil \frac{\log(r/2) }{ \log B} \rceil\in\N$ so that $B^{-m}<r/2\le B^{-m+1}$. 
If $y\in [0,1]^d$ choose $x\in G_m$ so that $\Vert y-x\Vert_\infty\le B^{-m}$. The triangle inequality and the choice of $m$ imply that $C_m(x)\subset B_\infty(y,r)$ and so the required lower bounds now follow from Proposition~\ref{hitprob1} and the choice of $m$.  From the fact that $2r\le (4B)B^{-m}$, one easily sees that $B_\infty(y,r)\cap [0,1]^d$ is contained in the union of at most $(4B+2)^d$ cubes in $\Lambda_m$.  The required upper bounds now also follow from Proposition~\ref{hitprob1}.
\end{proof}

\section{Total Disconnectedness of $B$-ary Classical Super-Tree Random Measures}\label{secTDBary}

In this section we prove Theorem~\ref{thm:totdisc} giving sufficient conditions for total disconnectedness of the support of $B$-ary CSTRM.
\begin{notation} If $m\in\N$ and $x\in G_m$, for $n\ge m$ we define a closed subset of $[0,1]^d$ by
\[
\widetilde C_{m,n}(x)=\cup_{f\in K^0_m,S^f=x}\cup_{f'\in K^f_n} \overline{C_n(S^{f'})}.
\]
\end{notation}

The key step in the proof will be the following disjointness lemma
which ensures that if two particles are at positive distance at some point in the process, then the distance between their respective descendants will remain bounded away from $0$ for all times.

\begin{lemma}\label{disjcells} Let $d\ge \frac{4}{\beta}-1$.  If $m\in\N$ and $f,g\in F_m$, then w.p. $1$ there is an a.s. finite random variable  $N=N(m,f,g)\in\N^{\ge m}$ a.s. so that 
\begin{multline*}S^f\neq S^g\text{ and }\overline{C_m(S^f)}\cap\overline{C_m(S^g)}\neq\emptyset\\
\text{ imply that} \quad  \forall n\ge N, \  \forall (f',g')\in K_n^f\times K_n^g, \ \overline{C_n(S^{f'})}\cap\overline{C_n(S^{g'})}=\emptyset.
\end{multline*}
\end{lemma}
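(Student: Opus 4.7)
The plan is to work on the $\cG_m$-measurable event
\[
A := \{S^f \neq S^g,\ \overline{C_m(S^f)} \cap \overline{C_m(S^g)} \neq \emptyset\}
\]
and show that on $A$ the number of bad pairs
\[
D_n := \bigl|\{(f',g') \in K^f_n \times K^g_n : \overline{C_n(S^{f'})} \cap \overline{C_n(S^{g'})} \neq \emptyset\}\bigr|
\]
vanishes for all $n$ large enough, almost surely. The key monotonicity $\overline{C_{n+1}(S^{f''})} \subset \overline{C_n(S^{\pi f''})}$ forces every level-$(n+1)$ bad pair to have a bad level-$n$ parent, so $\{D_n > 0\}$ is a decreasing sequence of events and the lemma reduces to $\P(D_n > 0 \mid A) \to 0$.

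On $A$ one has $S^g - S^f = B^{-m}\epsilon$ with $\epsilon \in \{-1,0,1\}^d \setminus \{0\}$; set $s := |\{k: \epsilon_k \neq 0\}| \ge 1$. The first moment estimate uses independence of the subtrees $K^f, K^g$ (they split before level $m$) and of the digit displacements $\{X^{f'|j}, X^{g'|j} : m < j \le n\}$. The probability that a fixed pair is bad factors coordinatewise: each ``shared'' coordinate (with $\epsilon_k \neq 0$) forces the digits of $X^{f'|j,(k)}$ to $B{-}1$ and those of $X^{g'|j,(k)}$ to $0$, contributing a factor $B^{-2(n-m)}$, while each ``aligned'' coordinate (with $\epsilon_k = 0$) contributes at most $C B^{-(n-m)}$ by a local CLT for the integer-valued sum $\sum_{j=m+1}^n B^{n-j}(X^{g'|j,(k)}-X^{f'|j,(k)})$. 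Multiplying by $\E[|K^f_n|\cdot|K^g_n|] = \mu^{2(n-m)}$ yields
\[
\E[D_n \mid A] \le C_\epsilon (\mu^2/B^{d+s})^{n-m}.
\]
Since $\mu = B^{2/\beta}$, the hypothesis $d \ge \tfrac{4}{\beta}-1$ gives $\mu^2 \le B^{d+1} \le B^{d+s}$, with equality exactly when $d = \tfrac{4}{\beta}-1$ and $s=1$. In the strictly subcritical regime ($s \ge 2$ or $d > \tfrac{4}{\beta}-1$) this is geometric and Markov gives $\P(D_n > 0 \mid A) \to 0$ at once.

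The critical case $d = \tfrac{4}{\beta}-1$, $s=1$ is the main obstacle: $\E[D_n \mid A]$ is bounded but not decaying. Here I would classify each bad pair by its alignment type $\delta(f',g') := B^n(S^{g'}-S^{f'}) \in \{-1,0,1\}^d$ and note that coordinatewise $\delta_k \mapsto B\delta_k + (X^{g''}-X^{f''})_k$ is irreversible in the sense that once an aligned coordinate activates (leaves $0$) it remains in $\{\pm1\}$ along every bad descendant chain. The resulting mean offspring matrix is upper triangular in activation-count with diagonal entries $\mu^2 B^{-(d+s+k)}$ (for $k$ = activation count), so its spectral radius equals $\mu^2 B^{-(d+s)} = 1$ in the critical case and is attained only on the ``fully aligned'' state. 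The count $W_n$ of bad pairs whose type is still fully aligned at level $n$ is therefore a non-negative integer-valued $\cG_n$-martingale with $\E[W_n \mid A] = 1$, and $\{W_n > 0\}$ is again decreasing because aligned coordinates never reactivate. The $\ge 1$-activation sub-populations have per-parent mean at most $\mu^2 B^{-(d+s+1)} = 1/B < 1$ and are handled as in the strictly subcritical case. The delicate remaining step — which is the ``fairly tight supermartingale convergence argument'' flagged in Remark~\ref{TDforSTRM}(a) — is to show $\P(W_n > 0 \mid A) \to 0$. The obstacle is that bad pairs sharing a common $f$- or $g$-side ancestor have correlated children through shared displacements, so $W_n$ is not a standard Galton-Watson process and the classical Kolmogorov asymptotics $\P(Z_n > 0) \sim c/n$ for critical GW do not apply off the shelf; the natural way out is a second-moment computation for $W_n$ exploiting the variance of the GW populations $|K^f_n|,|K^g_n|$ together with the local-CLT bounds, followed by a Paley--Zygmund/Yaglom-type estimate delivering $\P(W_n > 0 \mid A) = O(1/n)$.
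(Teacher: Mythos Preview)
Your overall architecture matches the paper's closely: you decompose bad pairs by their offset type $\delta\in\{-1,0,1\}^d$, observe that $L$ (the set of aligned coordinates) can only shrink along descendant chains, and recognise that in the critical case $d=\tfrac{4}{\beta}-1$, $s=1$ the top stratum $W_n$ (pairs with $\delta=\epsilon$) is a $\Z_+$-valued martingale while every lower stratum is strictly subcritical. This is exactly the paper's decomposition into $\ell$-neighbours (your $s$ is their $d-\ell$), and your $W_n$ is their $M_n=|\Gamma^\ell_n(f,g)|$.

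The genuine gap is the step you yourself flag as ``delicate'': showing $\P(W_n>0\mid A)\to 0$. Your proposed route via ``a Paley--Zygmund/Yaglom-type estimate'' does not work. Paley--Zygmund gives a \emph{lower} bound on $\P(W_n>0)$ in terms of moments, which is the wrong direction. A Yaglom/Kolmogorov argument would require establishing the $\sim c/n$ asymptotics for a branching process with the sibling correlations you correctly identify, and no mechanism is given for this; the standard proofs use the generating-function recursion, which is unavailable here. More fundamentally, a $\Z_+$-valued martingale with mean $1$ need not hit $0$ (take $W_n\equiv 1$), so some additional input beyond the martingale property and a second-moment bound is required.

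The paper's resolution is short and avoids rates entirely. It proves a general lemma (their Lemma~5.6): a $\Z_+$-valued supermartingale with $\P(M_{n+1}\neq k\mid\cF_n)\ge r_k>0$ on $\{M_n=k\}$ must converge to $0$ a.s. This is immediate from supermartingale convergence plus the observation that the limit cannot sit at any fixed $k\ge 1$. The hypothesis is then verified for $W_n$ by a one-step extinction estimate: on $\{W_n=k\}$, one forces a single coordinate $k_m\in L'_m(S^f,S^g)$ of all the $f$-side children to avoid the value $0$, which kills every $\ell$-neighbour child pair; this gives
\[
\P(W_{n+1}=0\mid\cG_n)\ \ge\ \E\bigl[(1-B^{-1})^Z\bigr]^{\,k}\ =:\ r_k\ >\ 0.
\]
This replaces your missing rate argument and also sidesteps the correlation issue, since the bound only looks at one side of each pair. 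Once $W_n=0$, your observation that the remaining multitype mean matrix has spectral radius $B^{-1}<1$ finishes the job; the paper implements this by iterating the same argument down through $\ell-1,\ell-2,\dots,0$.
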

We will prove this result below.  
The core of the proof, contained in Lemma~\ref{gammaext} below, relies on a supermartingale convergence argument requiring $d\ge \frac{4}{\beta}-1$. For now we will show how it easily gives Theorem~\ref{thm:totdisc}. 
\begin{proof}[Proof of Theorem~\ref{thm:totdisc}]
We may fix $\omega$, outside a null set, so that the above Lemma holds for all $m\in\N$ and all $f,g\in F_m$, the conclusion of Lemma~\ref{Cantor} holds, and $|K^0_m|<\infty$ for all $m\in\N$. If $N_m(\omega)=\max_{f,g\in K^0_m}N(m,f,g)\in \N^{\ge m}$, then 
\begin{align}\label{disjoint}
\nonumber\forall m\in\N, \ \forall (f,g)\in (K_m^0)^2,\quad &\left(S^f\neq S^g \text{ and }\overline{C_m(S^f)}\cap\overline{C_m(S^g)}\neq\emptyset\right) \text{ implies }\\ 
\forall n\ge N_m, \
&\forall(f',g')\in K_n^f\times K_n^g, \quad  \overline{C_n(S^{f'})}\cap\overline{C_n(S^{g'})}=\emptyset.
\end{align}
Note also that if $f,g\in K^0_m$ and $\overline{C_m(S^f)}\cap\overline{C_m(S^g)}=\emptyset$, then the conclusion in \eqref{disjoint} holds easily for all $n\ge m$.  This follows from \eqref{BaryS} because that result implies for all $f,g\in K^0_m$, 
\begin{equation}\label{nested}
\cup_{f'\in K_n^f}\overline{C_n(S^{f'})}\subset \overline{C_m(S^f)}\text{ and }\cup_{g'\in K_n^{g}}\overline{C_n(S^{g'})}\subset \overline{C_m(S^g)}.
\end{equation}
Therefore we have from \eqref{disjoint} that
\begin{multline}\label{disjoint2}
\forall m\in\N, \  \forall (f,g)\in (K_m^0)^2, \\
S^f\neq S^g \quad \text{ implies } \quad  \forall n\ge N_m, \ \forall (f',g')\in K_n^f\times K_n^g, \quad  \overline{C_n(S^{f'})}\cap\overline{C_n(S^{g'})}=\emptyset.
\end{multline}
It follows easily from \eqref{nested} and the definition of $\widetilde C_{m,n}(x)$ that for $n\ge m$,
\begin{equation}\label{diam}
\widetilde C_{m,n}(x)\subset\overline{C_m(x)},\text{ and so }\diam(\widetilde C_{m,n}(x))\le B^{-m}.
\end{equation}
Here the diameter is taken with respect to the $L^\infty$ norm on $\R^d$.  We also have for $n\ge m$, 
\begin{align}\label{suppnudiscn}\nonumber\cup_{x\in G_m}\widetilde C_{m,n}(x)=\cup_{x\in G_m}\cup_{f\in K^0_m,\,S^f=x}\cup_{f'\in K^f_n}\overline{C_n(S^{f'})}&=\cup_{f\in K^0_m}\cup_{f'\in K^f_n}\overline{C_n(S^{f'})}\\
&=\cup_{f'\in K_n^0}\overline{C_n(S^{f'})}\supset\supp(\nu).
\end{align}
We have used Lemma~\ref{Cantor} in the last inclusion, and in the next to last inclusion note that $f'\in K^0_n$ iff $f'|m\in K^0_m$ and $f'\in K_n^{f'|m}$. 

If $x,y$ are distinct points in $G_m$, then we have for $n\ge N_m$,
\begin{align*}\widetilde C_{m,n}(x)\cap\widetilde C_{m,n}(y)=\cup_{f\in K_m^0,S^f=x}\cup_{g\in K^0_m,S^g=y}\cup_{(f',g')\in K^f_n\times K^g_n} \overline{C_m(S^{f'})}\cap\overline{C_m(S^{g'})}=\emptyset,
\end{align*}
where the final equality holds by \eqref{disjoint2} because $S^f=x\neq y=S^g$. So this and \eqref{suppnudiscn} allows us to write $\supp(\nu)$ (for $n>N_m$) as a finite disjoint union of closed sets, $\supp(\nu)\cap \widetilde C_{m,n}(x)$ $x\in G_m$, each of $L^\infty$-diameter at most $B^{-m}$ by \eqref{diam}.  
As $m\in\N$ is arbitrary, this proves $\supp(\nu)$ is a.s.\ totally disconnected.
\end{proof} 

Turning to the proof of Lemma~\ref{disjcells}, we introduce some terminology. 

\begin{definition} If $C,C'\in \Lambda_m$ for $m\in\N$, we say $C$ and $C'$ are neighbours iff $C\neq C'$ and $\overline{C}\cap\overline{C'}$ is non-empty.
\end{definition} 

\begin{notation}  If $x,y\in G_m$, let $L=L(x,y)=L(C_m(x),C_m(y))=\{1\le k\le d:x^k=y^k\}$, so that $|L|\in\{0,\dots,d-1\}$ for neighbouring cubes in $\Lambda_m$. $L^c$ denotes the complement of $L$ in $\{1,\dots,d\}$.
\end{notation}

\begin{lemma}\label{ellneighb} Let $x=(x^1,\dots,x^d), y=(y^1,\dots,y^d)\in G_m$, for $m\in\N$. If $L=L(x,y)$, then
\[\text{$C_m(x)$ and $C_m(y)$ are neighbours $\iff$ $x\neq y$ and for any $k\in L^c$, $|x^k-y^k|=B^{-m}$.}\]
In this case
\begin{equation}\label{Cinters}\overline{C_m(x)}\cap\overline{C_m(y)}=\{z:z^k\in[x^k,x^k+B^{-m}]\text{ for }k\in L,\text{ and }z^k=x^k\vee y^k\text{ for }k\in L^c\}.
\end{equation}
\end{lemma}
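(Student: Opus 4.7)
The plan is to reduce the $d$-dimensional intersection problem to $d$ one-dimensional intersection problems, using the product structure of the cubes, and then exploit the fact that both corners lie on the lattice $B^{-m}\Z^d$.

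First I would write $\overline{C_m(x)}=\prod_{k=1}^d[x^k,x^k+B^{-m}]$ and similarly for $y$, so that
\[
\overline{C_m(x)}\cap\overline{C_m(y)}=\prod_{k=1}^d\bigl([x^k,x^k+B^{-m}]\cap[y^k,y^k+B^{-m}]\bigr).
\]
This intersection is non-empty iff each one-dimensional factor is non-empty, which in turn is equivalent to $|x^k-y^k|\le B^{-m}$ for every $k\in\{1,\dots,d\}$.

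Next I would use the key lattice observation that $x,y\in G_m\subset B^{-m}\Z^d$, so $x^k-y^k\in B^{-m}\Z$. Hence the inequality $|x^k-y^k|\le B^{-m}$ forces $|x^k-y^k|\in\{0,B^{-m}\}$, i.e.\ either $k\in L(x,y)$ or $|x^k-y^k|=B^{-m}$. Combined with $x\neq y$ (which is equivalent to $C_m(x)\neq C_m(y)$ since distinct lower corners give distinct cubes), this yields the stated equivalence: $C_m(x)$ and $C_m(y)$ are neighbours iff $x\neq y$ and $|x^k-y^k|=B^{-m}$ for every $k\in L^c$.

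Finally, to identify the intersection explicitly, I would look coordinate by coordinate. For $k\in L$ the two intervals $[x^k,x^k+B^{-m}]$ coincide, so the factor is $[x^k,x^k+B^{-m}]$. For $k\in L^c$, after relabelling assume $y^k=x^k+B^{-m}$ (or the symmetric case), so the two closed intervals meet only at the single point $x^k+B^{-m}=x^k\vee y^k$. Taking the product over $k$ gives the description of $\overline{C_m(x)}\cap\overline{C_m(y)}$ in \eqref{Cinters}. There is no real obstacle here; the only mild subtlety is remembering that the half-open cubes $C_m(x)$ and $C_m(y)$ cannot touch directly, so one must pass to closures before comparing, and that the lattice spacing exactly matches the side length, which is precisely what rules out intermediate separations.
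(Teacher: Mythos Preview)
Your proposal is correct and follows essentially the same approach as the paper: both reduce to a coordinate-by-coordinate analysis via the product structure of the closed cubes, use the lattice spacing $x^k-y^k\in B^{-m}\Z$ to force $|x^k-y^k|\in\{0,B^{-m}\}$, and then read off the intersection factor by factor.
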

\begin{proof}
$(\Rightarrow)$ Note that if $|x^k-y^k|>B^{-m}$, then $|x^k-y^k|\ge 2B^{-m}$, and so $\overline{C_m(x)}\cap\overline{C_m(y)}$ is empty, implying that $C_m(x)$ and $C_m(y)$ are not neighbours.\\
$(\Leftarrow)$ 
Assume $x\neq y$ and $|x^k-y^k|=B^{-m}$ for $k\in L^c$.  Then 
\begin{align*} z\in \overline{C_m(x)}\cap\overline{C_m(y)}\quad &\text{ iff } \quad \forall k\le d, \ z^k\in[x^k,x^k+B^{-m}]\cap[y^k,y^k+B^{-m}]\\
&\text{ iff } \quad \forall k\in L, \ z^k\in[x^k,x^k+B^{-m}] \quad  \text{and} \quad \forall k\in L^c, \ z^k=x^k\vee y^k.
\end{align*}
In particular, we see that $C_m(x)$ and $C_m(y)$ are neighbours because this last set of $z$ is non-empty. This also proves \eqref{Cinters}.
\end{proof}

\begin{definition}
For neighbouring $C,C'\in \Lambda_m$ and $\ell\in\{0,\dots, d-1\}$, we say $C$ and $C'$ are $\ell$-neighbours iff $|L(C,C')|=\ell$. (Geometrically this means $\overline C\cap \overline{C'}$ is an $\ell$-dimensional face, see Figure~\ref{fig:neighbour cubes} for an illustration.) We say $f$ and $g$ in $F_m$ are $\ell$-neighbours iff $C_m(S^f)$ and $C_m(S^g)$ are.  
\end{definition}

\begin{notation}  For $f\in F$ and $i\le d$ we let $S^{f,i}$ denote the $i$th coordinate of $S^f$, and similarly for $X^{f,i}$.
\end{notation}
\begin{figure}
	\centering
	\begin{tabular}{ccc}
		\includegraphics[page=1,width=4cm]{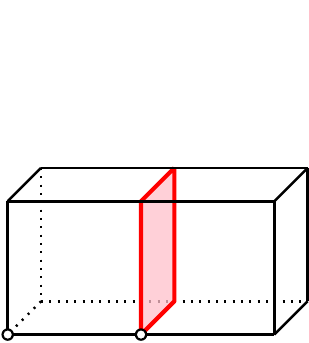} & 	\includegraphics[page=2,width=4cm]{neighboring_cubes} & 	\includegraphics[page=3,width=4cm]{neighboring_cubes}
	\end{tabular}
	\caption{From left to right, examples of $\ell$-neighbour cubes, for $\ell = 2,1,0$. The intersection of their boundary is shown in red. 
	\label{fig:neighbour cubes}}
\end{figure}
The next two results involve only deterministic reasoning, but for subsequent application we state them for random variables.
\begin{lemma}\label{ellnab}
Let $n_1\le n_2$  be in $\N$, $f,g\in F_{n_2}$, and $\ell\in\{0,\dots,d-1\}$.
\begin{enumerate}[label=\emph{(\alph*)}, ref=(\alph*)]
\item\label{it:L is non-increasing} $L(S^f,S^g)\subset L(S^{f|n_1},S^{g|n_1})$, and so in particular, $S^{f|n_1}\neq S^{g|n_1}$ implies $S^f\neq S^g$.
\item\label{it:neighbours descend from neighbours} If $C_{n_2}(S^f)$ and $C_{n_2}(S^g)$ are $\ell$-neighbours and $S^{f|n_1}\neq S^{g|n_1}$, then $C_{n_1}(S^{f|n_1})$ and $C_{n_1}(S^{g|n_1})$ are $\ell'$-neighbours for some $\ell'\ge \ell$.
\end{enumerate}
\end{lemma}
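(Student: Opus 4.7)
The plan is to exploit two elementary facts about the base-$B$ expansions that underlie $S^f$: the decomposition $S^f = S^{f|n_1} + \sum_{m=n_1+1}^{n_2} B^{-m} X^{f|m}$, and the observation that the ``tail'' sum lives coordinate-wise in the half-open interval $[0, B^{-n_1})$. Indeed, for each coordinate $k$,
\begin{equation*}
0 \le \sum_{m=n_1+1}^{n_2} B^{-m} X^{f|m,k} \le (B-1)\sum_{m=n_1+1}^{n_2} B^{-m} = B^{-n_1} - B^{-n_2} < B^{-n_1},
\end{equation*}
while $S^{f|n_1,k}$ is an integer multiple of $B^{-n_1}$.

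For part \ref{it:L is non-increasing}, I would fix $k \in L(S^f, S^g)$ and observe that, writing $a = S^{f|n_1,k}$, $b = S^{g|n_1,k}$, $c$ for the $k$-th tail sum for $f$ and $c'$ for the one for $g$, the hypothesis $S^{f,k}=S^{g,k}$ rearranges to $a - b = c' - c \in (-B^{-n_1}, B^{-n_1})$. But $a - b$ is an integer multiple of $B^{-n_1}$, so it must be zero, i.e., $k \in L(S^{f|n_1}, S^{g|n_1})$. The ``in particular'' statement is then the contrapositive: $S^{f|n_1}=S^{g|n_1}$ would force $L(S^{f|n_1},S^{g|n_1})=\{1,\dots,d\}$, hence $L(S^f,S^g)=\{1,\dots,d\}$, i.e., $S^f = S^g$.

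For part \ref{it:neighbours descend from neighbours}, I would first record the nesting $\overline{C_{n_2}(S^f)} \subset \overline{C_{n_1}(S^{f|n_1})}$ (and similarly for $g$), which follows from the tail bound above applied coordinate-wise: for each $k$,
\begin{equation*}
S^{f|n_1,k} \le S^{f,k} \quad \text{and} \quad S^{f,k} + B^{-n_2} \le S^{f|n_1,k} + B^{-n_1}.
\end{equation*}
Since $\overline{C_{n_2}(S^f)} \cap \overline{C_{n_2}(S^g)} \ne \emptyset$ by hypothesis, nesting yields $\overline{C_{n_1}(S^{f|n_1})} \cap \overline{C_{n_1}(S^{g|n_1})} \ne \emptyset$; combined with the hypothesis $S^{f|n_1} \ne S^{g|n_1}$, this makes $C_{n_1}(S^{f|n_1})$ and $C_{n_1}(S^{g|n_1})$ neighbours. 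Writing $\ell' = |L(S^{f|n_1}, S^{g|n_1})| \in \{0,\dots,d-1\}$, part \ref{it:L is non-increasing} gives $\ell' \ge |L(S^f,S^g)| = \ell$.

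There is no serious obstacle here; the whole argument is a bookkeeping exercise with base-$B$ expansions. The only place one must be careful is the strict inequality $\sum_{m=n_1+1}^{n_2} B^{-m}(B-1) < B^{-n_1}$, which is what makes the integer-multiple argument in \ref{it:L is non-increasing} collapse to equality, and the matching non-strict inequality $\le B^{-n_1} - B^{-n_2}$ which is exactly what is needed to get the nesting $\overline{C_{n_2}(S^f)} \subset \overline{C_{n_1}(S^{f|n_1})}$ used in \ref{it:neighbours descend from neighbours}.
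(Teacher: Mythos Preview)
Your argument for the main inclusion in \ref{it:L is non-increasing} is correct and essentially the same as the paper's (you prove $L(S^f,S^g)\subset L(S^{f|n_1},S^{g|n_1})$ directly, the paper proves the complementary inclusion, both via the same tail bound). One slip: your derivation of the ``in particular'' statement is written backwards. You start from $S^{f|n_1}=S^{g|n_1}$ and try to conclude $S^f=S^g$, but the inclusion $L(S^f,S^g)\subset L(S^{f|n_1},S^{g|n_1})$ does not let you pass from $L(S^{f|n_1},S^{g|n_1})=\{1,\dots,d\}$ to $L(S^f,S^g)=\{1,\dots,d\}$. The contrapositive you want is: if $S^f=S^g$ then $L(S^f,S^g)=\{1,\dots,d\}\subset L(S^{f|n_1},S^{g|n_1})$, forcing the latter to be full and hence $S^{f|n_1}=S^{g|n_1}$.

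For \ref{it:neighbours descend from neighbours} your route is genuinely different from the paper's and a bit cleaner. The paper verifies the neighbour criterion of Lemma~\ref{ellneighb} coordinate by coordinate: for each $k\in L(S^{f|n_1},S^{g|n_1})^c$ it uses the bound \eqref{Sincr} together with $|S^{f,k}-S^{g,k}|=B^{-n_2}$ to pin down $|S^{f|n_1,k}-S^{g|n_1,k}|=B^{-n_1}$. You instead use the geometric nesting $\overline{C_{n_2}(S^f)}\subset\overline{C_{n_1}(S^{f|n_1})}$ to transport the nonempty intersection up to level $n_1$, then invoke $S^{f|n_1}\neq S^{g|n_1}$ to get the neighbour relation directly, and finally read off $\ell'\ge\ell$ from part \ref{it:L is non-increasing}. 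Both are short; your version avoids re-opening the coordinate computation and makes the monotonicity of the cube hierarchy do the work.
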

\begin{proof}Let $f,g,n_1,n_2$ be as above. If $k\in\{1,\dots,d\}$, then by \eqref{BaryS}
\begin{align}\label{Sincr}
\nonumber|S^{f,k}-S^{g,k}|&\ge |S^{f|n_1,k}-S^{g|n_1,k}|-\sum_{m=n_1+1}^{n_2}B^{-m}|X^{f|m,k}-X^{g|m,k}|\\
\nonumber&\ge |S^{f|n_1,k}-S^{g|n_1,k}|-B^{-n_1-1}(B-1)\sum_{m=0}^{n_2-n_1-1}B^{-m}\\
&= |S^{f|n_1,k}-S^{g|n_1,k}|-B^{-n_1}+B^{-n_2}.
\end{align}

If $k\in L(S^{f|n_1},S^{g|n_1})^c$, then $|S^{f|n_1,k}-S^{g|n_1,k}|\ge B^{-n_1}$ and so \eqref{Sincr} implies that \break
 $|S^{f,k}-S^{g,k}|\ge B^{-n_2}>0$. This shows that $k\in L(S^f,S^g)^c$, proving \ref{it:L is non-increasing}.

For \ref{it:neighbours descend from neighbours}, assume that $C_{n_2}(S^f)$ and $C_{n_2}(S^g)$ are $\ell$-neighbours, and $S^{f|n_1}\neq S^{g|n_1}$. 
Let $k\in L(S^{f|n_1},S^{g|n_1})^c$.  
Then $k\in L(S^f,S^g)^c$ by \ref{it:L is non-increasing}, and so the left-hand side of \eqref{Sincr} is $B^{-n_2}$ by Lemma~\ref{ellneighb}. 
Therefore \eqref{Sincr} implies that
\[|S^{f|n_1,k}-S^{g|n_1,k}|\le B^{-n_1}.\]
Recalling that $S^{f|n_1,k}\neq S^{g|n_1,k}$ by the choice of $k$, we deduce that $|S^{f|n_1,k}-S^{g|n_1,k}|=B^{-n_1}$. 
In addition we have $S^{f|n_1}\neq S^{g|n_1}$ and so Lemma~\ref{ellneighb} implies that $C_{n_1}(S^{f|n_1})$ and $C_{n_1}(S^{g|n_1})$ are neighbours. 
By \ref{it:L is non-increasing} they are $\ell'$ neighbours for some $\ell'\ge \ell$. 
\end{proof}

\begin{notation}  If $R$ is a $\Z_+$-valued r.v., then for any $f ,g\in F_R$, $\ell\in\{0,\dots,d-1\}$ and $n\in\N$, we let 
\[\Gamma_{R,n}^\ell(f,g):=\{(f',g')\in K_{n\vee R}^f\times K_{n\vee R}^g:\, C_{n\vee R}(S^{f'})\text{ and }C_{n\vee R}(S^{g'})\text{ are }\ell-\text{neighbours}\}.\]
\end{notation}

\begin{corollary}\label{gamma0} 
Let $R$ be a $\Z_+$-valued random variable. 
	For all  $f ,g\in F_R$, and $\ell\in\{0,\dots,d-1\}$, if $S^f\neq S^g$, then for all $n\in\N$, we have	
\[ \left(\forall \ell'\in\{\ell,\dots,d-1\}, \ \Gamma_{R,n}^{\ell'}(f,g)=\emptyset\right)  \Rightarrow \left(\forall n'\ge n, \forall \ell'\in\{\ell,\dots,d-1\}, \Gamma_{R,n'}^{\ell'}(f,g)=\emptyset\right).\]
In particular ($\ell=0,\, n=R$),  
if $f,g\in F_R$ are not neighbours, then 
for all $n'\ge R$, 
$f'\in K^f_{n'}$ and $g' \in K^g_{n'}$ are not neighbours.
\end{corollary}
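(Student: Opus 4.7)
My plan is to prove the main implication by contradiction, reducing an assumed neighbour pair of descendants at a level $n'\geq n$ to a neighbour pair already at level $n$. The ``In particular'' statement will then drop out as the case $\ell=0$, $n=R$.

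So, fix $f,g\in F_R$ with $S^f\neq S^g$, fix $\ell\in\{0,\dots,d-1\}$ and $n\in\N$, and assume $\Gamma^{\ell'}_{R,n}(f,g)=\emptyset$ for every $\ell'\in\{\ell,\dots,d-1\}$. Suppose towards a contradiction that there exist $n'\geq n$, $\ell'\in\{\ell,\dots,d-1\}$ and $(f',g')\in\Gamma^{\ell'}_{R,n'}(f,g)$. Set $n_1:=n\vee R$, $n_2:=n'\vee R$ and let $\tilde f:=f'|n_1$, $\tilde g:=g'|n_1$; a direct check from the definition~\eqref{eq:def K^f_n} shows $\tilde f\in K^f_{n_1}$ and $\tilde g\in K^g_{n_1}$. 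My goal is to place $(\tilde f,\tilde g)$ in $\Gamma^{\ell''}_{R,n}(f,g)$ for some $\ell''\geq\ell$, contradicting the assumption.

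The two parts of Lemma~\ref{ellnab} accomplish this in succession. First, I would apply Lemma~\ref{ellnab}\ref{it:L is non-increasing} to the pair $(\tilde f,\tilde g)$ at levels $R\leq n_1$: since $\tilde f|R=f$ and $\tilde g|R=g$, the hypothesis $S^f\neq S^g$ propagates to $S^{\tilde f}\neq S^{\tilde g}$. Then I would invoke Lemma~\ref{ellnab}\ref{it:neighbours descend from neighbours} applied to $(f',g')$ at levels $n_1\leq n_2$: since $C_{n_2}(S^{f'})$ and $C_{n_2}(S^{g'})$ are $\ell'$-neighbours and we have just established $S^{f'|n_1}=S^{\tilde f}\neq S^{\tilde g}=S^{g'|n_1}$, the cubes $C_{n_1}(S^{\tilde f})$ and $C_{n_1}(S^{\tilde g})$ are $\ell''$-neighbours for some $\ell''\geq\ell'\geq\ell$. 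Hence $(\tilde f,\tilde g)\in\Gamma^{\ell''}_{R,n}(f,g)$, the contradiction I wanted.

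For the ``In particular'' statement, specialize to $\ell=0$ and $n=R$. Since $K^f_R=\{f\}$ and $K^g_R=\{g\}$, one has $\Gamma^{\ell'}_{R,R}(f,g)\subset\{(f,g)\}$, and this set is empty for every $\ell'\in\{0,\dots,d-1\}$ precisely when $f$ and $g$ are not neighbours. The main implication then yields $\Gamma^{\ell'}_{R,n'}(f,g)=\emptyset$ for every $n'\geq R$ and every $\ell'\in\{0,\dots,d-1\}$, which is exactly the statement that no pair $(f',g')\in K^f_{n'}\times K^g_{n'}$ is an $\ell'$-neighbour for any $\ell'$, i.e., no such pair consists of neighbours. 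I do not foresee any real obstacle: the argument is pure bookkeeping combining the two halves of Lemma~\ref{ellnab} with the nested tree structure of $K^f_{\cdot}$, and the hypothesis $S^f\neq S^g$ enters precisely once, in the step where Lemma~\ref{ellnab}\ref{it:L is non-increasing} is used to pass it down to $\tilde f,\tilde g$.
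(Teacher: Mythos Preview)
Your argument is correct and follows essentially the same route as the paper: assume a counterexample at level $n'\ge n$, restrict to level $n\vee R$, use Lemma~\ref{ellnab}\ref{it:L is non-increasing} to propagate $S^f\neq S^g$ down to the restricted pair, then Lemma~\ref{ellnab}\ref{it:neighbours descend from neighbours} to produce an $\ell''$-neighbour pair with $\ell''\ge\ell'$ at level $n\vee R$, contradicting the hypothesis. The treatment of the ``In particular'' case is also the same.
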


\begin{remark}	
Note that it is immediate that stating the above result for any $\Z_+$-valued r.v. $R$ is equivalent to stating it for all possible values in $\Z_+$. 
The current formulation happens to be more convenient for our proofs. 
The same will be true for the statement of Lemma~\ref{gammaext} below; see the first paragraph of the proof of Lemma~\ref{gammaext}.
\end{remark}

\begin{proof}[Proof of Corollary~\ref{gamma0}]
	 Assume $S^f\neq S^g$ and $\Gamma_{R,n}^{\ell'}(f,g)=\emptyset$ for all $\ell'\ge \ell$.  
	Let $\ell'\in\{\ell,\dots,d-1\}$, $n'\ge n$ and, proceeding by contradiction, let $(f',g')\in \Gamma^{\ell'}_{R,n'}(f,g)$. 
	Then $(f'|(n\vee R),g'|(n\vee R))\in K^f_{n\vee R}\times K^g_{n\vee R}$ and, by Lemma~\ref{ellnab}\ref{it:L is non-increasing},  $S^{f'|n\vee R}\neq S^{g'|n\vee R}$ because $S^{f'|R}=S^f\neq S^g=S^{g'|R}$. 
	By Lemma~\ref{ellnab}\ref{it:neighbours descend from neighbours} if $C_{n'\vee R}(S^{f'})$ and $C_{n'\vee R}(S^{g'})$ are $\ell'$-neighbours, then $C_{n\vee R}(S^{f'|(n\vee R)})$ and $C_{n\vee R}(S^{g'|(n\vee R)})$ are $\ell''$-neighbours for some $\ell''\in\{\ell',\dots,d-1\}$. 
	We have shown that $(f'|(n\vee R),g'|(n\vee R))\in \Gamma^{\ell''}_{R,n}(f,g)$ which is empty by assumption.  
	This contradiction proves that 
$\Gamma^{\ell'}_{R,n'}(f,g)$ is empty, and the proof is complete.
\end{proof}

\begin{lemma}\label{gammaext}
Assume $d\ge\frac{4}{\beta}-1$, $\ell\in\{0,\dots,d-1\}$, and $R\in\N\cup\{\infty\}$ is a random
variable.
With probability $1$, 
if $R<\infty$ and $f,g\in F_R$ are such that $C_R(S^f)$ and $C_R(S^g)$ are not $\ell'$-neighbours for any $\ell'\in\{\ell+1,\dots,d-1\}$, then 
\begin{equation}\label{Rfinite}R_R^\ell(f,g):=\inf\{n\ge R:\, \Gamma^\ell_{R,n}(f,g)=\emptyset\}<\infty.
\end{equation}
\end{lemma}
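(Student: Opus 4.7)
The plan is to set $N_n := |\Gamma^\ell_{R,n}(f,g)|$ and show that $(N_n)_{n\ge R}$ is a non-negative $(\cG_n)$-supermartingale that must vanish almost surely. First I would handle the trivial case where $(f,g)$ is not a neighbour pair at time $R$: Corollary~\ref{gamma0} then yields $\Gamma^\ell_{R,R}(f,g)=\emptyset$, so there is nothing to prove. Otherwise, by the hypothesis $(f,g)$ is exactly an $\ell$-neighbour pair, and by the contrapositive of Lemma~\ref{ellnab}\ref{it:neighbours descend from neighbours} no descendant pair at any time $n\ge R$ can be an $\ell'$-neighbour for $\ell'>\ell$. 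In particular, every element of $\Gamma^\ell_{R,n+1}(f,g)$ has a unique parent pair in $\Gamma^\ell_{R,n}(f,g)$, so
\[
N_{n+1}=\sum_{(f',g')\in\Gamma^\ell_{R,n}(f,g)} M_{f',g'},
\]
where $M_{f',g'}$ counts the $\ell$-neighbour child pairs $(f'\vee j,g'\vee k)$ with $j\le Z^{f'}$ and $k\le Z^{g'}$.

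Next I would compute $\E[M_{f',g'}\mid\cG_n]$ coordinate by coordinate. With $L=L(S^{f'},S^{g'})$ and $|L|=\ell$: for each $k\in L$ the children agree on coordinate $k$ iff $X^{f'\vee j,k}=X^{g'\vee k,k}$ (probability $1/B$); for each $k\in L^c$ the parent gap equals $B^{-n}$, so coincidence of children on this coordinate is impossible, while the child gap equals $B^{-(n+1)}$ only through a single extreme configuration of the two displacements (probability $1/B^2$). Since agreement on $L^c$ is impossible, higher $\ell'$-neighbourhood of the children cannot occur either. The children thus form an $\ell$-neighbour pair exactly when they agree on every coordinate of $L$ and differ by $B^{-(n+1)}$ on every coordinate of $L^c$, which has per-pair probability $p_\ell=B^{-\ell}\cdot B^{-2(d-\ell)}=B^{\ell-2d}$. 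Independence of $Z^{f'}$ and $Z^{g'}$ yields
\[
\E[M_{f',g'}\mid\cG_n]=\mu^2 B^{\ell-2d}, \qquad \E[N_{n+1}\mid\cG_n]=\mu^2 B^{\ell-2d}\,N_n.
\]
Since $B=\mu^{\beta/2}$, this ratio is at most $1$ iff $\ell\le 2d-4/\beta$, which is ensured for every $\ell\le d-1$ by the hypothesis $d\ge 4/\beta-1$. Thus $(N_n)$ is a non-negative supermartingale, and it converges a.s.\ to an integer limit $N_\infty$ with $N_n=N_\infty$ eventually.

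It remains to show $N_\infty=0$ a.s., which then gives $R_R^\ell(f,g)<\infty$. In the strictly subcritical range $\ell<2d-4/\beta$ this is immediate since $\E[N_n]\to 0$ geometrically. The main obstacle is the critical case $\ell=2d-4/\beta$, which only arises when $d=4/\beta-1$ and $\ell=d-1$, and in which $(N_n)$ is an honest martingale. Here I would exploit the rooted-tree structure on descendant pairs (from the first step, each pair has a unique parent) together with the fact that the per-pair offspring count $M$ has mean $1$ and strictly positive variance, since each child-pair indicator is $\text{Bernoulli}(p_\ell)$ with $p_\ell\in(0,1)$ and $Z$ itself has positive variance. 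A conditional Borel--Cantelli type argument then shows that, conditionally on $\cG_n\cap\{N_n\ge 1\}$, the probability of $\{N_{n+1}\ne N_n\}$ is bounded below by a universal positive constant (by fixing one pair and using the independent fluctuation it contributes); this forces $\{N_{n+1}\ne N_n\}$ to occur infinitely often on $\{N_\infty\ge 1\}$, contradicting integer-valued convergence to a non-zero limit. Hence $N_\infty=0$ a.s.
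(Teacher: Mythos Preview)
Your overall strategy is the paper's: set $N_n=|\Gamma^\ell_{R,n}(f,g)|$, show $\E[N_{n+1}\mid\cG_n]=\mu^2 B^{\ell-2d}\,N_n\le N_n$ under $d\ge 4/\beta-1$, and conclude via integer-valued supermartingale convergence. Your conditional-mean computation is correct (and in fact sharper than the paper's inequality \eqref{Mnbnd}, since you observe that every pair in $\Gamma^\ell_{R,n+1}$ has parent in $\Gamma^\ell_{R,n}$ and characterise the child $\ell$-neighbour condition exactly).

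The gap is your treatment of the critical case $\mu^2 B^{\ell-2d}=1$. Your proposed argument, ``fix one pair and use the independent fluctuation it contributes'', does not go through as stated: the per-pair counts $M_{f',g'}$ are \emph{correlated} across $\Gamma^\ell_{R,n}$, because distinct pairs may share a first or second coordinate and hence share $Z$-variables and displacement vectors. A single pair's fluctuation can therefore be absorbed by the rest, and one cannot extract a uniform lower bound on $\P(N_{n+1}\neq N_n\mid\cG_n)$ this way. (As an aside, your remark that ``$Z$ itself has positive variance'' is not guaranteed by the hypotheses; $Z$ may be deterministic.) The paper closes this differently: it shows $\P(N_{n+1}=0\mid\cG_n)\ge r_k:=\E[(1-B^{-1})^Z]^k>0$ on $\{N_n=k\}$, by fixing one coordinate $k_m\in L(S^f,S^g)^c$ and noting that if \emph{every} child of \emph{every} $f'$ in the first projection $\Gamma^\ell_{R,n,1}$ has $X^{\cdot,k_m}$ unequal to the single value required for $\ell$-neighbourhood, then $N_{n+1}=0$. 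This feeds into the elementary Lemma~\ref{supermart}: any $\Z_+$-valued supermartingale with $\P(M_{n+1}\neq k\mid\cF_n)\ge r_k>0$ on $\{M_n=k\}$ converges a.s.\ to $0$. This handles the critical and subcritical cases uniformly, without your case split.
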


If $C_R(S^f)$ and $C_R(S^g)$ are not $\ell$-neighbours, then the above conclusion is  trivial because 
$\Gamma^\ell_{R,R}(f,g)=\emptyset$ so that 
 $R^\ell_R(f,g)=R <\infty$ in that case.  
We will prove Lemma~\ref{gammaext} below. 
The route to establishing Lemma~\ref{disjcells} from Corollary~\ref{gamma0} and Lemma~\ref{gammaext} is now clear.  If
$f,g$ are as in Lemma~\ref{disjcells}, then for some $\ell\le d-1$ they are $\ell$-neighbours but not $\ell'$-neighbours for any $\ell'>\ell$.  Corollary~\ref{gamma0} and Lemma~\ref{gammaext} imply that after a large random time none of their descendants are $\ell'$-neighbours for any $\ell'>\ell-1$.  Iterating this argument $\ell$ times shows that after a sufficiently large time no pair of descendants of $f$ and $g$ are neighbours, which implies Lemma~\ref{disjcells}.  Here are the details.

\begin{proof}[Proof of Lemma~\ref{disjcells}]
Let $m\in\N$, $\ell\in\{0,\dots,d-1\}$ and $f,g\in F_m$. It suffices to show that w.p.$1$ on $\Omega_\ell=\{\omega:\ f\text{ and }g \text{ are $\ell$-neighbours}\}$,
\begin{equation}\label{stepzero}
\exists N_\ell\ge m\text{ such that }\forall n\ge N_\ell, \  \forall(f',g')\in K^f_n\times K^g_n, \quad \overline{C_n(S^{f'})}\cap\overline{C_n(S^{g'})}=\emptyset.
\end{equation}
Indeed, take the union over $\ell$, let $N$ be the maximum of the resulting $N_\ell$'s in \eqref{stepzero}, and combine  the null sets, to get the conclusion of Lemma~\ref{disjcells}.

Apply Lemma~\ref{gammaext} with $R=m$ to see that if $\inf\emptyset=\infty$ and we set $R_1=\infty$ on $\Omega_\ell^c$, then
\begin{equation}\label{R1fin}
R_1:=\inf\{n\ge m:\, \Gamma^\ell_{m,n}(f,g)=\emptyset\}<\infty\ \text{a.s.\ on }\Omega_\ell.\end{equation}
Corollary~\ref{gamma0}, with $R=n=m$ and $n'=R_1$, shows that on $\{R_1<\infty\}$, for all $\ell'>\ell$ we have  
$\Gamma^{\ell'}_{m,R_1}(f,g)=\emptyset$, while the definition of $R_1$ ensures $\Gamma^\ell_{m,R_1}(f,g)=\emptyset$ on $\{R_1<\infty\}$. 
So Corollary~\ref{gamma0}, with $R=n=R_1$, now implies
\begin{equation}\label{stepR1}
\forall n'\in\N^{\ge R_1}, \ \forall (f',g')\in K_{n'}^f\times K^g_{n'}, \ f'\text{ and }g'\text{ are not $\ell'$-neighbours for all }\ell'\ge \ell,
\end{equation}
where we note this conclusion is vacuously true if $R_1=\infty$.  

By \eqref{stepR1} on $\{R_1<\infty\}$ any $f'\in K^f_{R_1}$ and $g'\in K^g_{R_1}$ are not $\ell'$-neighbours for any $\ell'\ge \ell$, and so Lemma~\ref{gammaext} (with $R=R_1$) implies that w.p.$1$ on $\{R_1<\infty\}$, for all $(f',g')\in K^f_{R_1}\times K^g_{R_1}$, we have  $R^{\ell-1}_{R_1}(f',g')<\infty$.  
We conclude that if we set $R_2=\infty$ on $\{R_1=\infty\}$, then 
\begin{equation}\label{R2fin}R_2:=\max_{(f',g')\in K^f_{R_1}\times K^g_{R_1}}R^{\ell-1}_{R_1}(f',g')<\infty\text{ a.s.\ on }\{R_1<\infty\}.
\end{equation}
Assume $R_1<\infty$ and let $(f',g')\in K^f_{R_1}\times K^g_{R_1}$. 
We have $S^f\neq S^g$ (since $R_1<\infty$ implies $\Omega_\ell$) and therefore $S^{f'}\neq S^{g'}$ by Lemma~\ref{ellnab}\ref{it:L is non-increasing}, and, as noted above, $f'$ and $g'$ are not $\ell'$-neighbours for all $\ell'\ge \ell$ by \eqref{stepR1}. 
So we can apply Corollary~\ref{gamma0} with $(f',g')$ in place of $(f,g)$, $R=n=R_1$,  and $n' = R^{\ell-1}_{R_1}(f',g')$, to conclude that $\Gamma^{\ell'}_{R_1,R^{\ell-1}_{R_1}(f',g')}(f',g')=\emptyset$ for $\ell'\ge \ell$ on $\{R^{\ell-1}_{R_1}(f',g')<\infty\}$. 
The same conclusion holds for $\ell'=\ell-1$ by the definition of 
$R^{\ell-1}_{R_1}(f',g')$. 
Another application of Corollary~\ref{gamma0} with $R=R_1$, $n=R^{\ell-1}_{R_1}(f',g')$ and $n'\ge R_2$ shows that
for all $(f',g')\in K^f_{R_1}\times K^g_{R_1}$ and all $n'\ge R_2$, $\Gamma^{\ell'}_{R_1,n'}(f',g')=\emptyset$ for all $\ell'\ge \ell-1$ (this is vacuous if $R_2=\infty$).  
This means that
\begin{multline}\label{noneighb}
\forall n'\in\N^{\ge R_2}, \ \forall (f',g')\in K^f_{R_1}\times K^g_{R_1}, \forall (f'',g'')\in K^{f'}_{n'}\times K^{g'}_{n'}, \\
\text{$f''$ and $g''$ are not $\ell'$-neighbours for all $\ell'\ge \ell-1$}.
\end{multline}
Note that for any $n'\ge R_2$ and $f'',g''\in F_{n'}$, $(f'',g'')\in K^f_{n'}\times K^g_{n'}$ iff $(f',g')=(f''|R_1,g''|R_1)\in K^f_{R_1}\times K^g_{R_1}$ and $(f'',g'')\in K_{n'}^{f'}\times K_{n'}^{g'}$.  
Therefore we can rewrite \eqref{noneighb} as 
\begin{equation}\label{stepR2} \forall n'\in\N^{\ge R_2}, \ \forall (f',g')\in K_{n'}^f\times K_{n'}^g, \ f'\text{ and }g'\text{ are not $\ell'$-neighbours for all }\ell'\ge \ell-1.
\end{equation}
Comparing \eqref{stepR1} to \eqref{stepR2}, and noting that $R_2<\infty$ a.s.\ on $\Omega_\ell$ by \eqref{R1fin} and \eqref{R2fin},  we see that we can iterate this argument $\ell$ times and conclude there is a $(\cG_n)$-stopping time $R_{\ell+1}$, which is a.s.\ finite on $\Omega_\ell$, such that
\begin{equation*}
\forall n'\in\N^{\ge R_{\ell+1}}, \ \forall (f',g')\in K^f_{n'}\times K^g_{n'}, \ f'\text{ and }g'\text{ are not $\ell'$-neighbours for all }\ell'\ge 0.
\end{equation*}
This gives \eqref{stepzero} with $N_\ell=R_{\ell+1}$ and we are done.
\end{proof}

So to complete the proof of Theorem~\ref{thm:totdisc} it remains to establish Lemma~\ref{gammaext}. This will be done by a supermartingale convergence argument and we will need the following elementary result.

\begin{lemma}\label{supermart} Assume $\{M_n:n\in\N\}$ is a $\Z_+$-valued $(\cF_n)$-supermartingale such that for any $k\in\N$ there is a $r_k>0$ so that for all $n\in\N$,
\begin{equation}\label{martcond}\P(M_{n+1}\neq k \, | \, \cF_n)\ge r_k\text{ on }\{M_n=k\}.
\end{equation}
Then $M_n\to 0$ a.s.
\end{lemma}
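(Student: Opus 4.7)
The plan is to combine the supermartingale convergence theorem with a conditional Borel--Cantelli style estimate derived from the lower bound \eqref{martcond}. First I would observe that $(M_n)$ is a non-negative $(\cF_n)$-supermartingale, so by Doob's theorem it converges a.s.\ to a finite limit $M_\infty$. Since each $M_n$ takes values in $\Z_+$, the sequence converges if and only if it is eventually constant, so $M_\infty$ is itself $\Z_+$-valued and for $\P$-a.e.~$\omega$ there is an $N(\omega)$ with $M_n(\omega)=M_\infty(\omega)$ for all $n\ge N(\omega)$.

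Next, for each fixed $k\in\N$ I would argue that the event $E^k_n=\{M_m=k\text{ for all }m\ge n\}$ has probability $0$. Set $G^k_{n,N}=\{M_m=k\text{ for all }n\le m\le N\}$, so that $E^k_n=\bigcap_{N\ge n}G^k_{n,N}$. On $G^k_{n,N}$ we have $M_N=k$, and the hypothesis gives $\P(M_{N+1}=k\mid \cF_N)\le 1-r_k$ on $\{M_N=k\}$. Hence
\[
\P(G^k_{n,N+1})=\E[1_{G^k_{n,N}}\P(M_{N+1}=k\mid \cF_N)]\le (1-r_k)\P(G^k_{n,N}),
\]
and by induction $\P(G^k_{n,N})\le (1-r_k)^{N-n}\P(M_n=k)$. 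Letting $N\to\infty$ gives $\P(E^k_n)=0$, and taking a countable union over $n$ shows $\P(M_n=k\text{ eventually})=0$.

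Finally, since $M_\infty\in\Z_+$ a.s., the event $\{M_\infty\ge 1\}$ is contained in $\bigcup_{k\ge 1}\bigcup_{n\ge 1}E^k_n$, which is a countable union of null sets. Therefore $M_\infty=0$ a.s., which is the desired conclusion. The argument is elementary and has no serious obstacle; the only point to be careful about is making the conditional iteration rigorous (handling the indicator $1_{G^k_{n,N}}$ inside the conditional expectation), but this is standard.
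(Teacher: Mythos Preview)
Your proof is correct and follows essentially the same approach as the paper: invoke supermartingale convergence to get a $\Z_+$-valued limit $M_\infty$, then use the condition \eqref{martcond} to show iteratively that $\P(M_j=k\text{ for all }j\in[N,n])\le(1-r_k)^{n-N}$, whence $\P(M_\infty=k)=0$ for every $k\ge1$. Your presentation is slightly more explicit about the eventual-constancy of integer-valued convergent sequences and the handling of the indicator inside the conditional expectation, but the argument is the same.
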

\begin{proof} By supermartingale convergence we know $M_n\to M_\infty\in\Z_+$ a.s.\ The above condition easily implies for $k\in\N$ and $n\ge N$,
\[\P(\forall j\in[N,n+1], \ M_j=k)\le (1-r_k)\cdot\P(\forall j\in[N,n], \ M_j=k),\]
and therefore by induction, for all $k\in\N$ and $n\ge N$,
\[\P(\forall j\in[N,n], \ M_j=k )\le (1-r_k)^{n-N}.\] 
This proves that for all $k\in \N$, $\P(\cup_{N=1}^\infty\{\forall j\ge N, \ M_j=k \})=0$, and therefore \break
$\P(M_\infty=k)=0$.  We therefore conclude that $M_\infty=0$ a.s.\ and the proof is complete.
\end{proof}

\begin{proof}[Proof of Lemma~\ref{gammaext}]
As noted after the statement of Lemma~\ref{gammaext}, we may assume $f$ and $g$ are $\ell$-neighbours when verifying \eqref{Rfinite}. 
Next we claim that it suffices to prove the result for $R=m$ a constant.
Assume this constant case and let $f,g\in F_R$ be $\ell$-neighbours.  
So on $\{R=m\}$, $f,g\in F_m$ are $\ell$-neighbours. 
The result for $R$ identically $m$ gives w.p.$1$ $R_m^\ell(f,g)<\infty$.  
This gives \eqref{Rfinite} w.p.$1$ on $\{R=m\}$.  
Combine the resulting null sets to conclude that w.p.$1$, \eqref{Rfinite} holds.  

So we clearly may fix $m\in\N$, $f\neq g\in F_m$ and $\ell\in\{0,\dots,d-1\}$, and it suffices to prove that with probability $1$ on 
\[\Omega_\ell=\{\text{$f$, $g$ are $\ell$-neighbours}\}\]
which is in $\cG_m$, we have
\begin{equation}\label{Rmfinite}
R^\ell_m(f,g)<\infty.
\end{equation}
Until otherwise indicated we assume $\omega\in\Omega_\ell$. 
We simplify notation and for $n\ge m$ write $\Gamma_n^{\ell'}(f,g)$ for $\Gamma_{m,n}^{\ell'}(f,g)$. 

We first claim that 
\begin{equation}\label{constL}
\forall n\ge m, \forall (f',g')\in \Gamma_n^\ell(f,g), \quad L(S^{f'},S^{g'})=L(S^f,S^g).
\end{equation}
For this, note that Lemma~\ref{ellnab}\ref{it:L is non-increasing} implies that for all $n\ge m$ and $(f',g')\in K^f_n\times K_n^g\supset \Gamma_n^{\ell}(f,g)$, we have  $L(S^{f'},S^{g'})\subset L(S^f,S^g)$. 
On the other hand  $(f',g')\in\Gamma^\ell_n(f,g)$ implies $|L(S^{f'},S^{g'})|=\ell=|L(S^f,S^g)|$ (recall we work on $\Omega_\ell$), and so \eqref{constL} follows. 

If $n\ge m$ and $x,y\in G_n$, let 
\[L'_n(x,y)=\{1\le k \le d:x^k=y^k+B^{-n}\}.\]
It follows from Lemma~\ref{ellneighb} that ($\dot\cup$ denotes a disjoint union)
\begin{equation}\label{Lcdecomp}
\text{if $f',g'\in F_n$ are neighbours then $L(S^{f'},S^{g'})^c=L'_n(S^{f'},S^{g'})\dot\cup L'_n(S^{g'},S^{f'})$.}
\end{equation}
Together \eqref{constL} and \eqref{Lcdecomp} give us
\begin{equation}\label{unioneq}
L'_n(S^{f'},S^{g'})\cup L'_n(S^{g'},S^{f'})=L'_m(S^f,S^g)\cup L'_m(S^g,S^f)\ \forall (f',g')\in \Gamma_n^\ell(f,g)\ \forall \ n\ge m.
\end{equation}
Next we claim that 
\begin{equation}\label{constL'}
\forall n\ge m, \forall(f',g')\in \Gamma_n^\ell(f,g), \ L'_n(S^{f'},S^{g'})=L'_m(S^f,S^g)\text{ and }L'_n(S^{g'},S^{f'})=L'_m(S^g,S^f).
\end{equation}
For this, argue as in \eqref{Sincr} but without the absolute values, to see that for $(f',g')\in\Gamma^\ell_n(f,g)$ $(n\ge m$) and $k\in L'_m(S^f,S^g)$, 
\begin{align*}
 S^{f',k}-S^{g',k}&=S^{f,k}-S^{g,k}+\sum_{j=m+1}^n B^{-j}(X^{f'|j,k}-X^{g'|j,k})\\
&=B^{-m}+\sum_{j=m+1}^nB^{-j}(X^{f'|j,k}-X^{g'|j,k})\\
&\ge B^{-n}.
\end{align*}
The fact that $f'$ and $g'$ are neighbours implies $|S^{f',k}-S^{g',k}|=0$ or $B^{-n}$ by Lemma~\ref{ellneighb}. So the above implies $S^{f',k}-S^{g',k}=B^{-n}$, and we have proved $L'_m(S^f,S^g)\subset L'_n(S^{f'},S^{g'})$, and therefore $L'_m(S^g,S^f)\subset L'_n(S^{g'},S^{f'})$ by symmetry.  
These inclusions and the equality in \eqref{unioneq} now give \eqref{constL'}.

We claim that the evolution in $n\ge m$ of  $\Gamma^\ell_n(f,g)$ satisfies
\begin{align}\label{gammaevol}
(f',g')\in \Gamma^\ell_{n+1}(f,g)\ \ \ \text{implies }&(\pi f',\pi g')\in\Gamma^\ell_n(f,g),\ f'_{n+1}\le Z^{\pi f'},\ g'_{n+1}\le Z^{\pi g'},\\ \nonumber&\forall k\in L(S^f,S^g), \ X^{f',k}=X^{g',k},\\
\nonumber&\forall k\in L'_m(S^f,S^g), \ X^{g',k}=B-1, X^{f',k}=0,\text{ and}\\
\nonumber&\forall k\in L'_m(S^g,S^f), \ X^{f',k}=B-1, X^{g',k}=0.
\end{align}
(In fact it is also not hard to show the converse implication holds in the above but we will not need this.)  To prove the above, note first that for any $n\ge m$ we clearly have
\begin{equation}\label{Kevol}
(f',g')\in K^f_{n+1}\times K^f_{n+1}\quad \text{iff} \quad (\pi f',\pi g')\in K^f_n\times K^g_n \text{ and } f'_{n+1}\le Z^{\pi f'}\text{ and }g'_{n+1}\le Z^{\pi g'}.
\end{equation}
Let $(f',g')\in \Gamma^\ell_{n+1}(f,g)$ where $n\ge m$. Lemma~\ref{ellnab}\ref{it:L is non-increasing} implies
\[L(S^{f'},S^{g'})\subset L(S^{\pi f'},S^{\pi g'})\subset L(S^f,S^g).\]
The cardinalities of the first and last sets above are both $\ell$ (recall we are on $\Omega_\ell$), and so we conclude
\begin{equation}\label{Lsame}
L(S^{f'},S^{g'})=L(S^{\pi f'},S^{\pi g'})=L(S^f,S^g),
\end{equation}
and
\begin{equation}\label{Lcard}
|L(S^{\pi f'},S^{\pi g'})|=\ell.
\end{equation}
Recall that $S^f\neq S^g$ (they are in fact $\ell$-neighbours on $\Omega_\ell$) and so by Lemma~\ref{ellnab}\ref{it:L is non-increasing}, $S^{\pi f'}\neq S^{\pi g'}$. 
We can therefore apply Lemma~\ref{ellnab}\ref{it:neighbours descend from neighbours} to see that $\pi f'$ and $\pi g'$ are $\ell'$-neighbours for some $\ell'\ge \ell$, and hence are neighbours, because $f'$ and $g'$ are $\ell$-neighbours.  
Now \eqref{Lcard} shows that $\pi f'$ and $\pi g'$ must be $\ell$-neighbours.  
This conclusion and \eqref{Kevol} show that 
\begin{equation}\label{piinG}
(\pi f',\pi g')\in \Gamma_n^\ell(f,g).
\end{equation}
By \eqref{Lsame} for all $k\in L(S^f,S^g)$, $S^{\pi f',k}=S^{\pi g',k}$ and $S^{f',k}=S^{g',k}$, which implies
\begin{equation}\label{equalonL}
\forall k\in L(S^f,S^g), \ X^{f',k}=X^{g',k}.
\end{equation}
By \eqref{piinG} we may apply \eqref{constL'} to $(\pi f',\pi g')$, as well as $(f',g')$, and deduce
\begin{equation*}
L'_n(S^{\pi f'},S^{\pi g'})=L'_m(S^f,S^g)\quad \text{and} \quad L'_{n+1}(S^{f'},S^{g'})=L'_m(S^f,S^g).
\end{equation*}
By definition this means
\[\forall k\in L_m'(S^f,S^g), \quad S^{\pi f',k}=S^{\pi g',k}+B^{-n}\text{ and }S^{f',k}=S^{g',k}+B^{-n-1}.
\]
Take differences in the above to conclude that $X^{f',k}=X^{g',k}+1-B$, and therefore we obtain
\begin{equation}\label{L'cond2}
\forall k\in L'_m(S^f,S^g), \ X^{g',k}=B-1\text{ and }X^{f',k}=0.
\end{equation}
Reversing the roles of $f$ and $g$ gives
\begin{equation}\label{L'cond3}
\forall k\in L'_m(S^g,S^f), \ X^{f',k}=B-1\text{ and }X^{g',k}=0.
\end{equation}
Now combine \eqref{Kevol}, \eqref{piinG}, \eqref{equalonL}, \eqref{L'cond2} and \eqref{L'cond3} to complete the proof of \eqref{gammaevol}. 

Recall from \eqref{veenotation} the notation $f'\vee k\in F_{n+1}$ if $f'\in F_n$ and $k\in\N$. For $n\ge m$ define $M_{n}=1_{\Omega_\ell}|\Gamma_n^\ell(f,g)|$. Clearly $M_n$ is $\cG_n$-measurable. By \eqref{gammaevol} for $n\ge m$,
\begin{align}\label{Mnbnd}
M_{n+1}\le 1_{\Omega_\ell}\sum_{(f',g')\in \Gamma_n^{\ell}(f,g)}\sum_{i=1}^{Z^{f'}}\sum_{j=1}^{Z^{g'}}&1(\forall k\in L(S^f,S^g), \ X^{f'\vee i,k}=X^{g'\vee j,k})\\
\nonumber&\times 1(\forall k\in L_m'(S^f,S^g), \ X^{g'\vee j,k}=B-1,\,X^{f'\vee i,k}=0)\\
\nonumber&\times 1(\forall k\in L_m'(S^g,S^f), \ X^{f'\vee i,k}=B-1,\,X^{g'\vee j,k}=0).
\end{align}
Let $\bar\cG_n=\cG_n\vee\sigma(Z^{f'},f'\in F_n)$. 
Condition first on $\bar \cG_n$ and use the independence properties of $(X^f,f\in F)$ and $(Z^f,f\in F)$ to see that 
\begin{align*}
\E(M_{n+1}\, | \, \cG_n)&\le 1_{\Omega_\ell}\sum_{(f',g')\in \Gamma^\ell_n(f,g)}\E\Bigl(\sum_{i=1}^{Z^{f'}}\sum_{j=1}^{Z^{g'}}B^{-\ell}B^{-2(|L_m'(S^f,S^g)|+|L_m'(S^g,S^f)|)} \ \Bigl| \ \cG_n\Bigr)\\
&=M_n\mu^2B^{-\ell}B^{-2(d-\ell)}\quad\text{(by \eqref{Lcdecomp} and $|L(S^f,S^g)^c|=d-\ell$)}\\
&\le M_nB^{4/\beta}B^{-1-d}\le M_n,
\end{align*}
where we have used $\ell\le d-1$, $\mu=B^{2/\beta}$ and $d\ge 4/\beta-1$ in the last line. Therefore $(M_n)$ is a $\Z_+$-valued $(\cG_n)$-supermartingale.  

To prove that \eqref{Rmfinite} holds a.s.\ on $\Omega_\ell$ it clearly suffices to show $M_n\to 0$ a.s.\ and for this we will verify the hypothesis \eqref{martcond} of Lemma~\ref{supermart}. 
On $\Omega_\ell$ we have by \eqref{Lcdecomp} that $0<|L(S^f,S^g)^c|=|L'_m(S^f,S^g)|+|L'_m(S^g,S^f)|$. 
So one of $L_m'(S^f,S^g)$ or $L'_m(S^g,S^f)$ is nonempty and we may assume without loss of generality it is the former and set $k_m=\min L'_m(S^f,S^g)$, which is $\cG_m$-measurable.  
Let $\gamma_{n}^\ell(f,g)\subset K^f_n$ be the projection of $\Gamma_n^\ell(f,g)$ onto the first variable. 
By \eqref{Mnbnd} on $\{L'_m(S^f,S^g)\neq \emptyset\}\cap\Omega_\ell \,\in\cG_m$, 
\begin{align}\label{lbnd0}
\nonumber\P(M_{n+1}=0\, | \, \cG_n)&\ge\P\Bigl(\cap_{(f',g')\in \Gamma_n^\ell(f,g)}\cap_{i=1}^{Z^{f'}}\cap_{j=1}^{Z^{g'}}\{\exists k\in L'_m(S^f,S^g)\\
\nonumber&\phantom{\ge\P\Bigl(\cap_{(f',g')\in \Gamma_n^\ell(f,g)}\cap_{i=1}^{Z^{f'}}\cap_{j=1}^{Z^{g'}}} s.t.\ (X^{f'\vee i,k},X^{g'\vee j,k})\neq(0,B-1)\}\ \Bigl| \ \cG_n\Bigr)\\
&\ge \P\Bigl(\cap_{f'\in \gamma_{n}^\ell(f,g)}\cap_{i=1}^{Z^{f'}}\{X^{f'\vee i,k_m}\neq 0\}\ \Bigl| \ \cG_n\Bigr).
\end{align}
Conditional on $\bar\cG_n$, $\{1(X^{f'\vee i,k_m}\neq 0):f'\in\gamma^\ell_{n}(f,g),i\le Z^{f'}\}$ are i.i.d.\ $\mathrm{Bernoulli}(p)$ r.v.'s with $p=1-B^{-1}$, while conditional on $\cG_n$, $\{Z^{f'}:f'\in\gamma^\ell_{n}(f,g)\}$ are i.i.d.\ copies of the branching random variable $Z$.  
So condition first on $\bar\cG_n$ and then on $\cG_n$ to see that the right-hand side of \eqref{lbnd0} is (on $\Omega_\ell$)
\[\E\Bigl(\prod_{f'\in\gamma^\ell_{n}(f,g)}(1-B^{-1})^{Z^{f'}}\ \Bigl| \ \cG_n\Bigr)=\E\Bigl((1-B^{-1})^Z\Bigr)^{|\gamma^\ell_{n}(f,g)|}\ge \E((1-B^{-1})^Z)^{M_n},\]
where the last inequality holds because $|\gamma^\ell_{n}(f,g)|\le |\Gamma^\ell_n(f,g)|=M_n$ on $\Omega_\ell$. 
Recall that $M_{n+1}=0$ on $\Omega_\ell^c$ and so for any $k\in\N$, on $\{M_n=k\}$,
\[\P(M_{n+1}=0 \, | \, \cG_n)\ge \E((1-B^{-1})^Z)^{k}=:r_k>0.\]
This proves \eqref{martcond} and the proof is complete.
\end{proof}

\section{Weak disconnectedness result for subcritical $B$-ary Classical Super-Tree Random Measures}
\label{sec:wdisc}
We continue to work in the setting of $B$-ary classical super-tree random measures. Recall from Section~\ref{sec:STRM} the definition of the random measure $Y$ on $I$, which has total mass given by the random variable $W^0$, see \eqref{Ydef1} and \eqref{Ymass}.
On the event $\{W^0>0\}$, we denote by $V = (V_1,V_2,V_3,\dots )$ a random element taken under the probability measure $\frac{1}{W^0}\cdot Y$, ``conditionally on everything else". Formally one can work on the product space $(\Omega\times I, \cF\times \cB(I))$, where for $(\omega,V)\in\{W^0>0\}\times I$, $\P(V\in A \, | \, \omega)=Y(\omega)(A)/W^0$ and $V=(1,1,\dots)$, say, on $\{W^0=0\}$. 
An explicit description of the joint law of $(\omega,V)$ under this Campbell measure is given below. 
By definition of $\nu$ as the push-forward of $Y$ by the function $i\mapsto S^i$, conditionally on $\nu$ and on the event $\{\nu(1)>0\}$, the point $S:=S^V$ has distribution $\frac{1}{\nu(1)}\cdot \nu$. 
In this section we show that under the assumption that the process is ``subcritical", i.e.,\ $\frac{2}{\beta} <d$, the connected component of $S$ is almost surely reduced to a point.
\begin{theorem}\label{thm:weak disconnectedness}
If $\frac{2}{\beta} <d$, on the event $\{W^0> 0\}$ the connected component of $S$ is almost surely reduced to a point, i.e.
\begin{align*}
	\P\left( \{S\} \text{ is a connected component of }\supp(\nu)\, | \, \nu(1)>0\right) = 1.
\end{align*}
\end{theorem}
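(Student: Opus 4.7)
The plan is to adapt Tribe's 1991 argument via a Campbell-measure decomposition of $\nu$.

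First, I would introduce the Campbell measure $\P_{\mathrm{Camp}}$ on $\Omega\times I$ defined by $d\P_{\mathrm{Camp}}=dY\otimes d\P$. Since $\E[Y(I)]=\E[W^0]=1$, this is a probability measure, and for events $A\in\cF$ one has $\P_{\mathrm{Camp}}(A)=\E[W^0\,1_A]$. Under $\P_{\mathrm{Camp}}$ the coordinate $V$ is the distinguished (Palm) line of descent and $S=S^V$. The tree admits the standard spine decomposition: $Z^{V|(k-1)}$ has the size-biased law $\P(Z'=j)=j\P(Z=j)/\mu$, one of its $Z'$ children is $V|k$, and the remaining $Z'-1$ siblings launch independent standard CSTRM subtrees.

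For each $n\geq 1$ define the ``neighbour-free'' event
\[E_n=\bigl\{\nu(\overline{C_n(y)})=0\ \text{for every $y\in G_n$ with $\|y-S^{V|n}\|_\infty=B^{-n}$}\bigr\}.\]
On $E_n$, any point of $\supp(\nu)\setminus\overline{C_n(S^{V|n})}$ lies in some $\overline{C_n(z)}$ with $\|z-S^{V|n}\|_\infty\geq 2B^{-n}$, hence at $\ell^\infty$-distance at least $B^{-n}$ from $\overline{C_n(S^{V|n})}$. Consequently $\supp(\nu)\cap\overline{C_n(S^{V|n})}$ is a clopen subset of $\supp(\nu)$ of diameter at most $\sqrt d\,B^{-n}$ containing $S$, so it contains the connected component of $S$. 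As the diameter shrinks to $0$, it suffices to show $\P_{\mathrm{Camp}}(E_n\ \mathrm{i.o.})=1$.

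The event $E_n^c$ holds iff some sibling subtree (at some level $k\leq n$ of the spine) has a descendant at level $n$ at one of the $3^d-1$ neighbour positions of $S^{V|n}$, so the spine decomposition writes $E_n=\bigcap_{k=1}^n A_{k,n}$ as an intersection of events that are independent across the branching level $k$. Under the subcriticality $r:=\mu/B^d<1$ (equivalent to $d>2/\beta$), a Markov estimate on the expected descendant count of a single level-$k$ sibling at a fixed level-$n$ neighbour yields $\P_{\mathrm{Camp}}(A_{k,n}^c)=O(r^{n-k})$. An independent-product bound of the form
\[\P_{\mathrm{Camp}}(E_n)\geq \prod_{k=1}^n\P_{\mathrm{Camp}}(A_{k,n})\geq c>0\]
then holds uniformly in $n$, since the series $\sum_{k\leq n}r^{n-k}$ is bounded by $1/(1-r)$. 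Combining this uniform positive probability with a $0$-$1$ law for the tail $\sigma$-field of the independent random variables driving the spine and sibling subtrees forces $\P_{\mathrm{Camp}}(E_n\ \mathrm{i.o.})\in\{0,1\}$, and the uniform lower bound rules out the value $0$.

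The hard part is the last step, namely the $0$-$1$ law: although the events $A_{k,n}$ are independent across $k$ for fixed $n$, across different values of $n$ they couple through shared sibling subtrees at lower levels, so $\{E_n\ \mathrm{i.o.}\}$ is not obviously a tail event. One must identify a suitable filtration of independent random variables---the $X$-displacements, the offspring counts and the iid standard CSTRM subtrees attached to the spine---under which $\{E_n\ \mathrm{i.o.}\}$ does lie in a tail $\sigma$-field, so that Kolmogorov's $0$-$1$ law applies.
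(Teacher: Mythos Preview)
Your Campbell-measure/spine setup is exactly the right framework, and it matches the paper's change of measure $d\Q=W^0\,d\P$. The difficulty is that your argument from that point on has two real gaps, one of which you flag yourself.

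\textbf{The product/lower bound.} The events $A_{k,n}$ are \emph{not} unconditionally independent in $k$: the target set ``neighbours of $S^{V|n}$'' depends on all the spine displacements $X^{V|1},\dots,X^{V|n}$, and the starting point of the level-$k$ sibling subtree depends on $X^{V|1},\dots,X^{V|(k-1)}$. Independence only holds \emph{conditionally on the spine}. Conditionally you get
\[
\P_{\mathrm{Camp}}(E_n\mid\text{spine})\ \ge\ \prod_{k=1}^n\bigl(1-(3^d-1)\,c\,r^{\,n-k}\bigr)_+^{\,Z^{V|(k-1)}-1},
\]
but the exponent $Z^{V|(k-1)}-1$ is random and, for $k$ close to $n$, the base need not be close to $1$. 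So the a.s.\ conditional lower bound is not uniform, and an unconditional bound $\P_{\mathrm{Camp}}(E_n)\ge c>0$ does not follow from what you wrote.

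\textbf{The $0$--$1$ law.} You correctly identify this as the crux and leave it open. Since the entire level-$1$ sibling subtree is one of your ``independent inputs,'' and it can in principle hit neighbours of $S^{V|n}$ for arbitrarily large $n$, it is not obvious that $\{E_n\ \text{i.o.}\}$ is a tail event. One can argue via first Borel--Cantelli that each fixed early sibling subtree hits only finitely many level-$n$ neighbours (since $\sum_n r^{n-k}<\infty$), which morally makes the event tail, but turning this into a clean Kolmogorov $0$--$1$ statement (handling the spine-displacement shifts and boundary effects) is a genuine piece of work you have not done.

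\textbf{How the paper avoids both problems.} The paper chooses a different isolating event: rather than asking that all $3^d-1$ neighbour cubes be empty, it asks that at time $m$ the spine particle be \emph{alone at its site} ($|J_m^{S_m}|=1$) and that over the next two generations all descendants of the spine make two prescribed displacements. This event is $\cH_{m+2}$-measurable and has a fixed positive conditional probability given $\cH_m$ on $\{|J_m^{S_m}|=1\}$. The key lemma is that $(|J_m^{S_m}|-1)_{m\ge0}$ is a Galton--Watson process with immigration whose offspring mean is $\mu/B^d=r<1$; subcriticality makes it recurrent, so $|J_m^{S_m}|=1$ infinitely often. Then L\'evy's conditional second Borel--Cantelli lemma gives $E_m$ i.o.\ directly---no $0$--$1$ law, no product bound, no uniform estimate over sibling subtrees. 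The trade is that the paper's $E_m$ controls only the spine's own descendants (via two forced steps into the interior of a subcube), whereas your $E_n$ tries to control all competing mass at once; the former is much easier to force with positive conditional probability.
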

\begin{remark}\label{rem:Wdisc}
	\begin{enumerate*}[label=\emph{(\alph*)}, ref=(\alph*),leftmargin=*]
	\item 
	The above result is a restatement of Theorem~\ref{thm:nutribe}.
	Note that the above conclusion is consistent with the existence of a non-trivial connected component for $\supp(\nu)$. Interesting examples where such components exist and the result above is true are given in Remark~\ref{rem:weak disconnectedness result also holds for fp} in the next section and Remark~\ref{rem:ccomp} in the Introduction. \\
	\item Theorem~\ref{thm:weak disconnectedness}  also ensures that almost surely, any connected component of $\supp(\nu)$ has vanishing $\nu$-mass because $\nu$ is atomless by \eqref{finenergy}.
	\end{enumerate*}
\end{remark}

The proof of Theorem~\ref{thm:weak disconnectedness} uses a different probability measure $\Q$, given by 
\begin{align*}
	\frac{\mathrm{d} \Q}{\mathrm{d}\P} = W^0.
\end{align*}
In Theorem~\ref{thm:weak disconnectedness} we are working on the event where the density $W^0$ of $\Q$ relative to $\P$ is non-zero, and so 
we just need to prove that under $\Q$, the singleton $\{S\}$ is almost surely a connected component of $\supp  \ \nu$. 
As we now show, the law of all our random variables has a simple description under $\Q$. 

Recall $(p_k)_{k\geq 0}$ is the reproduction distribution and recall that $Z$ denotes a random variable with this distribution. 
We introduce $Z^*$ the \emph{size-biased version} of $Z$, whose distribution is characterized by the relation
\begin{align*}
	\E\left[h(Z^*)\right] = \frac{\E \left[Z h(Z)\right]}{\E[Z]},
\end{align*} 
for all bounded functions $h$. This amounts to saying that $\P(Z^*=k)=\frac{1}{\mu} k p_k$ for all $k\geq 0$.

Using standard arguments in the literature on branching processes (see, for example, Section~2 of \cite{LPP95}),
we can explicitly describe the law of the random variables $V=(V_1,V_2,\dots)$, $(Z^f, \ f\in F)$ and $(X^f, \ f\in F\setminus\{0\})$ under $\Q$: 
\begin{equation}\label{qdist1}
	\parbox[t]{0.9\textwidth}{The sequence $((V_{k+1},Z^{V|k}), \ k\geq 0)$ is i.i.d.\  with the law of $(N,Z^*)$, where $Z^*$ is
		the size-biased version of $Z$, and conditionally on $Z^*$, the random variable $N$ is uniform in $\{1,2,\dots Z^*\}$.}
\end{equation}
\begin{equation}\label{qdist2}
		\parbox[t]{0.9\textwidth}{ Conditionally on $((V_{k+1},Z^{V|k}), \ k\geq 0)$, the random variables $(Z^f, \ f\in F\setminus \{V|m \ : \ m\geq 0\})$ and $(X^f, \ f\in F\setminus\{0\})$ have the same joint distribution as under $\P$.}
\end{equation}
At any generation $m$ we refer to $V|m$ as the special individual (or particle).
In what follows, we will consider the filtration $(\cH_n)_{n\geq 0}$ defined as
\begin{align*}
	\cH_n:= \sigma(V_k:k \leq n)\vee \sigma(Z^f:|f|<n) \vee  \sigma(X^f:1\le |f|\leq n).
\end{align*}
To simplify notation, we denote by $S_m$ the position $S^{V|m}$ of the special particle at time $m$. Recall from Section~\ref{sec:Bary} that $J_m^x=\{f\in K_m^0:S^f=x\}$ for $x\in G_m$ and $m\in\Z_+$, so that $J_m^{S_m}=\{f\in K^0_m:S^f=S_m\}$ is the set of particles present at time $m$ at position $S_m$. 
Note that the process $(S_m)_{m\geq 0}$ is $(\cH_m)_{m\geq 0}$-adapted, as is the process $(J_m^{S_m})_{m\geq 0}$.
The proof of Theorem~\ref{thm:weak disconnectedness} will follow the immediate surroundings of the position $S_m$ of the distinguished particle $V|m$ at every time $m$. The idea will be to show that infinitely often as $m$ grows, the position $S_m$ will be surrounded by empty sites, thus disconnecting the descendants of $S_m$ from the the rest of the population.

We start by analyzing the number of particles at position $S_m$. 
We describe below the resulting process as a Galton-Watson process \emph{with immigration}: 
from one generation to the next, every particle is replaced by an independent number of offspring sampled from the reproduction law and an additional number of immigrant particles, sampled from the \emph{immigration} law, is added, independently of the reproduction, see for example \cite{Hea65}. 
As in the proof of Proposition~\ref{hitprob1}, we write $R=\sum_{\ell =1}^Z e_\ell$, where $\{e_\ell\}$ are i.i.d.\ Bernoulli r.v.'s with parameter $p=B^{-d}$, independent of $Z$. 
We also write $\tilde{R}=\sum_{\ell =1}^{Z^*-1} e_\ell$, where $Z^*$ is the size-biased version of $Z$ and again $\{e_\ell\}$ are i.i.d.\ Bernoulli r.v.'s with parameter $p=B^{-d}$, independent of $Z^*$.
\begin{lemma}\label{lem:brimm}
	Under $\Q$, the process $\left(|J_m^{S_m}|-1\right)_{m\geq 0}$ is an $(\cH_m)_{m\geq 0}$-adapted Galton-Watson process with immigration starting at $0$ at $m=0$. 
	The reproduction law is that  of $R$ and the immigration law is that of $\tilde{R}$. 
	This process is recurrent if $\frac{2}{\beta} < d$.
\end{lemma}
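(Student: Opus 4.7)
The plan is to identify the one-step transition law of $M_m := |J_m^{S_m}|-1$ explicitly under $\Q$ and recognize it as the defining recursion of a Galton-Watson process with immigration, then invoke a classical recurrence result in the subcritical case. First I would observe the trivial initial condition $M_0=0$ (since $J_0^{S_0}=\{0\}$) and the $(\cH_m)$-adaptedness of $(M_m)$: the position $S_m$, the set $K^0_m$, and the individual positions $\{S^f:f\in K^0_m\}$ are all determined by $V_1,\dots,V_m$, $\{Z^f:|f|<m\}$, and $\{X^f:1\le|f|\le m\}$, which generate $\cH_m$.

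For the transition from $m$ to $m+1$, I would partition $J_{m+1}^{S_{m+1}}$ according to the parent in $J_m^{S_m}$. A child $g=f\vee j$ of $f\in J_m^{S_m}$ has position $S^g=S_m+B^{-m-1}X^g$, so it lies at $S_{m+1}$ iff $X^g=X^{V|(m+1)}$. The spine particle $V|m$ contributes the new spine particle $V|(m+1)$ together with a random number of extra matching children: by \eqref{qdist1}-\eqref{qdist2}, conditionally on $\cH_m$, $Z^{V|m}$ is distributed as $Z^*$, and its $Z^{V|m}-1$ non-spine offspring have i.i.d.\ uniform displacements on $\{0,\dots,B-1\}^d$ independent of everything else, so the number of these extra matches is distributed as $\tilde R=\sum_{\ell=1}^{Z^*-1}e_\ell$. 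For each non-spine $f\in J_m^{S_m}\setminus\{V|m\}$, \eqref{qdist2} gives $Z^f$ distributed as $Z$ with i.i.d.\ uniform child displacements, yielding a matching count distributed as $R=\sum_{\ell=1}^{Z}e_\ell$; these counts are mutually independent, and independent of the spine's contribution, since they depend on disjoint families of $\{Z^f,X^f\}$. Peeling off the guaranteed spine child then gives
\[
M_{m+1}=\tilde R_m+\sum_{i=1}^{M_m}R_{m,i},
\]
with $\tilde R_m$ and $(R_{m,i})_i$ conditionally independent given $\cH_m$ under $\Q$, which is exactly the defining recursion of a Galton-Watson process with immigration.

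For the recurrence statement, I would compute the offspring mean $\E[R]=\mu B^{-d}=B^{2/\beta-d}<1$ (strictly subcritical whenever $2/\beta<d$) and note $\E[\tilde R]=(\E[Z^2]/\mu -1)B^{-d}<\infty$ by the second-moment assumption in \eqref{pk}. Irreducibility of $(M_m)$ on its range in $\Z_+$ is a one-step argument ($\P(\tilde R=0)>0$ and $\P(R=0)>0$ give return to $0$ from any state, while $\P(\tilde R=k)>0$ lets $0$ reach any accessible $k$ in one step), and classical results for subcritical Galton-Watson processes with $L^1$ immigration (see \cite{Hea65}) then yield positive recurrence and in particular return to $0$ infinitely often almost surely. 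The only mild obstacle is careful bookkeeping of the conditional independences under $\Q$, which becomes transparent once \eqref{qdist1}-\eqref{qdist2} are applied generation by generation.
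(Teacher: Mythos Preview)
Your proposal is correct and follows essentially the same route as the paper: both arguments decompose $J_{m+1}^{S_{m+1}}$ by parent, separate the spine particle $V|m$ (whose $Z^{V|m}-1$ non-spine children yield the immigration term $\tilde R$) from the $M_m$ non-spine parents (each contributing an independent copy of $R$), and then appeal to Heathcote's subcritical recurrence criterion via $\E[R]=\mu B^{-d}<1$ and $\E[\tilde R]<\infty$. Your explicit check of irreducibility is a small addition; the paper simply asserts it is easy, and cites the corrections note \cite{Hea66} rather than \cite{Hea65} for the recurrence result.
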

\begin{proof}
Let $m\geq 1$. We decompose $J^{S_m}_m$ into three subsets: one containing the special individual, one containing the siblings of the special individual, and one containing the rest of the particles. 
In order to keep the same notation as \eqref{Jdefn} we write $X^{V|m}= x_m$ for the displacement of the special particle. By \eqref{Jdefn} we have
\begin{align*}
J^{S_m}_m&=\bigcup_{f\in J_{m-1}^{S_{m-1}}}\bigcup_{g_m=1}^{Z^f}\{f\vee g_m:\, X^{f\vee g_m}=x_m\}\\
&=\{V|m\} \cup \bigcup_{\substack{g_m=1\\ g_m\neq V_m}}^{Z^{V|m-1}} \{(V|m-1)\vee g_m:\, X^{(V|m-1)\vee g_m}=x_m\}\\
&\phantom{=\{V|m\}\ } \cup \bigcup_{\substack {f\in J_{m-1}^{S_{m-1}}\\ f \neq(V|m-1)}}\bigcup_{g_m=1}^{Z^f}\{f\vee g_m:\, X^{f\vee g_m}=x_m \}.
\end{align*}
Hence
	\begin{align*}
		|J^{S_m}_m|= 1 + \sum_{\substack{\ell=1\\ \ell\neq V_m}}^{Z^{V|m-1}} 1(X^{(V|m-1)\vee\ell}=x_m) + \sum_{\substack {f\in J_{m-1}^{S_{m-1}}\\ f \neq(V|m-1)}}\sum_{\ell=1}^{Z^f}1(X^{f\vee\ell}=x_m).
	\end{align*}
By \eqref{qdist1} and \eqref{qdist2}, under $\Q$, conditionally on $\cH_{m-1}$, the number of non-special children $Z^{V|m-1}-1$ of the special particle in generation $m-1$ is distributed as $Z^*-1$, and the number of children of any non-special particle in $J_{m-1}^{S_{m-1}}$ is distributed as the law of $Z$, and these variables are all independent.
Conditionally on these numbers and $\cH_{m-1}$, the spatial displacements of all children of particles in $J_{m-1}^{S_{m-1}}$ are i.i.d.\ and each such non-special child has probability $B^{-d}$ to make the same displacement as the special child.
This proves the first part of the statement.

The main theorem in \cite{Hea66} implies that sufficient conditions for recurrence of the above Galton-Watson process are $\E\left[R\right]<1$ and $\E[\tilde{R}]< \infty$ (it is easy to check irreducibility and aperiodicity here).   
The first condition holds because 
 $\E\left[R\right] = \mu B^{-d} = B^{\frac{2}{\beta} - d} <1$ and the second condition holds thanks to assumption \eqref{pk}. 
\end{proof}

\begin{remark}
In the critical setting, i.e.\ if $\frac{2}{\beta} = d$, we have $\E[R]=1$ and an easy computation gives us $\E[R(R-1)]= B^{-2d} \E[Z(Z-1)]=\E[\tilde{R}]$. 
This ensures, using  the result \cite[Theorem~1]{Pak71} on the recurrence and transience of critical Galton-Watson processes with immigration, that our process is transient: our proof in the sub-critical case would break down in the critical case.
\end{remark}
\begin{figure}
	\centering
	\includegraphics[page=8]{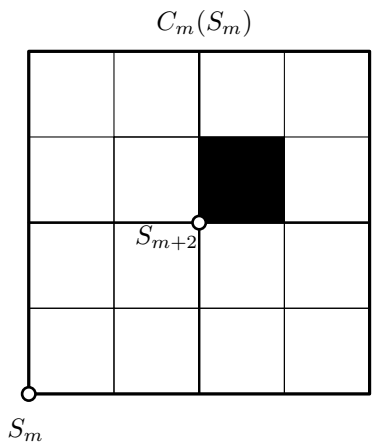}
	\caption{On the event $E_m$, the special particle is alone at $S_m$ and at time $m+2$ all its descendants are at position $S_m+B^{-(m+1)} \vec{1}$. This ensures that all the cubes surrounding $C_{m+2}(S_{m+2})$ are going to remain empty.
	\label{fig:special particle is surrounded}}
\end{figure}
Now, for $m\geq 0$, consider the following event 
\begin{align*}
	E_m:= \{|J_m^{S_m}| = 1 \text{ and }\forall f \in K_{m+2}^{V|m}, \ S^f= S_{m+2}=S_m + B^{-(m+1)} \vec{1} + B^{-(m+2)} \vec{0} \},
\end{align*}
that is, the event where at time $m$, the special particle is alone at site $S_m$, all its children have displacement $B^{-(m+1)} \vec{1}$ and then all of its grand-children have displacement $B^{-(m+2)} \vec{0}$,  i.e.\ no displacement. Note that $E_m$ is $\cH_{m+2}$-measurable.
\begin{lemma}\label{lem:sequence of events happens io}
The sequence of events $(E_m)_{m\geq 0}$ occurs infinitely often $\Q$-a.s.
\end{lemma}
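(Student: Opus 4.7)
The plan is to combine the recurrence from Lemma~\ref{lem:brimm} with a uniform lower bound on the $\cH_m$-conditional probability of $E_m$ on the event $A_m := \{|J_m^{S_m}|=1\}$, and then apply a conditional Borel--Cantelli argument along a sparse sub-sequence of times at which $A_m$ occurs.

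The first step is to exhibit a constant $\delta = \delta(B,d,\cL(Z)) > 0$ such that $\Q(E_m \mid \cH_m) \geq \delta\, 1_{A_m}$ for every $m\ge 0$. On $A_m$, the event $E_m$ is implied by the stronger event that $X^{(V|m)\vee \ell} = \vec{1}$ for every child $\ell$ of $V|m$ and $X^g = \vec{0}$ for every grandchild $g$ of $V|m$ (indeed, on this stronger event, each grandchild $f\in K^{V|m}_{m+2}$ satisfies $S^f = S^{V|m} + B^{-(m+1)}\vec{1} + B^{-(m+2)}\vec{0}$). Under $\Q$, each of the $X^f$ involved is uniform on $\{0,\dots,B-1\}^d$ and independent of everything else, so each such equality contributes a factor $B^{-d}$; averaging over the number of children of $V|m$ (size-biased, by \eqref{qdist1}) and over the numbers of children of each child (size-biased along the spine, distributed as $Z$ off the spine, by \eqref{qdist1}--\eqref{qdist2}) yields a positive constant $\delta$ which is independent of $m$.

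For the second step, Lemma~\ref{lem:brimm} ensures that the chain $(|J_m^{S_m}|-1)_{m\ge 0}$ visits $0$ infinitely often $\Q$-a.s., so the stopping times $\tau_0:=0$ and $\tau_{j+1}:=\inf\{m>\tau_j+2:|J_m^{S_m}|=1\}$ are all $\Q$-a.s.\ finite and $(\cH_m)$-adapted. Set $\cG_j := \cH_{\tau_j+2}$; since $\tau_j>\tau_{j-1}+2$, one has $\cG_{j-1}\subset \cH_{\tau_j}$ and $E_{\tau_j}\in \cG_j$. A standard strong Markov argument based on the i.i.d.\ nature of $((V_{k+1},Z^{V|k}))_{k\ge 0}$ under $\Q$ extends the first step to the stopping time $\tau_j$, giving $\Q(E_{\tau_j}\mid \cH_{\tau_j}) \ge \delta$, and then the tower property yields $\Q(E_{\tau_j}\mid \cG_{j-1}) \ge \delta$ as well. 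L\'evy's conditional Borel--Cantelli lemma applied in the filtration $(\cG_j)$ to the adapted sequence $(E_{\tau_j})$ then gives $\sum_j 1_{E_{\tau_j}} = \infty$ $\Q$-a.s., which concludes the proof.

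The main technical point that will require care is the extension of the first-step lower bound from a deterministic time $m$ to the stopping time $\tau_j$; this relies on the strong Markov (equivalently, the i.i.d.) structure of the size-biased spine construction together with the conditional independence properties of $(X^f,Z^f)$ recorded in \eqref{qdist1}--\eqref{qdist2}.
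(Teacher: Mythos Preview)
Your argument is correct, and the strategy mirrors the paper's: obtain a uniform lower bound $\Q(E_m\mid\cH_m)\ge\delta$ on $\{|J_m^{S_m}|=1\}$, invoke the recurrence from Lemma~\ref{lem:brimm}, and finish with a conditional Borel--Cantelli. The difference is in how the Borel--Cantelli step is organised.

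You pass to a random subsequence of stopping times $\tau_j$ (successive returns to state $1$ with gaps exceeding $2$) and work in the filtration $\cG_j=\cH_{\tau_j+2}$. This is sound, but the ``strong Markov'' step you flag as delicate is actually elementary: since $\{\tau_j=m\}\in\cH_m$, one has $\Q(E_{\tau_j}\mid\cH_{\tau_j})=\Q(E_m\mid\cH_m)$ on $\{\tau_j=m\}$ directly from the definition of $\cH_{\tau_j}$, so no genuine strong Markov machinery is needed.

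The paper avoids stopping times altogether by a simpler device: since $E_m\in\cH_{m+2}$, splitting into even and odd $m$ gives, for each parity, events $E_{2k}\in\cH_{2k+2}$ with conditional probabilities $\Q(E_{2k}\mid\cH_{2k})$ in a deterministic filtration, to which the second Borel--Cantelli lemma applies immediately. Recurrence ensures $\sum_m\Q(E_m\mid\cH_m)=\infty$ a.s., so at least one of the even or odd subsums diverges, and one concludes. This even/odd trick is worth remembering as a lightweight alternative to introducing stopping times whenever the obstruction is merely that $E_m$ looks two steps ahead.
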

\begin{proof}
First, note that on the event $\{|J_m^{S_m}| = 1\}$ we have 
\begin{align*}
	\Q\left(E_m \, | \, \cH_m\right) = \Q\left( \forall f \in K_{2}^{0}, \ S^f= S_{2}=B^{-1} \vec{1} + B^{-2} \vec{0} \right) >0
\end{align*}	
which does not depend on $m$.
By the recurrence of $\left(|J_m^{S_m}|-1\right)_{m\geq 0}$ and the above display, we know that almost surely the sum $\sum_{m=1}^\infty \Q\left( E_m \ \vert \ \cH_m\right)$ is infinite. 
Separating between even and odd terms, we get that almost surely, one of the two sums $\sum_{k=1}^\infty \Q\left( E_{2k} \ \vert \ \cH_{2k}\right)$ or $\sum_{k=1}^\infty \Q\left( E_{2k+1} \ \vert \ \cH_{2k+1}\right)$, is infinite.
Using the second Borel-Cantelli lemma (see for example \cite[Theorem~5.3.2]{Du10}), we get that on the event $\{\sum_{k=1}^\infty \Q\left( E_{2k} \ \vert \ \cH_{2k}\right)=\infty\}$ the sequence of event $(E_{2k})_{k\geq 1}$ a.s.\ occurs infinitely often, and similarly for $(E_{2k+1})_{k\geq 1}$ on $\{\sum_{k=1}^\infty \Q\left( E_{2k+1} \ \vert \ \cH_{2k+1}\right)=\infty\}$.
Combining this with the above, the sequence of events $(E_m)_{m\geq 0}$ occurs infinitely often, $\Q$-almost surely.
\end{proof}

\begin{proof}[Proof of Theorem~\ref{thm:weak disconnectedness}]
Recall that it suffices to prove  that the connected component of the point $S=\lim_{m\to\infty} S_m$ is reduced to a point almost surely under $\Q$.
Note that on $E_m$, the set of cubes in $\Lambda_{m+2}$ surrounding $\overline{C_{m+2}(S_{m+2})}$ are empty (see Figure~\ref{fig:special particle is surrounded}).
This ensures that the connected component of $S$ is included in the cube $\overline{C_{m+2}(S_{m+2})}$. 
If this occurs infinitely often as $m\rightarrow \infty$, this means that the connected component of $S$ is contained in a sequence of sets whose diameter tends to $0$ so it is reduced to a point.
We therefore conclude from Lemma~\ref{lem:sequence of events happens io} that the component of $S$ is $\Q$-a.s.\ reduced to a point, as required. 
\end{proof}

\section{Percolation on $B$-ary Classical Super-Tree Random Measures}\label{sec:percn}

We work in the context of a $B$-ary CSTRM associated with $(p_k)$. Our goal is to find sufficient conditions under which $\supp(\nu)$ will contain a non-trivial connected set, that is, $\supp(\nu)$ is not TD (totally disconnected).  We begin with a $0-1$ type law for this event. Recall the definition of $\supp(\nu)$ percolating prior to Theorem~\ref{thm:perc2}.
\begin{lemma} \label{lem:01}If $\P(\supp(\nu)\,\text{percolates})=p>0$, then 
$\P(\supp(\nu)\text{ is not TD}\,|\,\nu\neq 0)=1$.
\end{lemma}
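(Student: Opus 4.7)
The plan is to exploit the self-similarity of the $B$-ary CSTRM. For each $n\in\N$ and each $f\in K^0_n$, the descendants of $f$ give rise to a rescaled copy of $\supp(\nu)$ living inside the cube $\overline{C_n(S^f)}$. Explicitly, using \eqref{BaryS} and Lemma~\ref{Cantor},
\[T^f:=\{B^n(S^i-S^f):i\in K,\,i|n=f\}\subseteq[0,1]^d\]
is the support of a $B$-ary CSTRM $\nu^{(f)}$ built from the branching and displacement data of the subtree rooted at $f$. Because the collections $\{Z^g:g\in F\}$ and $\{X^g:g\in F\setminus F_0\}$ are i.i.d.\ and jointly independent, a standard branching-property argument shows that, conditional on $\cG_n$, the sub-measures $(\nu^{(f)})_{f\in F_n}$ are i.i.d.\ copies of $\nu$; this conditional independence descends to $f\in K^0_n$, since $K^0_n\subseteq F_n$ is $\cG_n$-measurable.

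Set $p=\P(\supp(\nu)\text{ percolates})>0$. Since the affine map $y\mapsto S^f+B^{-n}y$ is a homeomorphism sending $T^f$ onto a subset of $\supp(\nu)\cap\overline{C_n(S^f)}$, whenever $T^f$ percolates in $[0,1]^d$ the set $\supp(\nu)$ contains a connected subset of diameter at least $B^{-n}$, and in particular fails to be TD. The conditional independence above therefore yields
\[\P\Bigl(\forall\,f\in K^0_n,\,T^f\text{ does not percolate}\,\Bigr|\,\cG_n\Bigr)=(1-p)^{|K^0_n|}.\]

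To conclude, denote by $A_n^c$ the event that no $T^f$ with $f\in K^0_n$ percolates; then $\{\supp(\nu)\text{ is TD}\}\subseteq A_n^c$ for every $n$, by the previous paragraph. By Theorem~\ref{nuprops}\ref{it:nuprops:a}, on $\{\nu\neq 0\}$ we have $|K^0_n|\to\infty$ a.s., so $(1-p)^{|K^0_n|}1(\nu\neq 0)\to 0$ a.s., and dominated convergence gives $\P(A_n^c,\nu\neq 0)\to 0$. Hence $\P(\supp(\nu)\text{ is TD},\nu\neq 0)=0$, which is equivalent to the claim.

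The only genuine obstacle is verifying the self-similar subtree decomposition cleanly, namely that the $\nu^{(f)}$ indexed by generation-$n$ individuals are bona fide i.i.d.\ copies of $\nu$ conditional on $\cG_n$ and that this is compatible with the random selection of roots $f\in K^0_n$. This is a routine consequence of the i.i.d.\ nature of the families used to construct $\nu$, but needs to be stated carefully.
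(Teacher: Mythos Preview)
Your approach is essentially the same as the paper's: decompose $\nu$ via the subtrees rooted at generation $n$, use that the $\nu^{(f)}$ (the paper's $\nu_{[f]}$) for $f\in F_n$ are i.i.d.\ copies of $\nu$ independent of $\cG_n$, bound $\P(\supp(\nu)\text{ TD}\mid\cG_n)$ by $(1-p)^{|K^0_n|}$, and let $n\to\infty$. The paper carries out the ``routine'' decomposition you allude to via Lemma~\ref{Yfequiv} and the explicit identity \eqref{nudecomp2}.

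Two small repairs. First, Theorem~\ref{nuprops}\ref{it:nuprops:a} only gives $K^0_n\neq\emptyset$ for all $n$ on $\{\nu\neq 0\}$; to get $|K^0_n|\to\infty$ you need \eqref{Wfprop} (on survival, $\mu^{-n}|K^0_n|\to W^0>0$), which is what the paper invokes. Second, the dominated convergence step as written is not quite right: $\{\nu\neq 0\}\notin\cG_n$, so $\P(A_n^c,\nu\neq 0)$ is \emph{not} equal to $\E[(1-p)^{|K^0_n|}1(\nu\neq 0)]$. The clean fix (and what the paper does) is to drop the indicator: $\P(\supp(\nu)\text{ TD})\le\E[(1-p)^{|K^0_n|}]\to\P(\nu=0)$ by DCT, and since trivially $\{\nu=0\}\subset\{\supp(\nu)\text{ TD}\}$, the two events coincide a.s.
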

\noindent We will need some terminology for the proof.

\begin{definition}  $T:I\to I$ is the shift map $Ti=(i_2,i_3,\dots)$, and we abuse this notation by also using $T$ for the map from $F_n$ to $F_{n-1}$ given by $Tf=(f_2,\dots,f_n)$ ($Tf=0$ if $n=1$).  Let $m\in\N$ and $f\in F_m$. For $g\in F$, extend the notation in \eqref{veenotation} and let $f\vee g\in F_{m+|g|}$ be the concatenation of $f$ followed by $g$.  Also let $Z^g_{[f]}=Z^{f\vee g}$ and (for $|g|>0$) $X^g_{[f]}=X^{f\vee g}$ be the $f$-shifted analogues of our original branching and migration r.v.'s.  Define measures $Y_{[f]}$ on $(I,\cB(I))$ and $\nu_{[f]}$ on $([0,1]^d,\cB([0,1]^d)$ just as $Y$ and $\nu$, respectively, but now using the collections $\{Z_{[f]}^g:g\in F\}$ and $\{X_{[f]}^g:g\in F\setminus F_0\}$ in place of $\{Z^g:g\in F\}$ and $\{X^g:g\in F\setminus F_0\}$, respectively.  
Finally define $Y^f$ on $(I,\cB(I))$ by
\begin{equation*}
	Y^f(A)=\int_{\overline D(f)}1(T^mi\in A)\dd Y(i),
\end{equation*}
where $T^m$ denotes $m$-fold composition.
\end{definition}

Recalling $\cG_m$ from \eqref{newGdef}, we see that
\begin{equation}\label{nufiid} \text{$\{\nu_{[f]}:f\in F_m\}$ are i.i.d.\ copies of $\nu$, and are jointly independent of $\cG_m$.} 
\end{equation}

\begin{lemma}\label{Yfequiv}For any $f\in F_m$, we have  $Y^f=1(f\in K^0_m)\mu^{-m}Y_{[f]}$.
\end{lemma}
\begin{proof}
As in the uniqueness proof of Proposition~\ref{Y}, it suffices to fix $n\in\Z_+$ and $g\in F_n$, and show that
\[Y^f(\overline D(g))=1(f\in K^0_m)\mu^{-m}Y_{[f]}(\overline D(g)).\]
\SUBMIT{This is a straightforward calculation left for the interested reader.}
\ARXIV{A complete proof can be found in Section~\ref{app:subsec:Yfequiv} of the Appendix.}
\end{proof}

\begin{proof}[Proof of Lemma~\ref{lem:01}]
The idea of the proof is as follows: on the event $\{\nu \neq 0\}$, the number of particles living at time $n$ tends almost surely to infinity as $n\rightarrow \infty$, so if we stop the process at any large time $m\in \N$, the $m$-th generation of particles is made of large number of individuals.  
Since they each evolve independently of each other, and each of them has a constant probability $p$ of creating a non-trivial connected component (by percolating its corresponding cube of length $B^{-m}$), this should ensure that $\supp(\nu)$ contains non-trivial components. 
Note here that a non-trivial connected component created by single particle is contained in a connected component of $\supp(\nu)$.
Let us make this line of reasoning precise.
 Let $f\in F_m$.  Define $\nu^f$ on $([0,1]^d,\cB([0,1]^d)$ by
\begin{equation*}
	 \nu^f(A)=\int_{\overline D(f)}1_A(S^i)\,\dd Y(i).
\end{equation*}
By \eqref{Ydef1}, $Y(\overline D(f))=0$ if $f\notin K^0_m$, and so, since $\{\overline D(f):f\in F_m\}$ partitions $I$, we have
\begin{equation}\label{nudecompn}
\nu=\sum_{f\in K^0_m}\nu^f.
\end{equation}
It follows from its definition that 
\begin{equation}\label{nufrepn}
\nu^f(A)=\int_{\overline D(f)}1_A(S^{f\vee T^mi})\,\dd Y(i)=\int 1_A(S^{f\vee j})\,\dd Y^f(j).
\end{equation}
For $j\in I$, define
\begin{equation}\label{Sfdefn2}
S^j_{[f]}:=\sum_{k=1}^\infty B^{-k}X_{[f]}^{j|k}=\sum_{k=1}^\infty B^{-k}X^{f\vee(j|k)},
\end{equation}
so that we have
\begin{equation}\label{nufrep}
\nu_{[f]}(A)=\int 1(S^j_{[f]}\in A)\, \dd Y_{[f]}(j).
\end{equation}
For $j\in I$ \eqref{Sfdefn2} implies
\begin{equation*}
S^{f\vee j}=\sum_{n=1}^\infty B^{-n}X^{(f\vee j)|n}=S^f+B^{-m}S^j_{[f]}.
\end{equation*}
Use this in \eqref{nufrepn} and apply Lemma~\ref{Yfequiv} to conclude that
\begin{align*}\nu^f(A)&=\int 1_A(S^f+B^{-m}S^j_{[f]})\mu^{-m}\,\dd Y_{[f]}(j)\, 1(f\in K^0_m)\\
&=\mu^{-m}1(f\in K^0_m)\int 1_A(S^f+B^{-m}x)\,\dd \nu_{[f]}(x)\quad\text{(by \eqref{nufrep})}.
\end{align*}
Let $\tau^f(x)=S^f+B^{-m}x$ and let $(\tau^f)^*(\nu_{[f]})(A)=\nu_{[f]}((\tau^f)^{-1}(A))$ denote the pushforward of $\nu_{[f]}$ by $\tau^f$. Then by the above and \eqref{nudecompn} we have
\begin{equation}\label{nudecomp2}
\nu=\sum_{f\in K^0_m}\mu^{-m}(\tau^f)^*(\nu_{[f]}).
\end{equation}
Clearly $\supp(\mu^{-m}(\tau^f)^*(\nu_{[f]}))=\tau^f(\supp(\nu_{[f]}))$ and so 
\begin{equation}\label{TPDeq}\supp(\mu^{-m}(\tau^f)^*(\nu_{[f]}))\text{ is TD implies }\supp(\nu_{[f]})\text{ does not percolate}.
\end{equation}
It follows from \eqref{nufiid}, \eqref{nudecomp2} and \eqref{TPDeq} that (recall $p$ is as in the statement of the Lemma)
\begin{align}\label{TDub}
\nonumber\P(\supp(\nu)\text{ is TD} \, | \, \cG_m)&\le \P(\cap_{f\in K^0_m}\{\supp(\nu_{[f]}) \text{ does not percolate}\} \, | \, \cG_m)\\
&=(1-p)^{|K^0_m|}.
\end{align}
Therefore 
\[\P(\supp(\nu)\text{ is TD})\le \E((1-p)^{|K^0_m|}).\]
Recall from Proposition~\ref{Y} that either $|K^0_m|=0$ for large $m$ (iff $\nu=0$) or $\lim_m|K^0_m|=\infty$. Therefore
\begin{align*}
\P(\supp(\nu)\text{ is TD})\le \lim_{m\to\infty}\E((1-p)^{|K^0_m|})=\P(|K^0_m|=0\text{ for large }m)=\P(\nu=0).
\end{align*}
On the other hand $\{\nu=0\}\subset \{\supp(\nu)\text{ is TD}\}$. Therefore these two sets are a.s.\ equal and the result follows. 
\end{proof}
\begin{notation} For $m\in\Z_+$ let $\Lambda_m=\{\overline{C_m(x)}:x\in G_m\}$.
\end{notation}
To study the percolation properties of $\supp(\nu)$ we will set up a coupling with a random Cantor set arising from fractal percolation and then use the corresponding properties of the latter. We recount some features of fractal percolation from \cite{CCD88} and \cite{FG92}. Recall from Section~\ref{sec:Bary} (prior to Proposition~\ref{hitprob1} and Lemma~\ref{Cantor}, respectively) the notation $.x_1\dots x_m\in G_m$ for $x_\ell\in\{0,\dots,B-1\}^d$ and $C_m(x)$ for $x\in G_m$. 

Let $B\in \N^{\ge 2}$, $p\in[0,1]$ and $\{e_{x,m}: x\in G_m, \ m\geq 1\}$ be i.i.d.\ Bernoulli r.v.'s with parameter $p$.  
Fractal percolation builds a random Cantor set, $A_\infty$ by setting $A_0=[0,1]^d$ at stage~$0$ and at the $m$th stage divides each of the remaining cubes in $\Lambda_m$ into its non-overlapping subcubes in $\Lambda_{m+1}$ and keeps each subcube with probability $p$, independently of the other cubes and the past selections.  
$A_\infty$ is the residual limiting set obtained by letting $m\to\infty$. More formally let $A_0=[0,1]^d$ and for $m\in\N$ set 
\begin{equation*}
	A_m:=A_{m-1}\cap \bigcup_{\substack{x \in G_m\\ e_{x,m}=1}} \overline{C_m(x)}.
\end{equation*}
Clearly $A_m$ is decreasing in $m$. We call $A_\infty:=\cap_m A_m$ the fp-Cantor set with parameters $(B,p)$. Although our labeling is slightly different, the definition is clearly equivalent to that, say, prior to Theorem~2 in \cite{FG92}. 

Note that from this definition, for any $m\geq 1$ and any $x=.x_1\dots x_m\in G_m$, a cube $\overline{C_m(x)}$ is included in $A_m$ if and only if all the cubes $\overline{C_k(.x_1\dots x_k)}$ for $1\leq k\leq m$ have been kept during the process i.e.\ 
\begin{align}\label{Iande}
	I_m^x:=1(\overline{C_m(x)}\subset A_m) = \prod_{k=1}^{m} e_{.x_1\dots x_k, k}.
\end{align}
We can then write $A_\infty$ as 
\begin{align}\label{eq:Ainfty as intersection}
	A_\infty = \bigcap_{m\geq 1} \bigcup_{\substack{x \in G_m\\ I_m^x>0}} \overline{C_m(x)}. 
\end{align}

A simple branching process argument (see p. 308 of \cite{CCD88}) shows 
\begin{equation*}
	\P(A_\infty\neq\emptyset)>0\text{ iff }p>B^{-d}.
\end{equation*}
A number of authors have studied $\Theta(B,p)=\P(A_\infty\text{ percolates})$ and $p_c(B,d)=\sup\{p:\Theta(B,p)=0\}$.  \cite{CCD88} shows that $0<p_c(B,d)<1$ for any $d\ge 2$. Although finding $p_c$ appears to be hard in general, quite a bit is known about the large $B$ asymptotics. To describe these results let $\cL^d$ denote the graph with vertex set $\Z^d$ and an edge between $x$ and $y$ iff $|x_i-y_i|\le 1$ for all $i\le d$, $x_i=y_i$ for some $i\le d$, and $x\neq y$.  So each point now has $3^d-2^d-1$ neighbours rather than $2d$ neighbours as for the standard cubic lattice which we denote by $\Z^d$.  In two dimensions the graphs coincide and so the critical probabilities for site percolation on these graphs are equal. 
For $d>2$, $0<p_c(\cL^d)<p_c(\Z^d)<1/2$ (see the references in Section~1 of \cite{FG92} for the middle inequality, \cite{CP85} for the last inequality, and Theorem~2 of \cite{GS} for the first).

\begin{theorem}\label{thm:limit pc for fp} 
	\begin{enumerate}[label=\emph{(\alph*)}, ref=(\alph*), leftmargin=*]	
	\item \label{it:pc for fp is larger than pc} For all $d\ge 2$ and all $B\in\N^{\ge 2}$, $p_c(B,d)\ge p_c(\cL^d)$.
	\item\label{it:pc for fp converges as B goes to infinity} For all $d\ge 2$, $\lim_{B\to\infty} p_c(B,d)=p_c(\cL^d)$.
\end{enumerate}
\end{theorem}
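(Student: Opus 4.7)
The plan is to exploit the observation, essentially built into the construction \eqref{fpKs}--\eqref{eq:Ainfty as intersection}, that $\supp(\nu)$ is replaced here by the union $A_m$ of retained level-$m$ cubes, and two such cubes $\overline{C_m(x)}$ and $\overline{C_m(y)}$ meet if and only if $|x^k-y^k|\le B^{-m}$ for every coordinate $k$. That is precisely the $\cL^d$-adjacency relation on $G_m$ viewed as a rescaled sub-lattice of $\Z^d$. Hence the event ``$A_\infty$ percolates'' is the event that for every $m$ the set $S_m:=\{x\in G_m:I_m^x=1\}$ contains a path from the left face $\{x^1=0\}$ to the right face $\{x^1=1-B^{-m}\}$ under $\cL^d$-adjacency, and this is the common thread for both parts.

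\textbf{Proof sketch for part (a).} Fix $p<p_c(\cL^d)$; I would show $\Theta(B,p)=0$. Write $\theta=\Theta(B,p)$ and unfold one level of the recursion: by the self-similarity of fractal percolation, conditional on the first-level retentions $(e_{x,1})_{x\in G_1}$, each retained level-$1$ subcube hosts an independent copy of the fractal percolation at density $p$. If $A_\infty$ contains a left-right crossing then (i) the retained level-$1$ sites in $G_1\cong\{0,\dots,B-1\}^d$ contain an $\cL^d$-path $\pi$ from the left face to the right face, and (ii) within each $x\in\pi$ the nested copy admits the corresponding ``face-to-face'' sub-crossing. Dominating the direction-dependent sub-crossing probability by $\theta$ and using the independence of the nested copies yields a self-consistent inequality
\begin{equation*}
\theta\ \le\ \Psi_{B}(p,\theta),
\end{equation*}
where $\Psi_B(p,\theta)$ is the probability of a left-right crossing of the $B$-box in $\cL^d$-site percolation whose sites are independently ``open'' with probability $p\theta$. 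Viewing this percolation on all of $\cL^d$ (which only increases the crossing probability, by monotonicity) shows $\Psi_{B}(p,\theta)$ is bounded by the probability that the origin belongs to an infinite cluster in $\cL^d$-site percolation at density $p\theta$. Since $p\theta\le p<p_c(\cL^d)$, that probability is $0$, so $\theta\le 0$, forcing $\theta=0$. Sending $p\uparrow p_c(\cL^d)$ gives $p_c(B,d)\ge p_c(\cL^d)$ for every $B\ge2$.

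\textbf{Proof sketch for part (b).} The lower bound $\liminf_B p_c(B,d)\ge p_c(\cL^d)$ is part (a). For the matching upper bound, fix $p>p_c(\cL^d)$; I need $\Theta(B,p)>0$ for all sufficiently large $B$. The plan is a one-scale renormalization: for $p>p_c(\cL^d)$, the probability of a left-right crossing of the box $\{0,\dots,B-1\}^d$ in $\cL^d$-site percolation tends to $1$ as $B\to\infty$, and in fact one can make the probability of the stronger event ``crossings in all $2d$ coordinate directions together with a high-density dominating cluster joining them'' arbitrarily close to $1$ (via RSW in $d=2$ and the Grimmett--Marstrand style slab argument in $d\ge3$, applied to site percolation on $\cL^d$, noting $p_c(\cL^d)<1/2$). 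Choose $B=B_0$ so large that this ``good'' event has probability at least $1-\delta$ for some $\delta>0$ small enough that a branching-process / multi-type comparison forces survival. The point is that a good level-$1$ configuration with all-direction crossings means every ``slot'' through which we would need to cross at level $\ge2$ inherits a crossing problem of the same shape, conditional on its own (independent) copy also being good. This gives a super-critical branching structure that survives with positive probability, hence $\Theta(B_0,p)>0$, so $p_c(B_0,d)\le p$. Letting $p\downarrow p_c(\cL^d)$ completes the proof.

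\textbf{Main obstacle.} In part (a) the delicate step is justifying the sub-crossing bound by $\theta$ uniformly over the (\emph{a priori} direction-dependent) ``face pair'' that has to be linked inside each retained cube along the path $\pi$: one must truly dominate every such direction-specific crossing by the single quantity $\theta$, which may require introducing a slightly larger ``all-pairs crossing'' probability and checking it is still killed in the subcritical $\cL^d$ regime. In part (b) the corresponding obstacle is the quantitative strength of the ``good block'' event: the renormalized process is not genuinely independent from level to level unless one extracts an event strong enough (e.g.\ crossings in each coordinate direction joined by a dense dominating cluster) to guarantee that on every good block one can hook onto any good neighbor. This is the step that forces $B$ to be large and hence prevents an equality in (a) for finite $B$.
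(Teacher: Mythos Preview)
The paper does not prove this theorem; it simply cites Theorem~2 of Falconer--Grimmett \cite{FG92} (and \cite{CC89} for part~(b) in $d=2$). So there is no in-house proof to compare with, and I comment on your sketch directly.

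Your argument for part~(a) contains a genuine error. The step ``$\Psi_B(p,\theta)$ is bounded by the probability that the origin belongs to an infinite cluster in $\cL^d$-site percolation at density $p\theta$'' is false: a left--right crossing of a \emph{finite} box of side $B$ only requires a cluster of diameter $\ge B-1$, not an infinite cluster, so the finite-box crossing probability is strictly positive for any $p\theta>0$. Consequently, the self-consistency inequality $\theta\le \Psi_B(p,\theta)$ does not by itself force $\theta=0$ for $p<p_c(\cL^d)$ (one can check in small examples that the map $q\mapsto \Psi_B(p,q)$ can have nonzero fixed points). There is also a problem one step earlier: when a crossing of $A_\infty$ is projected to level~1, the portion inside a given retained cube need not be a face-to-opposite-face crossing of that cube (it can enter and leave through the same face, or through adjacent faces), so dominating the sub-event by $\theta$ is unjustified even up to symmetry.

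The correct (and simpler) argument, which is essentially how \cite{FG92} proceeds, bypasses recursion entirely. From \eqref{Iande} one has $I_m^x\le e_{x,m}$, and by construction the family $(e_{x,m})_{x\in G_m}$ is i.i.d.\ Bernoulli($p$). If $A_\infty$ percolates then $A_m$ has an $\cL^d$-connected left--right path of retained level-$m$ cubes for \emph{every} $m$; hence so does the i.i.d.\ Bernoulli($p$) configuration on $G_m\cong\{0,\dots,B^m-1\}^d$. For $p<p_c(\cL^d)$ the probability of such a crossing tends to $0$ as $m\to\infty$ (subcritical decay of the cluster radius), so $\Theta(B,p)=0$. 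This gives $p_c(B,d)\ge p_c(\cL^d)$ for every $B$.

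For part~(b) your block/renormalisation outline is in the right spirit and is indeed the strategy of \cite{FG92}, but what you have written is only a heuristic: the substance of the proof is precisely the construction of the ``good block'' event (crossings in all directions that can be glued across $\cL^d$-adjacent blocks, including diagonally adjacent ones) and the verification that its probability tends to $1$ as $B\to\infty$ for each fixed $p>p_c(\cL^d)$. Your appeal to ``RSW in $d=2$ and Grimmett--Marstrand in $d\ge3$'' hides nontrivial adaptations to site percolation on $\cL^d$; this is where the real work in \cite{FG92} lies.
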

\noindent This is contained in Theorem~2 of \cite{FG92}.  \ref{it:pc for fp converges as B goes to infinity} was first proved for $d=2$ in \cite{CC89}.

\medskip

Since we want to compare the two models, let us first re-express our construction of $\supp (\nu)$ in the $B$-ary CSTRM model in a way that resembles the definition of the set $A_\infty$ in fractal percolation. 
Consider a $B$-ary CSTRM model.
If $m\in\Z_+$ and $x\in G_m$, define
\begin{equation}\label{Nmxdef}
	N_m^x:=|J_m^x|,\text{ where we recall that }J_m^x=\{f\in K^0_m:S^f=x\}.
\end{equation}
Hence $N_m^x$ is the number of particles at $x$ in generation $m$ (so in particular, $J_0^0=\{0\}$ and so $N_0^0=1$).
Note that $N_m^x$ is clearly $\cG_m$-measurable, where $\cG_m$ is defined in \eqref{newGdef}.  
For $m\in\Z_+$ our usual approximating measures are carried on $G_m$ and are characterized by 
\[
\forall x\in G_m, \quad \nu_m(\{x\})=\mu^{-m}\sum_{f\in K_m^0}1(S^f=x)=\mu^{-m} N_m^x,
\]
and therefore
\begin{equation}\label{numdef2}
	\nu_m=\mu^{-m}\sum_{x\in G_m} N_m^x \delta_{x}= \mu^{-m}\sum_{\substack{x\in G_m \\ N_m^x>0}} N_m^x \delta_{x}.
\end{equation}
One easily checks that $\{S^f:f\in K^0_m\}=\{x \in G_m: N_m^x>0\}$
and so by Lemma~\ref{Cantor},
\begin{equation}\label{suppNalpha}
	\supp(\nu)=\bigcap_{m=1}^\infty\bigcup_{f\in K^0_m} \overline{C_m(S^f)}
	=\bigcap_{m=1}^\infty\bigcup_{\substack{x\in G_m \\ N_m^x>0}}\overline{C_m(x)}\quad\text{a.s.}
\end{equation}
This closely resembles the definition of $A_\infty$ in \eqref{eq:Ainfty as intersection}. 
This way of viewing the construction of $\supp(\nu)$ makes it analogous to a version of fractal percolation where, instead of having any cube $\overline{C_m(x)}$ either present or absent at time $m$, it can be present with some multiplicity, corresponding to the number of particles lying at $x$ in the process. 
The evolution in time of the number of particles at each site is described in the following lemma.
\begin{lemma}\label{lem:evolution number of particles}
The law of the process $((N_m^x, \ x\in G_m))_{m\geq 0}$ can be described as follows:
\begin{itemize}
	\item $N_0^0=1$
	\item For any $m\geq 1$, conditionally on $((N_k^y, \ y\in G_k))_{0 \leq k \leq m-1}$, the random vector $(N_m^x, \ x\in G_m)$ is distributed in such a way that for all $x=.x_1\dots x_{m}\in G_{m}$ 
		\begin{align}\label{eq:distribution number of particles}
		 N_m^x=\sum_{i=1}^{N_{m-1}^{.x_1\dots x_{m-1}}}\sum_{j=1}^{Z_i^{.x_1\dots x_{m-1},m-1}} 1(u_{i,j}^{.x_1\dots x_{m-1},m-1}= x_m),
		\end{align} 
	where the $(Z_i^{y,m-1} : \ y\in G_{m-1},\ i,m\in\N)$ are i.i.d.\ with the same law as $Z$ and the $(u_{i,j}^{y,m-1} :\ y\in G_{m-1},\ i,j,m\in\N)$ are i.i.d\ uniformly chosen over $\{0,\dots,B-1\}^d$. 
\end{itemize}  
\end{lemma}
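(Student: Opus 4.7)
The plan is to decompose the sum $J_m^x = \{f \in K_m^0 : S^f = x\}$ by conditioning on the parent site $.x_1\ldots x_{m-1}$ in generation $m-1$, then reindex the sum over parents in $J_{m-1}^{.x_1\ldots x_{m-1}}$ via a canonical enumeration to match the formula in \eqref{eq:distribution number of particles}. The base case $N_0^0 = 1$ is immediate from the initial condition $\nu_0 = \delta_0$, which forces $J_0^0 = \{0\}$.

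For the inductive step, recall the decomposition already used in \eqref{eq:evolution Jmx} in the proof of Proposition~\ref{hitprob1}:
\[
N_m^x = |J_m^x| = \sum_{f \in J_{m-1}^{.x_1\ldots x_{m-1}}} \sum_{\ell=1}^{Z^f} 1(X^{f\vee \ell} = x_m),
\]
which follows directly from \eqref{Knind} and \eqref{Sind} (a child $f\vee \ell$ of a generation-$(m-1)$ particle $f$ lies at $.x_1\ldots x_m$ iff $S^f = .x_1\ldots x_{m-1}$ and $X^{f\vee\ell} = x_m$). Now fix a deterministic total order on $F_{m-1}$ (for instance lexicographic order) and, for each $y \in G_{m-1}$, let $f_1^y < f_2^y < \cdots < f_{N_{m-1}^y}^y$ be the enumeration of $J_{m-1}^y$ in increasing order, extending the assignment $(y,i) \mapsto f_i^y$ arbitrarily (but injectively into $F_{m-1}$) for $i > N_{m-1}^y$. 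Since each $J_{m-1}^y$ is $\cG_{m-1}$-measurable, this enumeration is $\cG_{m-1}$-measurable. Defining $Z_i^{y,m-1} := Z^{f_i^y}$ and $u_{i,j}^{y,m-1} := X^{f_i^y \vee j}$ and substituting into the display above yields \eqref{eq:distribution number of particles}.

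It remains to justify the joint distributional claim. Recall from \eqref{newGdef} that $\cG_{m-1} = \sigma(Z^f : |f|<m-1) \vee \sigma(X^f : 1 \le |f| \le m-1)$, so by construction the families $\{Z^f : f \in F_{m-1}\}$ and $\{X^{f \vee j} : f \in F_{m-1}, j \in \N\}$ are each i.i.d., jointly independent of each other and of $\cG_{m-1}$, with the required marginals ($\cL(Z)$ and uniform on $\{0,\ldots,B-1\}^d$, respectively). Because the enumeration $(y,i) \mapsto f_i^y$ is a $\cG_{m-1}$-measurable \emph{injection} into $F_{m-1}$, the reindexed variables $Z_i^{y,m-1}$ and $u_{i,j}^{y,m-1}$ are, conditionally on $\cG_{m-1}$, i.i.d.\ with the stated laws and jointly independent; iterating over $m$ and using the independence of the $Z^f, X^f$ across distinct generations delivers the full description of the process.

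The only subtle step is the last one: one must verify that the act of reindexing by a random (but $\cG_{m-1}$-measurable and injective) map does not destroy the i.i.d.\ structure. This is an elementary consequence of the fact that if $\{\xi_f : f \in F\}$ is an i.i.d.\ family independent of a $\sigma$-field $\cH$, and $\sigma : I \to F$ is a $\cH$-measurable injection, then $\{\xi_{\sigma(i)} : i \in I\}$ is, conditionally on $\cH$, still i.i.d.\ with the same marginal. Beyond this bookkeeping the proof is a direct unwinding of definitions.
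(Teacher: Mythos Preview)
Your proof is correct and follows exactly the route the paper intends: the paper's own proof is the single line ``This follows easily from the equality \eqref{eq:evolution Jmx}. We leave the details to the reader,'' and you have supplied precisely those details via the $\cG_{m-1}$-measurable enumeration of $J_{m-1}^y$ and the reindexing lemma for i.i.d.\ families. The only cosmetic wrinkle is that for $m=1$ the index set $F_0=\{0\}$ is a singleton, so your injective extension for $i>N_{m-1}^y$ cannot land in $F_{m-1}$; but since those extra variables never enter the sum in \eqref{eq:distribution number of particles}, one can simply take them to be fresh i.i.d.\ copies, which does not affect the argument.
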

\begin{proof}
This follows easily from the equality \eqref{eq:evolution Jmx}. We leave the details to the reader.
\end{proof}
To achieve the coupling mentioned above we will assume that for some $c\in(B^{-d},\infty)$,
\begin{equation}\label{Poisson}
	\text{the branching distribution, $\cL(Z)$, is Poisson with mean $\mu=cB^d$.}
\end{equation}
For $Z$ as in the above we will write $Z\sim \mathrm{Poi}(\mu)$.
A reason to assume the branching has a Poisson law is that the description of the distributions arising in Lemma~\ref{lem:evolution number of particles} becomes particularly simple: in \eqref{eq:distribution number of particles}, if $Z\sim \mathrm{Poi}(\mu)$ then, conditional on $N_{m-1}^{.x_1\dots x_{m-1}}$,  the summing and thinning properties of the Poisson distribution ensure that the $(N_m^{x}  : x=.x_1\dots x_{m}\in G_m)$ are independent with respective distribution $\mathrm{Poi}(c N_{m-1}^{.x_1\dots x_{m-1}})$.
Recall from Section~\ref{sec:Bary} that $\mu=B^{\frac{2}{\beta}}$, so we are (for now) replacing the parameter $\beta$ by 
\begin{equation}\label{cbeta}
c=B^{\frac{2}{\beta}-d}\in(B^{-d},\infty).
\end{equation}
Note that $c=1$ corresponds to the critical case $d=\frac{2}{\beta}$ in Section~\ref{sec:Bary}, which will be of particular interest to us. 

\begin{lemma}\label{monotone}
If $B^{-d}<c_1<c_2$, then we can construct $B$-ary CSTRMs $\nu_i$, $i=1,2$ corresponding to $\mu=\mu_i=c_iB^d$, so that $\supp(\nu_1)\subset \supp(\nu_2)$.
\end{lemma}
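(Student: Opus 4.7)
The plan exploits the defining feature of this section, namely that the offspring law is Poisson: a Bernoulli thinning of $\mathrm{Poi}(\mu_2)$ with retention probability $p=c_1/c_2\in(0,1)$ is exactly $\mathrm{Poi}(\mu_1)$. So I would construct both CSTRM's on a common probability space by building $\nu_2$ via the standard particle system of Section~\ref{sec:STRM}, and embedding $\nu_1$ into it as the empirical limit of a randomly thinned subpopulation.

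Concretely, let $\{Z^f\}_{f\in F}$ and $\{X^f\}_{f\in F\setminus F_0}$ be the branching and displacement variables for $\nu_2$, with $Z^f\sim\mathrm{Poi}(\mu_2)$ and $X^f$ uniform on $\{0,\dots,B-1\}^d$. On the same space, introduce independent Bernoulli($p$) marks $\{\epsilon^f\}_{f\in F\setminus F_0}$, jointly independent of everything else. Recursively declare an alive particle $f$ of the $\mu_2$-system to be \emph{marked} iff $f=0$ or the parent $\pi f$ is marked and $\epsilon^f=1$. The set of marked particles $\widetilde K^0_n\subset K^0_n$ forms a random subtree rooted at $0$. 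After re-indexing the marked subtree in the canonical way (enumerate the marked children of each marked particle in the order inherited from $F$), the offspring count at each marked vertex is Binomial$(Z^{f'},p)$ with $Z^{f'}\sim\mathrm{Poi}(\mu_2)$, which is $\mathrm{Poi}(p\mu_2)=\mathrm{Poi}(\mu_1)$ by Poisson thinning. The inherited displacements are i.i.d.\ uniform on $\{0,\dots,B-1\}^d$ and remain independent of the reindexed branching counts, because $\{\epsilon^f\}$ are independent of $\{X^f\}$. This shows that the re-indexed marked system is a $B$-ary particle system with parameters $(\mathrm{Poi}(\mu_1),\cL(X),\beta)$, so by Theorem~\ref{nuprops}\ref{it:nuprops:d} its generation-$n$ empirical measures converge a.s.\ to a $B$-ary CSTRM $\nu_1$ with branching mean $\mu_1$.

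The inclusion $\supp(\nu_1)\subset\supp(\nu_2)$ is then immediate from Lemma~\ref{Cantor} applied to both systems, using the same displacement variables and the obvious inclusion $\widetilde K^0_m\subset K^0_m$ for every $m$:
\[
\supp(\nu_1)=\bigcap_{m=1}^\infty\bigcup_{f\in\widetilde K^0_m}\overline{C_m(S^f)}\ \subset\ \bigcap_{m=1}^\infty\bigcup_{f\in K^0_m}\overline{C_m(S^f)}=\supp(\nu_2)\quad\text{a.s.}
\]
I do not anticipate any real difficulty: the whole argument is a one-step Poisson thinning coupling and the trivial monotonicity in the Cantor-set description of the support. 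The only point that deserves a careful sentence is the verification that the re-indexed marked tree indeed has the joint law of an independent Poisson-branching/uniform-displacement particle system; this is where the Poisson assumption \eqref{Poisson} is essential and is what makes the lemma special to this section rather than general CSTRM's.
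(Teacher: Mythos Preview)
Your argument is correct, but the paper takes a slightly shorter route that avoids the reindexing step altogether. Instead of thinning the $\mu_2$-population and then relabelling the surviving subtree, the paper directly builds two coupled families $\{Z^f_1\}_{f\in F}$ and $\{Z^f_2\}_{f\in F}$ of i.i.d.\ Poisson variables with $Z^f_1\le Z^f_2$ for every $f$ (the standard monotone coupling of $\mathrm{Poi}(\mu_1)$ and $\mathrm{Poi}(\mu_2)$, done independently over $F$), and uses the \emph{same} displacement variables $\{X^f\}$ for both systems. With the index set $F$ unchanged, one has immediately $K^0_{1,n}\subset K^0_{2,n}$ from the definitions (since $f_{j+1}\le Z^{f|j}_1$ forces $f_{j+1}\le Z^{f|j}_2$), and the positions $S^f$ coincide for the two systems. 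Lemma~\ref{Cantor} then gives the inclusion in one line.

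Your thinning construction is really one concrete realisation of this monotone coupling, but the relabelling of the marked subtree and the verification that the reindexed system has the correct joint law are extra work that the paper's formulation sidesteps. A minor bonus of the paper's version is that it only uses stochastic ordering of the two offspring laws, so it would go through verbatim for any one-parameter family of offspring distributions that is stochastically increasing; your argument genuinely needs the Poisson thinning identity.
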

\begin{proof} We may build two collections of i.i.d.\ Poisson ($\mu_i$) r.v.'s $\{Z^f_i:f \in F\}$ ($i=1,2$) which are coupled so that $Z^f_1\le Z^f_2$ for all $f\in F$.  For $i=1,2$ let $\nu_i$ be the $B$-ary CSTRM constructed in \eqref{nudef} and Section~\ref{sec:Bary} using $\{Z^f_i:f\in F\}$ and the same i.i.d.\ collection $\{X^f:f\in F\}$.  
The conclusion is now clear from Lemma~\ref{Cantor}. (Alternatively, Theorem~\ref{nuprops} (d), using the simpler notation in \eqref{BaryS}, shows $\nu_1\le \nu_2$ a.s.)
\end{proof} 

\begin{proposition}\label{prop:Anucoupling} If $B\in \N^{\ge 2}$ and $c>B^{-d}$ there is a $B$-ary CSTRM, $\nu$, satisfying \eqref{Poisson} and an fp-Cantor set, $A_\infty$, with parameters $(B,1-e^{-c})$, both defined on the same probability space such that $A_\infty\subset\supp(\nu)$ a.s.
\end{proposition}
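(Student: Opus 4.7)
The plan is to realize $A_\infty$ and $\nu$ on a common probability space by extracting the percolation indicators $e_{x,m}$ from the branching and displacement variables of $\nu$ in a way that forces ``site retained implies at least one particle at that site''. The key enabling tool is the Poisson thinning property, which decomposes each parent's $\mathrm{Poi}(\mu)$ offspring count into independent $\mathrm{Poi}(c)$ contributions at each of the $B^d$ possible daughter sites.

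Construct $\nu$ as in Section~\ref{sec:Bary} with $Z^f\sim\mathrm{Poi}(\mu)$, and on the same probability space adjoin an independent family $\{U_{x,m}:x\in G_m,\,m\ge 1\}$ of i.i.d.\ uniform $[0,1]$ variables. For each $f\in F$ and each $v\in\{0,\dots,B-1\}^d$ set $M(f,v)=|\{1\le\ell\le Z^f:X^{f\vee\ell}=v\}|$; by Poisson thinning and independence across parents, $(M(f,v))_{f,v}$ is i.i.d.\ $\mathrm{Poi}(c)$, hence $\epsilon(f,v):=1(M(f,v)\ge 1)$ is i.i.d.\ $\mathrm{Bernoulli}(p)$ with $p=1-e^{-c}$. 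Fix any deterministic total ordering of $F$ and, whenever $N_{m-1}^y\ge 1$, let $f_0(y)$ be the smallest element of $J_{m-1}^y$. For $x=y+B^{-m}v\in G_m$ define
\[
e_{x,m}=\begin{cases}
\epsilon(f_0(y),v) & \text{if } N_{m-1}^y\ge 1,\\
1(U_{x,m}\le p) & \text{if } N_{m-1}^y=0,
\end{cases}
\]
and let $A_\infty$ be the fp-Cantor set built from these $e_{x,m}$ via \eqref{fpKs}.

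The proof then consists of two verifications. First, $(e_{x,m})_{x,m}$ is i.i.d.\ $\mathrm{Bernoulli}(p)$: by induction on $m$, the variables from generations $\le m$ are measurable with respect to $\cG_m$ together with the uniforms $(U_{\cdot,m'})_{m'\le m}$, whereas $(e_{x,m+1})_{x\in G_{m+1}}$ depends only on $(M(f,v))_{|f|=m,\,v}$ (independent of $\cG_m$) and on $(U_{x,m+1})_x$. Conditionally on $\cG_m$ the selections $f_0(y)$ for distinct $y\in G_m$ with $N_m^y\ge 1$ yield distinct elements of $F_m$, so the family $(\epsilon(f_0(y),v))_{y,v}$ is i.i.d.\ $\mathrm{Bernoulli}(p)$, and the uniform case supplies the remaining marginals. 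Second, $A_\infty\subset\supp(\nu)$ a.s.: we show by induction that $I_m^x=\prod_{k=1}^m e_{.x_1\cdots x_k,k}=1$ forces $N_m^x\ge 1$. If $N_{m-1}^{\pi x}\ge 1$ (inductive hypothesis) and $e_{x,m}=1$, then by construction $\epsilon(f_0(\pi x),x_m)=1$, so $f_0(\pi x)$ has at least one child landing at $x$, giving $N_m^x\ge 1$. Combined with \eqref{suppNalpha}, this yields $A_\infty\subset\bigcap_m\bigcup_{N_m^x>0}\overline{C_m(x)}=\supp(\nu)$.

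The main obstacle is the joint independence of the $e_{x,m}$ in the presence of the data-dependent selection $f_0(y)$. The resolution uses a two-way decoupling: $\cG_{m-1}$ determines the selections while being independent of the freshly sampled variables $(M(f,v))_{|f|=m-1,v}$ that feed into generation $m$. Indexing $M(f,v)$ by all $f\in F$ (not just by $f\in K^0_\infty$) is what makes this argument clean, since it allows the conditional independence step to proceed without entangling the selection with the variables being selected from.
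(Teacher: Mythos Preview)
Your proof is correct and rests on the same mechanism as the paper's: Poisson thinning turns each parent's $\mathrm{Poi}(\mu)$ offspring into $B^d$ independent $\mathrm{Poi}(c)$ counts, one per daughter site, and then one designated parent per occupied site supplies the $\mathrm{Bernoulli}(1-e^{-c})$ indicator. The implementations differ slightly. The paper first re-parametrizes the particle system by \emph{site} rather than by tree label (Lemma~\ref{lem:evolution number of particles}), introducing i.i.d.\ variables $(Z_i^{y,m},u_{i,j}^{y,m})$ indexed by all $(y,i,j)$; it then defines $\widetilde{I}_m^x$ as the truncation of $\widetilde{N}_m^x$ to $\{0,1\}$, so that only the slot $i=1$ ever matters, and sets $\tilde e_{x,m}$ from that slot regardless of occupancy. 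This avoids both the explicit selection map $f_0$ and the auxiliary uniforms: the ``spare'' variables at $i=1$ for vacant sites play exactly the role of your $U_{x,m}$. Your version works directly in the tree-indexed setup, which is perhaps more natural given how $\nu$ was constructed in Section~\ref{sec:STRM}, at the cost of the measurable-selection argument for $f_0(y)$ and the conditional-independence step you handle at the end. Either route gives the coupling; the paper's is marginally cleaner bookkeeping, yours stays closer to the original labelling.
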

\begin{proof}
Assume $B$, $c$ and $\nu$ are as in the statement of the Proposition. 
Let us work on a probability space where the following collection of random variables are defined 
\begin{itemize}
	\item $(Z_i^{x,m} \ : \ i\geq 1, x \in G_m, m\geq 0)$ i.i.d.\ with the same law as $Z$ in \eqref{Poisson},
	\item $(u_{i,j}^{x,m}\ : \ i,j\geq 1, x \in G_m, m\geq 0)$ i.i.d.\ with distribution $\mathrm{Uniform}(\{0,\dots, B-1\}^d)$,
\end{itemize}
all jointly independent.
From those random variables we define two processes $(\widetilde{N}_m^x, \ x\in G_m)_{m\geq 0}$ and $(\widetilde{I}_m^x, \ x\in G_m)_{m\geq 0}$ inductively on $m$ as follows:
\begin{itemize}
	\item $\widetilde{N}_0^0=\widetilde{I}_0^0=1$,
	\item For all $m\geq 1$, for all $x=.x_1\dots x_{m}\in G_{m}$,
		\begin{align*}
		\widetilde{N}_m^x:=\sum_{i=1}^{\widetilde{N}_{m-1}^{.x_1\dots x_{m-1}}}\sum_{j=1}^{Z_i^{.x_1\dots x_{m-1},m-1}} 1(u_{i,j}^{.x_1\dots x_{m-1},m-1}= x_m)
	\end{align*} 
and
	\begin{align*}
		\widetilde{I}_m^x:=\min \left(1,\sum_{i=1}^{\widetilde{I}_{m-1}^{.x_1\dots x_{m-1}}}\sum_{j=1}^{Z_i^{.x_1\dots x_{m-1},m-1}} 1(u_{i,j}^{.x_1\dots x_{m-1},m-1}= x_m)\right).
	\end{align*} 
\end{itemize}
Next define 
\begin{align*}
	B_\infty := \bigcap_{m\geq 1} \bigcup_{\substack{x\in G_m\\\widetilde{N}_m^x>0}}\overline{C_m(x)} \qquad \text{and} \qquad  A_\infty := \bigcap_{m\geq 1} \bigcup_{\substack{x\in G_m\\\widetilde{I}_m^x>0}}\overline{C_m(x)}.
\end{align*}
We need to check the three properties below to get our result:
\begin{enumerate}[label=(\alph*), ref=(\alph*)]
	\item\label{it:Binfty is a strm} $B_\infty$ has the law of $\supp(\nu)$ where $\nu$ is a $B$-ary CSTRM with Poisson reproduction with mean $cB^d$,
	\item\label{it:Ainfty is a fp cantor set} $A_\infty$ has the law of an fp-Cantor set with parameters $(B,1-e^{-c})$,
	\item \label{it:Ainfty subset Binfty}$A_\infty\subset B_\infty$.  
\end{enumerate}
The first assertion follows from Lemma~\ref{lem:evolution number of particles} and \eqref{suppNalpha}. 
Indeed, from Theorem~\ref{nuprops}\ref{it:nuprops:d}, \eqref{numdef2}, and Lemma~\ref{lem:evolution number of particles} we can even define $\tilde{\nu}$ as the a.s. weak limit of $\mu^{-m}\sum_{\substack{x\in G_m, \widetilde N_m^x>0}} \widetilde N_m^x \delta_{x}$ as $m\rightarrow \infty$, so that $\tilde{\nu}$ has the same distribution as $\nu$ and $B_\infty=\supp(\tilde{\nu})$ a.s.\ by \eqref{suppNalpha}.

For \ref{it:Ainfty subset Binfty} a straightforward induction on $m$ shows that $\widetilde{I}_m^x\leq \widetilde{N}_m^x$ for all $x\in G_m$ and $m\geq 0$, which implies the result.

It remains to show \ref{it:Ainfty is a fp cantor set}. 
For that, for any $m\geq 1$ and $x=.x_1\dots x_{m}\in G_m$, we introduce
\begin{align*}
	\tilde{e}_{x,m}:= 1\left(\sum_{j=1}^{Z_1^{.x_1\dots x_{m-1}}} 1(u_{1,j}^{.x_1\dots x_{m-1},m-1}= x_m)>0\right).
\end{align*} 
Now, using the thinning property of Poisson random variables, we get that \[\sum_{j=1}^{Z_1^{.x_1\dots x_{m-1}}} 1(u_{1,j}^{.x_1\dots x_{m-1},m-1}= x_m)\] is $\mathrm{Poi}(c)$ and that all those random variables for different values of $(x,m)$ are independent. 
This implies that $(\tilde{e}_{x,m} \ : \ x\in G_m, m\geq 1)$ are i.i.d.\ Bernoulli with parameter $1-e^{-c}$. 
In addition, we can check by induction that for any  $m\geq 1$ and $x=.x_1\dots x_{m}\in G_m$, 
\begin{align*}
	\widetilde{I}_m^x = \prod_{k=1}^{m}\tilde{e}_{.x_1\dots x_k,k}. 
\end{align*}
This and \eqref{Iande} show that the family of random variables $(\widetilde{I}_m^x : x\in G_m, m\geq 1)$ has the same law as  $(I_m^x : x\in G_m, m\geq 1)$ in the definition of fractal percolation, and hence $A_\infty$ is as in \ref{it:Ainfty is a fp cantor set}. 
\end{proof}

\begin{remark}\label{rem:fpisastrm}
Let $p>B^{-d}$ and set $L=\{0,\dots,B-1\}^d$. 
It is not hard to show that the fp-Cantor set, $A_\infty$, with parameters $(B,p)$, is equal in law to $\supp(\nu)$ for a STRM $\nu$ with $\cL(Z)$ chosen to be $\mathrm{Binomial}(B^d,p)$, $\beta>0$ given by $B^{-1}=\mu^{-\beta/2}$ (here $\mu=E(Z)=B^dp>1$), and for each $k\in \N$, $\vec X_k=(X_{j,k}, j\le k)\in L^k$, with law $Q_k$, is a uniformly chosen vector of $k$ distinct sites in $L$.  
More precisely, for $x\in L^k$ with distinct coordinates,
\[Q_k(\{x\})=[B^d(B^d-1)\times\dots\times (B^d-(k-1))]^{-1}.\]
The lack of independence of the coordinates of $\vec X_k$ means that $\nu$ is not a $B$-ary CSTRM, as defined in Section~\ref{sec:Bary}.  Note also that each $X_{j,k}$ is uniformly distributed over $L$ and  the required exchangeable  property of each $Q_k$ holds, and also that \ref{TC} is true for any $\zeta>0$. 
\SUBMIT{We leave this as an exercise using the notation and arguments above. It is not used in the proofs below.}
\ARXIV{A proof of this claim can be found in Section~\ref{app:subsec:fpisastrm} of the Appendix.}
\end{remark}

\begin{remark}\label{rem:weak disconnectedness result also holds for fp}
A subset of the arguments used to prove Theorem~\ref{thm:weak disconnectedness} will show the same conclusion holds for an fp-Cantor set, $A_\infty$, with parameters $(B,p)$, where we assume $p\in (B^{-d},1)$ to ensure non-triviality. We may again work under $\Q$ where properties \eqref{qdist1} and \eqref{qdist2} still hold (they do for any STRM).  Now the process $(|J_m^{S_m}|)_{m\geq 0}$ is identically $1$ as we have single occupancy in our approximating measures, so we can establish Lemma~\ref{lem:sequence of events happens io} directly as before without Lemma~\ref{lem:brimm}. Theorem~\ref{thm:weak disconnectedness} follows as before.

For  this random Cantor set there are non-trivial connected components  with positive probability whenever $p$ is sufficiently close to $1$ (Theorem~1 of \cite{CCD88}), and so (Remark~\ref{rem:fpisastrm}) we have an example of a STRM where a.a.\ points in its  support are totally disconnected (by the extended Theorem~\ref{thm:weak disconnectedness}) but the support has non-trivial connected components with positive probability. Recall from Remark~\ref{rem:ccomp} that the same two properties hold for a range of $B$-ary CSTRMs, {\it assuming \eqref{pccond}}.  Together these results indicate that such a situation is perhaps more generic than one might think.
\end{remark}
\begin{theorem} \label{thm:perc1} Assume $d\ge 2$ and $c_0>0$ satisfies $1-e^{-c_0}>p_c(\cL^d)$. There is a $B_0=B_0(c_0,d)\in \N^{\ge 2}$ such that if $B\ge B_0$, $c\ge c_0$ and $\nu$ is the $B$-ary CSTRM associated with the Poisson$\,(cB^d)$ distribution, then
\begin{equation}\label{percandnottd}
\P(\supp(\nu) \text{ percolates})>0\text{ and }\P(\supp(\nu)\text{ not TD}\,|\, \nu\neq 0)=1.
\end{equation}
In particular, the above holds if $1-e^{-c_0}>p_c(\Z^d)$.
\end{theorem}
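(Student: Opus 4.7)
The plan is to combine the coupling from Proposition~\ref{prop:Anucoupling} with the asymptotic result on the critical fractal percolation probability from Theorem~\ref{thm:limit pc for fp}\ref{it:pc for fp converges as B goes to infinity}, and then conclude total non-disconnectedness via the $0$-$1$ type law in Lemma~\ref{lem:01}. First I would observe that the function $c\mapsto 1-e^{-c}$ is strictly increasing, so the hypothesis $1-e^{-c_0}>p_c(\cL^d)$ yields a fixed $\delta>0$ with $1-e^{-c_0}>p_c(\cL^d)+\delta$. By Theorem~\ref{thm:limit pc for fp}\ref{it:pc for fp converges as B goes to infinity} there exists $B_0=B_0(c_0,d)\ge 2$ so that $p_c(B,d)<p_c(\cL^d)+\delta$ for all $B\ge B_0$. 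Hence for every $B\ge B_0$ and every $c\ge c_0$,
\begin{equation*}
1-e^{-c}\ge 1-e^{-c_0}>p_c(B,d),
\end{equation*}
so an fp-Cantor set $A_\infty$ with parameters $(B,1-e^{-c})$ percolates with positive probability.

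Next I would invoke Proposition~\ref{prop:Anucoupling} to couple the $B$-ary CSTRM $\nu$ (with Poisson$(cB^d)$ branching) and an fp-Cantor set $A_\infty$ with parameters $(B,1-e^{-c})$ on a common probability space so that $A_\infty\subset\supp(\nu)$ almost surely. Percolation is monotone in the set: if $A_\infty$ contains a connected subset meeting $\{0\}\times[0,1]^{d-1}$ and $\{1\}\times[0,1]^{d-1}$, so does $\supp(\nu)$. Therefore $\P(\supp(\nu)\text{ percolates})\ge \P(A_\infty\text{ percolates})>0$, proving the first half of \eqref{percandnottd}.

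With positive percolation probability established, the second half of \eqref{percandnottd} follows immediately from Lemma~\ref{lem:01}, which upgrades positive percolation probability to the almost sure existence of a non-trivial connected component on the event $\{\nu\neq 0\}$. Finally, the last sentence of the theorem follows because $\cL^d$ contains $\Z^d$ as a subgraph (every lattice edge is also an $\cL^d$-edge), hence site percolation is easier on $\cL^d$ and $p_c(\cL^d)\le p_c(\Z^d)$; so $1-e^{-c_0}>p_c(\Z^d)$ implies the hypothesis $1-e^{-c_0}>p_c(\cL^d)$.

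I do not anticipate a hard step: all the machinery is in place. The main subtlety to keep in mind is simply that $1-e^{-c}$ is the \emph{effective} cell-retention parameter produced by the Poisson thinning in Proposition~\ref{prop:Anucoupling} (monotone in $c$), so that the threshold condition translates cleanly across the coupling; and that $B_0$ depends on $c_0$ only through the gap $1-e^{-c_0}-p_c(\cL^d)$, via the convergence in Theorem~\ref{thm:limit pc for fp}\ref{it:pc for fp converges as B goes to infinity}.
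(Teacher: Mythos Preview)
Your proposal is correct and follows essentially the same route as the paper's proof: use Theorem~\ref{thm:limit pc for fp}\ref{it:pc for fp converges as B goes to infinity} to get $p_c(B,d)<1-e^{-c_0}\le 1-e^{-c}$ for $B\ge B_0$, couple via Proposition~\ref{prop:Anucoupling}, and finish with Lemma~\ref{lem:01} and the inequality $p_c(\cL^d)\le p_c(\Z^d)$. The only detail the paper makes explicit that you leave implicit is the hypothesis $c>B^{-d}$ needed to invoke Proposition~\ref{prop:Anucoupling}; but this follows automatically since $c>1-e^{-c}>p_c(B,d)\ge B^{-d}$ (the last inequality because the fp-Cantor set is a.s.\ empty for $p\le B^{-d}$).
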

\begin{proof} By Theorem~\ref{thm:limit pc for fp}\ref{it:pc for fp converges as B goes to infinity} there is a $B_0\in \N^{\ge 2}$ such that for $B\ge B_0$ and $c\ge c_0$,
\begin{equation*}
p_c(B,d)<1-e^{-c_0}\le 1-e^{-c}\text{ and }B^{-d}\le B_0^{-d}<c_0\le c.
\end{equation*}
If $\nu$ is as in the Theorem, then the above inequalities and Proposition~\ref{prop:Anucoupling} imply
\[\P(\supp(\nu)\text{ percolates})\ge \P(A_\infty(B,1-e^{-c}))\text{ percolates})>0.\]
Lemma~\ref{lem:01} now implies the last equality in \eqref{percandnottd}. The final assertion of the Theorem holds because (see \cite{FG92}) $p_c(\Z^d)\ge p_c(\cL^d)$.
\end{proof}
It is easy to prove Theorem~\ref{thm:perc2} by reinterpreting the above using our original parameter $\beta=\frac{2\log B}{\log\mu}$ instead of $c$.
\begin{proof}[Proof of Theorem~\ref{thm:perc2}.] If $d\ge 3$, then by \cite{CP85} $p_c(\Z^d)<1/2$ (the reference handles $d=3$ but it then follows trivially for $d>3$). Therefore we may take $c_0=\log 2<1$ in Theorem~\ref{thm:perc1} and let $B_0$ be as in that result. Recalling the connection between $c$ and $\beta$ in \eqref{cbeta}, we see that the condition $c\ge c_0$ in Theorem~\ref{thm:perc1} is equivalent to $0<\beta\le \frac{2}{d}+\veps(d,B)$, where 
\[\veps(d,B)=\frac{2}{d-(|\log(\log 2)|/\log B)}-\frac{2}{d}>0.\]
\ref{it:perc2:a} is now immediate from Theorem~\ref{thm:perc1}.\\
\noindent\ref{it:perc2:b} follows just as above but now use \eqref{pccond} to find $0<c_0<1$ so that $p_c(\Z^2)<1-e^{-c_0}$ and take 
\[\veps(2,B)=\frac{2}{2-(|\log(c_0)|/\log B)}-1>0.\]
\end{proof}

\begin{remark}\label{critpercdef}
If we fix $d,B\in\N^{\ge 2}$ and consider the $B$-ary CSTRM, $\nu_\beta$, associated with the Poisson distribution with mean $\mu=B^{2/\beta}$ it follows from \eqref{cbeta} and Lemma~\ref{monotone} that $\P(\supp(\nu_\beta)\text{ percolates})$ is non-increasing in $\beta>0$.  
		Therefore if 
		$\beta_c(d,B)=\inf\{\beta>0:\P(\supp(\nu_\beta)\text{ percolates})=0\}$, we see that $\supp(\nu_\beta)$ percolates w.p.p. for $0<\beta<\beta_c$, and fails to percolate a.s.\ for $\beta>\beta_c$.  
		Theorem~\ref{thm:totdisc} and Theorem~\ref{thm:perc2} imply that 
\begin{equation*}\text{for }B\ge B_0(d), \quad  \beta_c(d,B)\in (2/d,4/(d+1)],\ \text{for }d\ge 2,
\end{equation*}
		where for $d=2$, the lower bound requires \eqref{pccond}. Moreover  percolation fails at the above upper bound.
\end{remark}		
\noindent{\bf Acknowledgement.}
	E.P.\ thanks Bal\'asz R\'ath for pointing out the potential use of fractal percolation in the study of the original disconnectness problem for super-Brownian motion. 
	We thank John Wierman for discussions on site percolation in the plane.  Thanks also go to two anonymous referees for a careful reading of the manuscript and for suggesting a number of clarifications.
	E.P.'s research was supported by an NSERC Canada Discovery Grant.
	D.S.'s research at UBC was supported by an NSERC Canada Discovery Grant and his
	work at Université Paris Nanterre has been conducted within the FP2M federation (CNRS FR 2036).

\bibliographystyle{plain}
\def\cprime{$'$}

\newpage
\appendix
\ARXIV{
\section{Tables of notation}
\begin{center}
\begin{longtable}{|l|l|l|}\caption{Table of notation for Section~\ref{sec:STRM}}
	\\
\hline 
\endfirsthead
\hline 
\endhead

\hline \multicolumn{3}{|r|}{{Continued on next page}} \\ \hline
\endfoot

\hline 
\endlastfoot
		$F_n$  & set of finite sequences of integers of length $n$, i.e. $\N^{\{1,\dots,n\}}$& \\
		$F_0$  & by convention $F_0=\{0\}$ & \\
		$F$  &  set of finite sequences of integers $F=\cup_{n=0}^\infty F_n$& \\
		$I$  &  set of infinite sequences of integers $I=\N^\N$& \\
		$i|m$  & sequence $i$ restricted to its first $m$ terms $(i_1,\dots, i_m)$& \\
		$\kappa(i,j)$  & generation of the most recent common ancestor of $i$ and $j$& \\
		$Z^f$  & number of children of individual $f\in F$& \\
		$K^f_n$  &  descendants of individual $f$ at generation $n$& \eqref{eq:def K^f_n}\\
		$K_\infty^0$  & set of all the descendants of the root $K_\infty^0:=\cup_{n=1}^\infty K^0_n$& \\
		$K$  & set of infinite lines of descents $K=\{i\in I: i_{j+1}\le Z^{i|j}\text{ for all }j\in\Z_+\}$& \eqref{Kdef}\\
		$D(f)$  & set of possible descendants of $f$, i.e. $D(f)=\{g\in F:g|m=f\}$& \eqref{eq:def D(f)}\\
		$\overline D(f)$  & set of possible infinite line of descent from $f$ i.e.  $\overline D(f)=\{i\in I:i|m=f\}$& \eqref{eq:def D(f)} \\
		$W^f$  & asymptotic mass of descendants of $f$ i.e. $W^f=\lim_{n\to\infty}\mu^{-n}|K_n^f|$& \eqref{Wfprop}\\
		$Y$ & random measure supported on $K$ so that $Y(\overline D(f))=W^f$& \eqref{Ydef1}\\
		\hline
		$p=(p_k)_{k\geq 0}$  & reproduction measure & \\
		$Z$ & random variable with law given by $p$ & \\
		$\mu>1$  & expectation of the reproduction measure & \eqref{pk}\\
		$\beta>0$  & parameter of our model& \\
		$\rho>0$  & displacement exponent, obtained as $\rho =\mu^{-\beta/2}$& \\
		\hline
		$\left(X_{m,k}\right)_{\substack{m\le k \\ 
				m,k\in \N }}$ &   displacement vector & \\
		$Q^s$  & distribution of the displacement vector& \\
		$Q_k$ & symmetric displacement law given $k$ children & \\
		$X_{m,k}^f$ & spatial displacement of the $m$th child of $f$, given that $f$ has $k$ children& \\
		$S^f$  & spatial position of individual $f\in K_\infty^0$& \eqref{Sfdefn}\\
		$S^i$  & limiting position for the line of descent $i\in K$& \eqref{Sidefn}\\
		
		\hline
		$\nu$  & push-forward of the measure $Y$ through the map $i \mapsto S^i$ & \eqref{nudef} \\
		$\nu_n$  & an adapted approximation of $\nu$ & \\
		$\overline{\nu}_n$  & a multiple of $\nu_n$ with integer values & \\
		$\tilde \nu_n$  & another, non-adapted, approximation of $\nu$ & \\
		$\bar{\nu}$ & a generic integer-valued finite random measure on $\R^d$ & \\
		\hline
		$\xi$ & generic Borel map $\R^d \rightarrow (0,1]$ &\\
		$G\xi$ & p.g.f.\ of a random measure $\bar{\nu}$ applied to $\xi$ &  \\
		$G_n\xi$ & p.g.f.\ of the random measure $\bar{\nu}_n$ applied to $\xi$& \\
		\hline
\end{longtable}
\end{center}
\begin{table}[htbp]\caption{Table of notation for Section~\ref{sec:SBM}}
	\begin{tabular}{|l| l |l |}
		\hline
		$C$ & space of continuous $\R^d$-valued paths on the unit interval& \\
		$\pi_t$ & projection map, for $y\in C$ we have $\pi_t(y)=y(t)$& \\
		$y^t$ & stopped path $s\mapsto y(s\wedge t)$& \\
		$C^t$ & set of paths stopped at $t$ & \\
		$y/s/w$ & concatenation of paths $y$ and $w$, joined at time $s$& \eqref{eq:definition concatenation paths}\\
		$\cC_t$ & canonical filtration on $C$ & \\
		$T_{s,t}$ & semigroup of the inhomogeneous process $(B^t, \ t\in[0,1])$& \eqref{eq:def semi group HBM}\\
		$(H_t,t\in[0,1])$ & historical super-Brownian motion& \\
		$\Q_m$ & probability for $(H_t,t\in[0,1])$ started from measure $m$& \\
		$\P^*$ & cluster law for SBM & \\
		$(H_t^*,t\in[0,1])$  & historical branching Brownian motion with rate function $\lambda$ & \eqref{Hrep1}\\
		$\lambda$ & continuous function from $[0,1)$ to $\R_+$, later taken to be $s\mapsto\frac{1}{1-s}$ & \\
		$(X^*_t, t \in [0,1])$ & ordinary BBM associated to $H^*_t$ & \\
		$Q^*_{s,m}$ & measure for historical BBM started at $m$ at time $s$ & \\
		$Q^*$ & shorthand for for $Q^*_{0,\delta_0}$ & \\
		$P^*_{s,\eta}$ & probability for ordinary BBM $X^*$ started at time $s$ from $\eta$ & \\
		$M_F(E)$ &space of finite measures on a metric space $E$  & \\
		$N_F(E)$ & subspace of $M_F(E)$ consisting of $\Z_+$-valued finite measures& \\
		$\theta$ & generic Borel map $\theta: C \rightarrow (0,1]$ & \\
		$\overline{G}_{s,t}\theta(m)$ & p.g.f. of $H_t^*$ under $Q^*_{s,m}$  & \eqref{eq:def overline G st}\\
		$G_{s,t}\xi(x)$ & p.g.f. of $X_t^*$ under $P_{s,\delta_x}$ & \eqref{eq:def G st xi}\\
		$N_t$ & the number of points in the BBM at time $t$ & \\
		\hline
	\end{tabular}
	\label{ton:introduction}
\end{table}
\begin{table}[htbp]\caption{Table of notation for Section~\ref{sec:Bary}}
	\centering 
	
	\begin{tabular}{|l| l |l |}
		\hline
		$d$ & dimension, assumed larger than or equal to $2$ &  \\		
		$B$ &   integer larger than $2$& \\
		$\mu$ &   expected value of the reproduction distribution& \\
		$\beta$ &   exponent $\beta =\frac{2\log B}{\log \mu}$ & \\
		$\rho$ &  rate of decrease of spatial displacements, $\rho=\mu^{-\beta/2}=B^{-1}$& \\
		\hline
		$X^{f|m}$ &   (unscaled) displacement of the particle $f|m$ & \\
		$G_m$ &  level-$m$ grid approximation of $[0,1)^d$, i.e. $G_m = [0,1)^d\cap B^{-m}\Z^d$& \\
		$C_m(x)$ &  $d$-dimensional cube of edge length $B^{-m}$ with ``lower left corner" $x\in G_m$& \\
		$x^{(j)}$ &  $j$th coordinate of $x\in\R^d$ & \\
		$.x_1\dots x_m$ &  point $x\in G_m$ such that for $j=1,\dots,d$, $x^{(j)}=\sum_{\ell=1}^mx_\ell^{(j)}B^{-\ell}$. & \\
		$J_m^x$ &  set of individuals at position $x$ at time $w$& \eqref{Jdefn}\\
		$\Lambda_m$ &  set of level-$m$ closed cubes $\{\overline{C_m(x)}:x\in G_m\}$ & \\
		\hline
	\end{tabular}
	\label{ton:Bary strm}
\end{table}
\newpage
\section{Appendix}
\subsection{Proof of Proposition~\ref{bpshbm}}\label{app:subsec:bpshbm}
We follow the reasoning in Section~3 of \cite{DP91}. 
If $H_1$ is a random measure in $M_F(C)$ define an increasing collection of $\sigma$-fields for $s\in [0,1]$ by
\[\cG^*_s=\sigma(\{1(H_1(A)>0),A\in\cC_s\}).\]
In practice we consider $H_1$ under either $\P^*$ or $\Q_{\delta_0}$. 
If $s\in[0,1)$ we let $H_1^{*,s}$ denote a version of the conditional expectations $\P^*(H_1(\cdot)/((1-s)\gamma/2)\, | \, \cG^{*}_s)$ which is a random measure on $C$.  
For $s<1$ under $\Q_{\delta_0}$ (and hence also under $\P^*$) $r_sH_1=\sum_{i=1}^M e_i\delta_{y_i}$ for some finite collection $\{y_1,\dots,y_M\}$ in $C^s$ (see Theorem~III.1.1 of \cite{Per02} or Proposition~3.5(a) of \cite{DP91}).   
It helps to think of $\cG^{*}_s$ as the $\sigma$-field generated by the set of atoms $\{y_1,\dots,y_M\}$. 

\begin{proposition}\label{thm3.9} Under $\P^*$ there is a c\`adl\`ag version of $(r_sH_1^{*,s},\,0\le s<1)$ which is equal in law to the historical branching Brownian motion $(H^*_s,\, 0\le s<1)$ under $Q^*$. 
\end{proposition}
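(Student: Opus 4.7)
The plan is to obtain Proposition~\ref{thm3.9} from Theorem~3.9(b) of \cite{DP91} (which gives the analogous statement under the full historical SBM law $\Q_{\delta_0}$) and then transfer it to the cluster law $\P^*$ via the Poisson decomposition \eqref{Poisdec}. First I would invoke Theorem~3.9(b) of \cite{DP91}, after the notational translation flagged in the sketch of Proposition~\ref{bpshbm} (the parameter $\beta=1$ there and the $\gamma$ of \cite{DP91} corresponding to our $\gamma/2$); this produces a c\`adl\`ag $N_F(C)$-valued version of $(r_s H_1^{*,s})_{0\le s<1}$ under $\Q_{\delta_0}$ distributed as historical branching Brownian motion with rate function $\lambda(s)=1/(1-s)$ started from a single particle at $\delta_0$.

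Next I would use \eqref{Poisdec}, which writes $H_1=\sum_{i=1}^N H^i_1$ under $\Q_{\delta_0}$ with $N$ Poisson of mean $2/\gamma$ and $\{H^i_1\}$ i.i.d.\ with law $\P^*$. Since distinct clusters descend from independent Brownian trajectories that separate instantly from the common origin, for each $s>0$ the restrictions $r_s H^i_1$ have pairwise a.s.\ disjoint atomic supports in $C^s$. Consequently $\cG^*_s$ decomposes as the join of the per-cluster $\sigma$-fields and, by additivity of conditional expectation, $H_1^{*,s}=\sum_{i=1}^N H_1^{i,*,s}$ with conditionally independent summands given $N$, where $H_1^{i,*,s}$ is the analogous object built from the single $\P^*$-cluster $H^i_1$. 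The c\`adl\`ag version under $\Q_{\delta_0}$ therefore splits as a sum of $N$ independent processes, each of which must itself be c\`adl\`ag; on the limit side the corresponding historical BBM likewise splits into $N$ independent copies by the multiplicative property \eqref{multprop}. Conditioning on $\{N=1\}$, an event of positive probability under $\Q_{\delta_0}$, and noting that $H^1_1$ has law $\P^*$ under this conditioning, would then yield the statement under $\P^*$.

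The hard part will be the technical verification that $\cG^*_s$ decomposes cleanly across clusters: one needs that the atoms of $r_s H^i_1$ are a.s.\ pairwise distinct across $i$, so that the additive decomposition of the conditional expectation is valid and Poisson independence of $\{H^i_1\}$ transfers to independence of $\{H_1^{i,*,s}\}$. This should reduce to the standard fact that independent SBM clusters issued from the same spatial point have a.s.\ non-overlapping supports of historical paths in $C^s$ for every $s>0$, a consequence of the a.s.\ instantaneous divergence of independent Brownian trajectories; the remainder of the argument is then routine measure-theoretic bookkeeping, plus the observation that an independent sum of finitely many processes is c\`adl\`ag iff each summand is.
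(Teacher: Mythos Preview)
Your detour through $\Q_{\delta_0}$ is unnecessary here: the paper's proof is a one-line citation, stating that Theorem~3.9(b) of \cite{DP91} \emph{is} Proposition~\ref{thm3.9} (with the c\`adl\`ag version implicit in that proof). In other words, Theorem~3.9(b) in \cite{DP91} is already formulated in the canonical-measure/cluster-law setting, so there is nothing to transfer. The $\Q_{\delta_0}\to\P^*$ transference argument you outline is precisely what the paper carries out for the \emph{companion} result, Proposition~\ref{thm3.10} (the a.s.\ convergence), not for this one.

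Beyond being superfluous, your sketch contains an error and a gap worth flagging. First, the claim that under $\Q_{\delta_0}$ the resulting historical BBM ``start[s] from a single particle at $\delta_0$'' is wrong: by the Poisson decomposition \eqref{Poisdec}, the number of time-$0$ ancestors with descendants at time $1$ is $N\sim\mathrm{Poisson}(2/\gamma)$, so the corresponding HBBM under $\Q_{\delta_0}$ starts with $N$ coincident particles at the origin, not one. (Your plan to condition on $\{N=1\}$ at the end would ultimately repair this, but the intermediate assertion is false.) Second, the ``hard part'' you identify---a clean decomposition of $\cG^*_s$ across clusters---is genuinely delicate at $s=0$, where all clusters share the same (constant) path, and is more than is needed. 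When the paper does need to pass from $\Q_{\delta_0}$ to $\P^*$ (in Proposition~\ref{thm3.10}), it avoids this decomposition entirely: it instead verifies directly, via \eqref{clustereq}, that on $\{N=1\}$ the $\Q_{\delta_0}$- and $\P^*$-conditional expectations coincide, using only that $N$ is $\cG^*_{1-\veps_n}$-measurable and that $\Q_{\delta_0}(H_1\in\cdot,\,N=1)$ is equivalent to $\P^*(H_1\in\cdot)$. If you ever do need a transference argument of this type, that route is both shorter and more robust than splitting $\cG^*_s$ cluster by cluster.
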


This is Theorem~3.9(b) in \cite{DP91}. The existence of a c\`adl\`ag version is implicit in the proof. 

The following is a minor modification of Theorem~3.10 of \cite{DP91}.
\begin{proposition}\label{thm3.10}
If $\veps_n\downarrow 0$ where $\veps_n\in(0,1)$, then 
\[\lim_{n\to\infty}\frac{\veps_n\gamma}{2}r_{1-\veps_n}H_1^{*,1-\veps_n}=H_1\quad\P^*-\text{a.s.}\]
\end{proposition}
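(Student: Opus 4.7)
The plan is to deduce Proposition~\ref{thm3.10} from Theorem~3.10 of \cite{DP91} via two minor modifications. In the notation of that reference (with their $\beta$ set to $1$ and their $\gamma$ corresponding to our $\gamma/2$), Theorem~3.10 establishes the $\Q_{\delta_0}$-a.s.\ convergence
\[
\frac{\veps_n\gamma}{2}H_1^{*,1-\veps_n}\to H_1 \quad\text{in }M_F(C)
\]
as $n\to\infty$. Our statement differs from this in two ways: it is asserted under $\P^*$ rather than $\Q_{\delta_0}$, and the approximating measure on the left carries the restriction $r_{1-\veps_n}$ (so as to match the historical BBM construction from Proposition~\ref{thm3.9}, in which mass-$1$ atoms sit on paths in $C^{1-\veps_n}$).

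For the transfer from $\Q_{\delta_0}$ to $\P^*$, I would invoke the Poisson cluster decomposition \eqref{Poisdec}: under $\Q_{\delta_0}$, $H_1=\sum_{i=1}^N H^i_1$ with i.i.d.\ clusters of law $\P^*$ and an independent $N\sim\mathrm{Poisson}(2/\gamma)$. The event $\{N=1\}$ has positive $\Q_{\delta_0}$-probability, and conditionally on it the law of $H_1$ is exactly $\P^*$. Since $H_1^{*,s}$ is a measurable functional of $H_1$ (as a version of the conditional expectation $\P^*(H_1/((1-s)\gamma/2)\,|\,\cG^*_s)$), the $\Q_{\delta_0}$-a.s.\ convergence restricted to $\{N=1\}$ immediately yields the $\P^*$-a.s.\ convergence $\frac{\veps_n\gamma}{2}H_1^{*,1-\veps_n}\to H_1$.

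For the insertion of the restriction map, I would show that for every $\phi\in C_b(C)$,
\[
\frac{\veps_n\gamma}{2}\bigl[r_{1-\veps_n}H_1^{*,1-\veps_n}-H_1^{*,1-\veps_n}\bigr](\phi)
=\frac{\veps_n\gamma}{2}\int\bigl[\phi(y^{1-\veps_n})-\phi(y)\bigr]\,dH_1^{*,1-\veps_n}(y)
\]
tends to $0$ $\P^*$-a.s. The key observation is that, having already established $\frac{\veps_n\gamma}{2}H_1^{*,1-\veps_n}\to H_1$ weakly in $M_F(C)$ a.s., the sequence of approximating measures is a.s.\ tight, so for any $\eta>0$ there is (a.s.) a compact $K\subset C$ carrying all but $\eta$ of the asymptotic mass. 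On a compact subset of $C$ the paths are equicontinuous, hence the stopping map $y\mapsto y^{1-\veps_n}$ converges to the identity uniformly on $K$, so $\phi\circ r_{1-\veps_n}\to\phi$ uniformly on $K$. Combined with the boundedness of the total masses and tightness outside $K$, this forces the displayed difference to zero. Together with the first step this yields the proposition.

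The only real obstacle is making the tightness/uniform continuity argument in the second step rigorous; since $H_1^{*,1-\veps_n}$ itself depends on $n$, one cannot merely say ``$r_s\to\mathrm{id}$'' pointwise and conclude. However, once the a.s.\ weak convergence in $M_F(C)$ from the first step is in hand, this is routine—it is precisely why the authors describe this modification as ``trivial'' in the sketch preceding Proposition~\ref{bpshbm}.
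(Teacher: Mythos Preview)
Your transfer step contains a genuine gap. Theorem~3.10 of \cite{DP91} proves $\Q_{\delta_0}$-a.s.\ convergence of $H_1^{1-\veps_n}$, where $H_1^s$ is a version of $\Q_{\delta_0}\bigl(H_1(\cdot)/((1-s)\gamma/2)\,\big|\,\cG^*_s\bigr)$. The object $H_1^{*,s}$ in the proposition is a version of $\P^*\bigl(H_1(\cdot)/((1-s)\gamma/2)\,\big|\,\cG^*_s\bigr)$, i.e.\ a conditional expectation taken under a \emph{different} measure. These are a~priori different random measures; the fact that $H_1^{*,s}$ is ``a measurable functional of $H_1$'' does not help, because under $\Q_{\delta_0}$ that functional is not a version of the $\Q_{\delta_0}$-conditional expectation, so Theorem~3.10 says nothing about it directly. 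What is needed is the identity
\[
\Q_{\delta_0}\bigl(H_1(\phi)\,\big|\,\cG^*_{1-\veps_n}\bigr)\,1(N=1)
=\P^*\bigl(H_1(\phi)\,\big|\,\cG^*_{1-\veps_n}\bigr)\,1(N=1)\quad\Q_{\delta_0}\text{-a.s.},
\]
and this is exactly the content of the paper's display \eqref{clustereq}, proved there by a monotone class argument. The key ingredient you omit is that $N$ is $\cG^*_{1-\veps_n}$-measurable (from Proposition~3.5(b) of \cite{DP91}); once that is known, conditioning on $\{N=1\}$ commutes with the $\cG^*_{1-\veps_n}$-conditional expectation and the identity follows. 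Without this step your argument does not connect Theorem~3.10 to $H_1^{*,1-\veps_n}$ at all.

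Your treatment of the restriction map is a legitimate alternative to the paper's approach. The paper works under $\Q_{\delta_0}$ and invokes Hunt's Lemma: since $H_1(|\phi\circ\tau_n-\phi|)\to0$ a.s.\ and is dominated by an integrable variable, the conditional expectations with respect to the increasing filtration $(\cG^*_{1-\veps_n})$ also converge to $0$ a.s. Your tightness/equicontinuity argument under $\P^*$ (after the transfer) would also work, and is essentially the ``trivial'' route alluded to in the sketch; the paper's Hunt's Lemma approach is a bit cleaner because it avoids having to extract an a.s.\ compact set uniformly for the $n$-dependent measures.
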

\begin{proof}
For $s<1$ define $H^s_1$ as $H^{*,s}_1$ but now using $\Q_{\delta_0}$ in place of $\P^*$, that is, $H^s_1$ is a version of $\Q_{\delta_0}(H_1(\cdot)/((1-s)\gamma/2) \, | \, \cG^{*}_s)$ which is a random measure.  Theorem~3.10 of \cite{DP91} implies
\begin{equation}\label{thm310conv}
\lim_{n\to\infty}\frac{\veps_n\gamma}{2}H_1^{1-\veps_n}=H_1\quad \Q_{\delta_0}-\text{a.s.}
\end{equation}
Assume $\phi:C\to\R$ is bounded and continuous. If $\tau_nw=w^{1-\veps_n}$, then
\begin{equation} \label{Hunt}\Bigl|\frac{\veps_n\gamma}{2}(r_{1-\veps_n}H_1^{1-\veps_n})(\phi)-\frac{\veps_n\gamma}{2} H_1^{1-\veps_n}(\phi)\Bigr|
\le \Q_{\delta_0}(H_1(|(\phi\circ\tau_n)-\phi|) \, | \, \cG^*_{1-\veps_n}).
\end{equation}
Note that by dominated convergence we have $H_1(|(\phi\circ\tau_n)-\phi|)\to 0$ a.s.\ as $n\to\infty$,  and also that $H_1(|(\phi\circ\tau_n)-\phi|)\le 2\Vert\phi\Vert_\infty H_1(1)\in L^1$.  
Therefore by Hunt's Lemma (see Theorem~45 in Ch. V  of \cite{DM82}) we have that the right side of \ref{Hunt} approaches $0$ a.s. 
By considering a countable convergence determining set of $\phi$'s (e.g., use Theorem~4.5 in Ch. 3 of \cite{EK}) we can conclude from \eqref{thm310conv} and \eqref{Hunt} that
\begin{equation}\label{gettingr}
\lim_{n\to\infty}\frac{\veps_n\gamma}{2}r_{1-\veps_n}H_1^{1-\veps_n}=H_1\quad \text{a.s.}
\end{equation}
Let $\phi$ be as above. 
By the cluster decomposition \eqref{Poisdec} the above implies that
\begin{equation}\label{clusterconv}
\Q_{\delta_0}\Bigl(\sum_{i=1}^N r_{1-\veps_n}H^i_1(\phi)\, \Bigl| \,\cG^*_{1-\veps_n}\Bigr)\to \sum_{i=1}^N H^i_1(\phi)\text{ as }n\to\infty, \ \Q_{\delta_0}-\text{a.s.,}
\end{equation}
where $N$ and $H^i_1$ are as in \eqref{Poisdec}.
Proposition~3.5(b) of \cite{DP91} with $t=1$ and $\veps\uparrow 1$ in that result shows that $N$ is $\cG^*_{1-\veps_n}$-measurable. 
If we apply \eqref{clusterconv} on the event $\{N=1\}$ we get
\begin{equation}\label{clusterconvergence}
\Q_{\delta_0}(r_{1-\veps_n}H_1(\phi)1(N=1)\, | \, \cG^*_{1-\veps_n})\to H_1(\phi)1(N=1)\text{ as }n\to\infty\ \ \Q_{\delta_0}-\text{a.s.}
\end{equation}
Note that $\Q_{\delta_0}(H_1\in\cdot, N=1)$ and $\P^*(H_1\in\cdot)$ are equivalent measures.  
We claim that 
\begin{equation}\label{clustereq}\Q_{\delta_0}(r_{1-\veps_n}H_1(\phi)1(N=1) \, |\, \cG^*_{1-\veps_n})=\P^*(r_{1-\veps_n}H_1(\phi) \, | \, \cG^*_{1-\veps_n})1(N=1)\ \ \Q_{\delta_0}-\text{a.s.}
\end{equation}
Note that both sides of the above are measurable functions of $H_1\in M_F(C)$.
To prove \eqref{clustereq} it suffices to show that for any $A_1,\dots,,A_k\in\cC_{1-\veps_n}$ 
we have 
\begin{align}\label{condequality}
\int_{\{H_1(A_j)>0, \text{ for }j=1,\dots,k\}}r_{1-\veps_n}&H_1(\phi)1(N=1)\,\dd \Q_{\delta_0}\\
\nonumber&=\int \int_{\{H_1(A_j)>0\text{ for }j=1,\dots,k\}}r_{1-\veps_n}H_1(\phi)\,\dd \P^*1(N=1)\dd \Q_{\delta_0}.
\end{align}
Given the above, a monotone class theorem then gives \eqref{clustereq}. 
By the cluster decomposition \eqref{Poisdec} the left-hand side of \eqref{condequality} equals
\begin{align*}\int_{\{H^1_1(A_j)>0\text{ for }j=1,\dots,k\}}&r_{1-\veps_n}H_1^1(\phi)1(N=1)\dd \Q_{\delta_0}\\
&=\Q_{\delta_0}(N=1)\int_{\{H_1(A_j)>0\text{ for }j=1,\dots,k\}}r_{1-\veps_n}H_1(\phi)1(N=1)\dd \P^*,
\end{align*}
which is the right-hand side of \eqref{condequality}. Now use the equality \eqref{clustereq} in \eqref{clusterconvergence} and the aforementioned equivalence of measures to conclude that
\[\lim_{n\to\infty}\P^*(r_{1-\veps_n}H_1(\phi) \, | \, \cG^*_{1-\veps_n})= H_1(\phi), \ \P^*-\text{a.s.}\]
By considering a countable convergence determining set of $\phi$'s we conclude that 
\[\lim_{n\to\infty}\frac{\veps_n\gamma}{2}r_{1-\veps_n}H^{*,1-\veps_n}_{1}=H_1, \ \P^*-\text{a.s.,}\]
and the proof is complete.
\end{proof}

\begin{proof}[Proof of Proposition~\ref{bpshbm}]
 Propositions~\ref{thm3.9} and \ref{thm3.10} give Proposition~\ref{bpshbm}.
\end{proof}

\subsection{Proof of Proposition~\ref{HBBMmean}.}\label{app:subsec:HBBMmean}
\begin{proof}[Proof of Proposition~\ref{HBBMmean}]
We give a standard construction of $H^*_t$ as an explicit tree-indexed system of BBM's (e.g. see Ch. 8 of \cite{W86} or Ch. 1 of \cite{JB}).  Let $I=\cup_{n=0}^\infty\{0\}\times\{0,1\}^n$, write $|\beta|=n$ if $\beta\in\{0\}\times\{0,1\}^n$, and let $\pi\beta$ denote the parent of $\beta\in I\setminus\{0\}$.  Consider an i.i.d.\ collection $\{W^\beta_\cdot:\beta\in I\}$ of standard $d$-dimensional Brownian motions starting at $0$, and an independent i.i.d.\ collection 
$\{(\tau^\beta_s)_{s\in[0,1)}:\beta\in I\}$ of Poisson processes with rate $\frac{ds}{1-s}$. We inductively (on $|\beta|$) define the death time $T^\beta$ of particle $\beta$ by setting $T^{\pi 0}=0$ (even though $\pi 0$ is not defined) and
for $|\beta|>0$,
\[T^\beta=\inf\{t\ge T^{\pi\beta}:\Delta \tau^\beta_t=1\}.\]
We also call $T^{\pi\beta}$ the birth time of particle $\beta$ and write $\beta\sim t$ iff $T^{\pi\beta}\le t<T^\beta$, and  in this case say $\beta$ is alive at time $t$. Again by induction on $|\beta|$ we define an $\R^d$-valued process, $B^\beta$, stopped at $T^\beta$, by $B^0(t)=W^0(t\wedge T^0)$ and for $|\beta|>0$,
\[B^\beta(t)=\begin{cases} B^{\pi\beta}(t),\ &\text{ if }t<T^{\pi\beta},\\
B^{\pi\beta}(T^{\pi\beta})+\int_0^t1(T^{\pi\beta}\le u<T^\beta)\,\dd W^\beta_u,\ &\text{ if }t\ge T^{\pi\beta}.
\end{cases}\]
If $\cF_t=\sigma(\{\tau^{\beta'}:\beta'\in I\})\vee \sigma(\{W^{\beta'}(s):s\le t, \beta'\in I\})$, then it is easy to check that $B^\beta$ is a standard $d$-dimensional $\cF_t$-Brownian motion stopped at the $\cF_0$-measurable time $T^\beta$ and starting at $0$.  
Then $H^*_t=\sum_{\beta\sim t}\delta_{B^\beta(\cdot\wedge t)}$ defines a historical branching Brownian motion with law $Q^*_{0,\delta_0}$. This should be clear but can also be shown, for example, by verifying \eqref{brnle} through direct calculation. 
Therefore if $k\in \N$ and $t,\phi$ are as in the Proposition, then
\begin{align*}
Q^*(H^*_t(\phi)1(H^*_t(1)=k))&=\E\Bigl(\E\Bigl(\sum_{\beta\sim t}\phi(B^\beta(\cdot\wedge t))|\cF_0\Bigr)1(|\{\beta:\beta\sim t\}|=k)\Bigr)\\
&=\E\Bigl(\sum_{\beta\sim t}\E\Bigl(\phi(B^\beta(\cdot\wedge t))|\cF_0\Bigr)1(|\{\beta:\beta\sim t\}|=k)\Bigr)\\
&=\E\Bigl(\sum_{\beta\sim t}E_0(\phi(B(\cdot\wedge t\wedge T^\beta)))1(|\{\beta:\beta\sim t\}|=k)\Bigr)\\
&=kE_0(\phi(B^t))Q^*(H^*_t(1)=k).
\end{align*}
In the last line we use the fact that $t<T^\beta$ on $\{\beta\sim t\}$. This gives \eqref{condhmm}, and \eqref{condmm} is then immediate.
\end{proof}.
\subsection{Proof of Proposition~\ref{Qk}}\label{app:subsec:Qk}
We first restate the proposition.  
\begin{proposition}
	Let $\bar\nu$ be a random measure in $N_F(\R^d)$ such that $\P(\bar\nu(1)=k)>0$ for all $k\in\N$.  For each $k\in\N$ there is a unique symmetric probability, $\tilde Q_k$, on $(\R^d)^k$ (symmetry in the $k$ $\R^d$-valued components) such that
	\begin{equation}\label{QKcharapp}
		\int\prod_{i=1}^k\xi(x_i)\dd \tilde Q_k(x_1,\dots,x_k)=\E(\exp\{\bar\nu(\log\xi)\} \, | \, \bar\nu(1)=k)\ \text{for all Borel $\xi:\R^d\to(0,1]$}).
	\end{equation}
	Moreover for all $i\le k$, 
	\begin{equation}\label{Qkmarapp}
		\int 1(x_i\in\cdot)\dd \tilde Q_k(x_1,\dots,x_k)=\frac{1}{k}\E(\bar\nu(\cdot) \, | \,\bar\nu(1)=k).
	\end{equation}
\end{proposition}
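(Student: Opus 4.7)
\textbf{Proof plan for Proposition~\ref{Qk}.}

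The plan is to construct $\tilde Q_k$ explicitly by choosing a measurable ordering of the atoms of $\bar\nu$ on $\{\bar\nu(1)=k\}$ and then symmetrizing, after which both identities reduce to short calculations. First, I would fix $k\in\N$ and note that on the event $\Omega_k:=\{\bar\nu(1)=k\}$ (which has positive probability by hypothesis), the random measure $\bar\nu$ takes values in the set $N_F^k(\R^d)$ of $k$-atomic positive integer measures with total mass $k$ (counted with multiplicity). Using the standard map $(y_1,\dots,y_k)\mapsto \sum_{i=1}^k\delta_{y_i}$ from $(\R^d)^k$ onto $N_F^k(\R^d)$, and a measurable section of this map (e.g.\ order the atoms by lexicographic order on $\R^d$, repeating multiplicities), one can produce $\R^d$-valued random variables $Y_1,\dots,Y_k$ defined on $\Omega_k$ such that $\bar\nu=\sum_{i=1}^k\delta_{Y_i}$ on $\Omega_k$. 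Define the (not necessarily symmetric) law $\mu_k(\cdot)=\P((Y_1,\dots,Y_k)\in\cdot \,|\, \bar\nu(1)=k)$ on $(\R^d)^k$, and set
\[
\tilde Q_k=\frac{1}{k!}\sum_{\pi\in S_k}\mu_k\circ \pi^{-1},
\]
which is a symmetric probability on $(\R^d)^k$ by construction.

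Next I would verify \eqref{QKcharapp}. For any Borel $\xi:\R^d\to(0,1]$, on $\Omega_k$ we have
\[
\exp\{\bar\nu(\log\xi)\}=\prod_{i=1}^k\xi(Y_i),
\]
so taking expectations conditional on $\Omega_k$ gives $\E(\exp\{\bar\nu(\log\xi)\} \,|\, \bar\nu(1)=k)=\int\prod_{i=1}^k\xi(x_i)\,\dd \mu_k$. Since the integrand is symmetric in $(x_1,\dots,x_k)$, this equals $\int\prod_{i=1}^k\xi(x_i)\,\dd \tilde Q_k$, proving \eqref{QKcharapp}. For uniqueness, if $Q$ and $Q'$ are two symmetric probabilities on $(\R^d)^k$ satisfying the same identity, then specializing $\xi$ to functions of the form $\xi=\sum_{j=0}^n t_j 1_{A_j}$ (with $A_0,\dots,A_n$ a Borel partition of $\R^d$ and $t_j\in(0,1]$) shows that $Q$ and $Q'$ give the same joint law to the vector of occupation numbers $(N_1,\dots,N_n)$ where $N_j=|\{i:x_i\in A_j\}|$. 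A monotone class argument then shows that the laws of the random counting measures $\sum_i\delta_{x_i}$ under $Q$ and under $Q'$ agree, and since $Q$ and $Q'$ are both symmetric, this determines them as symmetric measures on $(\R^d)^k$, giving $Q=Q'$.

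Finally, for \eqref{Qkmarapp}, symmetry of $\tilde Q_k$ makes the marginal independent of $i\le k$, so
\[
\int 1(x_i\in A)\,\dd\tilde Q_k=\frac{1}{k}\sum_{j=1}^k\int 1(x_j\in A)\,\dd \tilde Q_k
=\frac{1}{k}\sum_{j=1}^k\frac{1}{k!}\sum_{\pi\in S_k}\P(Y_{\pi j}\in A \,|\, \bar\nu(1)=k).
\]
Since $\sum_{j=1}^k 1(Y_j\in A)=\bar\nu(A)$ on $\Omega_k$ and the index $\pi j$ just reshuffles $\{1,\dots,k\}$, the inner sum equals $\E(\bar\nu(A)\,|\,\bar\nu(1)=k)$, giving \eqref{Qkmarapp}.

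The only mild obstacle is the measurable selection of the atoms $Y_1,\dots,Y_k$ on $\Omega_k$, but this is routine given the continuity of the map $(y_1,\dots,y_k)\mapsto \sum_i\delta_{y_i}$ and the availability of a Borel order on $\R^d$; everything else is bookkeeping with symmetrization and a standard monotone class argument.
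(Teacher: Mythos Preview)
Your proposal is correct and follows essentially the same construction as the paper: lexicographically order the atoms of $\bar\nu$ on $\{\bar\nu(1)=k\}$ and then symmetrize (the paper applies an independent uniform permutation, you average the law over $S_k$, which is the same thing), after which \eqref{QKcharapp} and \eqref{Qkmarapp} are immediate. The one point of departure is the uniqueness argument: the paper invokes Stone--Weierstrass on the quotient $(\R^d)^k/\!\sim$ to see that the symmetric functions $\prod_i\xi(x_i)$ with $\xi\in C_0(\R^d,(0,1])$ are dense in the symmetric $C_0$ functions, whereas you use step functions $\xi=\sum_j t_j 1_{A_j}$ to read off the joint law of the occupation counts and then a monotone class argument; both routes are standard and yours is arguably more self-contained.
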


\begin{proof} Fix $k\in\N$ and work under $\P(\, \cdot\, | \,\nu(1)=k)$.  
Let $\cS=\{x\in\R^d:\bar\nu(\{x\})\ge 1\}$ be the support of $\bar\nu$, order the (at most $k$) points in $\cS$ lexicographically, and repeat each $x\in\cS$ $\bar\nu(\{x\})$ times to construct random variables $\hat X_1,\dots,\hat X_k$.
In words $\hat X^1$ is ``the lowest of the left-most points" in $\cS$ and one can easily check measurability.  
$\hat X_2$ is defined as $\hat X_1$ but for $\bar\nu-\delta_{\hat X_1}$. Proceeding inductively we see these are all r.v.'s.  
Let $\pi$ be an independent random permutation chosen uniformly from $S_k$ and set $X_i=\hat X_{\pi i}$ for $i=1,\dots,k$. 
Then $\{X_i:i\le k\}$ are exchangeable and $\bar\nu=\sum_{i=1}^k\delta_{X_i}$.
Let $\tilde Q_k$ be the law of $(X_1,\dots,X_k)$.  
Then $\tilde Q_k$ is symmetric and for $\xi$ as above, 
	\begin{align*}\int\prod_{i=1}^j\xi(x_i)\dd \tilde Q_k(x_1,\dots,x_k)&=\E\Bigl(\exp\Bigl\{\sum_{i=1}^k\log\xi(X_i)\Bigr\}\, \Bigl| \, \bar\nu(1)=k\Bigr)\\
		&=\E(\exp\{\bar\nu(\log\xi)\}\, | \, \bar\nu(1)=k).
	\end{align*}
	
For uniqueness define an equivalence relation $\sim$ on $(\R^d)^k$ by $(x_1,\dots,x_k)\sim(y_1,\dots,y_k)$ iff for some $\pi\in S_k$, $y_i=x_{\pi i}$ for all $i\le k$ (write $y=\pi x$).  
Now apply Stone-Weierstrass on the locally compact space $(\R^d)^k/\sim$ (see Ex XI.6.6 on p. 255 of \cite{Dug}) to the algebra of linear combinations of $\prod_{i=1}\xi(x_i)$, where $\xi:\R^d\to(0,1]$ are continuous functions vanishing at infinite (i.e., is in $C_0(\R^d,(0,1])$). 
This shows that \eqref{QKcharapp} uniquely determines $\int\phi\, \dd \tilde Q_k$ for all symmetric functions $\phi(x_1,\dots x_k)\in C_0((\R^d)^k,\R)$. If $\psi\in C_0((\R^d)^k,\R)$, then $\phi(x):=\frac{1}{k!}\sum_{\pi\in S_k}\psi(\pi x)$ is symmetric in $C_0((\R^d)^k,\R)$.  
The symmetry of $\tilde Q_k$ shows $\int \psi\,\dd \tilde Q_k=\int \phi\,\dd \tilde Q_k$. 
The uniqueness result follows.
	
Turning to \eqref{Qkmarapp}, let $i\le k$ and $\phi:\R^d\to\R$ be bounded Borel. Then
	\begin{align*}
		\int\phi(x_i)\,\dd \tilde Q_k=\int \frac{1}{k}\sum_{j=1}^k\phi(x_j)\,\dd \tilde Q_k=\E\Bigl(\frac{1}{k}\sum_{j=1}^k\phi(X_j)\, \Bigl| \, \bar\nu(1)=k\Bigr)=\frac{1}{k}\E(\bar\nu(\phi) \, | \, \bar\nu(1)=k).
	\end{align*}
\end{proof}
\subsection{Proof of Lemma~\ref{Yfequiv}}\label{app:subsec:Yfequiv}
We first restate the Lemma.
\begin{lemma}\label{Yfequivapp}For any $f\in F_m$, $Y^f=1(f\in K^0_m)\mu^{-m}Y_{[f]}$.
\end{lemma}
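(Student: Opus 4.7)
The plan is to follow the same measure-uniqueness strategy already used for Proposition~\ref{Y}: since $\cB(I)$ is generated by the family of closed balls $\{\overline D(g):g\in F\}$, which is closed under finite intersection (after adjoining $\emptyset$) because of the ultrametric structure, it is enough to verify the asserted equality on every such ball. So fix $n\in\Z_+$ and $g\in F_n$; I will compute both sides evaluated on $\overline D(g)$ and check that they agree.

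For the left-hand side, by the definition \eqref{Yfdef} of $Y^f$, the condition $T^mi\in\overline D(g)$ forces $(i_{m+1},\dots,i_{m+n})=g$, while $i\in\overline D(f)$ forces $i|m=f$, so together $i\in\overline D(f\vee g)$ (viewed inside $\overline D(f)$). Hence $Y^f(\overline D(g))=Y(\overline D(f\vee g))$, which by \eqref{Ydef1} equals $W^{f\vee g}\,1(f\vee g\in K^0_{m+n})$.

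For the right-hand side the key step is to identify the shifted branching family with the original one restricted to descendants of $f$. The bijection $h\mapsto f\vee h$ from $F_n$ into $F_{m+n}$ sends the shifted descendant set $(K^g_{n+j})_{[f]}$ onto $K^{f\vee g}_{m+n+j}$ for every $j\in\Z_+$, because $h_{\ell+1}\le Z^{h|\ell}_{[f]}=Z^{f\vee(h|\ell)}$ is exactly the descent condition for $f\vee h$ under the original $Z$'s. Consequently $W^g_{[f]}=\lim_j\mu^{-(n+j)}|(K^g_{n+j})_{[f]}|=\mu^m\,W^{f\vee g}$, and the event $\{g\in (K^0_n)_{[f]}\}$ is exactly $\{f\vee g\in K^f_{m+n}\}$. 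Plugging in \eqref{Ydef1} applied to $Y_{[f]}$ gives
\[
\mu^{-m}Y_{[f]}(\overline D(g))=\mu^{-m}W^g_{[f]}\,1(g\in(K^0_n)_{[f]})=W^{f\vee g}\,1(f\vee g\in K^f_{m+n}).
\]
Multiplying by $1(f\in K^0_m)$ and using $\{f\in K^0_m\}\cap\{f\vee g\in K^f_{m+n}\}=\{f\vee g\in K^0_{m+n}\}$ matches the left-hand side exactly.

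The only real content is the bookkeeping check in the middle paragraph that the relabeling $h\mapsto f\vee h$ intertwines the shifted branching construction with the original one below $f$; once this is verified, combining the three paragraphs gives equality on every generator $\overline D(g)$ and hence on $\cB(I)$, which finishes the proof. No issue is expected beyond keeping the indices straight.
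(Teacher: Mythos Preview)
Your proposal is correct and follows essentially the same approach as the paper: verify the identity on the generating sets $\overline D(g)$ by computing $Y^f(\overline D(g))=W^{f\vee g}1(f\vee g\in K^0_{m+n})$, then use the relabeling $h\mapsto f\vee h$ to identify $W^g_{[f]}=\mu^m W^{f\vee g}$ and $\{g\in(K^0_n)_{[f]}\}=\{f\vee g\in K^f_{m+n}\}$, and finally combine with $1(f\in K^0_m)$ via $\{f\in K^0_m\}\cap\{f\vee g\in K^f_{m+n}\}=\{f\vee g\in K^0_{m+n}\}$. The paper's proof spells out these index manipulations a bit more explicitly (see \eqref{fgK} and \eqref{Wshift2}), but the argument is the same.
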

\begin{proof}
Assume $n\in\Z_+$ and $g\in F_n$.  Then
\[\overline D(f)\cap\{i\in I:T^mi\in \overline D(g)\}=\overline D(f\vee g),
\]
and so 
\begin{equation}\label{Yf1}
Y^f(\overline D(g))=Y(\overline D(f\vee g))=1(f\vee g\in K^0_{m+n})W^{f\vee g}.
\end{equation}
For $N\ge n$ we introduce the $f$-shifted analogues of $K^g_N$ and $W^g$, that is,  
\begin{equation}\label{Knshift}
K^g_{[f],N}=\{h\in F_N:h|n=g, h_{j+1}\le Z^{h|j}_{[f]}=Z^{f\vee (h|j)}\text{ for }n\le j<N\},
\end{equation}
and
\begin{equation*}
W^g_{[f]}=\lim_{N\to\infty} \mu^{-N}|K^g_{[f],N}|.
\end{equation*}
We have 
\begin{align}\label{fgK}
\nonumber f\vee g\in K^0_{m+n}&\iff f\in K^0_m\text{ and }g_{j-m+1}\le Z^{(f\vee g)|j}\text{ for }j=m,\dots,m+n-1\\
 &\iff f\in K^0_m\text{ and }g_{j+1}\le Z^{f\vee(g|j)}\text{ for }j=0,\dots,n-1\\
\nonumber&\iff f\in K^0_m\text{ and }g\in K^0_{[f],n}.
\end{align}
In addition, 
\begin{align}\label{Wshift2}
\nonumber W^{f\vee g}&=\lim_{N\to\infty}\mu^{-N}|K^{f\vee g}_N|\\
\nonumber &=\lim_{N\to\infty}\mu^{-N} |\{h\in F_N:h|(m+n)=f\vee g, h_{j+1}\le Z^{h|j}\text{ for }m+n\le j<N\}\\
\nonumber &=\lim_{N\to\infty}\mu^{-N} |\{h\in F_N:h|m=f, (T^mh)|n=g, \\
\nonumber &\phantom{=\lim_{N\to\infty}\mu^{-N} |\{h\in F_N:h_1=}(T^mh)_{j+1}\le Z^{f\vee (T^mh|j))}\text{ for }n\le j<N-m\}|\\
\nonumber &=\mu^{-m}\lim_{N\to\infty}\mu^{-(N-m)}|\{h'\in F_{N-m}:h'|n=g,\\
\nonumber &\phantom{=\lim_{n\to\infty}\mu^{-n} |\{h\in F_n:h_1=\ }h'_{j+1}\le Z^{f\vee (h'|j)}\text{ for }n\le j<N-m\}|\\
\nonumber &=\mu^{-m}\lim_{N\to\infty}\mu^{-(N-m)}|K^g_{[f],N-m}|\quad\text{(by \eqref{Knshift})}\\
&=\mu^{-m} W^g_{[f]}.
\end{align}
Clearly \eqref{Yf1}, \eqref{fgK}, \eqref{Wshift2}, and the definition of $Y$ from \eqref{Ydef1} imply
\[Y^f(\overline D(g))=1(f\in K^0_m)1(g\in K^0_{[f],n})\mu^{-m}W_{[f]}^g=1(f\in K^0_m)\mu^{-m}Y_{[f]}(\overline D(g)).\]
The result now follows, just as in the uniqueness proof of Proposition~\ref{Y}.
\end{proof}

\subsection{Proof of Remark~\ref{rem:fpisastrm}.}\label{app:subsec:fpisastrm} 
We repeat the statement of the Remark for convenience.
\begin{remark}\label{rem:fpisastrmapp}
	Let $p>B^{-d}$ and set $L=\{0,\dots,B-1\}^d$. 
	It is not hard to show that the fp-Cantor set, $A_\infty$, with parameters $(B,p)$, is equal in law to $\supp(\nu)$ for a STRM $\nu$ with $\cL(Z)$ chosen to be $\mathrm{Binomial}(B^d,p)$, $\beta>0$ given by that $B^{-1}=\mu^{-\beta/2}$ (here $\mu=E(Z)=B^dp>1$), and for each $k\in \N$, $\vec X_k=(X_{j,k}, j\le k)\in L^k$, with law $Q_k$, is a uniformly chosen vector of $k$ distinct sites in $L$.  
	More precisely, for $x\in L^k$ with distinct coordinates,
	\[Q_k(\{x\})=[B^d(B^d-1)\times\dots\times (B^d-(k-1))]^{-1}.\]
\end{remark}
\begin{proof} If $Z$ and $\vec{X}=\{\vec X_k:k\in\N\}$ are independent and  are distributed as above (the joint law of $\{\vec X_k\}$ is irrelevant), then a moment's thought, or an elementary calculation, shows that $(e_x)_{x\in L}$ defined so that 
\begin{align}\label{Bernoulliconst}
\forall x\in L,\qquad e_x:=1(x\in\{X_{j,Z}:j\le Z\})=\sum_{j=1}^Z1(X_{j,Z}=x),
\end{align}
is an independent collection of $\mathrm{Bernoulli}(p)$ r.v.'s.

Let $\nu$ be the STRM described above, as constructed in Section~\ref{sec:STRM}. Although it is not a $B$-ary CSTRM, $\nu_m$ is still carried by $G_m$ and so we can still use the notation $N^x_m$ defined in \eqref{Nmxdef} and the representation for $\nu_m$ in \eqref{numdef2}.  
Also the boundedness of $Z$ and uniform boundedness of the coordinates of $\vec X=(X_{j,k}, j\le k)$ allow the proof of Lemma~\ref{Cantor} to go through unchanged and so \eqref{suppNalpha} remains valid.
In addition the natural analogue of \eqref{eq:evolution Jmx} holds, namely
\begin{equation}\label{Nmfp}
N^{.x_1\dots x_m}_m=|J^{.x_1\dots x_m}_m|=\sum_{f\in J_{m-1}^{.x_1\dots x_{m-1}}}\sum_{\ell=1}^{Z^f}1(X_{\ell,Z^f}^f=x_m),
\end{equation}
where $.x_1\dots x_{m-1}=0$ if $m=1$. 
A simple induction on $m$ now shows that $N_m^x=0\text{ or }1$ for all $x\in G_m$ and $m\in\Z_+$. As in Lemma~\ref{lem:evolution number of particles}, \eqref{Nmfp} allows us to obtain the analogue of \eqref{eq:distribution number of particles},
\begin{equation}\label{fpinddef}N^{.x_1\dots x_m}_m=\sum_{i=1}^{N_{m-1}^{.x_1\dots x_{m-1}}}\sum_{j=1}^{Z^{.x_1\dots x_{m-1},m-1}} 1(X_{j,Z^{.x_1\dots x_{m-1},m-1}}^{.x_1\dots x_{m-1},m-1}= x_m),
\end{equation} 
where the $(Z^{y,m-1} : \ y\in G_{m-1},\ m\in\N)$ are i.i.d.\ with the same law as $Z$, the $(X_{j,k}^{y,m-1} :\ j\le k\in\N)_{y\in G_{m-1},m\in\N}$ are i.i.d.\ copies of $\vec{X}$, and the two collections are independent. The fact that $N^y_{m-1}=0\text{ or }1$ means we do not need multiple i.i.d.\ copies of  $(Z^{y,m-1},y\in G_{m-1}, m\in\N)$ or $(\vec{X}^{y,m-1},y\in G_{m-1}, m\in\N)$ as before. Define 
\[e_{.x_1\dots x_m,m}=\sum_{j=1}^{Z^{.x_1\dots x_{m-1},m-1}} 1(X_{j,Z^{.x_1\dots x_{m-1},m-1}}^{.x_1\dots x_{m-1},m-1}= x_m),\]
so that by \eqref{fpinddef} and induction,
\[N^{.x_1\dots x_m}_m=\prod_{k=1}^me_{.x_1\dots x_k,k}.\]
Clearly \eqref{Bernoulliconst} and the independence properties of $\{Z^{y,m-1}, \ \vec{X}^{y,m-1} \ : \  y\in G_{m-1},\,m\in\N\}$ imply that $\{e_{x,m}:x\in G_m,m\in\N\}$ are i.i.d.\ Bernoulli r.v.'s with parameter $p$.  
Let $A_\infty$ be the associated fp-Cantor set with parameters $(B,p)$.  
By \eqref{eq:Ainfty as intersection} (note that $N^x_m$ is the $I^x_m$ appearing there) and \eqref{suppNalpha} we have w.p.$1$,
\[A_\infty=\bigcap_{m=1}^\infty\bigcup_{\substack{x\in G_m \\ N_m^x>0}}\overline{C_m(x)}=\supp(\nu),\]
and the derivation is complete.
\end{proof}}

\end{document}